\newtheorem*{theorem*}{Theorem}
\newtheorem{theorem}[equation]{Theorem}
\newtheorem{corollary}[equation]{Corollary}
\newtheorem{lemma}[equation]{Lemma}
\newtheorem{proposition}[equation]{Proposition}
\newtheorem{conjecture}{Conjecture}
\theoremstyle{definition}
\newtheorem{definition}[equation]{Definition}
\newtheorem{construction}[equation]{Definition}
\theoremstyle{remark}
\newtheorem{remark}[equation]{Remark}
\newtheorem{notation}[equation]{Notation}
\newtheorem{example}[equation]{Example}
\DeclareMathOperator*\tprod{{\textstyle\prod}}
\DeclareMathOperator*\tcoprod{{\textstyle\coprod}}
\DeclareMathOperator\Map{Map}
\DeclareMathOperator\End{End}
\DeclareMathOperator*\colim{colim}
\DeclareMathOperator*\hcolim{hcolim}
\DeclareMathOperator\supp{supp}
\DeclareMathOperator\BAR{B}
\DeclareMathOperator\hatBAR{\hat B}
\DeclareMathOperator\ob{ob}
\newcommand{\out}[1]{}
\newcommand{\cat}{\mathfrak C}
\newcommand{\Oo}{\mathcal P}
\newcommand{\Ms}{\mathscr M}
\newcommand{\Ns}{\mathscr N}
\newcommand{\Bb}{\mathfrak B}
\newcommand{\FF}{\mathfrak F}
\newcommand{\Gg}{\mathcal G}
\newcommand{\Ee}{\mathcal E}
\newcommand{\Ff}{\mathcal F}
\newcommand{\Hh}{\mathbb H}
\newcommand{\Mm}{\mathfrak M}
\renewcommand{\Mc}{\mathcal M}
\newcommand{\MI}{\mathfrak M\mathcal I}
\newcommand{\Rr}{\mathbb R}
\newcommand{\Rc}{\mathcal R}
\newcommand{\Cc}{\mathbb C}
\newcommand{\Zz}{\mathbb Z}
\newcommand{\Ll}{\mathscr L}
\newcommand{\Id}{\mathbf 1}
\newcommand{\ident}{\mathrm{id}}
\newcommand{\Ii}{\mathscr I}
\newcommand{\CP}{\mathbb{P}}
\newcommand{\cp}[1]{\tilde{\mathbb{P}}^2_{\!#1}}
\newcommand{\cpp}[1]{\tilde{\mathbb{P}}^2_{\!\{\!#1\!\}}}
\newcommand{\set}[1]{\{\hspace{-0.1em}#1\hspace{-0.1em}\}}
\newcommand{\Aa}{\mathcal A}
\newcommand{\ds}{\displaystyle}
\newcommand\Top{\underline{\mathrm{Top}}}
\newcommand\hTop{\underline{\mathrm{hTop}}}
\newcommand\xx{\mathbf x}
\newcommand\ff{\mathbf f}
\newcommand\gG{\mathbf f}
\newcommand\reg{\mathrm{reg}}
\newcommand\op{\mathrm{op}}
\newcommand{\tboxplus}{\mathbin{\tilde\boxplus}}
\newcommand{\Cs}{\mathscr C}
\newcommand\Vv{\mathbb V}
\newcommand\Ww{\mathbb W}
\newcommand\Fu{\mathbb U}
\newcommand\bg{\mathrm{big}}
\numberwithin{equation}{section}
\begin{document}

\title[Holomorphic bundles and the bar construction]{Holomorphic bundles on the blown-up plane and the bar construction}

\author{Jo\~ao Paulo Santos}

\subjclass[2010]{14D21, 58D27, 14J60, 55P48}

\address{Centro de An\'alise Matem\'atica, Geometria e Sistemas Din\^amicos
Departamento de Matem\'{a}tica, Instituto Superior T\'{e}cnico\\
Universidade de Lisboa\\
Av.\ Rovisco Pais 1, 1049-001 Lisboa, Portugal}

\email{jsantos@math.tecnico.ulisboa.pt}


\maketitle

\begin{abstract}
  We study the moduli space $\Mm_k^r(\tilde\CP^2_{\!q})$ of rank $r$ holomorphic bundles with trivial determinant and second Chern class $c_2=k$, over the  blowup $\tilde\CP^2_{\!q}$
  of the projective plane at $q$ points, trivialized on a rational curve. We show that, for $k=1,2$, we have a homotopy equivalence between
  $\Mm_k^r(\tilde\CP^2_{\!q})$ and the degree $k$ component of the bar construction $\BAR\bigl(\Mm^r\CP^2,(\Mm^r\CP^2)^{q},(\Mm^r\tilde\CP_{\!1}^2)^{q}\bigr)$.
  The space $\Mm_k^r(\tilde\CP^2_{\!q})$ is isomorphic
  to the moduli space $\MI_k^r(X_q)$ of charge $k$ based $SU(r)$ instantons on a connected sum $X_q$ of $q$ copies of $\overline{\CP^2}$ and we show that, 
  for $k=1,2$, we have a homotopy equivalence between $\MI_k^r(X_q\# X_s)$ and
  the degree $k$ component of $\BAR\bigl(\MI^r(X_q),\MI^r(S^4),\MI^r(X_s)\bigr)$. Analogous results hold in the limit when $k\to\infty$.
  As an application we obtain upper bounds for the cokernel of the Atiyah-Jones map in homology, in the rank-stable limit.
\end{abstract}

\bibliographystyle{amsplain}

\section{Introduction}

Let 
$\CP^2=\Cc^2\cup\CP^1$ be 
the projective plane seen as a
compactification of $\Cc^2$ and denote by $L_\infty\subset\CP^2$ the rational curve at infinity.
Given a finite set $I\subset\Cc^2$, 
let 
$\cp I$ 
be the blowup of $\CP^2$ along $I$.
In this paper we study the  moduli space  $\Mm_I^r$ of rank $r$ holomorphic bundles $\Ee$ on $\cp{I}$
with first Chern class $c_1(\Ee)=0$, trivialized at $L_\infty$. These bundles are topologically classified by their second Chern class $c_2=k$ and
by Bogolomov's inequality \cite[Theorem 4.1]{Kob87} we must have $k\geq0$. We write  $\Mm_I^r=\coprod_{k\geq0}\Mm_{I,k}^r$ where $\Mm_{I,k}^r$
denotes the subspace of bundles with $c_2=k$. 
By collapsing $L_\infty\subset\cp{I}$ to a point we obtain a smooth 4-manifold diffeomorphic to a connected sum of $\#I$ copies of the projective plane
with reversed orientation, which we represent by $\#_I\overline{\CP^2}$.
It was shown in \cite{Buc93} and \cite{Mat00} that
$\Mm_{I,k}^r$ is isomorphic as a real analytic space to the moduli space 
$\MI^{\,r}_{k}(\#_I\overline{\CP^2})$ of instantons with charge $k$ on a $SU(r)$ bundle over $\#_I\overline{\CP^2}$ (a standard reference for instantons is \cite{DoKr90}).

It has long been known \cite{Tau82,Tau84,Don86} that, given 4-manifolds $X$ and $Y$, under rather general hypothesis one can glue together instantons on $X$ and $Y$ to obtain an instanton
on the connected sum $X\#Y$.
When $X=Y=S^4$, this construction gives the moduli space $\MI^{\,r}(S^4)$ the structure of a 
graded homotopy algebra over the $C_4$ operad
\cite[section 6]{BM88}, where $C_4$ denotes the little 4-cubes operad \cite[section 4]{May72}. In particular, $\MI^{\,r}(S^4)$ behaves homologically like an algebra over the $C_4$ operad.
For general $X$ and $Y$ we have a map $\MI_{k_1}^{\,r}(X)\times\MI_{k_2}^{\,r}(Y)\to\MI_{k_1+k_2}^{\,r}(X\#Y)$.
Conversely, by ``stretching the neck'' an instanton $\nabla_{\!A}$ on $X\#Y$
decomposes as an instanton on $X$, an instanton on $Y$ and several instantons (the number depends on $\nabla_{\!A}$) on infinite cylinders \cite[Theorem 6.3.3]{MMR94}, which we may regard as an element in the space
$\MI^{\,r}(X)\times\MI^{\,r}(S^4)^n\times\MI^{\,r}(Y)$ for some $n$. These spaces are the building blocks of the bar construction
$\BAR\bigl(\MI^{\,r}(X),\MI^{\,r}(S^4),\MI^{\,r}(Y)\bigr)$
(see section~\ref{sec:prelim-bar-monoid} below), which suggests that this bar construction should be related to 
the moduli space $\MI^{\,r}(X\#Y)$.
In this paper we investigate this relationship. 

We will state our results in terms of the moduli space $\Mm_I^r$ of holomorphic bundles on $\cp{I}$. Holomorphic bundles on a blowup $\tilde X$ of a complex surface $X$ can be studied using gluing techniques
(see \cite{Buc00}, \cite{San02}, \cite{Gas08}). For $k=2$,
a neighbourhood of the subspace of bundles over $\tilde X$ which are non-trivial on one of the exceptional divisors can be obtained by gluing in ``concentrated'' framed holomorphic bundles on $\CP^2$
\cite[Propositions 4.8, 4.9]{San05},
mimicking the description of a neighbourhood of points at infinity in the compactified moduli space of instantons (see \cite[section 8.2]{DoKr90}).
The bar construction is homeomorphic to the nerve of the open cover thus obtained.

The procedure described above 
for $k=2$ can also be implemented, for any $k$, 
in the limit when the rank $r$ goes to infinity: the gluing maps correspond to Whitney sum. Whitney sum is not strictly associative so some care has to be taken in defining the bar construction.
Our approach is similar to the one in \cite{Ang09}: we replace the simplicial category with 
a topological category whose spaces of morphisms are homotopy equivalent to the discrete spaces of morphisms in the simplicial category.

In finite rank
we define the gluing maps for $k=1,2$ using the monad description of holomorphic bundles introduced in \cite{Don84}, \cite{Kin89}.
Although the same techniques could in principle be used
for $k>2$, there are several complicating factors. For $k=2$ the only non-trivial products we need to consider are of the form
$\Mm_{J,1}^r\times\Mm_{K,1}^r\to\Mm^r_{J\cup K,2}$ and we can further reduce to the case where $\#J,\#K\leq 1$ where we have simple monad descriptions of the moduli spaces.
Also, the gluing maps depend on a choice of disjoint open sets in $\Cc^2$ (of the form $U\times\Cc$). Since
there is never a need to multiply together more than 2 non-trivial factors,
associativity is not an issue so we don't need the little cubes operad. All these simplifying factors fail for $k>2$.

\subsection{Results}

For $k=1,2$ we obtain a description of the moduli space $\Mm_I^r$ in terms of the moduli spaces $\Mm_\emptyset^r$ and $\Mm_{x}^r$, with $x\in I$:
using results in \cite{San05}, we construct, for any
disjoint finite sets $I_1,\dots,I_n\subset\Cc^2$, maps
$\boxplus\colon \Mm_{I_1,k_1}^r\times\dots\times \Mm_{I_n,k_n}^r\to \Mm_{I,k}^r$, where $I=\bigcup_iI_i$ and $k=\sum_ik_i\leq 2$.
Using these maps we build the degree 0, 1 and 2 components of the bar constructions
$\BAR(\Mm_I^r,\Mm_\emptyset^r,\Mm_J^r)$ and $\BAR\bigl(\Mm_\emptyset^r,\smash[b]{\prod\limits_{x\in I}}\Mm_\emptyset^r,\smash[b]{\prod\limits_{x\in I}}\Mm_{x}^r\bigr)$,
which we represent by $\BAR(\cdots)_{\leq2}$.

\begin{theorem}\label{theor0}
Let $I=\{x_1,\dots,x_q\}\subset\Cc^2$. Then:
\begin{enumerate}
\item The map $\boxplus\colon \Mm^r_\emptyset\times\bigl(\prod_i\Mm^r_{x_i}\bigr)\to \Mm^r_I$ induces a map
\[
\textstyle h_\boxplus\colon\BAR\bigl(\Mm^r_\emptyset,\prod_i\Mm^r_\emptyset,\prod_i\Mm^r_{x_i}\bigr)_{\leq2}\to \Mm^r_{I,\leq2}
\]
which is a homotopy equivalence. 
\item If $I=J\cup K$, with $J\cap K=\emptyset$, then
the map $\boxplus\colon \Mm^r_J\times \Mm^r_K\to \Mm^r_{I}$ induces a map
$\BAR(\Mm^r_J,\Mm^r_\emptyset,\Mm^r_K)_{\leq2}\to \Mm^r_{I,\leq2}$ which is a homotopy equivalence. 
\end{enumerate}
\end{theorem}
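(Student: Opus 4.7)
The plan is to leverage the simplicial structure of the two-sided bar construction and analyze the problem one value of $c_2$ at a time. For the bar construction $\BAR\bigl(M^r_\emptyset,\prod_i M^r_\emptyset,\prod_i M^r_{x_i}\bigr)$, the $k$-th component in each simplicial degree $n$ is a disjoint union of products indexed by partitions $k=k_0+k_1+\dots+k_n$. Crucially, since $M^r_{I',k'}=*$ for $k'>2$, once we fix $k\in\{1,2\}$ only finitely many simplices (in bounded simplicial degree) contribute non-degenerately, and the geometric realization reduces to a finite homotopy colimit that can be compared directly with the target.

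First I would handle part (1) for $k=1$. The $k=1$ part of the bar construction unfolds as a homotopy pushout gluing $M^r_{\emptyset,1}$ to each $M^r_{x_i,1}$ along the pullback maps $\pi^*_{\emptyset,\{x_i\}}\colon M^r_{\emptyset,1}\to M^r_{x_i,1}$. By the results of \cite{San05} which provide the $\boxplus$ maps, $\Mm^r_{I,1}$ itself decomposes in exactly this way: a charge-one bundle on $\cp I$ is either pulled back from $\CP^2$ or comes from exactly one of the $\tilde{\CP}^2_{\{x_i\}}$, with the overlap being the charge-one bundles that are pulled back from $\CP^2$ itself. Identifying this decomposition with the homotopy colimit given by the bar construction yields the equivalence $h_\boxplus$ in degree $1$.

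For $k=2$ the analysis is more delicate because the bar construction now contributes non-trivially in simplicial degrees up to $n=2$, corresponding to partitions $2=1+1+0+\dots$ of the total charge among the blow-up points and the plane factor. The $k=2$ part must absorb both the case of a single charge-$2$ bundle concentrated at one locus and the case of two charge-$1$ bundles distributed over two distinct loci (with the relevant overlap strata coming from bundles that are pulled back). I would again invoke \cite{San05} to obtain an explicit stratification/colimit description of $\Mm^r_{I,2}$ compatible with $\boxplus$, then check simplex-by-simplex that the augmentation of the bar construction realizes this colimit up to homotopy. Part (2) can then be deduced by the same pattern with $M^r_J$ and $M^r_K$ replacing the extremal terms, or alternatively by an associativity argument relating $\BAR(M^r_J,M^r_\emptyset,M^r_K)$ to the iterated bar construction of part (1) applied to $J$ and $K$ separately and then combined.

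The main obstacle is expected to be the $k=2$ case of part (2), where one must verify that the middle $M^r_\emptyset$ in $\BAR(M^r_J,M^r_\emptyset,M^r_K)$ correctly mediates the gluing when one charge lies over $J$ and the other over $K$, including the degenerate case where both charges secretly come from $\CP^2$ itself and are counted redundantly. Controlling this overcounting amounts to showing that the simplicial face maps match the compatibility relations among the $\boxplus$ maps coming from \cite{San05}; this is essentially a bookkeeping task but requires explicit identification of strata rather than a purely formal manipulation.
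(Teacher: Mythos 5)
Your overall strategy---analyzing the bar construction charge-component by charge-component, identifying the $k=1$ piece with a wide homotopy pushout, and appealing to a \cite{San05}-style decomposition of $\Mm_{I,k}$ compatible with the $\boxplus$ maps, then deducing part (2) from part (1) via the associativity $\BAR(\BAR,\cdot,\BAR)\simeq\BAR$---is in the right neighborhood and has substantial overlap with the paper. The ``stratification/colimit description'' you invoke is, in the paper's treatment, precisely the fact that the maps $\pi_{x,I}^*$, $\pi_{y,I}^*$ and $\boxplus$ are open embeddings whose images form an open cover of $\Mm_{I,2}^V$, so that the $k=2$ component of the bar construction is literally the nerve of that open cover. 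You should be explicit that it is an open cover and a nerve, not a closed stratification: it is the openness and nerve identification that buy the homotopy equivalence without any extra input.

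The genuine gap is that you have no mechanism for controlling the combinatorics when $\#I>k$, and this is where the proposal, as stated, would stall. Analyzing the bar construction ``simplex-by-simplex'' for $k=2$ and $q$ large is exactly what the paper avoids by first invoking the reduction of Theorem~\ref{theoI}: $\Mm_{I,k}$ and $\|\Bb_{I,k}\|$ are both homotopy colimits over the category $\cat_{I,k}$ of subsets $J\subset I$ with $\#J\le k$, so it suffices to prove the equivalence for $\#I\le k$. Concretely, for $k=1$ one only needs $I=\{x\}$ (trivial), and for $k=2$ only $I=\{x,y\}$. Moreover, the actual flow is the reverse of what you propose: for the minimal case $I=\{x,y\}$ one proves a form of part (2) directly (the nerve-of-open-cover identification shows $\BAR(M_x,M_\emptyset,M_y)\to M_{\{x,y\}}$ is an equivalence), and then the commutative diagram with the left vertical $\BAR(|B_J|,M_\emptyset,|B_K|)\xrightarrow{\ \simeq\ }|B_I|$ deduces part (1) for $I=\{x,y\}$; the generic part (2) then follows from part (1) by the same diagram. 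Without the $\#J\le k$ reduction, your approach must verify the nerve identification for an indexing category that grows with $q$, and you give no argument that this remains manageable; including that reduction (it is exactly the content of section~\ref{sec:5}) would close the gap.
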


Direct sum with a trivial rank $r'-r$ bundle induces a map
$\Mm_{I}^r\to\Mm_{I}^{r'}$ and we let $\Mm^\infty_{I}=\colim\Mm_I^r$.
In \cite[Theorem 1.2]{Kir94}, \cite[Theorem 3]{San95}, \cite[Theorem 1.1]{BrSa97} it was shown that we have homotopy equivalences
$\Mm_\emptyset^\infty\simeq\coprod_{k\geq0} BU(k)$ and $\Mm_x^\infty\simeq\coprod_{k\geq0} BU(k)\times BU(k)$ (for $x\in\Cc^2$).
For $J\subset I$, pullback of bundles induces a map $\pi_{J,I}^*:\Mm_J^r\to\Mm_I^r$.
Combining the maps $\pi_{J,I}^*$ with Whitney sum allows us to define, for each $I\subset\Cc^2$, 
a bar construction (see sections~\ref{sec:3} and~\ref{sec:4}), which we denote by:
\[
\|\Bb_I\|=\BAR\Bigl(\Mm_\emptyset^\infty,\textstyle\prod\limits_{x\in I}\Mm_\emptyset^\infty,\prod\limits_{x\in I}\Mm_x^\infty\Bigr)\,,
\]
and a map: 
\[
h_I\colon\|\Bb_I\|\to\Mm_I^\infty\,.
\]
The second Chern class of the bundles gives a grading of the spaces $\|\Bb_I\|$.
In the limit when $r\to\infty$ the maps $\boxplus$ agree with the Whitney sum maps so
Theorem~\ref{theor0} implies that $h_{I}$ is a homotopy equivalence in degrees 1 and 2 (see Theorem~\ref{thm:hI2simeq}).

\begin{conjecture}\label{conjecture}
The map $h_I\colon\|\Bb_I\|\to\Mm_I^\infty$ is a homotopy equivalence.
\end{conjecture}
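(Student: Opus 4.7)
The plan is to propagate the equivalences from the two known regimes---the $k\le 2$ components provided by Theorem~\ref{theor0} and the $k\to\infty$ limit provided by Theorem~\ref{theoIII}---to every intermediate degree, using Taubes' gluing as the bridge. Since $h_I$ is degree preserving, it decomposes as a disjoint union of maps $h_{I,k}\colon\|\Bb_{I,k}\|\to\Mm_{I,k}$, and it suffices to show each of these is an equivalence. The cases $k=0,1,2$ are immediate: $k=0$ is trivial, and $k=1,2$ follow from Theorem~\ref{theor0} upon taking the rank-stable colimit $\colim_r$, since in that limit $\boxplus$ becomes Whitney sum and the truncated spaces $M_{I,k}$ coincide with $\Mm_{I,k}$ for $k\le 2$.

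For the inductive step I would use Taubes' gluing maps $s_k\colon\Mm_{I,k}\to\Mm_{I,k+1}$ (attaching a concentrated charge-one instanton away from $I$) together with the parallel stabilization $\tilde s_k\colon\|\Bb_{I,k}\|\to\|\Bb_{I,k+1}\|$ obtained by $\boxplus$-multiplying by a fixed element of $\Mm_{\emptyset,1}$. A direct simplicial check should yield $h_I\circ\tilde s_k\simeq s_k\circ h_I$, so the induced map on telescopes $\hcolim_k h_{I,k}$ agrees with the map $h_{I,\infty}$ of Theorem~\ref{theoIII} (applied rankwise and then taking $\colim_r$), hence is a homotopy equivalence. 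The target statement is that each individual $h_{I,k}$ is an equivalence, not just the telescope.

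To descend from the telescope to each finite stage I would exploit the rank-stable H-space structure. Rank stably, $\Mm_\emptyset\simeq\coprod_k BU(k)$ is a commutative H-monoid under Whitney sum whose group completion is $\Zz\times BU$; both $\|\Bb_I\|$ and $\Mm_I$ inherit compatible $\Mm_\emptyset$-module structures under which $\hcolim_k$ is precisely group completion in the $\Mm_{\emptyset,1}$-direction. The idea is then to show inductively that the homotopy cofiber of $\tilde s_k$ maps by an equivalence to the homotopy cofiber of Taubes' $s_k$, using the low-degree input to start the induction and the $k\to\infty$ agreement to supply a coherence constraint. The pinch map $\#_q\CP^2\to(\bigvee_q\CP^2)\vee S^4$ featured in Theorem~\ref{theoIII}, combined with the fact that $\Zz\times BU$ is an infinite loop space (so mapping spaces into it are insensitive to this one-suspension-away equivalence), provides a useful external model for cross-checking both sides.

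The main obstacle is the geometric control of Taubes' gluing at each finite $k$. At the level of full stabilization the gluing is well understood, but matching its homotopy type degree by degree against the combinatorial $\boxplus$-stabilization on the bar side requires a finer analysis, essentially a Blakers--Massey-style comparison of cofibers whose connectivity grows uniformly with $k$. It is exactly this uniformity, i.e.\ the compatibility of rank stabilization with every stratum of the $c_2$-filtration, that the conjecture asserts and that goes beyond what Theorems~\ref{theor0} and \ref{theoIII} establish; controlling it presumably requires either a direct scanning/electric-field model for $\Mm_I$ in the spirit of Atiyah--Jones and Kirwan, or a careful bookkeeping of the neighborhoods of the boundary strata in Donaldson's compactification.
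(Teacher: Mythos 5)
This statement is labeled a \emph{Conjecture} in the paper, and the paper does not prove it; there is no proof to compare your attempt against. What the paper does establish is a collection of partial results supporting the conjecture: Theorem~\ref{theor0} (the $k=1,2$ components), Theorem~\ref{theoIII} (the $k\to\infty$ limit), and Theorem~\ref{theoI}, a structural reduction showing it suffices to check $h_{J,k}$ for finite sets $J\subset I$ with $\#J\le k$. Your proposal is a research sketch rather than a proof, and to your credit you say so explicitly, correctly locating the gap in the degree-by-degree comparison of Taubes' gluing with the $\boxplus$-stabilization on the bar side.

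A few concrete comments on the sketch itself. First, you do not invoke Theorem~\ref{theoI}, which is the paper's principal reduction and would be essential to any proof attempt: it collapses the problem to subsets of bounded size, and it is exactly what makes the $k=1,2$ cases tractable (via Lemma~\ref{lemma:theo1nerve}, the nerve of an open cover, and the explicit monad computations of the appendix). Your route for $k\le 2$ via ``taking $\colim_r$ of Theorem~\ref{theor0}'' is circular in the context of this paper, since the proof of Theorem~\ref{theor0} already uses Theorem~\ref{theoI}. Second, the claim that $\hcolim_k$ along stabilization ``is precisely group completion'' and that this plus the $k\to\infty$ agreement constrains each finite $k$ is overstated: group completion theorems control homology after inverting $\pi_0$, not the homotopy type of each component, and the degree-by-degree statement you need is of Atiyah--Jones type, which is strictly stronger than anything a telescope argument alone can give. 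Third, the ``Blakers--Massey comparison of cofibers'' step presupposes connectivity estimates on $\Mm_{I,k}\to\Mm_{I,k+1}$ that grow with $k$ uniformly in $I$; establishing such estimates for blown-up planes is an open problem and is, in effect, a restatement of the conjecture rather than a reduction of it. So while the overall plan is a reasonable heuristic, the step you flag as ``the main obstacle'' is the entire content of the conjecture, and no part of your argument reduces it.

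Given that the paper treats this as open, the most useful thing you can extract from it is the reduction of Theorem~\ref{theoI} and the open-cover/monad technology of sections~\ref{sec:5},~\ref{sec:6}, and the appendix, which suggests that a proof, if one exists along these lines, would proceed by exhibiting $\|\Bb_{I,k}\|$ as the nerve of an explicit open cover of $\Mm_{I,k}$ for each $k$, rather than by a telescope-and-cofiber argument against the Taubes limit.
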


For disjoint finite sets $I$ and $J$ we have (see Proposition~\ref{prop:IcupJ}):
\begin{equation}\label{eq:barbarbar}
  \BAR\bigl(\|\Bb_I\|,\Mm_\emptyset^\infty,\|\Bb_J\|\bigr)\simeq\|\Bb_{I\cup J}\|\,.
\end{equation}
Assuming Conjecture~\ref{conjecture} holds, it follows that $\Mm^\infty_{I\cup J}\simeq\BAR(\Mm^\infty_I,\Mm^\infty_\emptyset,\Mm^\infty_J)$.
Also, from a finite rank version of equation \eqref{eq:barbarbar} 
(see Proposition~\ref{prop:barbarbardiagramcommutesinfiniterank})
it can be seen that parts (1) and (2) of Theorem~\ref{theor0} are equivalent.

We also show that, in order to
check the conjecture in a given degree $k$, it is enough to show that the map $h_I$  is a homotopy equivalence when $\#I\leq k$:

\begin{theorem}\label{theoI}
If, for every finite set $J\subset I$ with
$\#J\leq k$, the map $h_{J}$ is a homotopy equivalence in degree $k$, then $h_{I}$ is a homotopy equivalence in degree $k$.
\end{theorem}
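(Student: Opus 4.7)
The plan is to exhibit both $\Mm_{I,k}$ and $\|\Bb_{I,k}\|$ as (homotopy) colimits over the poset $\Ff_k(I)$ of finite subsets $J\subset I$ with $\#J\leq k$, and to invoke naturality of $h$ together with the hypothesis that each $h_{J,k}$ is a homotopy equivalence for $J\in\Ff_k(I)$.

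On the moduli side, the geometric input I would use is: for any $\Ee\in\Mm_{I,k}$, the set $J(\Ee)\subset I$ of blowup points $x$ at which $\Ee$ restricted to the exceptional divisor is non-trivial is finite and has cardinality at most $k$, because each such point contributes at least one to $c_2$. Moreover, $\Ee$ is the pullback $\pi_{J(\Ee),I}^*\Ee'$ of a bundle on $\cp{J(\Ee)}$. The image of $\pi_{J,I}^*$ is precisely the locus of bundles essentially supported on $J$, and these images satisfy the intersection relation $\pi_{J,I}^*\Mm_{J,k}\cap\pi_{J',I}^*\Mm_{J',k}=\pi_{J\cap J',I}^*\Mm_{J\cap J',k}$. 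Since the poset $\Ff_k(I)$ has meets, this identifies $\Mm_{I,k}$ with the colimit $\colim_{J\in\Ff_k(I)}\Mm_{J,k}$ along the pullback maps.

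On the bar construction side, an $n$-simplex of $\|\Bb_{I,k}\|$ is a point of $\Mm_\emptyset\times\bigl(\prod_I\Mm_\emptyset\bigr)^n\times\prod_I\Mm_x$ of total $c_2$-degree $k$. Because each $\Mm_y$ and $\Mm_\emptyset$ is based with basepoint in degree zero, only finitely many of the product-factor coordinates are non-basepoint, and the union $J$ of these indices across all factors has $\#J\leq k$. Restricting to $J$ identifies the simplex with one of $\|\Bb_{J,k}\|$. By the same intersection analysis, this gives $\|\Bb_{I,k}\|=\colim_{J\in\Ff_k(I)}\|\Bb_{J,k}\|$; and because the transition maps are closed cofibrations (coming from the CW structure on the geometric realization of a proper simplicial space), this colimit coincides with the homotopy colimit.

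The map $h$ is natural in $I$, so it induces a natural transformation between the two diagrams $\Ff_k(I)\to\Top$, pointwise a homotopy equivalence by assumption. A pointwise homotopy equivalence of cofibrant diagrams over a common indexing category induces a homotopy equivalence of homotopy colimits, which gives the conclusion. The main obstacle I anticipate is the passage from colimit to homotopy colimit on the moduli side: one must check that the inclusions $\pi_{J,I}^*\Mm_{J,k}\hookrightarrow\Mm_{I,k}$ are closed cofibrations (or, equivalently, that the diagram $J\mapsto\Mm_{J,k}$ is Reedy cofibrant over $\Ff_k(I)$), which I would verify by appealing to the local gluing structure of the moduli spaces from \cite{San05}.
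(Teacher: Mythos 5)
Your overall plan—express both $\Mm_{I,k}$ and $\|\Bb_{I,k}\|$ as homotopy colimits over the poset of subsets $J\subset I$ with $\#J\leq k$, then invoke naturality of $h$—is the same strategy the paper uses. The decomposition by "support" and the observation that only small $J$ contribute are also essentially what the paper does on the bar-construction side (though the paper works levelwise in the simplicial degree $n$, decomposing $\Bb_{I,k}(n)$ by the tuple $\mathbf k$ of degree contributions and noting $\cat_{I,\mathbf k}$ has an initial object $\supp\mathbf k$, rather than by taking a strict colimit of realizations and then checking cofibrancy).

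The genuine gap is in your treatment of the moduli side, which you correctly flag as the main obstacle but then propose to handle the wrong way. The inclusions $\pi_{J,I}^*\Mm_{J,k}\hookrightarrow\Mm_{I,k}$ have \emph{open} image (this is exactly what Lemma~\ref{lemma:theo1nerve}, citing \cite{San05}, gives: the restriction of $\Mm_k$ to $\cat_{I,k}$ is homeomorphic to the nerve of an \emph{open} cover of $\Mm_{I,k}$). An inclusion of an open subset is not a closed cofibration, and the diagram $J\mapsto\Mm_{J,k}$ is not Reedy cofibrant over $\Ff_k(I)$, so your proposed route—strict colimit first, then promote to homotopy colimit via closed cofibrations—does not go through. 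The correct tool is the theorem that the homotopy colimit of the nerve of an open cover is weakly equivalent to the space itself (Segal's theorem / the \v Cech realization theorem). This is what makes $\hcolim_{\cat_{I,k}}\Mm_k\simeq\Mm_{I,k}$ work, and it is precisely where the appeal to \cite{San05} is needed; it is not an appeal to local gluing to get closed cofibrations.

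A secondary issue: $h_I$ is only defined in $\hTop$ via the zigzag $\|\Bb_I\|\to\|\widetilde\Bb_I\|\leftarrow\Mm_I$, so "a pointwise homotopy equivalence of diagrams induces an equivalence of homotopy colimits" cannot be applied directly to a $\cat_{I,k}$-shaped diagram in $\hTop$. The paper circumvents this by reducing to showing $\|\Bb_{I,k}\|\to\|\widetilde\Bb_{I,k}\|$ is a homotopy equivalence, working entirely with genuine functors to $\Top$ (the $\Delta^*\Bb_k$ and $\Delta^*\widetilde\Bb_k$ constructions over $\widetilde\Delta_I^{\op}\times\cat_{I,k}$), and treating $\Mm_{I,k}$ as the value at the cone point $[-1]$, so that the open-cover lemma enters exactly at that object. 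Your sketch would need a comparable rigidification of the zigzag into honest $\Top$-valued diagrams before the hocolim comparison can be made.
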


Note that finite rank versions of Theorem~\ref{theoI}
and equation~\eqref{eq:barbarbar}
(see equation~\eqref{eq:nleqkisenoughinfiniter}
and Proposition~\ref{prop:barbarbardiagramcommutesinfiniterank}) 
imply Theorem~\ref{theor0} for $k=1$.

There is an analog to Theorem~\ref{theor0} 
in the limit when $k\to +\infty$.  
Write $\MI_I^r=\MI^r(\#_I\overline{\CP^2})$ and let
$\mathscr C_{I,k}^r$ denote the space of all connections on an $SU(r)$ bundle over $\#_I\overline{\CP^2}$ with second Chern class equal to $k$ mod
gauge equivalence. This space
is homotopically equivalent to $\Map_*\bigl(\#_I\overline{\CP^2},BSU(r)\bigr)$ \cite[Theorem 1.3]{CoMi94}.
For each degree $k\geq0$ 
we have $\MI_{I,k}^{\,r}\subset \mathscr C_{I,k}^r$
and in \cite{Tau89}, Taubes described, for $k'>k$, 
homotopy equivalences $\mathscr C_{I,k}^r\to\mathscr C_{I,k'}^r$
which restrict to maps
$\MI_{I,k}^{\,r}\to\MI_{I,k'}^{\,r}$,
and showed that ``in the direct limit''
the natural inclusion maps 
\begin{equation}\label{eq:AtiyahJonesmapimath_k}
\imath_k\colon\MI_{I,k}^{\,r}\to\mathscr C_{I,k}^r
\end{equation}
induce a homotopy equivalence 
$\MI_{I,\infty}^{\,r}\xrightarrow{\simeq}\mathscr C_{I,\infty}^r$. 
The $r=+\infty$ case of the following theorem shows that Conjecture~\ref{conjecture}
holds in the limit when $k\to+\infty$:

\begin{theorem}\label{theoIII}
Let $r$ be either a non-negative integer or $r=+\infty$. Then there is a homotopy equivalence:
\[
\BAR\Bigl(\mathscr C_{\emptyset,\infty}^{\,r},\tprod_{x\in I}\mathscr C_{\emptyset,\infty}^{\,r},\tprod\limits_{x\in I}\mathscr C_{x,\infty}^{\,r}\Bigr)\to \mathscr C_{I,\infty}^{\,r}\,.
\]
\end{theorem}

As an application, we study the image in homology of the map
$\imath_k$ in equation~\eqref{eq:AtiyahJonesmapimath_k}
in the limit when $r\to+\infty$.
This map was first studied 
by Atiyah and Jones in \cite{AtJo78} in the finite rank case.
They conjectured that $\imath_k$ induces an isomorphism 
in homology and homotopy through a range $q(k,r)$ that grows with $k$. The original conjecture (over $S^4$) was proven in \cite{BHMM93,
Kir94, Ytian94} and generalizations were proved in \cite{HuMi95, Gas08}. We compute the homology of $\|\Bb_I\|$ and show that the map $\|\Bb_{I,k}\|\to\mathscr C_{I,k}^\infty$
is an isomorphism in homology up to homological dimension $2k+1$. 
As a corollary we show that:

\begin{theorem}\label{theoIV}
Let $h_{I,k}\colon\|\Bb_{I,k}\|\to\Mm_{I,k}^\infty$ 
be the $c_2=k$ component of the map $h_I$. Then
$h_{I,k}$ is injective in homology and,
in homological degree up to $2k+1$, we have:
\begin{enumerate}
\item The inclusion map $\imath_k\colon\MI_{I,k}^{\,\infty}\to\mathscr C_{I,k}^\infty$ 
induces surjective homomorphisms in homology.
\item The map $h_{I,k}$
is an isomorphism in homology up to the range of validity of the Atiyah-Jones conjecture.
\end{enumerate}
\end{theorem}

We also give upper bounds for the cokernel of the map $\imath_k$ in homology up to dimension $4k-2\#I+7$; 
furthermore, since the space $\|\Bb_{I,k}\|$ is simply connected 
(Proposition~\ref{prop:simplyconnected}),
statements analogous to (1) and (2) above also hold in homotopy
(see Theorem~\ref{theoIV}* on page~\pageref{theor:upperboundstocokernel}).

The paper is organized as follows:
In section~\ref{sec:preliminaries} we briefly recall the definitions and basic facts about the bar construction, homotopy colimits and operads.
In section~\ref{sec:2}, we describe the moduli spaces of holomorphic bundles $\Mm_I^r$ and, in the limit when $r\to \infty$, give them
the structure of $E_\infty$-spaces.
In sections~\ref{sec:3} and~\ref{sec:4}, we describe the bar constructions in the limit when $r\to\infty$.
In section~\ref{sec:5}, we prove Theorem~\ref{theoI}.
In section~\ref{sec:glue}, we define the products $\boxplus$ used in the statement of Theorem~\ref{theor0} and prove their properties.
In section~\ref{sec:6}, we prove Theorem~\ref{theor0} and use it to show that $h_{I,k}$ is a homotopy equivalence for $k=1,2$.
In section~\ref{sec:7}, we prove Theorem~\ref{theoIII}.
In section~\ref{sec:8}, we compute the homology of $\|\Bb_I\|$ and prove Theorem~\ref{theoIV}.
In appendix~\ref{app:bar}, we gather some technical proofs concerning the bar construction.

\section{Preliminaries on operads and the bar construction}\label{sec:preliminaries}

The two-sided bar construction for monoids and its generalization for algebras over operads play a central role in our work.
We gather here some terminology and facts which we'll be using throughout the paper.
Let $\Top$ be the category of compactly generated spaces in the sense of \cite{Vog71} 
and let $\hTop$ be the homotopy category. Unless otherwise stated, mapping spaces always have the compact open topology.


\subsection{The bar construction}

In this section we provide some background on the bar construction and homotopy colimits. More details can be found in \cite{HoVo92}.

\subsubsection{Diagrams of spaces}\label{sec:diagramsofspaces}

A \emph{topological category} $\cat$ is a small category with topologized morphism sets such that composition is continuous and
$\ob\cat\subset\text{mor}\,\cat$ is a closed cofibration.
A continuous functor $\cat_1\to\cat_2$ is said to be
an \emph{equivalence of categories} if it is the identity on objects and a homotopy equivalence on the morphism spaces.

A $\cat$-\emph{diagram} of spaces is a continuous functor $X\colon\cat\to\Top$.
A \emph{weak equivalence} between $\cat$-diagrams $X_1$, $X_2$ is a natural transformation $\alpha\colon X_1\to X_2$
such that $\alpha(A)\colon X_1(A)\to X_2(A)$ is a homotopy equivalence for all $A\in\ob\cat$. We say $\alpha$ is a homeomorphism if $\alpha(A)$
is a homeomorphism for all $A\in\ob\cat$.
A functor between topological categories
$i\colon\cat_1\to\cat_2$ induces a pullback of diagrams: given a $\cat_2$-diagram $X\colon\cat_2\to\Top$
we let $i^*X=X\circ i\colon\cat_1\to\Top$.

\begin{example}\label{ex:nerveofopencover}
  Let $\mathcal U=\{U_\alpha\}_{\alpha\in A}$ be an open covering of a space $X$ and for each
  finite subset $\sigma\subset A$ let $U_\sigma=\bigcap_{\alpha\in\sigma}U_\alpha$.
  Let $\cat_{\mathcal U}$ be the category
  whose objects are the non-empty open sets $U_\sigma$ and whose morphisms are the inclusions.
  We refer to the tautological diagram of spaces $N\colon\cat_{\mathcal U}\to\Top$  as the \emph{nerve} of the open cover $\mathcal U$.
\end{example}

\subsubsection{Simplicial spaces}

Let $\Delta$ be the simplicial category, whose objects are the finite ordered sets $[n]=\{0,\dots,n\}$ and whose morphisms
$\Delta(m,n)$ are the order preserving maps $\mu\colon[m]\to[n]$. Let $\Delta^\op$ be the opposite category.
The $\Delta^\op$-diagrams of spaces are called \emph{simplicial spaces}.
Given a simplicial space $X_\bullet\colon\Delta^\op\to\Top$, its \emph{geometric realization} \cite[Definition 11.1]{May72}
is the topological space $|X_\bullet|=\bigl(\coprod_{n\geq0}\Delta\!{}^n\times X_n\bigr)/\sim$,
where $\Delta\!{}^n$ is the standard $n$-simplex and
the equivalence relation is generated by $(\xi\cdot\mu,a)\sim(\xi,\mu\cdot a)$ for every $\xi\in\Delta\!{}^m$, $a\in X_n$ and $\mu\in\Delta(m,n)$.
A natural transformation $X_\bullet\to Y_\bullet$ induces a map $|X_\bullet|\to|Y_\bullet|$.

\begin{example}\label{ex:constantsimplicialspace}
  A space $Y$ can be identified with the simplicial space $Y_\bullet$ with $Y_n=Y$ for all $[n]\in\mathrm{ob}\,\Delta$:
  we have a canonical homeomorphism $|Y_\bullet|\cong Y$ \cite[Lemma 11.8]{May72}.
  Given a simplicial space $X_\bullet$, a sequence of maps
  \[
  f_n\colon X_n\to Y\quad\text{with}\quad f_m(\mu x)=f_n(x)\quad\bigl(x\in X_n,\ \mu\in\Delta(m,n)\bigr)
  \]
  induces a map $|X_\bullet|\to Y$.
\end{example}

\out{
We call a morphism $\mu\in\Delta(m,n)$,                      
a \emph{face map} if $\mu\colon[n]\to[m]$ is injective, 
and a \emph{degeneracy map} if it is surjective. Given a simplicial space 
$X_\bullet$,
we denote by $\|X_\bullet\|$ the quotient of $\bigl(\coprod_{n\geq0}\Delta\!{}^n\times X_n\bigr)$ by the equivalence relation $(\xi\cdot\mu,a)\sim(\xi,\mu\cdot a)$
where $\mu$ is a face map.

\begin{proposition}
	Given a simplicial space $X_\bullet$, 
	there is a simplicial space $\tau X_\bullet$
	weakly equivalent to $X_\bullet$ such that 
	$|\tau X_\bullet|$ is homotopically equivalent to $\|X_\bullet\|$. 
	Furthermore, if $X_\bullet$ is good 
	(in the sense of \cite[Definition A.4]{Seg74}) then 
	we have homotopy equivalences
	$|X_\bullet|\simeq|\tau X_\bullet|\simeq\|X_\bullet\|$.
\end{proposition}
\begin{proof}
	\cite[Propositions A.1(iv) and A.2(v)]{Seg74}
\end{proof}

We call $|\tau X_\bullet|$ the \emph{thick geometric realization} of $X_\bullet$.

}

\subsubsection{The bar construction for monoids}\label{sec:prelim-bar-monoid}

Let $G$ be a topological monoid (that is, $G$ has an associative product with unit) acting on the left and on the right 
on topological spaces $Y$ and $X$ respectively.
The bar construction $\BAR(X,G,Y)$ is the homotopy invariant version of the orbit space
$X\times_G Y$. It is defined as the geometric realization of the simplicial space $\BAR_\bullet(X,G,Y)$ defined on objects by
\[
\BAR_n(X,G,Y)=X\times G^n\times Y
\]
and defined on morphisms as follows: Given $\mu\in\Delta(m,n)$ let $\mu_\alpha=\mu(\alpha)$, with $\alpha=1,\dots,m$; then we can write
\begin{equation}\label{eq:barconstructionformonoids}
X\times G^n\times Y=(X\times G^{\mu_0})\times\left(\prod_{\alpha=1}^mG^{\mu_\alpha-\mu_{\alpha-1}}\right)\times(G^{n-\mu_m}\times Y)
\end{equation}
and the map $X\times G^n\times Y\to X\times G^m\times Y$ is induced by the maps
\[
X\times G^{\mu_0}\to X\,,\quad G^{n-\mu_m}\times Y\to Y\quad\text{and}\quad G^{\mu_\alpha-\mu_{\alpha-1}}\to G\quad(\alpha=1,\dots,m)\,.
\]
defined by the product in $G$ and the actions of $G$ on $X$ and $Y$.

If we let $G^\op$ denote the space $G$ with the opposite monoid structure, then $G^\op$ acts on the left on $X$ and on the right on $Y$ and 
$\BAR(Y,G^\op,X)$ is naturally homeomorphic to $\BAR(X,G,Y)$.

\begin{proposition}\label{prop:B(G,G,Y)=Y}
	The actions $G\times Y\to Y$ and $X\times G\to X$
	induce maps $\BAR(G,G,Y)\to Y$ and $\BAR(X,G,G)\to X$
	respectively, which are homotopy equivalences.
\end{proposition}
\begin{proof}
	
	For the map $\BAR(G,G,Y)\to Y$ see \cite[Proposition 3.1(5)]{HoVo92} 
	(taking into account Remark~\ref{rmk:barformonoidsspecialcase} below). 
	The result for the map
	$\BAR(X,G,G)\to X$ then follows from the homeomorphism $\BAR(X,G,G)\cong \BAR(G^\op,G^\op,X)$.
\end{proof}

One important example is the case where
$X=Y=*$ and $G$ is a group. Then $BG=\BAR(*,G,*)$ is the classifying space of $G$ and $EG=\BAR(G,G,*)\simeq *$ is the universal principal bundle over $BG$.

\subsubsection{The categorical bar construction}

Given a topological category $\cat$, a $\cat$-diagram $Y$ and a $\cat^\op$-diagram $X$,
the bar construction $\BAR(X,\cat,Y)$ is the geometrical realization
of the simplicial space defined on objects by
\begin{multline*}
\BAR_n(X,\cat,Y)\\
=\tcoprod_{A,B\in\cat}
\bigl\{(x;f_n,\dots,f_1;y):\text{$f_n\circ\dots\circ f_1\in\cat(A,B)$ is defined, $y\in Y(A)$, $x\in X(B)$}\bigr\}
\end{multline*}
and defined on morphisms by composition of arrows in $\cat$ and by the evaluation maps
$\cat(A,B)\times Y(A)\to Y(B)$ and
$X(B)\times\cat(A,B)\to X(A)$.

\begin{remark}\label{rmk:barformonoidsspecialcase}
  The bar construction for monoids in section~\ref{sec:prelim-bar-monoid} is the special case when
  the category $\cat$ has only one object; then its space of morphisms is a monoid $G$ and a $\cat$-diagram
  is a space $Y$ with a left action of $G$.
\end{remark}

\out{%
We state two well known properties of the bar construction:

\begin{proposition}\label{prop:propofbarfromHoVo}
\begin{enumerate}
\item Let $\cat\colon\cat^\op\times\cat\to\Top$ be the Hom-functor. Then the evaluation map induces a weak equivalence
\[
B(\cat,\cat,X)\to X\,.
\]
\item Given categories $\cat_1$, $\cat_2$, $\cat$-diagrams $Y_i$ and $\cat^\op$ diagrams $Y_i$, a functor
$F\colon\cat_1\to\cat_2$ and natural transformations $\alpha\colon X_1\to F^*X_2$ and $\beta\colon Y_1\to F^*Y_2$, we have an induced map
\[
\BAR(\alpha,F,\beta)\colon\BAR(X_1,\cat_1,Y_1)\to\BAR(X_2,\cat_2,Y_2)
\]
which is a homotopy equivalence if $F$ is an equivalence and $\alpha$, $\beta$ are weak equivalences.
\end{enumerate}
\end{proposition}
\begin{proof}
  \cite[Proposition 3.1]{HoVo92}
\end{proof}
}%

\subsubsection{Homotopy colimit}\label{sec:hcolim}

Let $X$ be a $\cat$-diagram. The colimit of $X$ is $\colim_\cat X=\Bigl(\coprod_{a\in\ob\cat}X_a\Bigr)/\sim$ where the equivalence relation identifies
$x\sim \mu\cdot x$  for all $x\in X_a$ and $\mu\in\cat(a,b)$. The homotopy colimit is the homotopy invariant version of the colimit.

\begin{definition}\label{def:hcolim}
  Let $\cat$ be a topological category, let $X$ be a $\cat$-diagram
  and let $*$ be the constant $\cat^\op$-diagram
  on a one-point space. The \emph{homotopy colimit} of $X$ along $\cat$ is
  \[
  \hcolim_\cat X=\BAR(*,\cat,X)\,.
  \]
\end{definition}

For convenience, we state here some basic properties of the homotopy colimit which we will be using later:

\begin{proposition}\label{prop:hcolim}\hfill
  \begin{enumerate}
  	\item Given a simplicial space $X$, 
  	there is a simplicial space $\tau X$
  	weakly equivalent to $X$ whose geometric realization
  	$|\tau X|$ is homotopically equivalent to 
  	$\hcolim_{\Delta^{\mathrm{op}}}X$. Furthermore,
    if $X$ is a good simplicial space (in the sense of \cite[Definition A.4]{Seg74}) then $\hcolim_{\Delta^{\mathrm{op}}}X\simeq|X|\simeq|\tau X|$.
  \item Given a $\cat$-diagram $X$ and a topological space $Y$, we have $\hcolim(Y\times X)\cong Y\times\hcolim X$.
  \item If $X_1,\dots,X_n$ are $\cat$-diagrams then $\hcolim_\cat\bigl(\coprod_i X_i\bigr)=\coprod_i\bigl(\hcolim_\cat X_i\bigr)$.
  \end{enumerate}
\end{proposition}
\begin{proof}
  Statement (1) is \cite[Propositions A.1(iv) and A.3]{Seg74}, and
  statements (2) and (3) follow easily from the definition and
  properties of the bar construction 
  \cite[Proposition 3.1]{HoVo92}.
\out{
  \begin{enumerate}                     
  \item Let $\nabla$ be the $\Delta$-diagram which assigns to each $[n]\in\mathrm{ob}\,\Delta$ the $n$-simplex $\Delta^{\!n}$.
  Since $X$ is good, we have $|X|\simeq|\BAR(\Delta^\op,\Delta^\op,X)|$ (see \cite[Proposition A.1(ii)]{Seg74}) and:
  \[
  |\BAR(\Delta^\op,\Delta^\op,X)|\cong\BAR(\nabla,\Delta^\op,X)\simeq\BAR(*,\Delta^\op,X)=\hcolim X\,.
  \]
  \item Identifying $Y$ with $\BAR(*,*,Y)$ we have
  \[
  Y\times\hcolim X=\BAR(*,*,Y)\times\BAR(*,\cat,X)\cong(*\times *,*\times\cat,Y\times X)=\hcolim(Y\times X)\,.
  \]
  \item Let $\mathfrak D$ be the category with $\mathrm{ob}\,\mathfrak D=\{1,\dots,n\}$ and with only the identities as morphisms.
  Then the collection $\{X_i\}$ defines a $\cat\times\mathfrak D$-diagram and the statement follows from the commutation of homotopy colimits
  (see \cite[section 6]{HoVo92}).\qedhere
  \end{enumerate}                                
}
\end{proof}

Given a functor $F\colon\cat\to\mathfrak D$ and an object 
$D\in\mathfrak D$, the undercategory
$D\downarrow F$ is the category whose objects are the pairs $(f,C)$
with $C\in\cat$ and $f\colon D\to F(C)$ and whose morphisms
$p\colon (f,C)\to(f',C')$ are the morphisms $p\colon C\to C'$
in $\cat$ such that $F(p)\circ f=f'$. The overcategory
$F\downarrow D$
is the category whose objects are the pairs $(C,f)$
with $f\colon F(C)\to D$ and whose morphisms
$p\colon (C,f)\to(C',f')$ are the morphisms $p\colon C\to C'$
in $\cat$ such that $f'\circ F(p)=f$. We say
the functor $F$ is \emph{right cofinal} if:
\begin{equation}\label{eq:cofinal}
B(*,D\downarrow F,*)\cong *\quad \text{for all $D\in\mathfrak D$.}
\end{equation}
If $F$ is right cofinal then the induced map $\hcolim_\cat F^*X\to\hcolim_{\mathfrak D}X$
is a homotopy equivalence \cite[Proposition 4.4]{HoVo92}.

     
\subsection{Operads}

We wish to generalize the bar construction of section~\ref{sec:prelim-bar-monoid}
to spaces $A$ which, instead of a strictly associative product, have a family of products with each product associative only up to homotopy.
The technical tool to handle this situation is the theory of operads, of which we give some background in this section.
General treatments of this topic can be found in \cite{MSS02,Fre09}.

Before we define operad we need to introduce some notation about symmetric groups:

\begin{definition}\label{def:blockpermutation}
  Let $\Sigma_n$ denote the symmetric group of permutations of $n$ elements.
  \begin{itemize}
\item Given an $n$-tuple $(a_1,\dots,a_n)$ and $\sigma\in\Sigma_n$ we write \[\sigma(a_1,\dots,a_n)=(a_{\sigma^{-1}(1)},\dots,a_{\sigma^{-1}(n)})\,.\]
\item Let $j_1,\dots,j_k$ be non-negative integers, let $j=j_1+\dots+j_k$
and consider the partition $P$ of $\{1,\dots,j\}$
into $k$ intervals of sizes $j_1,\dots,j_k$.
\begin{itemize}
\item Given $\sigma\in\Sigma_k$, let
$\sigma_{j_1,\dots,j_k}\in\Sigma_{j}$ be the block permutation which permutes the $k$ blocks of the partition $P$ 
in the same way that $\sigma$ permutes the elements of $\{1,\dots,k\}$.
\item Given $\tau_i\in\Sigma_{j_i}$ let $\tau_1\oplus\dots\oplus\tau_k\in\Sigma_j$ be the image 
  of $(\tau_1,\dots,\tau_k)$ under the inclusion $\Sigma_{j_1}\times\dots\times\Sigma_{j_k}\to\Sigma_j$ induced by the partition $P$.
\end{itemize}
\end{itemize}
\end{definition}

An operad $\Oo$ is a sequence of spaces $\Oo(n)$ parametrizing $n$-adic operations $A^n\to A$ on a space $A$.
Note that, given maps $\theta\colon A^k\to A$ and $\theta_i\colon A^{j_i}\to A$, composition defines a map
$\theta\circ(\theta_1,\dots,\theta_k)\colon A^{j_1+\dots+j_k}\to A$.

\begin{definition}\label{def:operad}
  An operad $\Oo$ (in the category $\Top$) is a sequence of spaces $\{\Oo(n)\}_{n\geq0}$ with $\Oo(0)=\{*\}$, together with the following data:
  a right action of the symmetric group $\Sigma_n$ on $\Oo(n)$; a unit $\Id\in\Oo(1)$; and continuous maps
\[
\gamma\colon\Oo(k)\times\Oo(j_1)\times\dots\times\Oo(j_k)\to\Oo(j)\quad(j=j_1+\dots+j_k)
\]
which we also represent by $\gamma(\theta;\theta_1,\dots,\theta_k)=\theta\circ(\theta_1,\dots,\theta_k)$,
satisfying:
\begin{description}
\item[Equivariance] Given $\theta\in\Oo(k)$, $\theta_i\in\Oo(j_i)$, $\sigma\in\Sigma_k$ and $\tau_i\in\Sigma_{j_i}$ we have
\begin{align*}
\theta\circ(\theta_1\tau_1,\dots,\theta_k\tau_k)&=\bigl(\theta\circ(\theta_1,\dots,\theta_k)\bigr)(\tau_1\oplus\dots\oplus\tau_k)\,;\\
(\theta\sigma)\circ(\theta_1,\dots,\theta_k)&=\bigl(\theta\circ\sigma(\theta_1,\dots,\theta_k)\bigr)\sigma_{j_1,\dots,j_k}\,.
\end{align*}
\item[Associativity]
\begin{multline*}
  \theta\circ\bigl(
  \theta_1\circ(\theta_{11},\dots,\theta_{1j_1}),\dots,
  \theta_k\circ(\theta_{k1},\dots,\theta_{kj_k})\bigr)\\
  =\bigl(\theta\circ(\theta_1,\dots,\theta_k)\bigr)\circ
  (\theta_{11},\dots,\theta_{1j_1},
  \dots,\theta_{k1},\dots,\theta_{kj_k})\,.
\end{multline*}
\item[Unit] $\Id\circ\theta=\theta\circ(\Id,\dots,\Id)=\theta$.
\end{description}
A morphism of operads $\Oo_1\to\Oo_2$ is a sequence of equivariant maps $\Oo_1(n)\to\Oo_2(n)$ such that the induced map
$\psi\colon\coprod_n\Oo_1(n)\to\coprod_n\Oo_2(n)$ satisfies $\psi(\Id)=\Id$ and
\[
\psi\bigl(\theta\circ(\theta_1,\dots,\theta_k)\bigr)=(\psi\theta)\circ(\psi\theta_1,\dots,\psi\theta_k)\,.
\]
\end{definition}


\begin{example}\label{ex:endoperad}
Given a based topological space $A$, the sequence $\End_A(n)=\Map(A^n,A)$ defines an operad called the \emph{endomorphism operad} of $A$.
The group $\Sigma_n$ acts by permuting the factors in $A^n$.
\end{example}


\begin{definition}
  An operad $\Oo$ is said to be an $E_\infty$-operad if for each $n$ the space $\Oo(n)$ is contractible and the action of $\Sigma_n$ on $\Oo(n)$ is free.
\end{definition}

An $E_\infty$-operad models multiplications which are associative and commutative up to all higher homotopies.
The following example plays a central role in our work:

\begin{definition}\label{ex:Ll^H}
  A \emph{universe} is a countably infinite dimensional complex Hermitian vector space. Given a universe $\Hh$,
  the \emph{complex linear isometries operad} $\Ll^\Hh$ over $\Hh$ has
  $\Ll^\Hh(n)$ the space of linear isometries $\Hh^n\to\Hh$. The group $\Sigma_n$ acts on $\Ll^\Hh(n)$ by
  $(\phi\sigma)(\mathbf v)=\phi(\sigma\mathbf v)$ where $\phi\in\Ll^\Hh(n)$ and $\mathbf v=(v_1,\dots,v_n)\in\Hh^n$.
\end{definition}

The action of $\Sigma_n$ on $\Ll^\Hh(n)$ is clearly free and the spaces $\Ll^\Hh(n)$ are contractible \cite[Lemma 1.3]{May77} so $\Ll^\Hh$ is an $E_\infty$-operad.



\subsubsection{Algebras over operads}

An algebra $A$ over an operad $\Oo$, also called a $\Oo$-algebra, is a based topological space $A$ together with 
a sequence of equivariant maps
$q\colon\Oo(n)\times A^n\to A$ which we represent by $(\theta,a_1,\dots,a_n)\mapsto\theta\cdot(a_1,\dots,a_n)$,
such that $\Id(a)=a$ and
the following diagram commutes:
\[
\xymatrix@C=0pt{
  &A^{j_1}\times\dots\times A^{j_k}\ar[ld]_(.6){\theta_1\times\dots\times\theta_k}\ar[rd]^(.6){\theta\circ(\theta_1,\dots,\theta_k)}\\
  A^k\ar[rr]^-{\theta}&&A
}
\]
A morphism of $\Oo$-algebras is a map $f\colon A\to B$ such that $f\bigl(\theta\cdot(a_1,\dots,a_k)\bigr)=\theta\cdot\bigl(f(a_1),\dots,f(a_k)\bigr)$.

\subsubsection{Modules over algebras over operads}\label{sec:modoverPalgebras}

Let $\Oo$ be an operad and let $A$ be a $\Oo$-algebra. A left module over $A$ \cite[section 1.6]{GiKa94}
is a topological space $M$
together with maps $q_L\colon\Oo(k+1)\times A^k\times M\to M$ (with $k\geq0$),
equivariant with respect to the action of $\Sigma_k\subset\Sigma_{k+1}$ (the subgroup of permutations preserving $k+1$),
such that $q_L(\Id,m)=m$ and
the following diagram is commutative:
\[
\xymatrix@C=0pt{
  & A^{j_1}\times\dots\times A^{j_k}\times (A^{j_{k+1}}\times M)\ar[ld]_(.6){\theta_1\times\dots\times\theta_k\times\theta_{k+1}\quad}
    \ar[rd]^(.6){\quad\theta\circ(\theta_1,\dots,\theta_k,\theta_{k+1})}\\
  A^k\times M\ar[rr]^-{\theta}&&M
}
\]
The above definition corresponds to the notion of a left module, but, using equivariance, a module $M$ over a $\Oo$-algebra $A$
%
can be equivalently defined 
\cite[Definition 1.1]{BeMo09} as a family of maps
$\Oo(n)\times A^{k-1}\times M\times A^{n-k}\to M$
satisfying suitable unit and associativity relations and inducing a total action
\begin{equation}\label{eq:totalaction}
q_T\colon\Oo(n)\times_{\Sigma_n}\Bigl(\coprod_{k=1}^nA^{k-1}\times M\times A^{n-k}\Bigr)\to M\,.
\end{equation}
In particular, for $k=1$ we have a map 
$\Oo(n)\times M\times A^{n-1}\to M$ giving $M$ the structure of
a right module over the $\Oo$-algebra $A$.


\begin{example}\label{ex:moduleinducedbymap}
Let $\Oo$ be an operad. A morphism of $\Oo$-algebras $f\colon A\to B$ gives $B$ the structure of an $A$-module by defining
\[
q_L(\theta;a_1,\dots,a_k,b_{k+1})=q(\theta;f(a_1),\dots,f(a_k),b_{k+1})\,.
\]
\end{example}


\out{%
\subsubsection{Right modules over operads}\label{sec:rightmodsoveroperads}

A right module $\Ms$ over an operad $\Oo$ \cite[section 5.1.1]{Fre09} 
is a sequence of spaces $\Ms(n)$ with a right action of $\Sigma_n$, together with maps
\[
\Gamma\colon\Ms(k)\times\Oo(j_1)\times\dots\times\Oo(j_k)\to\Ms(j_1+\dots+j_k)
\]
which are equivariant with respect to the symmetric group actions and
satisfying virtually the same axioms as those of an operad. Partial composites $\circ_i\colon\Ms(k)\times\Oo(a_i)\to \Ms(k+a_i-1)$
are defined by
\begin{equation}\label{eq:rightmodulepartialcompositions}
\xi\circ_i\theta=\Gamma(\xi;\Id,\dots,\theta,\dots,\Id)\,.
\end{equation}

\begin{remark}\label{rmk:moduleasmapAntoB}
  In a similar way that points in $\Oo(n)$ can be thought of as maps $A^n\to A$, 
  points in $\Ms(n)$ can be thought of as maps $A^n\to B$, with $A$ and $B$ based topological spaces.
  Indeed, the sequence of spaces $\Map(A^n,B)$ defines a right module over the endomorphisms operad $\End_A$ of Example~\ref{ex:endoperad}.
\end{remark}

A morphism of right modules $\Ms_1$, $\Ms_2$ over $\Oo$ is a sequence of equivariant maps $\Ms_1(n)\to\Ms_2(n)$ such that the induced map
$\psi\colon\coprod_n\Ms_1(n)\to\coprod_n\Ms_2(n)$ satisfies 
\[
\psi\bigl(\theta\circ_\Gamma(\theta_1,\dots,\theta_k)\bigr)=(\psi\theta)\circ_\Gamma(\theta_1,\dots,\theta_k)
\]
}%

\section{Moduli spaces of holomorphic bundles}\label{sec:2}

In this section we give the moduli space of holomorphic bundles the structure of an
algebra over the linear isometries operad (Definition~\ref{ex:Ll^H}) using the machinery of \cite{BoVo68,May77}. 

\begin{definition}\label{def:Ii}
	Let $\Ii$ denote the \emph{category of linear isometries} whose objects are the finite or countably infinite dimensional
	complex Hermitian vector spaces topologized as the limits of their finite dimensional subspaces and whose morphisms $\Ii(\Vv,\Ww)$
	are the linear isometries $\alpha\colon \Vv\to \Ww$.
	For each $r\geq0$ let $\Ii_r$ be the full subcategory whose objects are the $r$-dimensional complex Hermitian vector spaces and
	let $\Ii_*$ be the union of the categories $\Ii_r$.
\end{definition}

Recall that 
$L_\infty\subset\CP^2$ denotes the rational curve at infinity and we write $\cp{I}$ for the blowup of $\CP^2$ along a finite set
$I\subset\CP^2\setminus L_\infty$. 
We work with the following construction of the moduli space:

\begin{definition}\label{def:ofMm}
	Let $\Vv\in\ob\Ii_r$ and
	let $E\to\cp{I}$ be a rank $r$ smooth complex vector bundle with first Chern class
	$c_1(E)=0$. A holomorphic structure on $E$ is a
	semi-connection $\bar\partial_E:\Omega^0(E)\to\Omega^{0,1}(E)$
	satisfying the integrability condition $\bar\partial_E^{\,2}=0$.
	Let $\mathcal C(I,E,\Vv)$ be the space of pairs $(\bar\partial_E,\phi)$ where
	$\bar\partial_E$ is a holomorphic structure on $E$ and
	$\phi:E|_{L_\infty}\to \Vv\times L_\infty$
	is a holomorphic trivialization. Let $\mathrm{Aut}(E)$ denote the group of smooth bundle automorphisms of $E$.
	Then we let $\Mm(I,E,\Vv)=\mathcal C(I,E,\Vv)/\mathrm{Aut}(E)$.
\end{definition}

In \cite[Theorem 1.1 and Lemma 2.6]{Lub93}, it was shown that
the group $\mathrm{Aut}(E)$ acts freely on
$\mathcal C(I,E,\Vv)$ and the quotient has the
structure of a finite dimensional Hausdorff complex analytic space.

\begin{proposition}\label{prop:M(I,E1,V)congM(I,E2,V)}
	Let $E_1,E_2\to\cp{I}$ be two isomorphic smooth complex vector bundles. Then there is
	a canonical isomorphism $\Mm(I,E_1,\Vv)\cong\Mm(I,E_2,\Vv)$.
\end{proposition}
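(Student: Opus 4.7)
The plan is to construct the isomorphism by pushforward along an arbitrary smooth bundle isomorphism and then show the result does not depend on the choice.

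First I would fix a smooth isomorphism $\alpha\colon E_1\to E_2$ and define
\[
\alpha_*\colon\mathcal C(I,E_1,V)\to\mathcal C(I,E_2,V),\qquad
(\bar\partial_{E_1},\phi_1)\longmapsto\bigl(\alpha\,\bar\partial_{E_1}\,\alpha^{-1},\,\phi_1\circ\alpha^{-1}|_{L_\infty}\bigr).
\]
A routine check shows that $\alpha\,\bar\partial_{E_1}\,\alpha^{-1}$ is a semi-connection whose integrability $\bar\partial^{\,2}=0$ is preserved (conjugation is tensorial), that it is holomorphically trivial on $L_\infty$ since $\bar\partial_{E_1}$ is, and that $\phi_1\circ\alpha^{-1}|_{L_\infty}$ is holomorphic with respect to this pushforward structure. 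Functoriality gives $(\alpha^{-1})_*\circ\alpha_*=\mathrm{id}$, so $\alpha_*$ is a biholomorphism of complex analytic spaces.

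Next I would show $\alpha_*$ descends to the quotients. The assignment $g\mapsto \alpha g\alpha^{-1}$ is a group isomorphism $\mathrm{Aut}(E_1)\to\mathrm{Aut}(E_2)$, and a direct computation yields $\alpha_*(g\cdot(\bar\partial_{E_1},\phi_1))=(\alpha g\alpha^{-1})\cdot\alpha_*(\bar\partial_{E_1},\phi_1)$, so $\alpha_*$ carries $\mathrm{Aut}(E_1)$-orbits to $\mathrm{Aut}(E_2)$-orbits and induces a biholomorphism $\bar\alpha_*\colon\Mm(I,E_1,V)\to\Mm(I,E_2,V)$.

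The one substantive step is canonicity: given a second smooth isomorphism $\beta\colon E_1\to E_2$, set $h=\alpha\beta^{-1}\in\mathrm{Aut}(E_2)$. A direct calculation gives
\[
\alpha_*(\bar\partial_{E_1},\phi_1)=h\cdot\beta_*(\bar\partial_{E_1},\phi_1),
\]
so $\alpha_*$ and $\beta_*$ differ by an element of $\mathrm{Aut}(E_2)$ and hence induce the same map after passing to the quotient. This is the main (and really only) obstacle; everything else is formal bookkeeping. Since some smooth isomorphism $\alpha$ exists by hypothesis and $\bar\alpha_*$ is independent of the choice, we obtain the canonical isomorphism, its inverse being induced by any smooth isomorphism $E_2\to E_1$.
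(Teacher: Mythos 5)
Your proof is correct and follows essentially the same route as the paper: push forward by a chosen smooth isomorphism $\psi\colon E_1\to E_2$ via $(\bar\partial,\phi)\mapsto(\psi\bar\partial\psi^{-1},\phi\circ\psi^{-1})$, check it descends through the $\mathrm{Aut}$-actions, and observe that two choices differ by an element of $\mathrm{Aut}(E_2)$ and therefore induce the same map on quotients. You simply spell out the equivariance and independence computations that the paper leaves implicit.
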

\begin{proof}
	Given an isomorphism $\psi:E_1\to E_2$ define a map
	$\psi_*:\mathcal C(I,E_1,\Vv)\to\mathcal C(I,E_2,\Vv)$ by
	$\psi_*(\bar\partial,\phi)=(\psi\circ\bar\partial\circ\psi^{-1},\phi\circ\psi^{-1})$.
	This map descends to the quotient to give a homeomorphism
	$\Mm(I,E_1,\Vv)\to\Mm(I,E_2,\Vv)$ which is independent of the choice of isomorphism $\psi$.
\end{proof}

Since the isomorphism class of $E$ is completely determined by $c_2(E)=k$
and $\mathrm{rk}\,E=\dim \Vv$, we will use the notation 
$\Mm_{I,k}^\Vv=\Mm(I,E,\Vv)$.

\begin{definition}\label{def:omega,pi*,Malpha}
	Let  $I\subset\Cc^2$ be a finite set.
	\begin{enumerate}
		\item For each $\Vv\in\ob\Ii_*$ we represent the moduli space of holomorphic bundles on $\cp{I}$ with rank $r=\dim \Vv$ by
		\[
		\Mm_I^\Vv=\coprod\limits_{k=0}^\infty\Mm_{I,k}^\Vv\,,
		\]
		with base point the trivial bundle $\Vv\times\cp{I}\in\Mm_{I,0}^\Vv$.
		\item Let $\alpha\in\Ii_r(\Vv_1,\Vv_2)$. For each rank $r$ smooth bundle $E$
		we define the map $\mathcal C\alpha\colon\mathcal C(I,E,\Vv_1)\to\mathcal C(I,E,\Vv_2)$
		by sending $(\bar\partial_E,\phi)$ to $\bigl(\bar\partial_E,(\alpha\times\ident)\circ\phi\bigr)$.
		These maps descend to the quotient to give a map 
		\[
		\Mm_I^\alpha\colon\Mm_I^{\Vv_1}\to\Mm_I^{\Vv_2}\,.
		\]
		\item Given $\Vv_1,\Vv_2\in\ob\Ii_*$ and smooth bundles $E_1$, $E_2$
		let $\omega\colon\mathcal C(I,E_1,\Vv_1)\times\mathcal C(I,E_2,\Vv_2)\to\mathcal C(I,E_1\oplus E_2,\Vv_1\oplus \Vv_2)$ be the map
		defined by $\omega\bigl((\bar\partial_{1},\phi_1),(\bar\partial_2,\phi_2)\bigr)=
		(\bar\partial_1\oplus\bar\partial_2,\phi_1\oplus\phi_2)$. These maps descends to the quotient to give a map
		\[
		\omega\colon\Mm_I^{\Vv_1}\times\Mm_I^{\Vv_2}\to\Mm_I^{\Vv_1\oplus \Vv_2}\,.
		\]
		\item Given $J\subset I$ let
		$\pi_{J,I}\colon\cp{I}\to\cp{J}$ be
		the blowup of $\cp{J}$ along $I\setminus J$. Then, for each smooth bundle $E$, pullback 
		of holomorphic bundles induces a map
		$\pi_{J,I}^*\colon\mathcal C(J,E,\Vv)\to\mathcal C(I,\pi_{J,I}^*E,\Vv)$.
		These maps descend to the quotient to give a map 
		\[
		\pi_{J,I}^*\colon\Mm_J^\Vv\to\Mm_I^\Vv\,.
		\]
	\end{enumerate}
\end{definition}

\begin{lemma}\label{lemma:closedembedding}
	Let $\Vv,\Ww\in\ob\Ii_*$ and let $*\in\Mm_I^\Ww$ be the basepoint. Then the map
	\[
	\omega\circ(\ident\times *)\colon\Mm_I^{\Vv}\to\Mm_I^{\Vv\oplus \Ww}
	\] 
	is a closed embedding.
\end{lemma}
\begin{proof}
The map $\omega\circ(\ident\times *)$ is given by Whitney sum with the trivial bundle $\Ww\times\cp{I}$.
  For each $k$, the moduli space $\Mm_{I,k}^\Vv$ can be described \cite[page 532]{Buc04} as the space of some matrices $a$, $a_{00}^0$, $a_{00}^1$, $c$, $d$ 
obeying the integrability condition in \cite[equation (3.4)]{Buc04}, modulo the free action
of a certain group $G$ \cite[equation (3.7)]{Buc04}. Here $c\in\text{Hom}(\Cc^k,\Vv)$, $d\in\text{Hom}(\Vv,\Cc^k)$ and the group action on $c$ and $d$ is given
by $c\mapsto ch_{00}$ and $d\mapsto g_{00}d$ with $h_{00},g_{00}\in GL(k)$. 
The holomorphic bundle is given as the cohomology of a monad of the form (see \cite[equation (3.1)]{Buc04}):
\[
0\to \Ee_0\oplus \Ee_1\xrightarrow{A} W_0\oplus\Vv\xrightarrow{B} \mathcal F_0\oplus\mathcal F_1 \to 0
\]
for some bundles $\Ee_i$, $\mathcal F_i$ over $\cp{I}$ and a vector space $W_0$. If $[z_0,z_1,z_2]\in\CP^2$ are homogeneous coordinates then
$A=\bigl(\begin{smallmatrix}\cdot&\cdot\\ cz_2 &0\end{smallmatrix}\bigr)$
  and $B=\bigl(\begin{smallmatrix}\cdot& dz_2\\ \cdot &0\end{smallmatrix}\bigr)$ \cite[equations (27) and (28)]{Hen14}.
It follows that the map $\omega\circ(\ident\times *)$ is induced by
the assignment $(a,a_{00}^0,a_{00}^1,c,d)\mapsto(a,a_{00}^0,a_{00}^1,\imath\circ c,d\circ p)$ where $\imath\colon \Vv\to \Vv\oplus \Ww$ is the inclusion
and $p\colon \Vv\oplus \Ww\to \Vv$ is the projection. So $\omega\circ(\ident\times *)$ embeds $\Mm_{I,k}^{\Vv}$ as a closed submanifold
of $\Mm_{I,k}^{\Vv\oplus \Ww}$.
\end{proof}

\out{%
	\begin{lemma}\label{lemm:L->Mapiscontinuous}
		The assignment $\alpha\mapsto\Mm_I^\alpha$ defines a continuous map between
		$\Ii_*(\Vv,\Ww)$ and the space of maps
		$\Map(\Mm_I^\Vv,\Mm_I^\Ww)$ with the compact open topology.
	\end{lemma}
	\begin{proof}
		Let $\alpha\in\Ii_*(\Vv,\Ww)$. Then $\mathcal C\alpha$ is the map 
		$(\bar\partial_E,\phi)\mapsto(\bar\partial_E,(\alpha\times\ident)\circ\phi)$
		and the result is clear. 
	\end{proof}
}%

Using the terminology of \cite{May77}, the pair $(\Mm_I,\omega)$ 
is an $\Ii_*$-functor. That is:

\begin{proposition}
	Let $\oplus:\Ii_*\times\Ii_*\to\Ii_*$ be the direct sum functor.
	Then the assignments $\Vv\mapsto\Mm_I^\Vv$ and $\alpha\mapsto\Mm_I^\alpha$
	define  a continuous functor $\Mm_I$
	from $\Ii_*$ to $\Top$ and $\omega:\Mm_I\times\Mm_I\to\Mm_I\circ\oplus$
	is a commutative, associative and continuous natural transformation satisfying $\omega(x,*)=x$, 
	(where $*\in\Mm_I^0$ is the basepoint) and such that $\omega\circ(\ident\times *)$ is a closed embedding.
\end{proposition}
\begin{proof}
	The proof is straightforward.
\end{proof}

\begin{proposition}\label{prop:MisI-functor}
	The functor $\Mm_I$ extends to a functor $\Mm_I:\Ii\to \Top$
	and $\omega$ extends to a natural transformation $\Mm_I\times\Mm_I\to\Mm_I\circ\oplus$
\end{proposition}
\begin{proof}
	We can extend $\Mm_I$ to infinite dimensional vector spaces $\Hh$ by letting
	$\Mm_I^\Hh=\colim\Mm_I^\Vv$ where the colimit is taken over the finite dimensional subspaces $\Vv\subset \Hh$.
	See \cite[Proposition 1.9]{May77} for details.
\end{proof}

\begin{proposition}\label{prop:MisaL-algebra}
	Let $\Ll^\Hh$ be the complex linear isometries operad over a universe $\Hh$ (Definition~\ref{ex:Ll^H}). Then
	$\omega$ induces an $\Ll^\Hh$-algebra structure on $\Mm_I^\Hh$.
	Furthermore, a linear isometric isomorphism $\alpha:\Hh_1\to\Hh_2$ induces a map of $\Ll^{\Hh_1}$-algebras
	$\Mm_I^\alpha\colon\Mm_I^{\Hh_1}\to\Mm_I^{\Hh_2}$ which is a homeomorphism. 
\end{proposition}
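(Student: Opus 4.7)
The plan is to verify that the standard machinery of May for producing $\Ll^\Hh$-algebras from $\Ll_*$-functors applies here. Recall that $\Ll^\Hh(n)=\Ll(\Hh^n,\Hh)$, where $\Hh^n$ denotes the direct sum of $n$ copies of $\Hh$, and that an $\Ll^\Hh$-algebra structure is a system of continuous maps
\[
\theta_n\colon\Ll^\Hh(n)\times(\Mm_I^\Hh)^n\to\Mm_I^\Hh
\]
satisfying the unit, associativity and equivariance axioms.

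First I would iterate $\omega$ to produce an $n$-fold multiplication $\omega_n\colon\Mm_I^{V_1}\times\cdots\times\Mm_I^{V_n}\to\Mm_I^{V_1\oplus\cdots\oplus V_n}$. Because $\omega$ is defined fiberwise as a direct sum of holomorphic structures and trivializations, iterated applications of $\omega$ agree up to the canonical associativity isomorphism of $\oplus$, so $\omega_n$ is well-defined and, taking a colimit over finite dimensional subspaces, extends to $\Hh$. I would then set
\[
\theta_n(\alpha;x_1,\dots,x_n)\defeq \Mm_I\alpha\bigl(\omega_n(x_1,\dots,x_n)\bigr)\,,
\]
for $\alpha\in\Ll(\Hh^n,\Hh)$ and $x_i\in\Mm_I^\Hh$. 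Continuity of $\theta_n$ is immediate: $\omega_n$ is continuous since $\omega$ is, and joint continuity in $\alpha$ and the $x_i$'s follows from Lemma~\ref{lemm:L->Mapiscontinuous} applied in the colimit.

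Next I would verify the operad axioms. The unit axiom $\theta_1(\Id;x)=x$ is clear from $\Mm_I(\Id)=\Id$. Equivariance under $\Sigma_n$ amounts to the identity $\omega_n(x_{\sigma(1)},\dots,x_{\sigma(n)})=\Mm_I(\sigma_*)\omega_n(x_1,\dots,x_n)$ for the permutation isometry $\sigma_*\colon\Hh^n\to\Hh^n$, which follows directly from the symmetry of $\omega$ as a direct sum. For associativity one must check that for $\alpha\in\Ll^\Hh(n)$ and $\beta_i\in\Ll^\Hh(k_i)$,
\[
\theta\bigl(\alpha\circ(\beta_1\oplus\cdots\oplus\beta_n);x_{11},\dots,x_{nk_n}\bigr)=
\theta\bigl(\alpha;\theta(\beta_1;x_{1*}),\dots,\theta(\beta_n;x_{n*})\bigr)\,,
\]
which follows from functoriality of $\Mm_I$ on $\Ll$ together with the naturality square $\omega\circ(\Mm_I\beta_1\times\Mm_I\beta_2)=\Mm_I(\beta_1\oplus\beta_2)\circ\omega$, itself a consequence of the fact that $\omega$ is defined by pointwise direct sum. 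The required compatibility with $\omega(x,*)=x$ handles the degenerate cases.

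Finally, for a morphism of universes $\alpha\colon\Hh_1\to\Hh_2$, I would observe that $\Mm_I\alpha$ is continuous by Lemma~\ref{lemm:L->Mapiscontinuous} and the colimit description, that it preserves $\omega$ by the naturality just used, and that it therefore commutes with each $\theta_n$ because the $\Ll$-action on $\Hh_2^n$ restricts via $\alpha$ to the correct action on the image. When $\alpha$ is an isomorphism, $\Mm_I\alpha$ is a homeomorphism with inverse $\Mm_I\alpha^{-1}$ by functoriality. The main obstacle is bookkeeping: one has to be careful in matching the operadic composition in $\Ll^\Hh$ (which involves the canonical isometry $\Hh^n\cong(\Hh^{k_1})\oplus\cdots\oplus(\Hh^{k_n})$) with the evident associativity of $\omega$, but once the naturality of $\omega$ under $\Ll$ is in hand, this is formal.
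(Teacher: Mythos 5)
Your proof is correct and follows the same route as the paper, which simply cites \cite{May77} for this standard result; you have unpacked May's $\mathcal L_*$-functor-to-operad-algebra machinery, defining $\theta_n=\Mm_I\alpha\circ\omega_n$ and verifying the operad axioms from continuity (Lemma~\ref{lemm:L->Mapiscontinuous}), functoriality of $\Mm_I$, and naturality of $\omega$, exactly as May's argument requires.
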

\begin{proof}
	See \cite[Definition 1.6, Remark 1.7]{May77}.
\end{proof}

Given finite subsets $J\subset I\subset\Cc^2$, the map $\pi_{J,I}^*$ from Definition~\ref{def:omega,pi*,Malpha}(4) passes to the colimit to give a map 
$\pi_{J,I}^*\colon\Mm_J^\Hh\to\Mm_I^\Hh$.

\begin{definition}\label{def:cat}
	Let $\cat$ be the category of finite subsets of $\Cc^2$
	with morphisms the inclusions.
\end{definition}

\begin{proposition}\label{prop:Mm:cat->Top}
	The assignments $I\mapsto\Mm_I^\Hh$ and
	$(J\subset I)\mapsto\pi_{J,I}^*$ define a functor $\Mm$ between $\cat$ 
	and the category of $\Ll^\Hh$-algebras.
\end{proposition}
\begin{proof}
	We can  easily check that $\pi_{J,I}^*$ is a natural transformation
	between the functors $\Mm_I$ and $\Mm_J$
	which commutes with $\omega$. The result follows.
\end{proof}

\begin{remark}\label{rmk:MImoduleoverMJ}
	For $J\subset I$, the space $\Mm_I^\Hh$ has the structure of a module over the $\Ll^\Hh$-algebra $\Mm_J^\Hh$, induced
	by the map $\pi_{J,I}^*\colon\Mm_J^\Hh\to\Mm_I^\Hh$ and the $\Ll^\Hh$-algebra structure on $\Mm_I^\Hh$ (see Example~\ref{ex:moduleinducedbymap}).
\end{remark}

\section{The homotopy coherent bar construction}\label{sec:3}

We wish to generalize the bar construction of section~\ref{sec:prelim-bar-monoid} to the case
where, instead of a monoid $G$ acting on spaces $X$ and $Y$, we have an algebra $A$ over an operad $\Oo$
and modules $M$, $N$ over $A$.

\subsection{Modules}\label{sec:monoidalmodules}

Let $\Oo$ be an operad and let $A$ be a $\Oo$-algebra.
We need to generalize the notion of a (right) module $M$ over $A$ of section~\ref{sec:modoverPalgebras} to the case where the maps
$M\times A^n\to M$ are parametrized, not by $\Oo(n+1)$, but by a certain sequence of spaces $\Ms(n)$.
\out{%
The sequence $\Ms$ will be a right module
over $\Oo$ but, besides the partial composites $\circ_i$
of equation~\eqref{eq:rightmodulepartialcompositions}, we have an extra partial composition
\[
\circ_{k+1}\colon\Ms(k)\times\Ms(j_0)\to\Ms(k+j_0)
\]
which gives $\Ms$ the structure of a graded monoid. Thus we'll call $\Ms$ a monoidal module.
}%
Recall the notation from Definition~\ref{def:blockpermutation}.

\begin{definition}\label{def:monoidalmodule}
  Let $\Oo$ be an operad with composition data $\gamma$ and unit $\Id\in\Oo(1)$.
  A monoidal module $\Ms$ over $\Oo$ is a sequence of spaces $\Ms(n)$ (where $n\geq0$) together with the following data: a right action of $\Sigma_n$
  on $\Ms(n)$; a unit $\Id_\Ms\in\Ms(0)$; and maps
  \[
  \Gamma_R\colon\Ms(k)\times\Ms(j_0)\times\Oo(j_1)\times\dots\times\Oo(j_k) 
  \to\Ms(j)\quad (j=j_0+\ldots+j_{k}),
  \]
  which we also represent by $\Gamma_R(\xi;\xi',\theta_1,\dots,\theta_k)=\xi\circ_R(\xi',\theta_1,\dots,\theta_k)$, satisfying:
\begin{description}
\item[Equivariance] Let $\xi\in\Ms(k)$, $\theta_i\in\Oo(j_i)$, $\xi'\in\Ms(j_{0})$,
$\tau_i\in\Sigma_{j_i}$ and $\sigma\in\Sigma_{k+1}$ with $\sigma(0)=0$. Then
\begin{align*}
\xi\circ_R(\xi'\tau_0,\theta_1\tau_1,\dots,\theta_k\tau_k)&=\bigl(\xi\circ_R(\xi',\theta_1,\dots,\theta_k)\bigr)(\tau_0\oplus\dots\oplus\tau_{k})\\
(\xi\sigma)\circ_R(\xi',\theta_1,\dots,\theta_k)&=\bigl(\xi\circ_R\sigma(\xi',\theta_1,\dots,\theta_k)\bigr)\sigma_{j_0,\dots,j_{k}}
\end{align*}
\item[Associativity]
    \begin{multline*}
    \xi\circ_R\bigl(\xi'\circ_R(\xi'',\theta_{01},\dots,\theta_{0j_0}),
        \theta_1\circ(\theta_{11},\dots,\theta_{1j_1}),\dots,
        \theta_k\circ(\theta_{k1},\dots,\theta_{kj_k})
    \bigr)\\
    =\bigl(\xi\circ_R(\xi',\theta_1,\dots,\theta_k)\bigr)\circ_R(
      \xi'',\theta_{01},\dots,\theta_{0j_0},\dots,\theta_{k1},\dots,\theta_{kj_{k}})
    \end{multline*}
\item[Unit] $\Id_\Ms\circ_R\xi=\xi\circ_R(\Id_\Ms,\Id,\dots,\Id)=\xi$.
\end{description}
We say $\Ms$ is an $E_\infty$-module if the $\Sigma_k$ actions are free and the spaces $\Ms(k)$ are contractible.

A morphism of monoidal modules $\Ms_1\to\Ms_2$ over an operad $\Oo$ is a sequence of equivariant maps $\Ms_1(n)\to\Ms_2(n)$
such that the induced map $\psi\colon\coprod\Ms_1(n)\to\coprod\Ms_2(n)$ satisfies $\psi(\Id_\Ms)=\Id_\Ms$ and
$\psi\bigl(\xi\circ_R(\xi',\theta_1,\dots,\theta_k)\bigr)=(\psi\xi)\circ_R(\psi\xi',\theta_1,\dots,\theta_k)$.
\end{definition}

\begin{remark}
  The name monoidal module comes from $\Ms$ being a right module over $\Oo$ \cite[section 5.1.1]{Fre09} and $\coprod_n\Ms(n)$ being a monoid.
\end{remark}

\begin{example}
  Given spaces $A$ and $M$ the sequence $\End_{A|M}(n)=\Map(M\times A^n,M)$ is a monoidal module over the endomorphisms operad $\End_A$ of Example~\ref{ex:endoperad}.
\end{example}

\out{
\begin{definition}\label{def:pullbackofmonoidalmodules}
Given operads $\Oo_1$, $\Oo_2$, a monoidal module $\Ms$ over
$\Oo_2$ with composition data $\Gamma_R$
and a morphism of operads $\psi\colon\Oo_1\to\Oo_2$,
we represent by $\psi^*\Ms$ the monoidal module over $\Oo_1$
with $\psi^*\Ms(n)=\Ms(n)$ and composition data:
\[
\psi^*\Gamma_R(\xi;\xi',\theta_1,\dots,\theta_n)=
\Gamma_R(\xi;\xi',\psi\theta_1,\dots,\psi\theta_n)\,.
\]
\end{definition}
}

We now generalize the definition of a module over a $\Oo$-algebra from section~\ref{sec:modoverPalgebras}.

\begin{definition}\label{def:MmoduleoverPalgebra}
  Let $\Ms$ be a monoidal module over an operad $\Oo$.
  A $\Ms$-module over a $\Oo$-algebra $A$ is a topological space $M$ together with $\Sigma_k$ equivariant maps 
  $q_R\colon\Ms(k)\times M\times A^k\to M$, 
  such that %
  the following diagram is commutative:
\[
\xymatrix@C=0pt{
  &(M\times A^{j_0})\times A^{j_1}\times\dots\times A^{j_k} 
  =M\times A^{j_0+\dots+j_{k}}
  \ar[rd]^(.6){\quad\xi\circ_R(\xi',\theta_1,\dots,\theta_k)}\ar[ld]_(.6){\xi'\times\theta_1\times\dots\times\theta_k\quad}\\
M\times A^k\ar[rr]^-{\xi}&&M
}\]
\end{definition}

\begin{example}\label{ex:Oo+AsAMonoidalModule}
  Let $\Oo$ be an operad with composition data $\gamma$
  and, for each $n\geq0$, let $\Oo_+(n)=\Oo(n+1)$. Then $\Oo_+$ is a monoidal module over $\Oo$ with data
  $\Gamma_R=\gamma$
  and the $\Oo_+$-modules over a $\Oo$-algebra $A$ are the right modules over $A$ in the sense of section~\ref{sec:modoverPalgebras}.
\end{example}

The main example of a monoidal module in our work is the following:

\begin{example}\label{ex:MainExOfMonoidalModule}
Let $\Ii$ denote the category of linear isometries (Definition~\ref{def:Ii}).
Given universes $\Hh$ and $\Fu$ (Definition~\ref{ex:Ll^H}),
let $\Ms(n)=\Ii(\Hh\oplus \Fu^n,\Hh)$. Then $\Ms(n)$ is a monoidal module over
the linear isometries operad $\Ll^\Fu$. We are interested in the case where $\Fu=\Hh^r$ for some $r>0$. Let $\Oo^r(n)=\prod_{i=1}^r\Ll^\Hh(n)$.
Then $\Oo^r(n)$  is naturally a sub-operad of
$\Ll^{\Hh^r}(n)$ so, in this case, $\Ms$ is a monoidal module over $\Oo^r$. Furthermore, if $A$ in a $\Ll^\Hh$-algebra
then $A^r$ is a $\Oo^r$-algebra and $A$ is a $\Ms$-module over $A^r$.
\end{example}

\out{%
\begin{example}\label{ex:MainExOfMonoidalModule}
  Let $\Oo$ be an operad with composition data $\gamma$, fix an integer $r>0$, let $\Oo^r(n)$ be the cartesian product of $r$ copies of $\Oo(n)$
  and let $\Ms(n)=\Oo(nr+1)$. Then $\Ms$ is a monoidal module over the operad $\Oo^r$. Given a $\sigma\in\Sigma_{n+1}$ which fixes $n+1$, 
  and $\xi\in\Ms(n)=\Oo(nr+1)$, the action of $\sigma$ on $\xi$ is given by $\xi\mapsto\xi\sigma_{r,\dots,r,1}$.
  Composition is given by the composition in $\Oo$:
  \[
  \gamma\colon\Oo(nr+1)\times\Oo(j_1)^r\times\dots\times\Oo(j_k)^r\times\Oo(j_{k+1}r+1)\to\Oo(jr+1)\quad (j=j_1+\dots+j_{k+1}).
  \]
  If $A$ is a $\Oo$-algebra, then $A^r$ is a $\Oo^r$-algebra and the $\Oo$-algebra structure on $A$ makes $A$ into a $\Ms$-module over $A^r$:
  \[
  q_L\colon\Ms(n)\times(A^r)^n\times A=\Oo(nr+1)\times A^{nr+1}\xrightarrow{\ q\ } A\,.
  \]
\end{example}
}%

\begin{remark}\label{rmk:leftmonoidal}
  Definitions~\ref{def:monoidalmodule} and~\ref{def:MmoduleoverPalgebra} correspond to the notion of a right module.
  In a completely analogous way we can also define left monoidal modules $\Ns$ over an operad $\Oo$ and left $\Ns$-modules over a $\Oo$-algebra $A$.
\end{remark}


\out{%
\begin{definition}
  For each $k$ let $\sigma_k\in\Sigma_k$ denote the permutation with $\sigma^k(i)=k+1-i$, $i=1,\dots,k$.
  Given a monoidal module $\Ms$, with data $\Gamma_L$, over an operad $\Oo$ let
  \[
  \Gamma_R\colon\Ms(k)\times\Ms(j_0)\times\Oo(j_1)\times\dots\times\Oo(j_k)\to\Ms(j)\quad(j=j_0+\dots+j_k)
  \]
  be the map defined by 
  \[
  \Gamma_R(\xi;\xi',\theta_1,\dots,\theta_k)= 
  \Gamma_L(\xi;\theta_k\sigma_{j_k},\dots,\theta_1\sigma_{j_1},\xi')\,.
  \]
  We also write $\Gamma_R(\xi;\xi',\theta_1,\dots,\theta_k)=\xi\circ_R(\xi',\theta_1,\dots,\theta_k)$.
  Given a $\Ms$-module $M$ over a $\Oo$-algebra $A$ let
  $q_R\colon\Ms(k)\times M\times A^k\to M$
  be the map defined by
  \[
  q_R(\xi;m,a_1,\dots,a_k)= 
  q_L(\xi;a_k,\dots,a_1,m)\,.
  \]
\end{definition}

From the relations $(\sigma_k)^2=\Id$ and
\[
\sigma_j=(\sigma_k)_{j_1,\dots,j_k}\circ(\sigma_{j_1}\oplus\dots\oplus\sigma_{j_k})\qquad(j=j_0+\dots+j_k)
\]
it is straightforward to see that $\Gamma_R$ and $q_R$ satisfy relations analogous to the ones in Definitions~\ref{def:monoidalmodule} and~\ref{def:MmoduleoverPalgebra}.
}%

\out{%
However a $\Ms$-module $M$ over a $\Oo$-algebra $A$ can be seen as a bimodule:
we can define $q_L\colon\Ms(k)\times M\times A^k\to M$ by
$q_R(\xi;m,a_1,\dots,a_k)=q_L(\xi;a_1,\dots,a_k,m)$ and we easily check that we have a commutative diagram
\[
\xymatrix@C=0pt{
  &(M\times A^{j_{k+1}})\times A^{j_1}\times\dots\times A^{j_k}
  \ar[rd]^(.6){\quad\xi\circ_R(\xi',\theta_1,\dots,\theta_k)}\ar[ld]_(.6){\xi'\times\theta_1\times\dots\times\theta_k\quad}\\
M\times A^k\ar[rr]^-{\xi}&&M
}\]
}%


\out{%
\begin{example}\label{ex:Oo+AsAMonoidalModule}
  Let $\Oo$ be an operad with data $\gamma$
and, for each $n\geq0$, let $\Oo_+(n)=\Oo(n+1)$. Then $\Oo_+$ is a monoidal module over $\Oo$ with data 
$\Gamma_L=\gamma$
and the $\Oo_+$-modules over a $\Oo$-algebra $A$ are the modules over $A$ in the sense of section~\ref{sec:modoverPalgebras}.
  If 
$\theta\in\Oo(k+1)$, $\theta'\in\Oo(j_0)$, $\theta_i\in\Oo(j_i)$ and $j=j_0+\dots+j_k$ then
  \[
  \Gamma_R(\theta;\theta',\theta_1,\dots,\theta_k)=\gamma(\theta\sigma_{k+1};\theta'\sigma_{j_0},\theta_1,\dots,\theta_k)\sigma_j
  \]
and the  left and right actions are related to the total action of equation~\eqref{eq:totalaction} by
  \begin{align*}
  q_L(\theta;a_1,\dots,a_k,m)&=q_T(\theta;a_1,\dots,a_k,m)\\
  q_R(\theta;m,a_1,\dots,a_k)&=q_T(\theta\sigma_{k+1};m,a_1,\dots,a_k)
  \end{align*}
\end{example}
}%

\out{%
\begin{example}\label{ex:M(a,b,c)}
  In appendix~\ref{app:bar} we will use the following constructions.
  Let $\Hh$ be a universe and given non-negative integers $r$, $s$ let $\Mm(a,b,c)=\Ii\bigl((\Hh^r)^a\oplus\Hh\oplus\Hh^b\oplus\Hh\oplus(\Hh^s)^c,\Hh\bigr)$,
  where $a,c\geq0$ and $b\geq-2$.
  For $b$, $c$ fixed, $\Mm$ is a monoidal module over $\Ll^{\Hh^r}$, for $a$ and $b$ fixed $\Mm$ is a monoidal module over $\Ll^{\Hh^s}$ and for $a$ and $c$ fixed
  $\Oo(b)=\Mm(a,b,c)$ satisfies all properties of an operad except that $\Oo(0)\neq\{*\}$. If $A$ is a $\Ll^\Hh$ algebra then we have maps
  \[
  \Mm(a,b,c)\times(A^r)^a\times A\times A^b\times A\times (A^s)^c\to A\,.
  \]
  Furthermore, let $\Ms(n)=\Ii(\Hh\oplus \Fu^n,\Hh)$ (see Example~\ref{ex:MainExOfMonoidalModule}). Then 
\end{example}
}%

\out{%
\begin{example}\label{example:L(W+nV,W)}
Given countably infinite dimensional complex Hermitian vector spaces $\Vv$ and $W$,
let $\Ll^\Vv$ be the linear isometries operad over $\Vv$ and
let $\Ll^{\Vv}_W(n)$ be the space of linear isometries from $W\oplus \Vv^n$ to $W$. Then $\Ll^{\Vv}_W$ is a monoidal module over $\Ll^\Vv$.
Notice that, when $\Vv=W$, we have $\Ll_W^\Vv=\Ll^\Vv_+$.
\end{example}
}%

\subsection{The bar construction}\label{subsec:defofbar}

To define the homotopy coherent bar construction we replace the simplicial category $\Delta$ with a new category,
obtained by expanding the spaces of morphisms to include the operad data. Given an operad $\Oo$, write $\Oo_+(n)=\Oo(n+1)$ (see Example~\ref{ex:Oo+AsAMonoidalModule})

\begin{construction}\label{con:catDelta(P)}
  Let $\Oo$ be an operad with data $\gamma$ and let $\Ms$ be a monoidal module over $\Oo$ with composition data $\Gamma_R$.
  We represent by $\Delta(\Ms,\Oo)$ the category with the same objects as the simplicial category $\Delta$ and
  whose morphisms are defined as follows:
  For each morphism $\mu\in\Delta(m,n)$ let
  \[
  \Delta(\Ms,\Oo)(\mu)=\Ms(\mu_0)\times
  \left(\prod_{\alpha=1}^m\Oo(\mu_\alpha-\mu_{\alpha-1})\right)\times
  \Oo_+(n-\mu_m)\,;
  \]
  Then, the space of morphisms is defined to be:
  \[
  \Delta(\Ms,\Oo)(m,n)=\coprod_{\mu\in\Delta(m,n)}\Delta(\Ms,\Oo)(\mu)\,.
  \]
  Let $\mu\in\Delta(m,n)$ and $\nu\in\Delta(n,p)$.
  Composition of morphisms: $\Delta(\Ms,\Oo)(\mu)\times\Delta(\Ms,\Oo)(\nu)\to\Delta(\Ms,\Oo)(\nu\circ\mu)$
  is defined using the operad data:
  \begin{align*}
    \Gamma_R&:\Ms(\mu_0)\times\Ms(\nu_0)\times\prod_{\beta=1}^{\mu_0}\Oo(\nu_\beta-\nu_{\beta-1})
    \to\Ms(\nu_{\mu_0})\,;
    \\
    \gamma&:\Oo(\mu_\alpha-\mu_{\alpha-1})\times\prod_{\beta=\mu_{\alpha-1}+1}^{\mu_{\alpha}}\Oo(\nu_{\beta}-\nu_{\beta-1})
    \to\Oo(\nu_{\mu_\alpha}-\nu_{\mu_{\alpha-1}})\,;
    \\
    \gamma&:\Oo_+(n-\mu_m)\times\biggl(\,\prod_{\beta=\mu_m+1}^{n}\Oo(\nu_\beta-\nu_{\beta-1})\biggr)\times\Oo_+(p-\nu_n)
    \to\Oo_+(p-\nu_{\mu_m})\,.
  \end{align*}
  From the associativity of the operad data
  it is straightforward to prove that this composition law is associative.
\end{construction}

\begin{remark}
  The above construction could also be done replacing $\Oo_+$ by an arbitrary (left) monoidal module but we won't need that generality.
\end{remark}

We now define the bar construction. Comparing with the bar construction for monoids in section~\ref{sec:prelim-bar-monoid},
the category $\Delta(\Ms,\Oo)$ replaces the simplicial category $\Delta$ and geometric realization is replaced by the homotopy colimit
(section~\ref{sec:hcolim}). By analogy with the notation for geometric realization, we often represent the homotopy colimit of
a $\Delta(\Ms,\Oo)^\op$-diagram $X$ by
\[
\hcolim_{\Delta(\Ms,\Oo)^\op}X=\|X\|\,.
\]

\begin{construction}\label{con:barconstruction}
  Let $\Oo$, $\Ms$ be as in Definition~\ref{con:catDelta(P)}.
  Let $A$ be a $\Oo$-algebra, let $M$ be a right $\Ms$-module over $A$ and let $N$ be a left module over $A$.
  We represent by $\Bb_\bullet(M,A,N)\colon\Delta(\Ms,\Oo)^\op\to \Top$
  the functor defined on objects by
  \[
  \Bb_n(M,A,N)= M\times A^n\times N
  \]
  and defined on morphisms as follows:
  Given $\mu\in\Delta(m,n)$ 
  we can write (compare with equation~\eqref{eq:barconstructionformonoids}):
  \[
  M\times A^n\times N
  =\left(M\times A^{\mu_0}\right)\times
  \left(\prod_{\alpha=1}^n A^{\mu_\alpha-\mu_{\alpha-1}}\right)\times
  \left( A^{n-\mu_m}\times N\right)\,.
  \]
  Then the map $\Delta(\Ms,\Oo)(\mu)\times M\times A^n\times N\to M\times A^m\times N$ is induced by the maps
  \begin{align*}
    q_R&\colon\Ms(\mu_0)\times M\times A^{\mu_0}\longrightarrow M\,;\\
    q&\colon\Oo(\mu_\alpha-\mu_{\alpha-1})\times A^{\mu_\alpha-\mu_{\alpha-1}}\longrightarrow A\,;\\
    q&\colon\Oo_+(n-\mu_m)\times A^{n-\mu_m}\times N\longrightarrow N\,.
  \end{align*}
  We define the bar construction by taking the homotopy colimit:
  \begin{equation}\label{eq:defofbarhcolim}
  \Bb(M,A,N)=\hcolim_{\Delta(\Ms,\Oo)^\op}\Bb_\bullet(M,A,N)\,.
  \end{equation}
\end{construction}

\subsection{Maps from the bar construction}\label{sec:mapsfromthebarconstruction}

If $G$ is a monoid acting on spaces $X$ and $Z$, we can build a map from $\BAR(X,G,Z)$ to a space $Y$ from a sequence of maps $X\times G^n\times Z\to Y$ giving a map to the constant diagram $Y_\bullet$
(see Example~\ref{ex:constantsimplicialspace}). When $X=G$ and $Y=Z$ this map is a homotopy equivalence \cite[Proposition 3.1(5)]{HoVo92}.
In this section we generalize these results to the bar construction of section~\ref{subsec:defofbar}.

Let $\Oo$, $\Ms$, and $A$, $M$, $N$ be as in Definitions~\ref{con:catDelta(P)} and~\ref{con:barconstruction}.
We wish to define maps $\Bb(M,A,N)\to Y$ for some space $Y$.
The idea is to replace $Y$ with a homotopically equivalent space. We will need, for each $n\geq0$, spaces $\mathcal Y(n)$ parametrizing maps $M\times A^n\times N\to Y$.
Although Definition~\ref{con:widetildedelta(P)} below could be made for any spaces $\mathcal Y(n)$ satisfying suitable associativity relations, in this paper we will
confine ourselves to the case where $\mathcal Y(n)=\Ms(n+1)$.

\begin{definition}\label{def:widetildeDelta}
  Let $\widetilde\Delta$ denote the category whose objects are the ordered sets
  $[n]=\{0,1,\ldots,n\}\subset\Zz$ plus the empty set, which we represent by $[-1]\in\widetilde\Delta$, and whose morphisms are the
  order preserving maps.
\end{definition}

The simplicial category $\Delta$ is a full subcategory of $\widetilde\Delta$. We will define a category
$\widetilde\Delta(\Ms,\Oo)$ equivalent to $\widetilde\Delta$ which contains $\Delta(\Ms,\Oo)$ as a full subcategory.

\begin{construction}\label{con:widetildedelta(P)}
\hfill
\begin{enumerate}
\item Given a monoidal module $\Ms$ over an operad $\Oo$, let $\Oo_+(n)=\Oo(n+1)$ and let $\Ms_+(n)=\Ms(n+1)$.
We define a category $\widetilde\Delta(\Ms,\Oo)$ with the same objects as $\widetilde\Delta$ and morphisms defined as follows:
For $m,n\neq[-1]$, the spaces of morphisms coincide with those of $\Delta(\Ms,\Oo)$ and we let
$\widetilde\Delta(\Ms,\Oo)(-1,n)=\Ms_+(n)$. Given $m,n\geq 0$ and $\mu\in\Delta(m,n)$,
composition of morphisms 
\[
\widetilde\Delta(\Ms,\Oo)(-1,m)\times\widetilde\Delta(\Ms,\Oo)(\mu)\to
\widetilde\Delta(\Ms,\Oo)(-1,n)
\]
is defined using the operad data:
\[
\Gamma_R:\Ms_+(m)\times\Ms(\mu_0)\times\left[\prod_{\alpha=1}^m\Oo(\mu_\alpha-\mu_{\alpha-1})\right]\times
\Oo_+(n-\mu_m)\to\Ms_+(n)\,.
\]
\item Let $A$ be a $\Oo$-algebra, let $M$ be a right $\Ms$-module over $A$ and let $N$ be a left module over $A$.
  Let $Y$ be a topological space and suppose we have, for each $n\geq 0$, maps 
  \begin{equation}\label{eq:mMs(n)xMxA^nxN->Y}
  Q\colon\Ms_+(n)\times M\times A^n\times N\to Y
  \end{equation}
  equivariant with respect to the $\Sigma_n$-action on $\Ms_+(n)$ and $A^n$,
  and such that
  the following diagram commutes (where $j=j_0+\dots+j_{k+1}$):
  \[
  \xymatrix@C=-5em{
    &(M\times A^{j_0})\times A^{j_1}\times\dots\times A^{j_k}\times(A^{j_{k+1}}\times N)=M\times A^j\times N
    \ar[rd]^(0.6){\qquad\xi\circ_R(\xi_0,\theta_1,\dots,\theta_k,\theta_{k+1})}\ar[ld]_(0.6){\xi_0\times\theta_1\times\dots\times\theta_{k+1}\qquad}\\
    M\times A^k\times N\ar[rr]^-{\xi}&&Y
  }\]
Let $\widetilde\Bb_\bullet(M,A,N;Y)\colon\widetilde\Delta(\Ms,\Oo)^\op\to\Top$ 
be the functor extending $\Bb_\bullet(M,A,N)$, sending the object 
$[-1]$ to $Y$ and defined on the remaining morphisms by the map in equation~\eqref{eq:mMs(n)xMxA^nxN->Y}.
We let
\begin{equation}\label{eq:tildebarmany}
\widetilde\Bb(M,A,N;Y)=\hcolim\limits_{\Delta(\Ms,\Oo)^\op}\widetilde\Bb_\bullet(M,A,N;Y)\,.
\end{equation}
\end{enumerate}
\end{construction}

\begin{proposition}\label{prop:B->Y}
  The inclusion $[-1]\to\widetilde\Delta$
induces a map $Y\to\widetilde\Bb(M,A,N;Y)$ which  
is a homotopy equivalence. 
\end{proposition}
\begin{proof}
This follows from the cofinality theorem \cite[Proposition~4.4]{HoVo92}
since $[-1]$ is a homotopy initial object.
\end{proof}

\begin{definition}\label{def:themaph}
We represent by $h_{M,A,N;Y}\colon\Bb(M,A,N)\to Y$ the map in the homotopy category $\hTop$ determined by the diagram:
\[
\xymatrix{
\Bb(M,A,N)\ar[r]&\widetilde\Bb(M,A,N;Y)&Y\,.\ar[l]_-{\simeq}
}\]
where the first map is determined by the functoriality 
of the homotopy colimit.
\end{definition}

If $G$ is a monoid acting on the right on a space $Y$ and on the left on itself, then $\BAR(Y,G,G)\simeq Y$ \cite[Proposition 3.1(5)]{HoVo92}.
The analogous result for $\Bb$ is the following:

\begin{proposition}\label{prop3.1}
Let $\Ms$ be a monoidal module over an operad $\Oo$ and let $M$ be a $\Ms$-module over a $\Oo$-algebra $A$.
Then the map $h_{M,A,A;M}\colon\Bb(M,A,A)\to M$ is a homotopy equivalence.
Furthermore, if $\Ms=\Oo_+$ and $N$ is a left module over $A$ then
the map $h_{A,A,N;N}\colon\Bb(A,A,N)\to N$ is a homotopy equivalence.
\end{proposition}
\begin{proof}
The statement for $h_{M,A,A;M}$
follows immediately from Lemma~\ref{theo3.1} in appendix~\ref{app:bar}
and the proof for $h_{A,A,N;N}$ is completely analogous.
\end{proof}

\subsection{Morphisms}

If, for each $i=1,2$, we have a monoid $G_i$ acting on spaces $X_i$ and $Z_i$, then a
homomorphism $f\colon G_1\to G_2$ and equivariant maps $f_X\colon X_1\to X_2$ and $f_Z\colon Z_1\to Z_2$
induce a natural transformation $\BAR_\bullet(X_1,G_1,Z_1)\to\BAR_\bullet(X_2,G_2,Z_2)$
and hence a map between the bar constructions which is a homotopy equivalence if $f$, $f_X$ and $f_Z$ are 
homotopy equivalences \cite[Proposition 3.1(6)]{HoVo92}. 
In this section we prove analogous results for the homotopy coherent bar construction.

Suppose $\psi\colon\Oo_1\to\Oo_2$ is a morphism of operads, 
	and $\Ms$ is a monoidal module over
	$\Oo_2$ with composition data $\Gamma_R$.
	We represent by $\psi^*\!\Ms$ the monoidal module over $\Oo_1$
	with $\psi^*\!\Ms(n)=\Ms(n)$ and composition data:
	\[
	\psi^*\Gamma_R(\xi;\xi',\theta_1,\dots,\theta_n)=
	\Gamma_R(\xi;\xi',\psi\theta_1,\dots,\psi\theta_n)\,.
	\]
For each $i=1,2$ 
let  $\Ms_i$ be a monoidal module over $\Oo_i$
and let $\psi_{\!{}_\Ms}\colon\Ms_1\to \psi^*\!\Ms_2$ be a morphism of monoidal modules over $\Oo_1$.
Then $\psi$, $\psi_{\!{}_\Ms}$ induce a functor
$F\colon\Delta(\Ms_1,\Oo_1)\to\Delta(\Ms_2,\Oo_2)$
and given any $\Delta(\Ms_2,\Oo_2)$-diagram $\Bb$ we have a pullback
$\Delta(\Ms_1,\Oo_1)$-diagram $F^*\Bb=F\circ\Bb$.
Now suppose $A$ is a $\Oo_2$-algebra and $M$ and $N$  are,
respectively, a right $\Ms_2$-module over $A$ and a left module over $A$. 
Then we have a $\Oo_1$-algebra $\psi^*A$, a right $\Ms_1$-module
$\psi^*M$ and a left module $\psi^*N$ over $\psi^*A$
defined in the obvious way. 

\begin{proposition}\label{prop:MapsFBetweenBars}
We have an isomorphism of 
$\Delta(\Ms_1,\Oo_1)^{\mathrm{op}}$-diagrams
\[
\Bb_\bullet(\psi^*M,\psi^*A,\psi^*N)\cong F^*\Bb_\bullet(M,A,N)
\]
and if $\Oo_i$, $\Ms_i$ are $E_\infty$ then the functor $F$ induces a homotopy equivalence
\[
\hcolim_{\Delta(\Ms_1,\Oo_1)^{\mathrm{op}}}F^*\Bb_\bullet(M,A,N)\xrightarrow{\ \simeq\ }\hcolim_{\Delta(\Ms_2,\Oo_2)^{\mathrm{op}}}\Bb_\bullet(M,A,N)\,.
\]
\end{proposition}
\begin{proof}
The isomorphism of 
$\Delta(\Ms_1,\Oo_1)^{\mathrm{op}}$-diagrams can easily be checked by direct inspection. The homotopy equivalence
follows from  \cite[Proposition 3.1(6)]{HoVo92}, since $F$ is an equivalence of categories.
\end{proof}

Let $\Oo$ be an operad and let $\Ms$ be a monoidal module over $\Oo$. For each $i=1,2$ let $A_i$ be a $\Oo$-algebra, let $M_i$ be a $\Ms$-module over $A_i$ and let
$N_i$ be a module over $A_i$. Let
    \[
    f\colon A_1\to A_2\,,\quad f_M\colon M_1\to f^*M_2\,,\quad f_N\colon N_1\to f^*N_2
    \]
    be, respectively, a morphism of $\Oo$ algebras, a morphism of $\Ms$-modules over $A_1$ and a morphism of modules over $A_1$.
    Also suppose we have, for each $i=1,2$, maps $Q_i\colon\Ms_+(k)\times M_i\times A_i^k\times N_i\to Y_i$ satisfying the associativity conditions in Definition~\ref{con:widetildedelta(P)} and
    let $f_Y\colon Y_1\to Y_2$ be a map such that  
\[
f_Y\bigl(Q_1(\xi;m,a_1,\dots,a_k,n)\bigr)=Q_2\bigl(\xi;f_M(m),f(a_1),\dots,f(a_k),f_N(n)\bigr)\,.
\]
    Then $f$, $f_M$, $f_N$ and $f_Y$ induce natural transformations, which we represent by
    \begin{equation}\label{eq:MapsBetweenBars}
\begin{aligned}
    \Bb_\bullet(f_M,f,f_N)&\colon\Bb_\bullet(M_1,A_1,N_1)\to\Bb_\bullet(M_2,A_2,N_2)\,;\\
    \widetilde\Bb_\bullet(f_M,f,f_N;f_Y)&\colon\widetilde\Bb_\bullet(M_1,A_1,N_1;Y_1)\to\widetilde\Bb_\bullet(M_2,A_2,N_2;Y_2)\,.
\end{aligned}
    \end{equation}


\begin{proposition}\label{prop:morphismsbetweenbarsarehtpyequiv}
  If $\Oo$ and $\Ms$ are $E_\infty$ and $f$, $f_M$ and $f_N$ are homotopy equivalences, then $\Bb(f_M,f,f_N)$ is a homotopy equivalence.
\end{proposition}
\begin{proof}
  This follows immediately from the homotopy invariance of the homotopy colimit \cite[Proposition 4.4]{HoVo92}
\end{proof}

\section{The spaces $\|\Bb_I\|$}\label{sec:4}

Let $\Ll^\Hh$ be the linear isometries operad (Definition~\ref{ex:Ll^H}) and let $\Ii$ be the category of linear isometries (Definition~\ref{def:Ii}).
By Proposition~\ref{prop:MisaL-algebra}, the spaces $\Mm_I^\Hh$ are $\Ll^{\Hh}$-algebras.

\begin{definition}\label{def:LlI}
Given a finite set $I\subset\Cc^2$
let $\Hh^I=\smash[b]{\bigoplus\limits_{x\in I}}\Hh$ and,
for each non-negative integer $n$, let
\begin{align*}
\Ll_I^\Hh(n)&=\Ii\bigl(\Hh\oplus(\Hh^I)^n,\Hh\bigr)\,,& \Ll^{\Hh,I}(n)&=\prod_{x\in I}\Ll^\Hh(n)\,.
\end{align*}
Also, let $\Ll_{I+}^\Hh(n)=\Ll_I^\Hh(n+1)$ and $\Ll_+^{\Hh,I}(n)=\Ll^{\Hh,I}(n+1)$.
\end{definition}

In Example~\ref{ex:MainExOfMonoidalModule} we observed that $\Ll^\Hh_I$ is a monoidal module over $\Ll^{\Hh,I}$ and
$\Mm^\Hh_\emptyset$ is a $\Ll_I^\Hh$-module over the $\Ll^{\Hh,I}$-algebra $\prod_{x\in I}\Mm^\Hh_\emptyset$.
%
%
The space $\prod_{x\in I}\Mm^\Hh_x$ is also a $\Ll^{\Hh,I}$-algebra, and the pullback maps $\pi_{\emptyset,x}\colon\Mm^\Hh_\emptyset\to\Mm^\Hh_x$
make $\prod_{x\in I}\Mm^\Hh_x$ into a module over 
$\prod_{x\in I}\Mm^\Hh_\emptyset$ (Example~\ref{ex:moduleinducedbymap}).
Next we apply Definition~\ref{con:widetildedelta(P)}(2) with $Y=\Mm_I^\Hh$. We need a map 
\[
Q\colon\Ll_{I+}^\Hh(n)\times \Mm^\Hh_\emptyset\times\Bigl(\prod_{x\in I}\Mm^\Hh_\emptyset\Bigr)^n\times\prod_{x\in I}\Mm^\Hh_x\to\Mm_I^\Hh
\]
satisfying the associativity conditions in Definition~\ref{con:widetildedelta(P)}. We define this map,
for each $f\in\Ll_{I+}^\Hh(n)$ by the composition:
\[\textstyle
\Mm_\emptyset\times\left(\prod\Mm_\emptyset\right)^n\times\left(\prod\Mm_x\right)\xrightarrow{\pi^*}
\Mm_I\times\left(\prod\Mm_I\right)^n\times\left(\prod\Mm_I\right)=\Mm_I\times\left(\prod\Mm_I\right)^{n+1}\xrightarrow{f}\Mm_I\,,
\]
where the map $\pi^*$ is induced by pullback.

\begin{definition}\label{def:Delta_I,Bb_I,h_I}
We write:
\begin{align*}
  \Delta_I^\Hh&=\Delta(\Ll_I^\Hh,\Ll^{\Hh,I})\,; &
  \widetilde\Delta_I^\Hh&=\widetilde\Delta(\Ll_I^\Hh,\Ll^{\Hh,I})\,; \\
  \Bb_I^\Hh&=\Bb_\bullet\bigl(\Mm_\emptyset^\Hh,\tprod_{x\in I}\Mm_\emptyset^\Hh,\tprod_{x\in I}\Mm_x^\Hh\bigr)\,; &
  \widetilde\Bb_I^\Hh&=\widetilde\Bb_\bullet\bigl(\Mm_\emptyset^\Hh,\tprod_{x\in I}\Mm_\emptyset^\Hh,\tprod_{x\in I}\Mm_x^\Hh;\Mm_I^\Hh\bigr)\,.
\end{align*}
Also, we write $\|\Bb_I^\Hh\|=\hcolim\Bb_I^\Hh$ and we denote the map of Definition~\ref{def:themaph} by
\begin{equation}\label{eq:defofh_I:Bb_I->M_I}
h_I\colon\|\Bb_I^\Hh\|\to\Mm_I^\Hh\,.
\end{equation}
\end{definition}

\begin{proposition}\label{thm:h0hxsimeq}
The maps $h_\emptyset$, $h_x$ are homotopy equivalences.
\end{proposition}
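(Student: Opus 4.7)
The plan is to reduce both assertions to Proposition~\ref{prop3.1} together with its evident left--right symmetric analogue. The only substantive task is identifying the correct operad 4-tuples in each of the two cases.

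For $h_\emptyset$, I first observe that the empty products collapse: since $I=\emptyset$, one has $\tprod_{x\in\emptyset}\Mm_\emptyset^\Hh=\tprod_{x\in\emptyset}\Mm_x^\Hh=\ast$ and $\Hh^\emptyset=0$, so the middle operad $\Ll^{\Hh,\emptyset}$ is the trivial operad. Thus $\Bb_\emptyset^\Hh=\Bb(\Mm_\emptyset^\Hh,\ast,\ast)$ has precisely the shape $\Bb(X_L,X,X)$ required in Proposition~\ref{prop3.1}, with $X_L=\Mm_\emptyset^\Hh$, $X=X_R=\ast$, and $\Oo_L=\Ll_\emptyset^\Hh$; applying the proposition then delivers the desired homotopy equivalence.

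For $h_x$, I note that $\Hh^{\{x\}}=\Hh$, whence $\Ll^{\Hh,\{x\}}=\Ll^\Hh$ and $\Ll^\Hh_x=\Ll^\Hh_+$, so that the 4-tuple of Section~\ref{sec:4} takes the form $(\Ll^\Hh_+,\Ll^\Hh,\Ll^\Hh_+;\Ll^\Hh_{++})$, and the bar construction $\Bb_x^\Hh=\Bb(\Mm_\emptyset^\Hh,\Mm_\emptyset^\Hh,\Mm_x^\Hh)$ has the shape $\Bb(X,X,X_R)$ rather than $\Bb(X_L,X,X)$. To handle this I will invoke the left--right symmetric version of Proposition~\ref{prop3.1}: given a monoidal module $\Oo_R$ over an operad $\Oo$ and a $\Oo_R$-module $X_R$ over a $\Oo$-algebra $X$, the canonical map $\|\Bb(X,X,X_R)\|\to X_R$ is a homotopy equivalence. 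Its proof is verbatim that of Proposition~\ref{prop3.1}, with the extra vertex in $\overline\Delta$ inserted on the right in place of the left; Lemma~\ref{lemma:widehatdelta} applies without change since its statement and proof are entirely formal. Plugging in $\Oo=\Ll^\Hh$, $\Oo_R=\Ll^\Hh_+$, $X=\Mm_\emptyset^\Hh$, $X_R=\Mm_x^\Hh$ then shows that $h_x$ is a homotopy equivalence.

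The main (and only) obstacle is the bookkeeping required to match the operad 4-tuple appearing in Section~\ref{sec:4} with the one produced by the symmetric version of Proposition~\ref{prop3.1}. Once the identifications $\Ll^\Hh_x=\Ll^\Hh_+$ and $\Ll^\Hh_{x+}=\Ll^\Hh_{++}$ are recorded, there is nothing further to verify.
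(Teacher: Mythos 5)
Your reduction to Proposition~\ref{prop3.1} is exactly what the paper does (the paper simply writes ``Now Proposition~\ref{prop3.1} tells us that\ldots'' with no further detail), and your identifications $\Ll^{\Hh,\emptyset}=\ast$, $\Ll^\Hh_x=\Ll^\Hh_+$, $\Ll^\Hh_{x+}=\Ll^\Hh_{++}$ are all correct; you are in fact more careful than the paper in pointing out that $h_x$ requires the left--right mirror of Proposition~\ref{prop3.1} rather than the proposition as literally stated. One small slip: in the paper's $\overline\Delta$ the extra vertex $m+1$ sits at the \emph{right} end of $\{0,\dots,m+1\}$ (absorbing the right slot of $\Bb(X_L,X,X)$), so the mirrored argument needed for $h_x$ inserts the extra vertex on the \emph{left} (an index $-1$ absorbing the left slot of $\Bb(X,X,X_R)$), not on the right as you wrote --- but since $\widehat\Delta$ already carries both extra vertices and Lemma~\ref{lemma:widehatdelta} is formal, the argument goes through unchanged.
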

\begin{proof}
	This immediately follows from Proposition~\ref{prop3.1}.
\end{proof}

Let $\cat$ be as in Definition~\ref{def:cat}.
Given a morphism $i\colon J\to I$ in $\cat$,
the projection $\Ll^{\Hh,I}(n)\to\Ll^{\Hh,J}(n)$ and the map
$\Ll_I^\Hh(n)\to\Ll_J^\Hh(n)$ induced by the inclusion $\Hh\oplus(\Hh^J)^{n}\to\Hh\oplus(\Hh^I)^{n}$ induce 
equivalences of categories, which we represent by
\[
\Delta_i\colon\Delta_I^\Hh\to\Delta_J^\Hh\quad\text{and}\quad \widetilde\Delta_i\colon\widetilde\Delta_I^\Hh\to\widetilde\Delta_J^\Hh\,,
\]
and hence, by Proposition~\ref{prop:MapsFBetweenBars}, homotopy equivalences
\begin{equation}\label{eq:whatisDelta_i}
\|\Delta_i^*\Bb^\Hh_J\|\to\|\Bb^\Hh_J\|\quad\text{and}\quad\|\widetilde\Delta_i^*\widetilde\Bb^\Hh_J\|\to\|\widetilde\Bb^\Hh_J\|.
\end{equation}


The inclusions of based spaces:
\begin{align*}
i\colon \prod_{x\in J}\Mm^\Hh_\emptyset&\to\prod_{x\in I}\Mm^\Hh_\emptyset\,,&
i_N\colon\prod_{x\in J}\Mm^\Hh_x&\to\prod_{x\in I}\Mm^\Hh_x\,,
\end{align*}
together with the pullback maps $\pi_{J,I}^*\colon\Mm^\Hh_J\to\Mm^\Hh_I$ induce maps (see equation~\ref{eq:MapsBetweenBars})
\begin{equation}\label{eq:Bb_i}
\begin{aligned}
  \widetilde\Bb(\ident,i,i_N;\pi_{J,I}^*)&\colon\|\widetilde\Delta_i^*\widetilde\Bb^\Hh_J\|\to \|\widetilde\Bb^\Hh_I\|\\
  \Bb(\ident,i,i_N)&\colon\|\Delta_i^*\Bb^\Hh_J\|\to\|\Bb^\Hh_I\|
\end{aligned}
\end{equation}

\begin{remark}
  Although we are not going to need these facts, we observe that
  the assignments $I\mapsto\Delta_I$ and $i\mapsto\Delta_i$ define a functor from $\cat^\op$ to the category of topological categories equivalent to $\Delta$, and
  the assignments $I\mapsto\|\Bb^\Hh_I\|$ and $(i\colon J\to I)\mapsto \|\Bb^\Hh_i\|$ define a functor $\|\Bb^\Hh\|:\cat\to \hTop$. Furthermore
  the maps $h_I\colon\|\Bb^\Hh_I\|\to\Mm^\Hh_I$
  define a natural transformation between the functors $\|\Bb^\Hh\|,\Mm^\Hh:\cat\to \hTop$ (for the functor $\Mm^\Hh$ see Proposition~\ref{prop:Mm:cat->Top}).
\end{remark}

\out{  

\begin{proposition}
Let $\cat$ be as in Definition~\ref{def:cat}.
Given a morphism $i\colon J\to I$,
let $\|\Delta_i\|^{-1}$, $\|\widetilde\Delta_i\|^{-1}$ denote the homotopy inverses of the maps $\|\Delta_i\|$ and $\|\widetilde\Delta_i\|$ in the homotopy category $\hTop$.
Then:
\begin{enumerate}
\item The assignments $I\mapsto\|\Bb^\Hh_I\|$ and 
$(i\colon J\to I)\mapsto \|\Bb_i\|\circ\|\Delta_i\|^{-1}$ define a functor $\|\Bb\|:\cat\to \hTop$.
\item The assignments $I\mapsto\|\widetilde\Bb^\Hh_I\|$ and 
$(i\colon J\to I)\mapsto \|\widetilde\Bb_i\|\circ\|\widetilde\Delta_i\|^{-1}$ define a functor $\|\widetilde\Bb\|:\cat\to \hTop$.
\item The assignments $I\mapsto\Mm^\Hh_I$ and $(J\to I)\mapsto\pi^*_{J,I}$
define a functor $\Mm\colon\cat\to\hTop$ and the maps $h_I\colon\|\Bb^\Hh_I\|\to\Mm^\Hh_I$ 
define a natural transformation between the functors $\|\Bb\|,\Mm:\cat\to \hTop$.
\end{enumerate}
\end{proposition}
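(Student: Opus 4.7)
The plan is to reduce all three parts to the naturality of the homotopy colimit under change of indexing category, combined with the strict functoriality of the underlying operadic projections, moduli-space inclusions, and pullback maps in the finite-set variable $I$.

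For parts (1) and (2), the identity case is immediate since $\Delta_{\mathrm{id}}$ and $\Bb_{\mathrm{id}}$ are literal identities. For composition $J \xrightarrow{i} I \xrightarrow{j} K$, I would first note that the projections $\Ll^{\Hh,\bullet}(n) \to \Ll^{\Hh,\bullet}(n)$ and the adjoints to the inclusions $\Hh \oplus (\Hh^\bullet)^n \to \Hh \oplus (\Hh^\bullet)^n$ are strictly functorial in the set variable, so $\Delta_{ji} = \Delta_i \circ \Delta_j$ as functors. Similarly, the based-point inclusions $\prod_J \Mm_\emptyset \hookrightarrow \prod_I \Mm_\emptyset$, $\prod_J \Mm_x \hookrightarrow \prod_I \Mm_x$, and the identity $\pi_{J,K}^* = \pi_{I,K}^* \circ \pi_{J,I}^*$ produce a strict factorization $\Bb_{ji} = \Bb_j \circ \Delta_j^*\Bb_i$. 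The technical heart is then the naturality square
\[
\xymatrix{
\|\Delta_j^*\Delta_i^*\Bb_J\| \ar[r]^-{\|\Delta_j^*\Bb_i\|} \ar[d]_-{\|\Delta_i^*\Delta_j\|} & \|\Delta_j^*\Bb_I\| \ar[d]^-{\|\Delta_j\|} \\
\|\Delta_i^*\Bb_J\| \ar[r]^-{\|\Bb_i\|} & \|\Bb_I\|
}
\]
which commutes on the nose because all four maps are induced by functorial data on the underlying categories. Rearranging and inverting the vertical homotopy equivalences in $\hTop$ then yields
\[
\|\Bb_{ji}\| \circ \|\Delta_{ji}\|^{-1} = \bigl(\|\Bb_j\|\circ\|\Delta_j\|^{-1}\bigr) \circ \bigl(\|\Bb_i\|\circ\|\Delta_i\|^{-1}\bigr)
\]
in $\hTop$. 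Part (2) is proved by the identical argument with $\widetilde\Bb$ in place of $\Bb$ and $\widetilde\Delta$ in place of $\Delta$.

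For part (3), the functoriality of $\Mm$ is exactly the composition law $\pi_{J,K}^* = \pi_{I,K}^* \circ \pi_{J,I}^*$. For the naturality of $h_I$, I apply the same reasoning to the defining zig-zag $\|\Bb_I\| \to \|\widetilde\Bb_I\| \xleftarrow{\simeq} \Mm_I$ of Definition~\ref{def:themaph}: the inclusions together with $\pi_{J,I}^*$ assemble into a morphism of $(\Ll^\Hh_I,\Ll^{\Hh,I},\Ll^{\Hh,I}_+;\Ll^\Hh_{I+})$-$4$-tuples once the $I$-structures are pulled back along $\Delta_i$, so they induce a strictly commutative map of zig-zags; its image in $\hTop$ is the required naturality square.

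The main obstacle is the naturality square displayed above. Although it commutes strictly at the level of spaces, establishing this rigorously requires identifying $\Delta_{ji}^*\Bb_J$ with $\Delta_j^*\Delta_i^*\Bb_J$ as functors on $\Delta_K^\Hh$ and unwinding the bar-construction data cell-by-cell to match the induced maps; once this bookkeeping is in hand, passage to $\hTop$ and inversion of the horizontal equivalences is formal.
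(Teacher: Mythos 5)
Your proposal is correct and matches the paper's proof in essentially all respects: you use the strict functoriality $\Delta_{j\circ i}=\Delta_i\circ\Delta_j$ and $\Bb_{j\circ i}=\Bb_j\circ\Delta_j^*\Bb_i$ at the level of indexing categories and natural transformations, assemble the resulting triangles with the displayed naturality square into a strictly commuting diagram in $\Top$, and pass to $\hTop$ by inverting the $\|\Delta_\bullet\|$-equivalences; for part (3) you likewise invoke a strictly commuting map between the two zig-zags, which is exactly the paper's diagram. (Your closing remark refers to inverting the ``horizontal'' equivalences where, in the square you drew, the equivalences $\|\Delta_\bullet\|$ are vertical, but this is only a labeling slip, not a gap.)
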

\begin{proof}
We drop the $\Hh$ to simplify the notation.
Given finite sets $I,J,K\subset\Cc^2$ and
inclusions $i:I\to J$ and $j:J\to K$, we need to show that
\[
\|\Bb_{j\circ i}\|\circ\|\Delta_{j\circ i}\|^{-1}=\bigl(\|\Bb_j\|\circ\|\Delta_j\|^{-1}\bigr)\circ
\bigl(\|\Bb_i\|\circ\|\Delta_i\|^{-1}\bigr).
\]
We have $\Delta_{j\circ i}=\Delta_i\circ\Delta_j$ and,
for each non-negative integer $n$, we have
$\Bb_{i\circ j}(n)=\Bb_i(n)\circ \Bb_j(n)$.
We then have a commutative diagram:
\[
\entrymodifiers={+!!<0pt,\fontdimen22\textfont2>}
\xymatrix{
\hcolim\limits_{\Delta_I^\op}\Bb_I&
\hcolim\limits_{\Delta_J^\op}\Delta_i^*\Bb_I
  \ar[l]_-{\Delta_i}^-{\simeq}
  \ar[r]^-{\Bb_i}&
\hcolim\limits_{\Delta_J^\op}\Bb_J\\
&\hcolim\limits_{\Delta_K^\op}\Delta_{j\circ i}^*\Bb_I
  \ar[lu]^-{\Delta_{j\circ i}}_-{\simeq}
  \ar[u]^-{\simeq}_-{\Delta_j}
  \ar[r]^-{\Bb_i}
  \ar[rd]_-{\Bb_{j\circ i}}&
\hcolim\limits_{\Delta_J^\op}\Delta_j^*\Bb_J
  \ar[u]^-{\simeq}_-{\Delta_j}
  \ar[d]^-{\Bb_j}\\
&&\hcolim\limits_{\Delta_K^\op}\Bb_K}
\]
which concludes the proof of (1). The proof of (2) is completely analogous. To prove (3) it is enough to observe that,
given $i\colon J\to I$, we have a commutative diagram:
\[
\xymatrix{\|\Bb_J\|\ar[r] & \|\widetilde\Bb_J\| & \Mm_J\ar[l]\ar@{-}[d]^-{=} \\
\|\Delta_i^*\Bb_J\|\ar[u]^-{\simeq}\ar[r]\ar[d] & \|\widetilde\Delta_i^*\widetilde\Bb_J\|\ar[u]^-{\simeq}\ar[d] & \Mm_J\ar[l]\ar[d] \\
\|\Bb_I\|\ar[r] & \|\widetilde\Bb_I\| & \Mm_I\ar[l]
}\]
\end{proof}

\begin{remark}
If we consider only subsets of a fixed $I$ we can get a functor to 
$\Top$ instead of $\hTop$. 
For each $J\subset I$ we replace
the functors $\Bb_J$ by the functors $\Delta_j^*\Bb_J:\Delta_I\to \Top$ where $j:J\to I$ is the unique morphism,
and given a morphism $i:J_1\to J_2$, the natural transformation $\Bb_i:\Delta_i^*\Bb_{J_1}\to \Bb_{J_2}$ induces 
a natural transformation
$\Delta_{j_1}^*\Bb_{J_1}=\Delta_{j_2}^*\Delta_i^*\Bb_{J_1}\to \Delta_{j_2}^*\Bb_{J_2}$.
If we let $\cat_I$ be the full subcategory of $\cat$ whose objects are the subsets of $I$, homotopy colimit
gives a functor $\cat_I\to \Top$. We will come back to this  construction in section \ref{sec:theoI}.
\end{remark}

} 

We now show that, for an appropriate choice of universes
(Definition~\ref{ex:Ll^H}), the spaces
$\|\widetilde\Bb_I\|$ and $\|\Bb_I\|$ are modules over the $\Ll$-algebra $\Mm_\emptyset$ (see Proposition~\ref{prop:barLmoduleoverM} below).
Given universes $\Hh_0$, $\Hh_1$, we have canonical operad maps
$i_j\colon\Ll^{\Hh_j}\to\Ll^{\Hh_0\otimes\Hh_1}$ (with $j=0,1$): the map $i_0$ maps
$f\in\Ll^{\Hh_0}(n)$ to the isometry 
$f\otimes \Id:(\Hh_0^{\oplus n})\otimes\Hh_1\to\Hh_0\otimes \Hh_1$ and
similarly for $i_1$. Let $\Hh=\Hh_0\otimes\Hh_1$.
The maps $i_0$, $i_1$ induce equivalences of categories
$F_{i_j}\colon\Delta_I^{\Hh_j}\to\Delta_I^\Hh$ (with $j=0,1$),
and by Proposition~\ref{prop:MapsFBetweenBars}, homotopy equivalences
\[
\hcolim_{\bigl(\Delta_I^{\Hh_j}\bigr)^\op}F_{i_j}^*\Bb_I^\Hh\xrightarrow{\ \simeq\ }\hcolim_{(\Delta_I^\Hh)^\op}\Bb_I^\Hh\,.
\]
Also observe that each $i_j$ gives $\Mm_\emptyset^\Hh$ the structure of
a $\Ll^{\Hh_j}$-algebra.

\out{%
\begin{proof}
  The isomorphism $\alpha$ induces a map of operads $\Ll^{\Hh_1}\to\Ll^{\Hh_2}$ and, for any $J\in\cat$,
  homotopy equivalences $\Mm_J^{\Hh_1}\to\Mm_J^{\Hh_2}$.
  Thus, we get an equivalence of categories $\Delta_\alpha\colon\Delta_I^{\Hh_1}\to\Delta_I^{\Hh_2}$
  and a weak equivalence of functors
  $\Bb_\alpha:\Delta_\alpha^*\Bb_I^{\Hh_2}\to\Bb_I^{\Hh_1}$.
  Thus we have homotopy equivalences
  \[
  \xymatrix{\|\Bb_I^{\Hh_1}\|\ar[r]^-{\|\Bb_\alpha\|}&
    \|\Delta_\alpha^*\Bb_I^{\Hh_1}\|\ar[r]^-{\|\Delta_\alpha\|}&
    \|\Bb_I^{\Hh_2}\|}
  \]
  which concludes the proof that $\|\Bb_I^{\Hh_1}\|\simeq\|\Bb_I^{\Hh_2}\|$. The proof that
  $\|\widetilde\Bb_I^{\Hh_1}\|\simeq\|\widetilde\Bb_I^{\Hh_2}\|$  is completely analogous.
\end{proof}
}%

\begin{proposition}\label{prop:barLmoduleoverM}
Let $\Hh_\emptyset$, $\Hh_I$ be universes, let $\Hh=\Hh_\emptyset\otimes\Hh_I$ and
let $i_I:\Ll^{\Hh_I}\to\Ll^{\Hh}$ be the canonical map of operads.
Then $\|F_{i_I}^*\widetilde\Bb_I^\Hh\|$ and $\|F_{i_I}^*\Bb_I^\Hh\|$ are left modules 
over the $\Ll^{\Hh_\emptyset}$-algebra $\Mm_\emptyset^\Hh$.
\end{proposition}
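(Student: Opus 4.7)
The plan is to construct the $\Ll_+^{\Hh_\emptyset}$-action one simplicial level at a time and then verify compatibility with every face and degeneracy map of the underlying simplicial object, so that it descends to the homotopy colimit. The geometric heart of the matter is the commuting-images identity for the two operad maps: for $f\in\Ll^{\Hh_\emptyset}(m)$ and $g\in\Ll^{\Hh_I}(n)$, the isometries $i_\emptyset(f)=f\otimes\Id_{\Hh_I}$ and $i_I(g)=\Id_{\Hh_\emptyset}\otimes g$ in $\Ll^\Hh$ commute in the obvious sense, because they act on disjoint tensor factors; this makes $(\Id\otimes g)(f\otimes\Id)=f\otimes g=(f\otimes\Id)(\Id\otimes g)$ on their common source.

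This commutation implies that the $\Ll^{\Hh_\emptyset}$-operations induced by $i_\emptyset$ on any $\Mm_?^\Hh$ (with $?\in\{\emptyset,x,I\}$) are morphisms of $\Ll^{\Hh_I}$-algebras. Equivalently, the structure maps
\[
\Ll^{\Hh,I}(n)\times\Bigl(\prod_{x\in I}\Mm_?^\Hh\Bigr)^n\to\prod_{x\in I}\Mm_?^\Hh\,,\qquad
\Ll_I^\Hh(n)\times\Mm_\emptyset^\Hh\times\Bigl(\prod_{x\in I}\Mm_\emptyset^\Hh\Bigr)^n\to\Mm_\emptyset^\Hh\,,
\]
together with the augmentation $\Ll_{I+}^\Hh(n)\times\Mm_\emptyset^\Hh\times(\prod_{x}\Mm_\emptyset^\Hh)^n\times\prod_{x}\Mm_x^\Hh\to\Mm_I^\Hh$ that defines $\widetilde\Bb_I^\Hh$, all become $\Ll^{\Hh_\emptyset}$-equivariant after restriction along $i_I$.

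Next, at each simplicial level $k$, I would define the $\Ll_+^{\Hh_\emptyset}$-action on
\[
\Bb_I^\Hh(k)=\Mm_\emptyset^\Hh\times\Bigl(\prod_{x\in I}\Mm_\emptyset^\Hh\Bigr)^k\times\prod_{x\in I}\Mm_x^\Hh
\]
by the rule: given $\theta\in\Ll_+^{\Hh_\emptyset}(n)=\Ll^{\Hh_\emptyset}(n+1)$, an element $(a,b_1,\ldots,b_k,c)$, and $(d_1,\ldots,d_n)\in(\Mm_\emptyset^\Hh)^n$, return $\bigl(\theta(a,d_1,\ldots,d_n),b_1,\ldots,b_k,c\bigr)$, using the $\Ll^{\Hh_\emptyset}$-algebra structure on the first $\Mm_\emptyset^\Hh$ factor. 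The equivariance just established guarantees that this levelwise action commutes with every morphism of $(\Delta_I^\Hh)^\op$, so it descends to $\|i_I^*\Bb_I^\Hh\|$; the module axioms over $\Mm_\emptyset^\Hh$ are then inherited from the operad axioms of $\Ll^{\Hh_\emptyset}$. For $\|i_I^*\widetilde\Bb_I^\Hh\|$ the argument is identical, with the additional observation that $\Mm_I^\Hh$ is also a $\Ll^{\Hh_\emptyset}$-algebra via $i_\emptyset$ and that the augmentation $\widetilde\gamma$ is $\Ll^{\Hh_\emptyset}$-equivariant for the same reason.

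The main obstacle is the bookkeeping: one must unravel the compositions inside $\Ll_I^\Hh(n)=\mathcal{L}(\Hh\oplus(\Hh^I)^n,\Hh)$, in which the target $\Hh$ breaks the evident symmetry between the inputs, and check levelwise equivariance against every face and degeneracy map encoded in $\widetilde\Delta_I^\Hh$. All these checks ultimately reduce to the identity $(f\otimes\Id)(\Id\otimes g)=(\Id\otimes g)(f\otimes\Id)$ combined with the associativity axioms of $\Ll^\Hh$, so no genuinely new idea is required beyond the commuting-images observation.
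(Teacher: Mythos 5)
Your proposal is correct and matches the paper's proof essentially step for step: you define the $\Ll_+^{\Hh_\emptyset}$-action levelwise on the leftmost $\Mm_\emptyset^\Hh$ factor (and on $\Mm_I^\Hh$ via $\pi_{\emptyset,I}^*$ at level $-1$), and you verify naturality over $\widetilde\Delta_I^\Hh$ by the same commuting-images identity $i_\emptyset(f)\circ i_I(g)=i_I(g)\circ i_\emptyset(f)$ that the paper records as its single commutative square.
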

\begin{proof}
Given $f\in\Ll^{\Hh_\emptyset}_+(n)$ the map $(\Mm_\emptyset^\Hh)^n\times\|F_{i_I}^*\widetilde\Bb_I^\Hh\|\to \|F_{i_I}^*\widetilde\Bb_I^\Hh\|$
is defined by the natural transformation
$(\Mm_\emptyset^\Hh)^n\times F_{i_I}^*\widetilde\Bb_I^\Hh\to F_{i_I}^*\widetilde\Bb_I^\Hh$ 
given, for $m\neq -1$, by the maps:
\begin{align}\label{eq:leftactionofMonBar}\notag
(\Mm_\emptyset^\Hh)^n\times \widetilde\Bb_I^\Hh(m)
&=\Bigl((\Mm_\emptyset^\Hh)^{n}\times\Mm_\emptyset^\Hh\Bigr)\times\Bigl(\,\prod_{x\in I}\Mm_\emptyset^\Hh\Bigr)^{m}\times
\Bigl(\,\prod_{x\in I}\Mm_x^\Hh\Bigr)\\
&\hspace*{-3em}\xrightarrow{i_\emptyset(f)\times\ident\times\ident}
\Mm_\emptyset^\Hh\times\Bigl(\,\prod_{x\in I}\Mm_\emptyset^\Hh\Bigr)^{m}\times\Bigl(\,\prod_{x\in I}\Mm_x^\Hh\Bigr)=\widetilde\Bb_I^\Hh(m)\\ 
\intertext{and for $m=-1$ by the maps}\notag
(\Mm_\emptyset^\Hh)^n\times \widetilde\Bb_I^\Hh(-1)&=(\Mm_\emptyset^\Hh)^n\times\Mm_I^\Hh\xrightarrow{(\pi_{\emptyset,I}^*)^n\times\ident}
(\Mm_I^\Hh)^{n+1}\xrightarrow{i_\emptyset(f)}\Mm_I^\Hh=\widetilde\Bb_I^\Hh(-1)\,.
\end{align}
The fact that this is a natural transformation follows from the commutativity
of the following
diagram, where $f\in\Ll^{\Hh_\emptyset}(n+1)$, $g\in\Ll^{\Hh_I}(k+1)$ and $J=\emptyset$ or $J=I$:
\[\xymatrix{
(\Mm_J^\Hh)^n\times\Mm_J^\Hh\times(\Mm_J^\Hh)^k
\ar[rr]^-{i_\emptyset(f)\times\ident}\ar[d]_-{\ident\times i_I(g)}&&
\Mm_J^\Hh\times(\Mm_J^\Hh)^k\ar[d]_-{i_I(g)}\\
(\Mm_J^\Hh)^n\times\Mm_J^\Hh\ar[rr]^-{i_\emptyset(f)}&&
\Mm_J^\Hh
}\]
The map $(\Mm_\emptyset^\Hh)^n\times\|F_{i_I}^*\Bb_I^\Hh\|\to \|F_{i_I}^*\Bb_I^\Hh\|$
is obtained by restricting the natural transformation defined above.
\end{proof}

\begin{remark}\label{rmk:barLmoduleoverM}
Using equivariance, we can also think of 
$\|F_{i_I}^*\widetilde\Bb_I^\Hh\|$ and $\|F_{i_I}^*\Bb_I^\Hh\|$
as right modules over $\Mm_\emptyset^\Hh$
(see section~\ref{sec:modoverPalgebras}).
These structures are induced by the maps
(compare with equation~\eqref{eq:leftactionofMonBar}):
\begin{multline*}
\Bb_I^\Hh(m)\times(\Mm_\emptyset^\Hh)^n
\xrightarrow{\text{shuffle}}
 \Bigl(\Mm_\emptyset^\Hh\times(\Mm_\emptyset^\Hh)^{n}\Bigr)\times\Bigl(\,\prod_{x\in I}\Mm_\emptyset^\Hh\Bigr)^{m}\times
\Bigl(\,\prod_{x\in I}\Mm_x^\Hh\Bigr)\\
\xrightarrow{i_\emptyset(f)\times\ident\times\ident}
\Mm_\emptyset^\Hh\times\Bigl(\,\prod_{x\in I}\Mm_\emptyset^\Hh\Bigr)^{m}\times\Bigl(\,\prod_{x\in I}\Mm_x^\Hh\Bigr)=\Bb_I^\Hh(m)
\end{multline*}
and also, in the case of $\|F_{i_I}^*\widetilde\Bb_I^\Hh\|$, 
by the maps
\[
\widetilde\Bb_I^\Hh(-1)\times(\Mm_\emptyset^\Hh)^n =\Mm_I^\Hh\times(\Mm_\emptyset^\Hh)^n\xrightarrow{\ident\times(\pi_{\emptyset,I}^*)^n}
(\Mm_I^\Hh)^{n+1}\xrightarrow{i_\emptyset(f)}\Mm_I^\Hh=\widetilde\Bb_I^\Hh(-1)\,.
\]
\end{remark}

Let $I,J\subset\Cc^2$ be finite disjoint sets.
Fix universes $\Hh_{IJ},\Hh_\emptyset$ and let $\Hh=\Hh_{IJ}\otimes\Hh_\emptyset$. Let
$i:\Ll^{\Hh_{IJ}}\to\Ll^\Hh$ be the canonical operad map.
From Definition~\ref{con:barconstruction},
with $\Oo=\Ll^{\Hh_\emptyset}$ and $\Ms=\Oo_+$
(see Example~\ref{ex:Oo+AsAMonoidalModule}),
we get functors
\begin{equation}\label{eq:defofB(B,M,B)}
\begin{aligned}
  \Bb_\bullet^{\Hh_\emptyset}(\|F_i^*\Bb_I^\Hh\|,\Mm_\emptyset^\Hh,\|F_i^*\Bb_J^\Hh\|)&\colon\Delta(\Ll_+^{\Hh_\emptyset},\Ll^{\Hh_\emptyset})\to\Top\,,\\[1ex] 
  \Bb_\bullet^{\Hh_\emptyset}(\|F_i^*\widetilde\Bb_I^\Hh\|,\Mm_\emptyset^\Hh,\|F_i^*\widetilde\Bb_J^\Hh\|)&\colon\Delta(\Ll_+^{\Hh_\emptyset},\Ll^{\Hh_\emptyset})\to\Top\,,
\end{aligned}
\end{equation}
where the superscript in $\Bb_\bullet^{\Hh_\emptyset}(\cdots)$ 
is added to 
stress that we are over the operad $\Ll^{\Hh_\emptyset}$.
From Remark~\ref{rmk:MImoduleoverMJ}, $\Mm_I^\Hh$ and
$\Mm_J^\Hh$ are modules over the $\Ll^\Hh$-algebra $\Mm_\emptyset^\Hh$.
Taking $\Oo=\Ll^{\Hh}$ and $\Ms=\Oo_+$,  we get the functor
\[
\Bb^\Hh_\bullet(\Mm_I^\Hh,\Mm_\emptyset^\Hh,\Mm_J^\Hh)\colon\Delta(\Ll_+^\Hh,\Ll^\Hh)\to\Top\,.
\]
We will use the notation in equation~\eqref{eq:defofbarhcolim} to represent the homotopy colimit of these functors.


\begin{proposition}\label{prop:IcupJ}
With notation as above, 
we have a commutative diagram in $\hTop$:
\[
\xymatrix{
\Bb^{\Hh_\emptyset}(\|F_i^*\Bb_I^\Hh\|,\Mm_\emptyset^\Hh,\|F_i^*\Bb_J^\Hh\|)\ar[r]\ar[d]_-{\simeq}&
\Bb^\Hh(\Mm_I^\Hh,\Mm_\emptyset^\Hh,\Mm_J^\Hh)\ar[d]\\
\|\Bb_{I\cup J}^\Hh\|\ar[r]^-{h_{I\cup J}}&
\Mm_{I\cup J}^\Hh}
\]
where the left vertical map is a homotopy equivalence and
the top horizontal map is induced by $h_I$ and $h_J$.
\end{proposition}

The proof will be given on page \pageref{prop:IcupJinappendix} in appendix~\ref{app:bar}.

\section{Proof of Theorem  \ref{theoI}}\label{sec:theoI}\label{sec:5}

In this section we fix 
a universe $\Hh$, and write $\Mm$, $\Bb$ instead of $\Mm^\Hh$, $\Bb^\Hh$. Fix a finite set $I\subset\Cc^2$.
For each $n=-1,0,\ldots$ and $J\subset I$, the topological space $\widetilde\Bb_J(n)$ is naturally graded
as a product of graded spaces, and given a morphism $f\in\widetilde\Delta_J(m,n)$, the induced map
$\widetilde\Bb_J(n)\to\widetilde\Bb_J(m)$ preserves the grading. 
Denote by $\widetilde\Bb_{J,k}\colon\widetilde\Delta_J\to\Top$ the functor obtained by taking the degree $k$ component of $\widetilde\Bb_J$.
Let $h_J\colon\|\Bb_J\|\to\Mm_J$ be the map in Definition~\ref{def:Delta_I,Bb_I,h_I}.
The objective of this section is to prove Theorem~\ref{theoI}, which we now restate:

{
\renewcommand{\theequation}{\ref{theoI}}
\begin{theorem}
If, for every $J\subset I$ with cardinality
$\#J\leq k$, the map $h_{J}$ is a homotopy equivalence in degree $k$, then $h_{I}$ is a homotopy equivalence in degree $k$.
\end{theorem}
\addtocounter{equation}{-1}
}

Let $\cat$ be as in Definition~\ref{def:cat} and let $\Mm\colon\cat\to\Top$ be the functor introduced in Proposition~\ref{prop:Mm:cat->Top}.

\begin{definition}\label{def:catI,k}
  Let $\cat_{I,k}$ be the full subcategory of $\cat$ whose objects are the subsets $J\subset I$ with $\#J\leq k$.
\end{definition}

We first need the following result (by nerve of an open cover we mean the functor of Example~\ref{ex:nerveofopencover}):

\begin{lemma}\label{lemma:theo1nerve}
  Let $\Mm_k\colon\cat\to\Top$ denote the degree $k$ component of the functor $\Mm$ and write $\Mm_{J,k}$ for its value
  on $J\in\cat$. Then the collection $\mathcal U=\{\pi_{J,I}^*\Mm_{J,k}\}_{J\in\cat_{I,k}}$  is an open cover of $\Mm_{I,k}$
  and the maps $\pi_{J,I}^*$ induce homeomorphisms between the restriction of $\Mm_{k}$ to $\cat_{I,k}$ and the nerve of $\mathcal U$.
\end{lemma}
\begin{proof}
  \cite[Theorem 2.1]{San05}
\end{proof}

We now turn to the proof of Theorem~\ref{theoI}\label{proof:theorem1.3}

\begin{proof}
It is enough to show that the map $\|\Bb_{I,k}\|\to\|\widetilde\Bb_{I,k}\|$ is a homotopy equivalence.
For each morphism $j:J\to I$, 
it will be convenient to replace the functor
$\widetilde\Bb_J$ with the functor $\Delta_j^*\widetilde\Bb_J\colon\widetilde\Delta_I^\op\to\Top$ (see equation~\eqref{eq:whatisDelta_i}).
Let $\Delta^*\widetilde\Bb_k\colon\widetilde\Delta_I^{\mathrm{op}}\times\cat_{I,k}\to\Top$
be the functor defined on objects by 
\[
\Delta^*\widetilde\Bb_k(n,J)=\Delta_j^*\widetilde\Bb_{J,k}(n)\,;
\]
given morphisms $i:(J_1,j_1)\to(J_2,j_2)$ and 
$f\in\widetilde\Delta_I(m,n)$, we define $\Delta^*\widetilde\Bb(f,i)$ by the
diagonal map in the following commutative diagram (for the vertical maps 
in the diagram see equation~\eqref{eq:Bb_i}):
\[
\xymatrix{ \Delta_{j_1}^*\widetilde\Bb_{J_1}(n)\ar[r]^-{f}
\ar[d]_{i}&
\Delta_{j_1}^*\widetilde\Bb_{J_1}(m)\ar[d]^{i}\\
\Delta_{j_2}^*\widetilde\Bb_{J_2}(n)\ar[r]^-{f}&
\Delta_{j_2}^*\widetilde\Bb_{J_2}(m)}
\]
We define the functor $\Delta^*\Bb_k:\Delta_I^{\mathrm{op}}\times\cat_{I,k}\to \Top$ by restricting $\Delta^*\widetilde\Bb_k$.
We claim that the maps 
\begin{align}\label{eq:theorem1}
&\hcolim_{\cat_{I,k}}\Bigl(\hcolim_{\Delta_I^{\mathrm{op}}}\Delta^*\Bb_k\Bigr)\to
\hcolim_{\Delta_I^{\mathrm{op}}}\Bb_{I,k}\,,\\
\tag{\ref{eq:theorem1}a}
&\hcolim_{\cat_{I,k}}\Bigl(\hcolim_{\widetilde\Delta_I^{\mathrm{op}}}\Delta^*\widetilde\Bb_k\Bigr)\to
\hcolim_{\widetilde\Delta_I^{\mathrm{op}}}\widetilde\Bb_{I,k}
\label{eq:theorem1a}
\end{align}
induced by the maps $\Delta_j^*\widetilde\Bb_{J,k}\to\widetilde\Bb_{I,k}$
are homotopical equivalences; the theorem will follow since we then have a commutative diagram:
\[
\entrymodifiers={+!!<0pt,\fontdimen22\textfont2>}
\xymatrix{
\hcolim\limits_{\cat_{I,k}}\|\Delta^*\Bb_k\|\ar[r]\ar[d]_-{\simeq}&
\hcolim\limits_{\cat_{I,k}}\|\Delta^*\widetilde\Bb_k\|\ar[d]^-{\simeq}\\
\|\Bb_{I,k}\|\ar[r]&\|\widetilde\Bb_{I,k}\|}
\]
and by hypothesis (and the homotopy invariance of the homotopy colimit), 
the top horizontal map is a homotopy equivalence. We first prove that the map in equation~\eqref{eq:theorem1}
is a homotopy equivalence. From the commutation of homotopy colimits
\cite[Section~6]{HoVo92} we get
\[
\hcolim_{\cat_{I,k}}\Bigl(\hcolim_{\Delta_I^{\mathrm{op}}}\Delta^*\Bb_k\Bigr)\cong
\hcolim_{\Delta_I^{\mathrm{op}}}\Bigl(\hcolim_{\cat_{I,k}}\Delta^*\Bb_k\Bigr)\,,
\]
so it is enough to show that we have a weak equivalence of $\Delta_I^\op$-diagrams
$\hcolim_{\cat_{I,k}}\Delta^*\Bb_k\simeq\Bb_{I,k}$ which we now prove.
Let $Z_k\subset\Zz\times\Zz^{(n+1)(\#I)}$ be the subset of tuples of non-negative integers whose sum is $k$. We write an element $\mathbf k\in Z_k$ as
$\mathbf k=\bigl(\,k_0\,,\,(k_{\alpha x})_{\begin{subarray}{l}x\in I\\ \alpha=0,\ldots,n\end{subarray}}\,\bigr)$.
Unraveling Definition~\ref{def:Delta_I,Bb_I,h_I} we see that, for $n\geq 0$, we have $\Bb_{I,k}(n)=\coprod_{\mathbf k}\Bb_{I,\mathbf k}(n)$ where
\[
\Bb_{I,\mathbf k}(n)=\Mm_{\emptyset,k_0}\times
\Bigl(\prod_{\substack{\alpha=1,\ldots,n\\x\in I}}\Mm_{\emptyset,k_{\alpha x}}\Bigr)
\times\Bigl(\prod_{x\in I}\Mm_{x,k_{0x}}\Bigr).
\]
Let $\supp\mathbf k\subset I$ be the set of points such that there is an $\alpha$ for which $k_{\alpha x}>0$. If we let
\[
\Bb_{J,\mathbf k}(n)=\begin{cases}\Bb_{I,\mathbf k}(n),&\text{if $\supp\mathbf k\subset J$;}\\ \emptyset,&\text{if $\supp\mathbf k\not\subset J$,}\end{cases}
\]
then $\Bb_{J,k}(n)\cong\coprod_{\mathbf k}\Bb_{J,\mathbf k}(n)$ and under this isomorphism the map $\Bb_{J,k}(n)\to\Bb_{I,k}(n)$ corresponds to inclusion. 
Let $\mathfrak D_{I,\mathbf k}\subset\cat_{I,k}$ denote the full subcategory whose objects $J$ satisfy $\supp\mathbf k\subset J$.
Then, using the properties of the homotopy colimit (Proposition~\ref{prop:hcolim}) we obtain:
\begin{multline*}
  \Bigl(\hcolim_{\cat_{I,k}}\Delta^*\Bb_k\Bigr)(n)
  \cong\coprod_{\mathbf k\in Z_k}\hcolim_{J\in\mathfrak C_{I,\mathbf k}}\Bb_{J,\mathbf k}(n)
  \cong\coprod_{\mathbf k\in Z_k}\hcolim_{J\in\mathfrak D_{I,\mathbf k}}\Bb_{I,\mathbf k}(n)\\
  \cong \coprod_{\mathbf k\in Z_k}B\mathfrak D_{I,\mathbf k}\times\Bb_{I,\mathbf k}(n)\simeq \Bb_{I,k}(n)
\end{multline*}
because $\mathfrak D_{I,\mathbf k}$ has an initial object, namely: $\supp\mathbf k$.
To complete the proof we need to show that the map in equation~\eqref{eq:theorem1a}
is a homotopy equivalence. We just need to show that $\Bigl(\hcolim_{\cat_{I,k}}\Delta^*\widetilde\Bb_k\Bigr)(-1)\simeq\widetilde\Bb_{I,k}(-1)$
that is (see Definition~\ref{def:Delta_I,Bb_I,h_I}), $\hcolim_{\cat_{I,k}}\Mm_{k}\simeq\Mm_{I,k}$.
This immediately follows from Lemma~\ref{lemma:theo1nerve} and \cite[Proposition 4.1]{Seg68}.
\end{proof}

\section{The gluing maps for $k=1,2$}\label{sec:glue}

In this section we define the gluing maps $\boxplus$ used in the statement of Theorem~\ref{theor0}. The definition is based on the monad description of
holomorphic bundles  introduced in \cite{Don84,Kin89} which we now review.

\subsection{The monad description of the moduli spaces}\label{sec:monads}


We first set some notation.
Let $[x_1,x_2,x_3]\in\CP^2$ be homogeneous coordinates,
and let $L_\infty=\{[x_1,x_2,x_3]\in\CP^2:x_3=0\}\subset\CP^2$ be the line at infinity.
We identify the blow up $\cp{\set0}$ of $\CP^2$ at $[0,0,1]$ with the subvariety of
$\CP^2\times\CP^1$ consisting of the pairs $\bigl([x_1,x_2,x_3],[y_1,y_2]\bigr)$ such that $x_1y_1+x_2y_2=0$.

Let $\Vv$ be a finite dimensional complex Hermitian vector space and
let $W$, $W_0$, $W_1$ be complex vector spaces of dimension $k$.
Let $\Rc_W^\Vv$ be the space of 4-tuples $(a_1,a_2,b,c)$ where
$a_i\in{\rm End}(W)$, $b\in{\rm Hom}(\Vv,W)$ and $c\in{\rm Hom}(W,\Vv)$,
obeying the integrability condition 
\begin{equation}\label{eq:monadintegrabilityforP}
a_1a_2-a_2a_1+bc=0\,,
\end{equation}
and let $\tilde\Rc_{W_0,W_1}^\Vv$ be the space of 5-tuples
$(\tilde a_1,\tilde a_2,\tilde d,\tilde b,\tilde c)$ where
$\tilde a_i\in{\rm Hom}(W_1,W_0)$, $\tilde d\in{\rm Hom}(W_0,W_1)$,
$\tilde b\in{\rm Hom}(\Vv,W_0)$ and $\tilde c\in{\rm Hom}(W_1,\Vv)$,
such that 
\begin{equation}\label{eq:monadconditionfortildeP}
\tilde a_1(W_1)+\tilde a_2(W_1)+\tilde b(\Vv)=W_0\,,
\end{equation}
obeying the integrability condition 
\begin{equation}\label{eq:monadintegrabilityfortildeP}
\tilde a_1\tilde d\tilde a_2-\tilde a_2\tilde d\tilde a_1+\tilde b\tilde c=0\,.
\end{equation}

\begin{notation}\label{remark:W=W_0orW=W_1}
	Except for the proof or Proposition~\ref{prop:propertiesofmonaddescription} below, we always take $W=W_1$ and write
        \[
	\Rc_{W_1}^\Vv=\Rc_k\,,\quad \tilde\Rc_{W_0,W_1}^\Vv=\tilde\Rc_k\quad(k=\dim W_0=\dim W_1)\,.
	\]
        Also, we fix for each $k$ an isomorphism $\chi_k\colon W_0\to W_1$.
\end{notation}

For each $r=(a_1,a_2,b,c)\in\mathcal R_k$ and $x=(x_1,x_2,x_3)\in\Cc^3$ consider the linear maps
$A_{r,x}\colon W_1\to W_1^2\oplus \Vv$ and $B_{r,x}\colon W_1^2\oplus \Vv\to W_1$
given in matrix form by
\begin{equation}\label{eq:monadmatricesforP2}
A_{r,x}=\begin{pmatrix}x_1-a_1x_3\\x_2-a_2x_3\\cx_3\end{pmatrix}\,,\quad
B_{r,x}=\begin{pmatrix}-x_2+a_2x_3 & x_1-a_1x_3 & bx_3\end{pmatrix}\,.
\end{equation}
Then the vector spaces $\text{Im}\,A_{r,x}$ and $\text{Ker}\,B_{r,x}$ depend only on $[x_1,x_2,x_3]\in\CP^2$.
The 4-tuple  $r$ is called nondegenerate if:
\begin{equation}\label{eq:monaddegenerateforP}
\text{$A_{r,x}$ and $B_{r,x}$ have maximal rank for every $[x]\in\CP^2$.}
\end{equation}
Let $\Rc_k^{\reg}\subset\Rc_k$ denote the subspace of nondegenerate configurations. To each $r\in\Rc_k^\reg$ we assign a framed holomorphic bundle $(\Ee_r,\phi_r)$ over $\CP^2$ as follows:
Consider the subbundles $\text{Im}\,A_{r}$, $\text{Ker}\,B_{r}$
of the trivial bundle $\bigl(W_1^2\oplus \Vv\bigr)\times\CP^2$. The integrability condition~\eqref{eq:monadintegrabilityforP} is equivalent to
$\text{Im}\,A_{r}\subset\text{Ker}\,B_{r}$
and we define $\Ee_r$ as the quotient:
\[
\Ee_r=\text{Ker}\,B_{r}/\text{Im}\,A_{r}\,.
\]
The inverse of the trivialization $\phi_r$ is the composition
\begin{equation}\label{eq:trivializationphir}
\phi_r^{-1}\colon \Vv\times L_\infty\to\text{Ker}\,B_r\to\Ee_r
\end{equation}
where the first map is induced, for each $[x]\in L_\infty$, by the inclusion $\Vv\to\text{Ker}\,B_{r,x}$.

In a similar way, given $\tilde r=(\tilde a_1,\tilde a_2,\tilde d,\tilde b,\tilde c)\in\tilde \Rc_k$, $x=(x_1,x_2,x_3)\in\Cc^3$ and $y=(y_1,y_2)\in\Cc^2$,
we have linear maps
\[
\tilde A_{\tilde r,x}\colon W_1\oplus W_0\to(W_0\oplus W_1)^2\oplus \Vv\quad\text{and}\quad\tilde B_{\tilde r,x}\colon(W_0\oplus W_1)^2\oplus \Vv\to W_0\oplus W_1
\]
defined by
\begin{equation*}
\tilde A_{\tilde r,x}=\begin{pmatrix} \tilde a_1x_3 & -y_2 \\ x_1-\tilde d\tilde a_1x_3 & 0 \\ \tilde a_2x_3 & y_1 \\ x_2-\tilde d\tilde a_2x_3 & 0 \\ \tilde cx_3 & 0 \end{pmatrix}\,,\quad
\tilde B_{\tilde r,x}=\begin{pmatrix} x_2 & \tilde a_2x_3 & -x_1 & -\tilde a_1x_3 & \tilde bx_3 \\ \tilde dy_1 & y_1 & \tilde dy_2 & y_2 & 0 \end{pmatrix}\,.
\end{equation*}
The subspace $\tilde\Rc_k^{\reg}\subset\tilde\Rc_k$ of non-degenerate 5-tuples is the space of elements $\tilde r\in\tilde\Rc_k^\reg$ such that
\begin{equation}\label{eq:monaddegeneratefortildeP}
\text{$\tilde A_{\tilde r,x}$ and $\tilde B_{\tilde r,x}$ have maximal rank for every $\bigl([x],[y]\bigr)\in\cp{\set0}$}\,.
\end{equation}
The integrability condition~\eqref{eq:monadintegrabilityfortildeP} is equivalent to $\text{Im}\,\tilde A_{\tilde r}\subset\text{Ker}\,\tilde B_{\tilde r}$ (provided $x_1y_1+x_2y_2=0$) and
we define a holomorphic bundle over $\cp{\set0}$ by
$\tilde \Ee_{\tilde r}=\text{Ker}\,\tilde B_{\tilde r}/\text{Im}\,\tilde A_{\tilde r}$; as before (see equation~\eqref{eq:trivializationphir}),
the inverse of the trivialization $\tilde\phi_{\tilde r}$ is induced by the inclusion $\Vv\to\text{Ker}\,\tilde B_{\tilde r}$.

The groups $GL(W_1)$ and $GL(W_0)\times GL(W_1)$ act on $\Rc_k$ and $\tilde\Rc_k$, respectively by
\begin{equation}\label{eq:GLactiononR}
\begin{aligned}
g\cdot(a_1,a_2,b,c)&=(ga_1g^{-1},ga_2g^{-1},gb,cg^{-1})\,,\quad\text{and}\\
(g_0,g_1)\cdot(\tilde a_1,\tilde a_2,\tilde d,\tilde b,\tilde c)&=(g_0\tilde a_1g_1^{-1},g_0\tilde a_2g_1^{-1},g_1\tilde dg_0^{-1},g_0\tilde b,\tilde cg_1^{-1})\,.
\end{aligned}
\end{equation}
The group actions preserve the nondegeneracy conditions and we denote the quotients by
\[
M_{\emptyset,k}^\Vv=\Rc_k^\reg/GL(W_1)\quad\text{and}\quad M_{\set0,k}^\Vv=\tilde\Rc_k^\reg/\bigl(GL(W_0)\times GL(W_1)\bigr)\,.
\]

\begin{theorem*}[Donaldson, King]
	The actions of $GL(W_1)$ on $\Rc_k^\reg$ and $GL(W_0)\times GL(W_1)$ on $\tilde \Rc_k^{\reg}$
	are free and the assignments $r\mapsto(\Ee_r,\phi_r)$ and $\tilde r\mapsto(\tilde\Ee_{\tilde r},\tilde\phi_{\tilde r})$ descend to the quotient defining isomorphisms
	\begin{equation}\label{eq:defofpsi_emptysetpsi_0}
	\psi_\emptyset\colon M_{\emptyset,k}^\Vv\xrightarrow{\cong}\Mm_{\emptyset,k}^\Vv\,,\qquad 
	\psi_{\set0}\colon M_{\set0,k}^\Vv\xrightarrow{\cong}\Mm_{\set0,k}^\Vv\,.
	\end{equation}
\end{theorem*}
\begin{proof}
	The statement for $\CP^2$ is proven in \cite[Proposition 1]{Don84}. The statement for $\cp{\set0}$ is proven in \cite[Theorem 3.4.1]{Kin89}.
\end{proof}

\out{%
Translation in $\Cc^2$ induces an action $\tau\colon\Cc^2\times\CP^2\to\CP^2$ and
a map $\tau_z\colon\cp{\set0}\to\cp{\set z}$ given respectively by
\begin{equation*}
\begin{aligned}
\tau_z\bigl([x_1,x_2,x_3]\bigr)&=[x_1+z_1x_3,x_2+z_2x_3,x_3]\quad\text{and}\\
\tau_z\bigl([x_1,x_2,x_3],[y_1,y_2]\bigr)&=\bigl([x_1+z_1x_3,x_2+z_2x_3,x_3],[y_1,y_2]\bigr)\,.
\end{aligned}
\end{equation*}
The translation map $\tau_z$ allows us to identify,
for any $z\in\Cc^2$, the moduli space $\Mm_{\set z,k}$ with $M_{\set 0,k}$, through the
isomorphism
\begin{equation}\label{eq:defofpsi_z}
\psi_{\set z}=\tau_{-z}^*\circ\psi_{\set0}\colon M_{\set 0,k}\to\Mm_{\set z,k}\,.
\end{equation}
We have a commutative diagram
\begin{equation}\label{eq:pitau=taupi}
\xymatrix{
	M_{\set 0,k}\ar[r]^-{\psi_{\set0}}&
	\Mm_{\set 0,k}\ar[r]^-{\tau_{-z}^*} &
	\Mm_{\set z,k}\\
	M_{\emptyset,k}\ar[r]^-{\psi_\emptyset}&
	\Mm_{\emptyset,k}\ar[u]^{\pi_{\emptyset,\set 0}^*}\ar[r]^-{\tau_{-z}^*} &
	\Mm_{\emptyset,k}\ar[u]_{\pi_{\emptyset,\set z}^*}
}
\end{equation}
}%


\begin{proposition}\label{prop:propertiesofmonaddescription}
	Let $\Vv_1$, $\Vv_2$ be finite dimensional complex Hermitian vector spaces and let $J\subset\Cc^2$ be either $\emptyset$ or $\{0\}$.
	\begin{enumerate}
	\item Given a  linear isometry $\alpha\colon \Vv_1\to \Vv_2$, with dual $\alpha^*:\Vv_2\to \Vv_1$, 
	the isomorphisms in equation~\eqref{eq:defofpsi_emptysetpsi_0} take the induced maps $\Mm_{J,k}^{\Vv_1}\to \Mm_{J,k}^{\Vv_2}$
	to the maps $\alpha\colon M_{J,k}^{\Vv_1}\to M_{J,k}^{\Vv_2}$ given by
		\begin{align*}
			[a_1,a_2,b,c]&\mapsto[a_1,a_2,b\circ\alpha^*,\alpha\circ c] & (J&=\emptyset)\\
			[\tilde a_1,\tilde a_2,\tilde d,\tilde b,\tilde c]&\mapsto[\tilde a_1,\tilde a_2,\tilde d,\tilde b\circ\alpha^*,\alpha\circ \tilde c] & (J&=\{0\})
		\end{align*}
	\item The isomorphisms in equation~\eqref{eq:defofpsi_emptysetpsi_0} take
	Whitney sum $\Mm_{J,k_1}^{\Vv_1}\times \Mm_{J,k_2}^{\Vv_2}\to \Mm_{J,k_1+k_2}^{\Vv_1\oplus \Vv_2}$ to the maps
	$\omega\colon M_{J,k_1}^{\Vv_1}\times M_{J,k_2}^{\Vv_2}\to M_{J,k_1+k_2}^{\Vv_1\oplus \Vv_2}$ induced by direct sum:
		\begin{align*}
			\bigl([a_1,a_2,b,c],[a_1',a_2',b',c']\bigr)&\mapsto[a_1\oplus a_1',a_2\oplus a_2',b\oplus b',c\oplus c']& J&=\emptyset\\
			\bigl([\tilde a_1,\tilde a_2,\tilde d,\tilde b,\tilde c],[\tilde a_1',\tilde a_2',\tilde d',\tilde b',\tilde c']\bigr)
			&\mapsto[\tilde a_1\oplus \tilde a_1',\tilde a_2\oplus \tilde a_2',\tilde d\oplus \tilde d',\tilde b\oplus \tilde b',\tilde c\oplus \tilde c']& J&=\{0\}
		\end{align*}
        \item For each $z=(z_1,z_2)\in\Cc^2$, let $\tau_z\colon\CP^2\to\CP^2$ be the translation map defined by 
        \begin{equation}\label{eq:translationonCP2}
        \tau_z\bigl([x_1,x_2,x_3]\bigr)=[x_1+z_1x_3,x_2+z_2x_3,x_3]\,.
        \end{equation}
        Then the isomorphisms in equation~\eqref{eq:defofpsi_emptysetpsi_0} take
        the pullback map $\tau_z^*\colon \Mm_{\emptyset,k}^\Vv\to \Mm_{\emptyset,k}^\Vv$ to the map
        $\tau_z^*\colon M_{\emptyset,k}^\Vv\to M_{\emptyset,k}^\Vv$ given by 
		\[
		\tau_z^*\bigl([a_1,a_2,b,c]\bigr)=[a_1-z_1,a_2-z_2,b,c]\,.
		\]
        \item Let $\pi\colon\cp{\set0}\to\CP^2$ be the blowup map.
        The pullback map $\pi^*\colon\Mm_{\emptyset,k}^\Vv\to \Mm_{\set0,k}^\Vv$
        is taken by the isomorphisms in equation~\eqref{eq:defofpsi_emptysetpsi_0} to the map
        $\pi^*\colon M_{\emptyset,k}^\Vv\to M_{\set0,k}^\Vv$ given as follows: fix a vector space isomorphism $\chi_k\colon W_0\to W_1$; then
		\[
		\pi^*\bigl([a_1,a_2,b,c]\bigr)=[\chi_k^{-1}a_1,\chi_k^{-1}a_2,\chi_k,\chi_k^{-1}b,c]\,.  
		\]
	\end{enumerate}
\end{proposition}
\begin{proof}
	We prove (1) and (2) only for $J=\emptyset$ since the proof for $J=\set0$ is completely analogous. We'll use the notation $\Rc_W^\Vv$ instead of $\Rc_k$ (see Notation~\ref{remark:W=W_0orW=W_1}). 
	\begin{enumerate}
	\item We assume first that $\dim \Vv_1=\dim \Vv_2$.
          Given $r=(a_1,a_2,b,c)\in\Rc_{W}^{\Vv_1}$ let $\alpha(r)=(a_1,a_2,b\circ\alpha^*,\alpha\circ c)\in\Rc_{W}^{\Vv_2}$.
		We want to show that $[\Ee_{\alpha(r)},\phi_{\alpha(r)}]=[\Ee_r,(\alpha\times\ident)\circ\phi_r]$ (see Definition~\ref{def:omega,pi*,Malpha}(2)).
		We have a commutative diagram:
		\[
		\xymatrix@C=3.5em{
			W_1\ar[d]_-=\ar[r]^-{A_{r,x}}&
			W_1^2\oplus \Vv_1\ar[d]_{\cong}^{\Id\times\alpha}\ar[r]^-{B_{r,x}}&
			W_1\ar[d]_-=\\
			W_1\ar[r]^-{A_{\alpha(r),x}}&
			W_1^2\oplus \Vv_2\ar[r]^-{B_{\alpha(r),x}}&
			W_1
		}
		\]
		which induces an isomorphism $\psi_\alpha\colon\Ee_r\to\Ee_{\alpha(r)}$ and restricting to $L_\infty$ 
                we have $(\alpha\times\ident)\circ\phi_r=\phi_{\alpha(r)}\circ\psi_\alpha$ (see equation~\eqref{eq:trivializationphir}) so                
		\[
		[\Ee_r,(\alpha\times\ident)\circ\phi_r]=[\Ee_r,\phi_{\alpha(r)}\circ\psi_\alpha]=[\Ee_{\alpha(r)},\phi_{\alpha(r)}]\,.
		\]
                Now consider the general case. Any isometry $\alpha\colon \Vv_1\to \Vv_2$ is the composition of an isomorphism $\Vv_1\to\alpha(\Vv_1)$ and an inclusion
                $\alpha(\Vv_1)\to\alpha(\Vv_1)\oplus\alpha(\Vv_1)^\perp=\Vv_2$ whose induced map $M^{\alpha(\Vv_1)}_{J,k}\to\Mm^{\Vv_2}_{J,k}$ is given by direct sum with a trivial bundle so the result follows from (2).               
		\item Given $r\in\Rc_{W}^\Vv$ and $r'\in\Rc_{W'}^{\Vv'}$ let $r\oplus r'\in\Rc_{W\oplus W'}^{\Vv\oplus \Vv'}$ be the 4-tuple obtained by direct sum in each coordinate. Then from the definition
		of the linear maps $A$ and $B$ (equation~\eqref{eq:monadmatricesforP2}) it is straightforward to see that the complex
		\[
		W\oplus W'\xrightarrow{A_{r\oplus r',x}}(W\oplus W')^2\oplus(\Vv\oplus \Vv')\xrightarrow{B_{r\oplus r'}}W\oplus W'
		\]
		gives rise to an isomorphism $\Ee_{r\oplus r'}\cong\Ee_r\oplus\Ee_{r'}$.
	      \item For each $x\in\CP^2$ let $\tau_zx$ be as in equation~\eqref{eq:translationonCP2}. 
	      The map $\tau_z^*\colon\Rc_k\to\Rc_k$ defined by $\tau_z^*(a_1,a_2,b,c)=(a_1-z_1,a_2-z_2,b,c)$
                preserves the nondegeneracy conditions and the framing and from equation~\eqref{eq:monadmatricesforP2} we immediately get
		$A_{r,\tau_zx}=A_{\tau_z^*r,x}$ and $B_{r,\tau_zx}=B_{\tau_z^*r,x}$. It follows that
		$\tau_z^*\Ee_r=\Ee_{\tau_z^*r}$.
	      \item In \cite[Lemma 4.1]{BrSa00} it was shown that, if we take $W=W_0$ (see Notation~\ref{remark:W=W_0orW=W_1}) then pullback is given by the map
                $j(a_1,a_2,b,c)=\bigl(a_1\chi_k^{-1},a_2\chi_k^{-1},\chi_k,b,c\chi_k^{-1}\bigr)$. In this paper we take $W=W_1$. If we let
                $f\colon M_{\emptyset,k}^\Vv=\Rc_{W_1}^\Vv/GL(W_1)\to \Rc_{W_0}^\Vv/GL(W_0)$ be the isomorphism
                defined by \[f(a_1,a_2,b,c)=\bigl(\chi_k^{-1}a_1\chi_k,\chi_k^{-1}a_2\chi_k,\chi_k^{-1}b,c\chi_k\bigr)\] then                 
		$j\circ f(a_1,a_2,b,c)=(\chi_k^{-1}a_1,\chi_k^{-1}a_2,\chi_k,\chi_k^{-1}b,c)$, which finishes the proof.\qedhere
	\end{enumerate}
\end{proof}

\subsection{Completion of the moduli space}\label{subsec:completionofM}

From now on (except in Proposition~\ref{prop:mapsboxplus}) we work with a fixed finite dimensional complex Hermitian vector space $\Vv$, which we will omit from the notation.

We now drop the nondegeneracy conditions~\eqref{eq:monaddegenerateforP} and~\eqref{eq:monaddegeneratefortildeP},
and consider the geometric invariant theory quotients,
formed by identifying orbits whose closures intersect \cite[Remark 8.14]{Kir84}:
\[
\overline M_{\emptyset,k}=\Rc_k/\!/GL(W_1)\,,\qquad\overline M_{\set0,k}=\tilde\Rc_k/\!/\bigl(GL(W_0)\times GL(W_1)\bigr)\,.
\]

\begin{theorem*}[King]
	The spaces $\overline M_{\emptyset,k}$ and $\overline M_{\set0,k}$  
	are isomorphic to the Donaldson-Uhlenbeck completions $\overline\Mm_{\emptyset,k}$ and
	$\overline\Mm_{\set 0,k}$.
\end{theorem*}
\begin{proof}
	The statement for $\CP^2$ is proven in \cite[Theorem 5.2.7]{Kin89}.  The statement for $\cp{\set0}$ is proven in \cite[Theorem 5.3.7]{Kin89}.
\end{proof}

We briefly sketch how the isomorphisms $\psi_\emptyset$, $\psi_{\set0}$  in equation~\eqref{eq:defofpsi_emptysetpsi_0} extend to the completion.
For more details see \cite[section 5]{Kin89}. 
The Donaldson-Uhlenbeck completion
$\overline \Mm_{\emptyset,k}$ consists of pairs $([\Ee,\phi],\ell)$ where $[\Ee,\phi]\in \Mm_{\emptyset,j}$ for some $j\leq k$,
and $\ell\colon\Cc^2\to\Zz$ is a non-negative function which vanishes except at a finite number of points
$p_1,\dots,p_r$ and is such that $\sum_i\ell(p_i)=k-j$. Such a function $\ell$
determines an unordered $(k-j)$-tuple in $\Cc^2$, 
with each point $p_i$ appearing with multiplicity $\ell(p_i)$, which
in turn correspond to a point in the symmetric product $S^{k-j}\Cc^2$. So the Donaldson-Uhlenbeck completion has a natural stratification
\begin{equation}\label{eq:stratificationDonUhl}
\overline \Mm_{\emptyset,k}=\bigcup_{j=0}^k\Mm_{\emptyset,j}\times S^{k-j}\Cc^2\,.
\end{equation}
Any 4-tuple $r\in\Rc_k$ is equivalent to a direct sum $r^\reg\oplus r^\Delta$ where $r^\reg\in\Rc_j^\reg$
for some $0\leq j\leq k$, determining a point in $\Mm_{\emptyset,j}$, and $r^\Delta=(a_1^\Delta,a_2^\Delta,0,0)$ where $a_1^\Delta$, $a_2^\Delta$ are  $(k-j)\times(k-j)$ diagonal matrices.
For each simultaneous eigenvector, the eigenvalues of $a_1$, $a_2$ determine a point $p\in\Cc^2$.
The $k-j$ points thus obtained define an unordered $(k-j)$-tuple in $S^{k-j}\Cc^2$.

\begin{remark}\label{rmk:compactifications}
	If $r=r^{\text{reg}}\oplus r^\Delta$ 
	is a degenerate configuration then 
	$\Ee_r=\text{Ker}\,B_r/\text{Im}\,A_r$ is a coherent sheaf
	which is locally free except at a finite number of points.
	The holomorphic bundle $\Ee_{r^{\text{reg}}}$ is the double 
	dual of the sheaf $\Ee_r$ and 
	$\ell$ is the cycle associated to the zero-dimensional sheaf
	$\Ee_{r^{\text{reg}}}/\Ee_r$:
	at each point $p_i$ where
	$\Ee_r$ is not locally free, $\ell(p_i)=\dim_\Cc(\Ee_{r^{\text{reg}}}/\Ee_r)_{p_i}$.
\end{remark} 

The completion $\overline \Mm_{\set0,k}$ can be described in an analogous way to $\overline\Mm_{\emptyset,k}$: it is the space of pairs
$\bigl([\Ee,\phi],(p_1,\dots,p_{k-j})\bigr)$ with $[\Ee,\phi]\in \Mm_{\set0,j}$ and $(p_1,\dots,p_{k-j})$ an unordered
$(k-j)$-tuple in the blowup $\widetilde\Cc^2$ of $\Cc^2$ at the origin,
which we identify with the pairs of points
$\bigl((\lambda_1,\lambda_2),[\mu_1,\mu_2]\bigr)\in\Cc^2\times\mathbb P^1$
satisfying $\mu_1\lambda_1+\mu_2\lambda_2=0$.
Any 5-tuple $\tilde r\in\tilde\Rc_k$ is equivalent to a direct sum $\tilde r^\reg\oplus\tilde r^\Delta$ with
$\tilde r^\reg\in\tilde\Rc_j^\reg$ for some $j$, determining a point in $\Mm_{\set0,j}$, and
$\tilde r^\Delta=(\tilde a_1^\Delta,\tilde a_2^\Delta,\tilde d^\Delta,0,0)$ 
where $\tilde a_1^\Delta$, $\tilde a_2^\Delta$, $\tilde d^\Delta$ are  $(k-j)\times(k-j)$ diagonal matrices.
For each simultaneous eigenvector $v$ of 
$\tilde d^\Delta\tilde a^\Delta_1$ and $\tilde d^\Delta \tilde a^\Delta_2$, 
there is a unique $[\mu_1,\mu_2]\in\mathbb P^1$ such that
$\mu_1\tilde a^\Delta_1v+\mu_2\tilde a^\Delta_2v=0$ 
(this follows from condition~\eqref{eq:monadconditionfortildeP}) and
the eigenvalues $\lambda_i$ of $\tilde d^\Delta \tilde a^\Delta_i$ satisfy
$\mu_1\lambda_1+\mu_2\lambda_2=0$, determining a point
$\bigl((\lambda_1,\lambda_2),[\mu_1,\mu_2]\bigr)\in\widetilde\Cc^2$.
The $k-j$ points 
thus obtained define an unordered $(k-j)$-tuple in 
$S^{k-j}\widetilde\Cc^2$.

\subsection{Blowup}

Let $\pi\colon\tilde X\to X$ be the blowup of a complex surface $X$ at a point $z\in X$ and let $L=\pi^{-1}(z)$ be the exceptional divisor.
A holomorphic bundle on $\tilde X$ determines a holomorphic bundle on $X$: 

\begin{proposition}\label{prop:pi*veeveeexistsandisunique}
	Given a holomorphic bundle $\tilde \Ee$ over $\tilde X$
	there is a unique (up to isomorphism) holomorphic bundle $\Ee$ over $X$ such that $(\pi^*\Ee)|_{X\setminus L}\cong\tilde \Ee|_{\tilde X\setminus L}$.
\end{proposition}
\begin{proof}
	Uniqueness follows from Hartogs' Theorem: any bundle isomorphism over $X\setminus\set z$ can be extended to an isomorphism over $X$. To show existence
	we take $\Ee$ to be the double dual of the direct image sheaf: $\Ee=(\pi_*\tilde\Ee)^{\vee\vee}$.
	This sheaf is locally free \cite[Chapter 2, Proposition 25]{Fri98} and hence it is a holomorphic bundle.
	Moreover, we have $\bigl(\pi_*\tilde\Ee\bigr)^{\vee\vee}|_{X\setminus\set z}\cong\tilde \Ee|_{\tilde X\setminus L}$ as required.
\end{proof}

We represent the double dual of the direct image sheaf by
\begin{equation*}
\pi_*^{\vee\vee}\tilde\Ee=\bigl(\pi_*\tilde\Ee\bigr)^{\vee\vee}\,.
\end{equation*}

\begin{proposition}\label{prop:pibuletpi*=id}
	For any holomorphic bundle $\Ee$ over $X$ we have $\Ee\cong\pi_*\pi^*\Ee\cong\pi_*^{\vee\vee}\pi^*\Ee$.
\end{proposition}
\begin{proof}
	From \cite[Lemma 2.2(a)]{Buc00} it follows that $\pi_*\pi^*\Ee$ is locally free so $\pi_*\pi^*\Ee\cong\pi_*^{\vee\vee}\pi^*\Ee$.
	Since $\pi|_{\tilde X\setminus L}$ is an isomorphism we have
	$\Ee|_{X\setminus\set z}\cong\pi_*\pi^*\Ee|_{X\setminus \set z}$ 
	and hence, by uniqueness (Proposition~\ref{prop:pi*veeveeexistsandisunique}), we get $\Ee\cong \pi_*\pi^*\Ee$.
\end{proof}  

Now fix a finite set $J\subset \Cc^2$ and a point $x\notin J$, let $I=J\cup\set x$ and let $\pi\colon\cp{I}\to\cp{J}$
be the blowup at $x$.
We define a map $\pi_\bullet\colon \Mm_{I,k}\to\overline \Mm_{J,k}$ as follows: Let $\tilde\Ee$ be 
a holomorphic bundle over $\cp{I}$;
since $\pi|_{L_\infty}$ is an isomorphism,
a trivialization
$\tilde\phi$ of $\tilde\Ee|_{L_\infty}$ induces a trivialization
of $(\pi_*^{\vee\vee}\tilde\Ee)|_{L_\infty}$, 
which we represent by $\pi_*(\tilde\phi)$;
we define
\begin{equation}\label{eq:defofpibullet}
\pi_\bullet[\tilde\Ee,\tilde\phi]=\bigl([\pi_*^{\vee\vee}\tilde\Ee,\pi_*(\tilde\phi)],\ell\bigr)
\end{equation}
where $\ell(p)=0$ for $p\neq x$ and $\ell(x)=c_2(\tilde\Ee)-c_2(\pi_*^{\vee\vee}\tilde\Ee)\geq0$ (see \cite[Equation (2.10)]{Buc00}).
From Proposition~\ref{prop:pibuletpi*=id} it immediately follows that
\begin{equation}\label{eq:pibuletpi*=id}
\pi_\bullet\pi^*[\Ee,\phi]=[\Ee,\phi]\,.
\end{equation}

\begin{remark}
	The map $\pi_\bullet$ sends a holomorphic bundle $\tilde\Ee$
	to the point in the Donaldson-Uhlenbeck compactification 
	determined by the direct image sheaf 
	$\pi_*\tilde\Ee$: see Remark~\ref{rmk:compactifications}.
\end{remark}

\begin{proposition}\label{prop:caracterizationofpi*M}
	Let $[\tilde\Ee,\tilde\phi]\in\Mm_{I,k}$.
	The following are equivalent:
	\begin{enumerate}
		\item $\tilde\Ee|_L$ is the trivial bundle.
		\item $\pi_\bullet\bigl([\tilde\Ee,\tilde\phi]\bigr)\in\Mm_{J,k}$.
		\item $\tilde\Ee=\pi^*\bigl(\pi_*^{\vee\vee}\tilde\Ee\bigr)$.
	\end{enumerate}
\end{proposition}
\begin{proof}
	Statement (2) is equivalent to $c_2(\tilde\Ee)=c_2\bigl(\pi_*^{\vee\vee}\tilde\Ee\bigr)$ so it is clear that $(3)\Rightarrow(2)$
	and from \cite[Equation (2.10)]{Buc00} it follows that $(2)\Rightarrow(1)$. Finally,
	from \cite[Lemma 2.2(a)(c)]{Buc00}, it follows that $(1)\Rightarrow(3)$.
\end{proof}

\begin{corollary}\label{coro:pi_*pi^*}
	Let $I=\{x,y\}\subset\Cc^2$ and let $\Ee$ be a holomorphic
	bundle over $\cp{\set{y}}$. Then, with notation as in 
	Definition~\ref{def:omega,pi*,Malpha}(4),
	we have
	\[
	(\pi_{x,I})_*^{\vee\vee}\circ\pi_{y,I}^*\Ee\cong
	\pi_{\emptyset,x}^*\circ(\pi_{\emptyset,y})_*^{\vee\vee}\Ee\,.
	\]
\end{corollary}
\begin{proof}
	From Proposition~\ref{prop:pi*veeveeexistsandisunique} it follows that,
	given any holomorphic bundle $\tilde\Ee$ over $\cp{I}$, we have
	\[
	(\pi_{\emptyset,y})_*^{\vee\vee}\circ (\pi_{y,I})_*^{\vee\vee}\tilde\Ee
	\cong(\pi_{\emptyset,I})_*^{\vee\vee}\tilde\Ee\cong
	(\pi_{\emptyset,x})_*^{\vee\vee}\circ (\pi_{x,I})_*^{\vee\vee}\tilde\Ee\,.
	\]
	Since the bundle $(\pi_{x,I})_*^{\vee\vee}\circ\pi_{y,I}^*\Ee$
	is trivial on the exceptional divisor we get
	\begin{align*}
	(\pi_{x,I})_*^{\vee\vee}\circ\pi_{y,I}^*\Ee
	&\cong\pi_{\emptyset,x}^*\circ(\pi_{\emptyset,x})_*^{\vee\vee}
	\circ (\pi_{x,I})_*^{\vee\vee}\circ\pi_{y,I}^*\Ee&
	&\text{(Proposition~\ref{prop:caracterizationofpi*M})} \\
	&\cong\pi_{\emptyset,x}^*\circ(\pi_{\emptyset,y})_*^{\vee\vee}
	\circ (\pi_{y,I})_*^{\vee\vee}\circ\pi_{y,I}^*\Ee \\
	&\cong \pi_{\emptyset,x}^*\circ(\pi_{\emptyset,y})_*^{\vee\vee}\Ee &
	&\text{(Proposition~\ref{prop:pibuletpi*=id})} \qedhere
	\end{align*}
\end{proof}


\begin{proposition}\label{prop:formulaforpibullet}
  Let $\pi_\bullet\colon M_{\set0,k}\to \overline M_{\emptyset,k}$ be the map induced by the map in equation~\eqref{eq:defofpibullet} and by the isomorphisms in equation~\eqref{eq:defofpsi_emptysetpsi_0}.
  Then
  we have \[\pi_\bullet[a_1,a_2,d,b,c]=[da_1,da_2,db,c]\,.\] 
\end{proposition}
\begin{proof}
	For any $\tilde r\in\tilde\Rc_k^{\text{reg}}$, $\pi_\bullet\tilde r$ is nondegenerate
	away from the blowup point and
	$\pi^*\Ee_{\pi_\bullet\tilde r}$ restricted to 
	$\cp{\set0}\setminus L$ is isomorphic to 
	$\tilde\Ee_{\tilde r}$ restricted to $\cp{\set0}\setminus L$
	(see \cite{Kin89}, Proposition~6.1.1 and subsequent discussion).
	By Proposition~\ref{prop:pi*veeveeexistsandisunique},
	this implies that $\Ee_{\pi_\bullet\tilde r}$
	is isomorphic to $\pi_*^{\vee\vee}\tilde\Ee$, 
	which finishes the proof.
\end{proof}

\begin{definition}\label{def:S_0M_1}
  Let $S_0M_{\set0,1}=\bigl\{[\tilde r]\in M_{\set0,1}:\text{$\pi_*^{\vee\vee}\tilde\Ee_{\tilde r}$ is the trivial bundle}\bigr\}$.
\end{definition}

\begin{proposition}\label{prop:propertiesofS_0M_1}
	Let $\tilde r\in\tilde \Rc_1^\reg$. The following are equivalent:
	\begin{enumerate}
		\item $[\tilde r]\in S_0M_{\set 0,1}$\,.
		\item $\pi_{\bullet}[\tilde r]=0\in\Cc^2=\overline M_{\emptyset,1}\setminus M_{\emptyset,1}$.
		\item $[\tilde r]\notin\pi^*M_{\emptyset,1}$.
		\item $\tilde r=(a_1,a_2,0,b,c)$.
	\end{enumerate}
\end{proposition}
\begin{proof}
	The equivalence $(1)\Leftrightarrow(2)$ follows immediately from the definition of $S_0M_{\set 0,1}$ and the definition of $\pi_{\bullet}$.
	To show that $(1)\Leftrightarrow(3)$ note that $[\tilde r]\notin S_0M_{\set 0,1}$ is equivalent to
	$\pi_\bullet\tilde\Ee_{\tilde r}=\pi_*^{\vee\vee}\tilde\Ee_{\tilde r}\in\Mm_{\emptyset,1}$. 
	From Proposition~\ref{prop:pibuletpi*=id} it follows that $\tilde\Ee\in\pi^*\Mm_{\emptyset,1}\Rightarrow\pi_*^{\vee\vee}\tilde\Ee\in\Mm_{\emptyset,1}$
        so $(1)\Rightarrow(3)$; from 
	Proposition~\ref{prop:caracterizationofpi*M} it follows that $\pi_\bullet\tilde\Ee\in\Mm_{\emptyset,1}\Rightarrow\tilde\Ee\in\pi^*\Mm_{\emptyset,1}$
        so $(3)\Rightarrow(1)$. Now let $\tilde r=(a_1,a_2,d,b,c)$. Then
	$[\tilde r]\in\pi^*M_{\emptyset,1}$ is equivalent to $d$ being an isomorphism (see the beginning of section 3.6 in \cite{Kin89})
	which shows that $(3)\Leftrightarrow(4)$.
\end{proof}

\out{
	Let $\Rc^W$ be the space of 4-tuples $(a_1,a_2,b,c)$ with $a_i\in\text{End}(W)$, $b\in\text{Hom}(V,W)$ and $c\in\text{Hom}(W,V)$,
	satisfying~\eqref{eq:monadintegrabilityforP}.
	In \cite{BrSa} CHECK holomorphic bundles on $\CP^2$ were described in terms of 4-tuples $(a_1,a_2,b,c)$ with 
	$W=W_0$. Given an isomorphism $d\colon W_0\to W_1$, we have a map $\phi\colon\Rc^{W_1}\to\Rc^{W_0}$ defined by
	\[
	\phi(a_1,a_2,b,c)=(d^{-1}a_1d,d^{-1}a_2d,d^{-1}b,cd)\,.
	\]
	In \cite{BrSa} CHECK realized pullback as a map $j\colon\Rc^{W_0}\to\tilde\Rc$ defined by
	\[
	j(a_1,a_2,b,c)=(a_1d^{-1},a_2d^{-1},b,cd^{-1})\,.
	\]
	Then $j\circ\phi$ is the map we want.
	
	We turn to the proof of statement 2. See also \cite[Proposition 6.1.1]{Kin89}.
	It is enough to show that $\pi_\#\Ee$ restricted to $\CP^2\setminus\set0$ is isomorphic to $\Ee$ restricted to $\cp{\set0}\setminus L$.
	If $[x_1,x_2,x_3],[y_1,y_2]\notin L$ then $(x_1,x_2)\neq(0,0)$ so we can rescale so that $y_2=-x_1$ and $y_1=x_2$. 
	Let $g=\bigl(\begin{smallmatrix}1&0\\d&-1\end{smallmatrix}\bigr)$. Then
	\[
	\tilde A=\begin{pmatrix} a_1x_3 & x_1 \\ x_1-da_1x_3 & 0 \\ a_2x_3 & x_2 \\ x_2-da_2x_3 & 0 \\ cx_3 & 0 \end{pmatrix}\,,\quad
	g\tilde B=\begin{pmatrix} x_2 & a_2x_3 & -x_1 & -a_1x_3 & bx_3 \\ 0 & -x_2+da_2x_3 & 0 & x_1-da_1x_3 & dbx_3 \end{pmatrix}
	\]
	and $\text{Ker}\,\tilde B=\text{Ker}\,g\tilde B$.
	Let $p\colon W_0\oplus W_1\to W_1$ be the projection and let $i\colon W_0\to W_0\oplus W_1$ be the inclusion. 
	Let $A_0=\bigl(\begin{smallmatrix}x_1\\x_2\end{smallmatrix}\bigr)$ and let $B_0=\bigl(\begin{smallmatrix}x_2&-x_1\end{smallmatrix}\bigr)$.
	We have a commutative diagram
	\[
	\xymatrix{
		W_0\ar[d]_{i}\ar[r]^-{A_0}&
		W_0^2\ar[d]^{i\times i\times0}\ar[r]^-{B_0}&
		W_0\ar[d]^{i}\\
		W_0\oplus W_1\ar[r]^-{\tilde A_m}\ar[d]_{p}&
		(W_0\oplus W_1)^2\oplus V\ar[r]^-{g\tilde B_m}\ar[d]^{p\times p\times\Id}&
		W_0\oplus W_1\ar[d]^{p}\\
		W_1\ar[r]^-{A_{\pi_\#m}}&W_1^2\oplus V\ar[r]^-{B_{\pi_\#m}}& W_1
	}
	\]
	Since the top row is exact, we get
	an induced isomorphism map $p_*\colon\tilde\Ee_m\to\Ee_{\pi_\#m}$.
}

\subsection{Blowups at points \boldmath $z\neq0$}

We identify the blow up $\cpp z$ of $\CP^2$ at a point $z=(z_1,z_2)\in\Cc^2$ with the subvariety of
$\CP^2\times\CP^1$ consisting of the pairs $\bigl([x_1,x_2,x_3],[y_1,y_2]\bigr)$ such that $(x_1-z_1)y_1+(x_2-z_2)y_2=0$.
Translation in $\Cc^2$ induces an action $\tau\colon\Cc^2\times\CP^2\to\CP^2$ and
an isomorphism $\tau_z\colon\cp{\set0}\to\cp{\set z}$ 
given respectively by
\begin{equation*}
  \begin{aligned}
    \tau_z\bigl([x_1,x_2,x_3]\bigr)&=[x_1+z_1x_3,x_2+z_2x_3,x_3]\quad\text{and}\\
    \tau_z\bigl([x_1,x_2,x_3],[y_1,y_2]\bigr)&=\bigl([x_1+z_1x_3,x_2+z_2x_3,x_3],[y_1,y_2]\bigr)\,.
  \end{aligned}
\end{equation*}
Let $\psi_{\set0}\colon M_{\set0,k}\to\Mm_{\set0,k}$ be as in equation~\eqref{eq:defofpsi_emptysetpsi_0}.
The translation map $\tau_z$ allows us to identify,
for any $z\in\Cc^2$, the moduli space $\Mm_{\set z,k}$ with $M_{\set 0,k}$, through the
isomorphism
\begin{equation}\label{eq:defofpsi_z}
  \psi_{\set z}=(\tau_{z}^{-1})^*\circ\psi_{\set0}\colon M_{\set 0,k}\to\Mm_{\set z,k}\,.
\end{equation}

Let $\pi_z\colon\cp{z}\to\CP^2$ be the blowup at $z\in\Cc^2$ and let $\pi=\pi_0$ be the blowup at $0$. Then
\[
\pi_{z}^*\circ\tau_{-z}^*=(\tau_{z}^{-1})^*\circ\pi^*\quad \text{and}\quad
\pi_{z*}^{\vee\vee}\circ(\tau_{z}^{-1})^*=\tau_{-z}^*\circ\pi_*^{\vee\vee}\,.
\]



\subsection{The monad description of the gluing maps}

For $k=1$ we have $\dim W_0=\dim W_1=1$ so there is a canonical isomorphism $\text{End}(W_1)\cong\Cc$. Consider the maps
$f\colon \overline M_{\emptyset,1}\to\Cc$ and $\tilde f\colon \overline M_{\set0,1}\to\Cc$ defined by
\begin{equation}\label{eq:f:M->C^2}
f\bigl([a_1,a_2,b,c]\bigr)=a_1\quad\text{and}\quad \tilde f\bigl([a_1,a_2,d,b,c]\bigr)=da_1\,.
\end{equation}
It follows from Proposition~\ref{prop:propertiesofmonaddescription}(4)
that $\tilde f\circ\pi^*=f$.

\out{%
These maps extend to maps $\Mm_{I,1}\to\Cc$
in the following sense:

\begin{proposition}\label{prop:f_I:M->C}
	There is, for each finite set $I\subset\Cc^2$, a unique map $f_I\colon\Mm_{I,1}\to\Cc$ with the property that $f_\emptyset=f\circ\psi_{\emptyset}^{-1}$ and
	$f_J=f_I\circ\pi_{J,I}^*$ for any $J\subset I$.
\end{proposition}
\begin{proof}
  Uniqueness follows because $\pi_{\emptyset,I}^*\Mm_{\emptyset,1}$ is dense in $\Mm_{I,1}$. Let $z=(z_1,z_2)\in\Cc^2$ and
  let $\psi_\emptyset,$ $\psi_{\set z}$ be as in equations~\eqref{eq:defofpsi_emptysetpsi_0} and~\eqref{eq:defofpsi_z}.
  We define $f_\emptyset=f\circ\psi_{\emptyset}^{-1}$ and $f_{z}=\bigl(\tilde f\circ\psi_{\set z}^{-1}\bigr)+z_1$. Then $f_{z}\circ\pi_{\emptyset,z}^*=f_\emptyset$.
  Since the open sets $\{\pi_{z,I}^*\Mm_{z,1}^V\}_{z\in I}$
  form an open cover of $\Mm_{I,1}^V$ (Lemma~\ref{lemma:theo1nerve}), 
  the collection of maps $\{f_{z}\}_{z\in I}$ determines the map $f_I$.
\end{proof}
}%


\begin{definition}\label{def:boxplustboxplus}
	Let $D=\{(m,m')\in\overline M_{\emptyset,1}\times\overline M_{\emptyset,1}:f(m)\neq f(m')\}$.
	We define $\boxplus\colon D\to\overline M_{\emptyset,2}$ by
	\begin{multline}\label{eq:defOfboxplus}
		[a_1,a_2,b,c]\boxplus[a_1',a_2',b',c']\\
		=\left[
		\begin{pmatrix}a_{1}&0\\0&a_{1}'\end{pmatrix},
		\begin{pmatrix}a_{2}&\frac{bc'}{a_{1}'-a_{1}}\\ \frac{b'c}{a_{1}-a_{1}'}&a_{2}'\end{pmatrix},
		\begin{pmatrix}b\\b'\end{pmatrix},
		\begin{pmatrix}c&c'\end{pmatrix}
		\right]\,.
	\end{multline}
        Let
	\begin{align*}
		\overline M_{\emptyset,1}^{\neq 0}&=\{[a_1,a_2,b,c]\in\overline M_{\emptyset,1}:[a_1,a_2,b,c]\in M_{\emptyset,1}\text{ or }(a_1,a_2)\neq(0,0)\}\,,\\
		\tilde D&=\{(\tilde m,m)\in \overline M_{\set0,1}\times\overline M_{\emptyset,1}^{\neq 0}:\tilde f(\tilde m)\neq f(m)\}\,.
	\end{align*}
	Let $\tboxplus\colon\tilde D\to\overline M_{\set0,2}$  be defined as follows (where $\chi_1$ is as in Notation~\ref{remark:W=W_0orW=W_1}): 
	\begin{multline}\label{eq:defOftboxplus}
		[\tilde a_1,\tilde a_2,\tilde d,\tilde b,\tilde c]\tboxplus[a_{1},a_{2},b,c]\\
		=\left[
		\begin{pmatrix}\tilde a_{1}&0\\0&\chi_1^{-1}a_1\end{pmatrix},
		\begin{pmatrix}\tilde a_{2}&\frac{\tilde bc}{a_1-\tilde d\tilde a_{1}}\\ \frac{\chi_1^{-1}b\tilde c}{\tilde d\tilde a_{1}-a_{1}}&\chi_1^{-1}a_2\end{pmatrix},
		\begin{pmatrix}\tilde d&0\\0&\chi_1\end{pmatrix},
		\begin{pmatrix}\tilde b\\\chi_1^{-1}b\end{pmatrix},
		\begin{pmatrix}\tilde c&c\end{pmatrix}
		\right]\,.
	\end{multline}
\end{definition}

\begin{proposition}\label{prop:propertiesofboxplus}
	The maps $\boxplus$ and $\tboxplus$ are well defined and we have:
	\begin{enumerate}
		\item The map $\boxplus$ is commutative.
		\item Let $\widetilde\Cc^2$ be the blowup of $\Cc^2$ at the origin. If we write $\overline M_{\emptyset,1}=M_{\emptyset,1}\cup \Cc^2$,
                  $\overline M_{\emptyset,1}^{\neq0}=M_{\emptyset,1}\cup (\Cc^2\setminus\{0\})$ and 
		$\overline M_{\set0,1}=M_{\set0,1}\cup\widetilde\Cc^2$
		(see the stratification in equation~\eqref{eq:stratificationDonUhl}) then the restriction of $\boxplus$ and $\tboxplus$ to the strata indicated below is given by the canonical maps:
		\begin{align*}
			\boxplus&\colon \bigl(M_{\emptyset,1}\times\Cc^2\bigr)\cap D\to M_{\emptyset,1}\times\Cc^2\subset\overline M_{\emptyset,2}\\
			\boxplus&\colon\bigl(\Cc^2\times\Cc^2\bigr)\cap D\to S^2\Cc^2\subset\overline M_{\emptyset,2}\\
			\tboxplus&\colon \bigl(M_{\set0,1}\times\Cc^2\setminus\set0\bigr)\cap \tilde D\to M_{\set0,1}\times\widetilde\Cc^2\subset \overline M_{\set0,2}\\
			\tboxplus&\colon \bigl(\widetilde\Cc^2\times M_{\emptyset,1}\bigr)\cap\tilde D\xrightarrow{\ident\times\pi^*}\widetilde\Cc^2\times M_{\set0,1}\subset\overline M_{\set0,2}\\
			\tboxplus&\colon \bigl(\widetilde\Cc^2\times \Cc^2\setminus\set0\bigr)\cap \tilde D\to \widetilde\Cc^2\times\widetilde\Cc^2\to S^2\widetilde\Cc^2\subset\overline M_{\set0,2}
		\end{align*}
		\item Given $(m,m')\in D$ we have $\tau_z^*(m\boxplus m')=(\tau_z^* m)\boxplus(\tau_z^*m')$.
		\item Given $(m,m')\in D$ we have $\pi^*(m\boxplus m')=(\pi^*m)\tboxplus m'$.
		\item Given $(\tilde m,m)\in\tilde D$ we have $\pi_{\bullet}(\tilde m\tboxplus m)=(\pi_{\bullet}\tilde m)\boxplus m$.
	\end{enumerate}
\end{proposition}
\begin{proof}
  First we check that $\tboxplus$ is well defined. Let $\tilde r\in\tilde\Rc_1$ and $r\in\Rc_1$ be such that $\bigl([\tilde r],[r]\bigr)\in\tilde D$, let
  $g,g_1\in GL(W_1)$ and let $g_0\in GL(W_0)$. Then, by equations~\eqref{eq:GLactiononR} and~\eqref{eq:defOftboxplus} we get
  \begin{equation}\label{eq:tboxplusisequivariant}
    \bigl((g_0,g_1)\cdot\tilde r\bigr)\tboxplus(g\cdot r)
    =\Bigl(\bigl(\begin{smallmatrix}g_0&0\\0&\chi_1^{-1} g\chi_1\end{smallmatrix}\bigr),\bigl(\begin{smallmatrix}g_1&0\\0&g\end{smallmatrix}\bigr)\Bigr)\cdot(\tilde r\tboxplus r)
  \end{equation}
  which shows $\tboxplus$ is well defined in the quotient. 
  We also need to check that $[\tilde r]\tboxplus [r]$ satisfies 
  conditions~\eqref{eq:monadconditionfortildeP} and~\eqref{eq:monadintegrabilityfortildeP}. The integrability condition is straightforward. Condition~\eqref{eq:monadconditionfortildeP} is trivially satisfied if
  $\tilde a_1a_1\neq0$ so assume $\tilde a_1a_1=0$. Since 
  $\tilde f(\tilde m)\neq f(m)$ we cannot have $\tilde a_1=a_1=0$.
  If $\tilde a_1=0$ then $a_1\neq 0$ and since $[\tilde r]$
  satisfies condition~ \eqref{eq:monadconditionfortildeP},
  either $\tilde a_2\neq0$ or $\tilde b\neq0$. In either case
  $[\tilde r]\tboxplus [r]$ satisfies 
  condition~\eqref{eq:monadconditionfortildeP}.
  If $a_1=0$ then $\tilde a_1\neq0$ and since
  $[r]\in \overline M_{\emptyset,1}^{\neq 0}$,
  either $a_2\neq0$ or $b\neq0$. In either case 
  condition~\eqref{eq:monadconditionfortildeP} is satisfied.
  This shows that $\tboxplus$ is well defined.
  The proof that $\boxplus$ is well defined is completely analogous.
	Statements (3), (4) and (5) follow easily from Propositions~\ref{prop:propertiesofmonaddescription}(3)(4) and Proposition~\ref{prop:formulaforpibullet} if
	$m,m'\in M_{\emptyset,1}$ and $\tilde m\in M_{\{0\},1}$,
	  and the general case follows by continuity since 
	  $M_{\emptyset,1}\subset\overline M_{\emptyset,1}$
	  and $M_{\{0\},1}\subset\overline M_{\{0\},1}$ are dense.
	We prove (1) and (2):
	\begin{enumerate}
	\item Let $r,r'\in\Rc_1$ be such that $[r],[r']\in D$ and let $g=\bigl(\begin{smallmatrix}0&1\\1&0\end{smallmatrix}\bigr)\in GL(W_1)$.
                Then $g^{-1}(r\boxplus r')g=r'\boxplus r$
		so, in the quotient, $[r]\boxplus [r']=[r']\boxplus [r]$.
		\item We only prove the statement for $\boxplus$ since the proof for $\tboxplus$ is entirely analogous.
		Suppose $r'=(a_1',a_2',,b',c')\notin\Rc_1^\reg$. Then we must have either $b'=0$ or $c'=0$. Assume $c'=0$
(the proof for the case $b'=0$ is completely analogous). Let $r=(a_1,a_2,b,c)$
		and for each $\varepsilon>0$ let $g=\bigl(\begin{smallmatrix}1&0\\0&\varepsilon\end{smallmatrix}\bigr)\in GL(W_1)$. Then
		\[
		g\cdot(r\tboxplus r')
		=\left(
		\begin{pmatrix} a_{1}&0\\0&a_1'\end{pmatrix},
		\begin{pmatrix} a_{2}&0\\ \varepsilon\frac{b' c}{ a_1-a_1'}&a_2'\end{pmatrix},
		\begin{pmatrix} b\\\varepsilon b'\end{pmatrix},
		\begin{pmatrix} c&0\end{pmatrix}
		\right)\,.
		\]
		Taking the limit when $\varepsilon\to0$ we get $r\boxplus r'\sim(a_1,a_2,b,c)\oplus(a_1',a_2',0,0)$
		which corresponds to $\bigl([r],(a_1',a_2')\bigr)\in\overline M_{\emptyset,2}$ if $r\in\Rc_1^\reg$ or $\bigl((a_1,a_2),(a_1',a_2')\bigr)\in S^2\Cc^2$ if $r\notin\Rc_1^\reg$
                (see section~\ref{subsec:completionofM}).\qedhere
	\end{enumerate}
\end{proof}

\subsection{The spaces \boldmath $\Mm_I^{\Vv,U}$}

In order to extend the maps $\boxplus$, $\tboxplus$ in Definition~\ref{def:boxplustboxplus} to the moduli spaces $\Mm_{I,1}$
we will replace these moduli spaces by homeomorphic subspaces.
Let $\pi_{J,I}^*$ be as in Definition~\ref{def:omega,pi*,Malpha}(4). Identify the spaces $\Mm_\emptyset$ and $M_\emptyset$ through the isomorphism
$\psi_\emptyset$ (see equation~\eqref{eq:defofpsi_emptysetpsi_0}).
  The maps $f$ and $\tilde f$ in equation~\eqref{eq:f:M->C^2} extend to maps $\Mm_{I,1}\to\Cc$
  in the following sense:

  \begin{proposition}\label{prop:f_I:M->C}
    There is, for each finite set $I\subset\Cc^2$, a unique continuous map $f_I\colon\Mm_{I,1}\to\Cc$ with the property that $f_\emptyset=f$ and
    $f_J=f_I\circ\pi_{J,I}^*$ for any $J\subset I$.
  \end{proposition}
  \begin{proof}
    Uniqueness follows because $\pi_{\emptyset,I}^*\Mm_{\emptyset,1}$ is dense in $\Mm_{I,1}$. Let $z=(z_1,z_2)\in\Cc^2$ and
    let $\psi_{\set z}$ be as in equation~\eqref{eq:defofpsi_z}.
    We define $f_{z}=\bigl(\tilde f\circ\psi_{\set z}^{-1}\bigr)+z_1$. Then $f_{z}\circ\pi_{\emptyset,z}^*=f_\emptyset$.
    The open sets $\{\pi_{z,I}^*\Mm_{z,1}\}_{z\in I}$
    form an open cover of $\Mm_{I,1}$ and 
    the collection of maps $\{f_{z}\}_{z\in I}$ is compatible 
    in the intersections (see Lemma~\ref{lemma:theo1nerve})
    so $\{f_{z}\}_{z\in I}$ defines the required map $f_I$.
  \end{proof}


\begin{definition}\label{def:UgoodrelI}
  We say an open set $U\subset \Cc$ is a Jordan open set if $U$ is the bounded region inside a Jordan curve.

  Let $U\subset\Cc$ be a Jordan open set and let $I\subset U\times\Cc$ be a finite set. Let $\Vv$ be a finite dimensional complex Hermitian vector space. Then we define:
  \begin{equation}\label{eq:DefOfM^U}
    \Mm_{I,1}^{\Vv,U}=f_I^{-1}(U)\,,\qquad
    \Mm_{I,k}^{\Vv,U}=\Mm_{I,k}^\Vv\quad\text{(for $k\neq 1$).}
  \end{equation}
\end{definition}

Except for Proposition~\ref{prop:mapsboxplus},
we omit the vector space $\Vv$ from the notation.

\begin{proposition}\label{prop:MUishomeoandmoretoM}
Let $U$, $I$ be as in Definition~\ref{def:UgoodrelI}.
	Then there is a homeomorphism $\Mm_I^{U}\xrightarrow{\cong}\Mm_I$ which is homotopic to the inclusion map.
\end{proposition}
\begin{proof}
	The statement is a tautology in degrees $k\neq 1$ so let $k=1$. 
	Let $p\colon\Cc^2\to\Cc$ be projection onto the first factor.
        First we show that there is a neighbourhood $U'$ of $p(I)$ in $U$ and a homotopy $H\colon U\times [0,1]\to\Cc$
        between the inclusion $U\to \Cc$ and a homeomorphism $U\cong\Cc$ such that $H(z,t)=z$ for all $z\in U'$.
        By the Schoenflies Theorem there is a homeomorphism $h\colon\Cc\to\Cc$ such that $h(U)=D$ is the open unit disk.
        We let $H=h^{-1}\circ H_D\circ(h\times\ident)$ where $H_D\colon D\times[0,1]\to\Cc$ is a homotopy between the inclusion and a homeomorphism $D\cong\Cc$ which is the identity
        on a smaller disk $D'\subset D$ containing $h\bigl(p(I)\bigr)$. Then $H$ has the required properties, with $U'=h^{-1}(D')$.

        Identifying $M_{\emptyset,1}$ with $\Mm_{\emptyset,1}$,
        we define the homotopy $H_\emptyset\colon \Mm_{\emptyset,1}^{U}\times[0,1]\to \Mm_{\emptyset,1}$ by $H_\emptyset\bigl([a_1,a_2,b,c],t\bigr)=\bigl[H(a_1,t),a_2,b,c\bigr]$.
        Then:
        \begin{equation}\label{eq:Hemptyset=idonU'}
          m\in \Mm_{\emptyset,1}^{U'}\ \Rightarrow\ H_\emptyset(m,t)=m\,.
        \end{equation}
        For each $z\in I$ let $\pi_z^*\colon \Mm_{\emptyset,1}\to\Mm_{z,1}$ and $\pi_{z\bullet}\colon\Mm_{z,1}\to \overline \Mm{}_{\emptyset,1}$ be the maps induced by the projection.
        From Proposition~\ref{prop:propertiesofS_0M_1}(4), if 
        $[\tilde r]\in S_0 M_{\set 0,1}$ then $f_{\{0\}}([\tilde r])=0$ so
        $S_0 M_{\set 0,1}\subset M_{\set 0,1}^{U'}$.
        Let $S_0\Mm_{z,1}\subset\Mm_{z,1}$ be the image of 
        $S_0 M_{\set 0,1}$ under the isomorphism 
        $\psi_{\set z}\colon M_{\set 0,1}\to \Mm_{z,1}$. Then
        $S_0\Mm_{z,1}\subset \Mm_{z,1}^{U'}$ and
        from Proposition~\ref{prop:propertiesofS_0M_1}(3) we have $\pi_z^*\Mm_{\emptyset,1}^{U}\cup S_0\Mm_{z,1}=\Mm_{z,1}^{U}$ so:
        \[
        \pi_z^*\Mm_{\emptyset,1}^{U}\cup \Mm_{z,1}^{U'}=\Mm_{z,1}^{U}\,.
        \]
        Define $\tilde H_z\colon \Mm_{z,1}^{U}\times[0,1]\to \Mm_{z,1}$ by
	\[
\tilde H_z(\tilde m,t)=
\begin{cases}\pi_z^*\circ H_\emptyset(\pi_{z\bullet}\tilde m,t)& \tilde m\in\pi_z^*\Mm_{\emptyset,1}^{U}\,;\\ \tilde m & \tilde m\in \Mm_{z,1}^{U'}\,.\end{cases}
	\]
	We need to check that $\tilde H_z$ is well defined and continuous. 
        From equation~\eqref{eq:pibuletpi*=id} it follows that, for any $m\in \Mm_{\emptyset,1}^{U}$ we have:
        \begin{equation}\label{eq:tildeHcircpi*=pi*circH}
          \tilde H_z(\pi_z^*m,t)=\pi_z^*H_\emptyset(m,t)\,.
        \end{equation}
        If $\tilde m=\pi_z^*m\in\pi_z^*\Mm_{\emptyset,1}^{U}\cap \Mm_{z,1}^{U'}$ then $m\in \Mm_{\emptyset,1}^{U'}$ so,
        by equations~\eqref{eq:Hemptyset=idonU'} and~\eqref{eq:tildeHcircpi*=pi*circH}:
        \[
        \tilde H_z(\pi_z^*m,t)=\pi_z^*H_\emptyset(m,t)=\pi_z^*m
        \]
        and $\tilde H_z$ is well defined.
        By Lemma~\ref{lemma:theo1nerve} the set $\pi_z^*\Mm_{\emptyset,1}^{U}$ is open, and $\Mm_{\set0,1}^{U'}$ is clearly open so $\tilde H_z$ is continuous.
        From equation~\eqref{eq:tildeHcircpi*=pi*circH} it follows that the homotopies $\{H_z\}_{z\in I}$  patch together to define the homotopy $H_I\colon\Mm_{I,1}^{U}\times[0,1]\to\Mm_{I,1}$.
\end{proof}

\begin{proposition}\label{prop:boxplusisembedding}
	Let $U_1, U_2\subset\Cc$ be disjoint open sets. Then: 
	\begin{enumerate}
		\item The map $\boxplus\colon \overline M_{\emptyset,1}^{U_1}\times \overline M_{\emptyset,1}^{U_2}\to \overline M_{\emptyset,2}$ is a homeomorphism onto the subspace
		\[
		N_0=\{[a_1,a_2,b,c]\in \overline M_{\emptyset,2}:\text{each $U_i$ contains exactly one eigenvalue of $a_1$}\}\,.
		\]
		\item If $0\in U_1$ then the map $\tboxplus\colon \overline M_{\set0,1}^{U_1}\times \overline M_{\emptyset,1}^{U_2}\to \overline M_{\set0,2}$ is a homeomorphism onto the subspace
		\[
		N_1=\{(a_1,a_2,d,b,c)\in \overline M_{\set0,2}:\text{each $U_i$ contains exactly one eigenvalue of $da_1$}\}\,.
		\]
	\end{enumerate}
\end{proposition}
\begin{proof}
	This is proven in \cite[Proposition 4.5(1)]{San05} in the non-degenerate case and when $U_1$ and $U_2$ are disjoint balls but the exact same proof works in our case.
	We sketch how the inverse of the map $\tboxplus$ is constructed. Let
	$\tilde m=[\tilde a_1,\tilde a_2,\tilde d,\tilde b,\tilde c]\in N_1$. Then the eigenvalues of $\tilde d\tilde a_1$ are distinct. Fix an eigenvector basis for $\tilde d\tilde a_1$. Then each eigenvector $v$ determines, up to rescaling, an eigenvector of
	 $\tilde a_1\tilde d$ with the same eigenvalue: if $\tilde a_1v\neq0$, then we can take $w=\tilde a_1v$; if 
	 $\tilde a_1v=0$ then we choose $w$ generating the kernel of 
	 $\tilde a_1\tilde d$.
	 With this choice of basis, $\tilde m$ is of the form
        \begin{equation}\label{eq:proofofembedding}
        \begin{pmatrix}\tilde a_{1}'&0\\0&\chi_1^{-1}a_1'\end{pmatrix},\ 
          \begin{pmatrix}\tilde a_{2}'&\frac{\tilde b'c'}{a_1'-\tilde d'\tilde a_{1}'}\\ \frac{\chi_1^{-1}b'\tilde c'}{\tilde d'\tilde a_{1}'-a_{1}'}&\chi_1^{-1}a_2'\end{pmatrix},\ 
            \begin{pmatrix}\tilde d'&0\\0&\chi_1\end{pmatrix},\ 
              \begin{pmatrix}\tilde b'\\\chi_1^{-1}b'\end{pmatrix},\ 
                \begin{pmatrix}\tilde c'&c'\end{pmatrix}                  
        \end{equation}
        (compare with equation~\eqref{eq:defOftboxplus}) and the action of
	$GL(W_0)\times GL(W_1)$ reduces to the action of pairs of the form
	$\bigl(\bigl(\begin{smallmatrix}g_0&0\\0&\chi_1^{-1}g\chi_1\end{smallmatrix}\bigr),\bigl(\begin{smallmatrix}g_1&0\\0&g\end{smallmatrix}\bigr)\bigr)$
            (compare with equation~\eqref{eq:tboxplusisequivariant}). From the matrices in equation~\eqref{eq:proofofembedding}
            we can recover $[\tilde a_1',\tilde a_2',\tilde d',\tilde b',\tilde c']\in\overline M_{\set0,1}$ and
        $[a_1',a_2',b',c']\in\overline M_{\emptyset,1}$.
\end{proof}

\subsection{The product maps}

Given disjoint finite sets $I_1$, $I_2$ and disjoint open sets $U_1$, $U_2$ containing them
we wish to define maps $\boxplus_{I_1,I_2}\colon\Mm_{I_1,1}^{U_1}\times\Mm_{I_2,1}^{U_2}\to\Mm_{I_1\cup I_2,2}$.
Since the space $\Mm_{I,1}$ has an open cover $\{\pi_{x,I}^*\Mm_{x,1}\}_{x\in I}$,
we only need to define the map in the cases when
$I_1$ and $I_2$ have 1 or less elements and check the compatibility
on the intersections.
Let $\psi_{\set z}$ be as in equation~\eqref{eq:defofpsi_z}. Identifying $M_\emptyset$ with $\Mm_\emptyset$, we define:
\begin{equation}\label{eq:defofboxplus_I}
\boxplus_{\emptyset,\emptyset}=\boxplus\,,\quad
\boxplus_{z,\emptyset}=\psi_{\set z}\circ\tboxplus\circ(\psi_{\set z}^{-1}\times\tau_z^*)\,,\quad
\boxplus_{\emptyset,z}=\boxplus_{z,\emptyset}\circ\sigma
\end{equation}
where $\sigma$ is the permutation $\sigma(m_\emptyset,m_z)=(m_z,m_\emptyset)$.
Then it follows from Proposition~\ref{prop:propertiesofboxplus}(3)(4) that $(\pi_{\emptyset,z}^* m)\boxplus_{z,\emptyset} m'=\pi_{\emptyset,z}^*(m\boxplus_{\emptyset,\emptyset}m')$.
Next we consider the case where $I_1$ and $I_2$ are both singletons:

\begin{proposition}\label{app:thm:boxplusI-newversion}
	Let $U_1,U_2\subset\Cc$ be disjoint open sets, let $x\in U_1\times\Cc$, $y\in U_2\times\Cc$ and let $I=\{x,y\}$.
	There is an open embedding $\boxplus_{x,y}:\Mm_{x,1}^{U_1}\times \Mm_{y,1}^{U_2}\to \Mm_{I,2}$ such that the following diagram commutes:
	\[
	\xymatrix@C=4em{
		\Mm_{x,1}^{U_1}\times\Mm_{\emptyset,1}^{U_2}\ar[r]^-{\ident\times\pi_{\emptyset,y}^*}\ar[d]^{\boxplus_{x,\emptyset}}&
		\Mm_{x,1}^{U_1}\times\Mm_{y,1}^{U_2}\ar[d]^{\boxplus_{x,y}}&
		\Mm_{\emptyset,1}^{U_1}\times\Mm_{y,1}^{U_2}\ar[l]_-{\pi_{\emptyset,x}^*\times\ident}\ar[d]^{\boxplus_{\emptyset,y}}\\
		\Mm_{x,2}\ar[r]^-{\pi_{x,I}^*}&
		\Mm_{I,2}&
		\Mm_{y,2}\ar[l]_-{\pi_{y,I}^*}
	}
	\]    
        Furthermore, 
        the image of $\boxplus_{x,y}$ is given by
        \begin{equation*}
        \mathrm{Im}\,\boxplus_{x,y}=\bigl(\pi_{x,I}^*N_x\cup\pi_{y,I}^*N_y\cup C\bigr)\cap\Mm_{I,2}
        \end{equation*}
        where $N_x$, $N_y$ are the images of $\boxplus_{x,\emptyset}$ and $\boxplus_{\emptyset,y}$ respectively,
        and $C$ is the complement of $\pi_{x,I}^*\Mm_{x,2}\cup\pi_{y,I}^*\Mm_{y,2}$ in $\Mm_{I,2}$.
\end{proposition}
\begin{proof}
	The proof follows the same lines as the proof of Proposition~4.9 in \cite{San05}. 
	Given $\tilde m_x\in\Mm_{x,1}^{U_1}$ and $\tilde m_y\in\Mm_{y,1}^{U_2}$ we define $m=\tilde m_x\boxplus_{x,y} \tilde m_y$ 
	as the unique solution of the system of equations:
	\begin{align}
		(\pi_{x,I})_\bullet m
		&=\tilde m_x\boxplus_{x,\emptyset}(\pi_{\emptyset,y})_\bullet\tilde m_y\label{eq:defofboxplusxy-eq1}\\
		(\pi_{y,I})_\bullet m
		&=(\pi_{\emptyset,x})_\bullet\tilde m_x\boxplus_{\emptyset,y}\tilde m_y\label{eq:defofboxplusxy-eq2}
	\end{align}
	We need to prove existence and uniqueness of solution. We have two cases (see Proposition~\ref{prop:propertiesofS_0M_1}(3)):
	\begin{enumerate}
		\item Suppose that 
		$\tilde m_x=\pi_{\emptyset,x}^*m_x$ for some $m_x\in \Mm_{\emptyset,1}$. Then, by Proposition~\ref{prop:pibuletpi*=id},
		  equation~\eqref{eq:defofboxplusxy-eq2} becomes $(\pi_{y,I})_\bullet m=m_x\boxplus_{\emptyset,y}\tilde m_y\in \Mm_{y,2}$ which,
          by Proposition~\ref{prop:caracterizationofpi*M},
          is equivalent to:        
		\begin{equation}\label{eq:boxplusxycommuteswithpi*}
		m=\pi_{y,I}^*(\pi_{y,I})_\bullet m=\pi_{y,I}^*(m_x\boxplus_{\emptyset,y} \tilde m_y)\,.
		\end{equation}
		This shows uniqueness of the solution. To show existence
		we must check that the solution of
		equation~\eqref{eq:defofboxplusxy-eq2} given in equation~\eqref{eq:boxplusxycommuteswithpi*} satisfies also equation~\eqref{eq:defofboxplusxy-eq1}. Applying
		Corollary~\ref{coro:pi_*pi^*} and
		Proposition~\ref{prop:propertiesofboxplus}(4)(5) we get
		\begin{multline*}
			(\pi_{x,I})_\bullet(\pi_{y,I})^*(m_x\boxplus_{\emptyset,y}\tilde m_y)=(\pi_{\emptyset,x})^*(\pi_{\emptyset,y})_\bullet(m_x\boxplus_{\emptyset,y}\tilde m_y)\\
			=(\pi_{\emptyset,x})^*\bigl(m_x\boxplus_{\emptyset,\emptyset}(\pi_{\emptyset,y})_\bullet\tilde m_y\bigr)
			=(\pi_{\emptyset,x}^*m_x)\boxplus_{x,\emptyset}(\pi_{\emptyset,y})_\bullet\tilde m_y\,.
		\end{multline*}
		The case where $\tilde m_y=\pi_{\emptyset,y}^*m_y$ for some $m_y\in \Mm_{\emptyset,1}$ is completely analogous.
		\item Suppose $\tilde m_x\in S_0\Mm_{x,1}$ and $\tilde m_y\in S_0\Mm_{y,1}$ (see Definition~\ref{def:S_0M_1}). Then, by Proposition~\ref{prop:propertiesofS_0M_1}(2) we have
		$\pi_{\emptyset,x\bullet}\tilde m_x=x$ and $\pi_{\emptyset,y\bullet}\tilde m_y=y$ so
		by Proposition~\ref{prop:propertiesofboxplus}(2)
		\begin{align*}
			(\pi_{x,I})_\bullet m&=\tilde m_x\boxplus_{x,\emptyset}(\pi_{\emptyset,y\bullet}\tilde m_y)=(\tilde m_x,y)\in \Mm_{x,1}\times\Cc^2\subset\overline\Mm_{x,2}\\
			(\pi_{y,I})_\bullet m&=(\pi_{\emptyset,x\bullet}\tilde m_x)\boxplus_{\emptyset,y}\tilde m_y=(\tilde m_y,x)\in \Mm_{y,1}\times\Cc^2\subset\overline\Mm_{y,2}
		\end{align*}
		and these equations are equivalent to
		\[
		(\pi_{x,I})_*^{\vee\vee} m=\tilde m_x\,, \quad
		(\pi_{y,I})_*^{\vee\vee} m=\tilde m_y\,.
		\]
		But the map $(\pi_{x,I})_*^{\vee\vee}\times(\pi_{y,I})_*^{\vee\vee}$ is a homeomorphism
		by \cite[Proposition 4.3]{San05}, so this system of equations has a unique solution, namely:
		\[
		m=\Bigl((\pi_{x,I})_*^{\vee\vee}\times(\pi_{y,I})_*^{\vee\vee}\Bigr)^{-1}(\tilde m_x,\tilde m_y)\,.
		\]
	\end{enumerate}
        The statement about the image of $\boxplus_{x,y}$ easily follows from (1) and (2) above.
	Now we must show continuity. Consider the Gieseker compactification $\overline{\Mm_{I,2}}{}^{\text{Gie}}$, defined as the closure of $\Mm_{I,2}$ inside
	the moduli space of framed sheaves \cite[section 2.3]{San02}, \cite[Theorem 1.21]{HuLe95}.
	Pick sequences $\tilde m_k\in\Mm_{x,1}$, $\tilde m'_k\in \Mm_{y,1}$ converging respectively to $\tilde m$, $\tilde m'$.
	If $\tilde m_k\boxplus_{x,y}\tilde m_k'$ doesn't converge to $\tilde m\boxplus_{x,y}\tilde m'$ then, by compactness, there is a subsequence
	$\tilde m_{k_n}\boxplus_{x,y}\tilde m_{k_n}'$ converging to a framed sheaf $(\Ee,\phi)$ over $\cp{I}$, with $(\Ee,\phi)\neq \tilde m\boxplus_{x,y}\tilde m'$.
	By the continuity of the direct image maps
	(see \cite[appendix D]{San02}) and pullback maps, equations~\eqref{eq:defofboxplusxy-eq1} and~\eqref{eq:defofboxplusxy-eq2} imply that
	\[
	(\pi_{x,I})_\bullet\Ee=\tilde m\boxplus_{x,\emptyset}(\pi_{\emptyset,y})_\bullet\tilde m'\,,\quad
	(\pi_{y,I})_\bullet\Ee=(\pi_{\emptyset,x})_\bullet\tilde m\boxplus_{\emptyset,y}\tilde m'\,.
	\]
	The first equation implies that the sheaf $\Ee$ is locally free
	away from $y$ and from the second equation it follows that $\Ee$
	is locally free away from $x$, so
$\Ee$ is locally free, and hence a holomorphic vector bundle. Uniqueness of solution implies that $\Ee=\tilde m\boxplus_{x,y}\tilde m'$ which finishes the proof of continuity.
	
	It remains to be shown that the inverse is also continuous. Consider sequences $\tilde m_k\in\Mm_{x,1}$ and $\tilde m'_k\in \Mm_{y,1}$ such that
	$\tilde m_k\boxplus_{x,y}\tilde m_k'\to \tilde m\boxplus_{x,y}\tilde m'$. Then equations~\eqref{eq:defofboxplusxy-eq1} and~\eqref{eq:defofboxplusxy-eq2} imply that
	\begin{align*}
		\lim_{k\to+\infty} \bigl(\tilde m_k\boxplus_{x,\emptyset}(\pi_{\emptyset,y})_\bullet\tilde m_k'\bigr)&=\tilde m\boxplus_{x,\emptyset}(\pi_{\emptyset,y})_\bullet\tilde m'\\
		\lim_{k\to+\infty} \bigl((\pi_{\emptyset,x})_\bullet\tilde m_k\boxplus_{\emptyset,y}\tilde m_k'\bigr)&=(\pi_{\emptyset,x})_\bullet\tilde m\boxplus_{\emptyset,y}\tilde m'
	\end{align*}
	By Proposition~\ref{prop:boxplusisembedding} the maps $\boxplus_{x,\emptyset}$ and $\boxplus_{\emptyset,y}$ are embeddings so
        $\tilde m_k\to \tilde m$ and $\tilde m_k'\to \tilde m'$, which finishes the proof.
\end{proof}

\begin{corollary}\label{cor:opencoverofM_2}
  The collection $\bigl\{\pi_{x,I}^*\Mm_{x,2},\pi_{y,I}^*\Mm_{y,2},\mathrm{Im}\,\boxplus_{x,y}\bigr\}$ is an open cover of $\Mm_{I,2}$.
\end{corollary}
\begin{proof}
  This is \cite[Theorem 4.1]{San05} if we take $U_1$ and $U_2$ to be open disks around $x$ and $y$ respectively (see \cite[Definition 4.1]{San05}): with the notation in \cite{San05} we have
  $A_L=\pi_{x,I}^*\Mm_{x,2}$, $A_R=\pi_{y,I}^*\Mm_{y,2}$ and $N_2=\text{Im}\,\boxplus_{x,y}$.
  For general $U_1$ and $U_2$ the only thing that needs to be checked is that $\text{Im}\,\boxplus_{x,y}$ is open. This is \cite[Proposition 4.8]{San05}, whose proof 
  only uses the fact that $x\in U_1\times\Cc$ and $y\in U_2\times\Cc$.
\end{proof}

\out{
	\begin{proposition}\label{app:prop:boxplus}
		Let $I_1$, $I_2$ be disjoint finite sets and let $I=I_1\cup I_2$. Let $U_1$, $U_2$, $U$ be open sets such that $U_1\cap U_2=\emptyset$ and $U_1\cup U_2\subset U$.
		Let $V$, $V'$ be finite dimensional complex Hermitian vector spaces.
		\begin{enumerate}
			\item Given a linear isometry $\alpha\colon V\to V'$, the following diagram is commutative:
			\[
			\xymatrix{
				\Mm_{I_1,1}^{V,U_1}\times\Mm_{I_2,1}^{V,U_2}\ar[r]^-{\boxplus_{\mathbf I}}\ar[d]_{\alpha\times\alpha}&
				\Mm_{I,2}\ar[d]^{\alpha}\\
				\Mm_{I_1,1}^{V',U_1}\times\Mm_{I_2,1}^{V',U_2}\ar[r]^-{\boxplus_{\mathbf I}}&
				\Mm_{I,2}^{V'}}
			\]
			\item Let $\imath:V\to V\oplus V'$, $\imath':V'\to V\oplus V'$ be the canonical inclusions. Then the Whitney sum map $\omega$ equals
			the composition:
			\[
			\Mm_{I_1,1}^{V,U_1}\times\Mm_{I_2,1}^{V',U_2}\xrightarrow{\imath\times\imath'}
			\Mm_{I_1,1}^{V\oplus V',U_1}\times\Mm_{I_2,1}^{V\oplus V',U_2}\xrightarrow{\boxplus_{\mathbf I}}
			\Mm_{I,2}^{V\oplus V'}\,.
			\]
		\end{enumerate}
	\end{proposition}
	\begin{proof}
		First we consider the case where $I_1=I_2=I=\emptyset$.
		\begin{enumerate}
			\item It follows immediately from Proposition~\ref{prop:propertiesofmonaddescription}(1) and the definition of $\boxplus$ (equation~\eqref{eq:defOfboxplus}).
			\item Let $r=(a_1,a_2,b,c)\in M_{\emptyset,1}^{V,U_1}$ and $r'=(a_1',a_2',b',c')\in M_{\emptyset,1}^{V,U_2}$. By Proposition~\ref{prop:propertiesofmonaddescription}(1)
			we have $\imath(r)=(a_1,a_2,b\,\imath^*,\imath\, c)$ and $\imath'(r')=(a_1',a_2',b'\imath'{}^*,\imath' c')$.
			But now $b\,\imath^*\imath' c'=0$ and $b'\imath'{}^*\imath\, c=0$ from whence it follows that the matrices in equation~\eqref{eq:defOfboxplus}
			are block diagonal and hence $\imath(m)\boxplus\imath'(m')=\imath(m)\oplus\imath'(m')$ which, by Proposition~\ref{prop:propertiesofmonaddescription}(2), corresponds to Whitney sum.
		\end{enumerate}
		The results extend to the general case by continuity, since $\pi_{\emptyset,I_1}^*\Mm_{\emptyset,1}^{V,U_1}$ and
		$\pi_{\emptyset,I_2}^*\Mm_{\emptyset,1}^{V,U_2}$
		are dense in $\Mm_{I_1,1}^{V,U_1}$ and $\Mm_{I_2,1}^{V,U_2}$, respectively.
	\end{proof}
}

\begin{proposition}\label{prop:mapsboxplus}\label{app:prop:boxplus}
	Let $k_1,\dots,k_n$ be nonnegative integers such that $k=\sum k_i\leq 2$ and
	let $U_1,\dots,U_n\subset \Cc$ be pairwise disjoint open sets all contained in an open set $U\subset\Cc$. For each $n$-tuple of pairwise disjoint finite sets
        $\mathbf I=(I_1,\dots,I_n)$ such that $I_i\subset U_i\times\Cc$ 
	there is a map $\boxplus_{\mathbf I}\colon \Mm_{I_1,k_1}^{\Vv,U_1}\times\dots\times \Mm_{I_n,k_n}^{\Vv,U_n}\to \Mm_{I,k}^{\Vv,U}$ (with $I=\bigcup_iI_i$) such that:
	\begin{enumerate}
		\item Given any $\mathbf J=(J_1,\dots,J_n)$ with $J_i\subset I_i$ for any $i$, we have
		\[
		\pi_{J,I}^*\circ\boxplus_{\mathbf J}=\boxplus_{\mathbf I}\circ\bigl(\pi_{J_1,I_1}^*\times\dots\times\pi_{J_n,I_n}^*\bigr)\qquad
		\text{(where $J=\textstyle\bigcup_iJ_i$).}
		\]
		\item Given a linear isometry $\alpha\colon \Vv\to \Vv'$, the following diagram is commutative:
		\[
		\xymatrix{
			\prod\Mm_{I_i,k_i}^{\Vv,U_i}\ar[r]^-{\boxplus_{\mathbf I}}\ar[d]_{\alpha\times\dots\times\alpha}&
			\Mm_{I,k}^{\Vv,U}\ar[d]^{\alpha}\\
			\prod\Mm_{I_i,k_i}^{\Vv',U_i}\ar[r]^-{\boxplus_{\mathbf I}}&
			\Mm_{I,k}^{\Vv',U}
		}
		\]
		\item Let $\imath_i:\Vv\to \Vv^m$ be the inclusion onto the $i$-th factor and
		let $\omega$ be the map induced by pullback and Whitney sum.
		Then we have a commutative diagram
		\[
		\xymatrix{
			\prod \Mm^{\Vv,U_i}_{I_i,k_i}\ar[r]^-{\omega}\ar[d]_-{\prod\imath_i}&\Mm_{I,k}^{\Vv^m,U}\\
			\prod \Mm^{\Vv^m,U_i}_{I_i,k_i}\ar[ru]_-{\boxplus_{\mathbf I}}}
		\]
		\item Given a permutation $\sigma\in\Sigma_n$, 
		let $\sigma\mathbf I=(I_{\sigma(1)},\dots,I_{\sigma(n)})$.
		Then the following  diagram is commutative:
		\[
		\xymatrix{
			\prod\Mm_{I_i,k_i}^{\Vv,U_i}\ar[r]^-{\boxplus_{\mathbf I}}\ar[d]_{\sigma}&
			\Mm_{I,k}^{\Vv,U}\\
			\prod\Mm_{I_{\sigma(i)},k_{\sigma(i)}}^{\Vv,U_{\sigma(i)}}
			\ar[ru]_{\boxplus_{\sigma\mathbf I}}}
		\]
		\item The maps $\boxplus_{\mathbf I}$ are associative, that is,
		if $m<n$ then
		\[
		\boxplus_{\mathbf I}=\boxplus_{I_1\cup\dots\cup I_m,I_{m+1},\dots,I_n}
		\circ\bigl(\boxplus_{I_1,\dots,I_m}\times\ident\bigr)
		\]
	\end{enumerate}
\end{proposition}
\begin{proof}
	If $k_i=k$ and $k_j=0$ for $j\neq i$ then $\boxplus_{\mathbf I}$ is defined by pullback:
	\[
	\boxplus_{\mathbf I}=\pi_{I_i,I}^*\colon\Mm_{I_i,k}^{\Vv,U_i}\to \Mm_{I,k}^U\,.
	\] 
	The only other case is the one where $k_i=k_j=1$ for some $i\neq j$ and $k=2$.
	The map $\boxplus_{\mathbf I}$ will be defined as the composition of a certain map $\boxplus_{I_i,I_j}\colon\Mm_{I_i,1}^{\Vv,U_i}\times\Mm_{I_j,1}^{\Vv,U_j}\to\Mm_{I_i\cup I_j,2}^{\Vv,U}$
	with pullback $\pi_{I_i\cup I_j,I}^*\colon \Mm_{I_i\cup I_j,2}^{\Vv,U}\to\Mm_{I,2}^{\Vv,U}$. It remains to define the maps $\boxplus_{I_i,I_j}$.
	Using the open cover from Lemma~\ref{lemma:theo1nerve} we can reduce to the case where $I_i$ and $I_j$ are either empty or have only one element. These cases were taken care of
	in equation~\eqref{eq:defofboxplus_I} and Proposition~\ref{app:thm:boxplusI-newversion}. Property (1) is clear and 
	property (5) follows easily from (1).
    It remains to prove properties (2), (3) and (4).
	First we consider the case when $I=\emptyset$. 
	\begin{enumerate}
		\item[(2)] Follows immediately from Proposition~\ref{prop:propertiesofmonaddescription}(1) and the definition of $\boxplus$ (equation~\eqref{eq:defOfboxplus}).
		\item[(3)] Let $m=[a_1,a_2,b,c]\in M_{\emptyset,1}^{\Vv,U_i}$ and $m'=[a_1',a_2',b',c']\in M_{\emptyset,1}^{\Vv,U_j}$. By Proposition~\ref{prop:propertiesofmonaddescription}(1)
		we have $\imath_i(m)=[a_1,a_2,b\,\imath_i^*,\imath_i c]$ and $\imath_j(m')=[a_1',a_2',b'\imath_j^*,\imath_j c']$.
		But now $b\,\imath_i^*\imath_j c'=0$ and $b'\imath_j^*\imath_i c=0$ from which it follows that the matrices in equation~\eqref{eq:defOfboxplus}
		are block diagonal and hence $\imath_i(m)\boxplus\imath_j(m')=\imath_i(m)\oplus\imath_j(m')$ which, by Proposition~\ref{prop:propertiesofmonaddescription}(2), corresponds to Whitney sum.
		\item[(4)] This was
		proven in Proposition~\ref{prop:propertiesofboxplus}(1).
	\end{enumerate}
	The results extend to the general case by continuity, since $\pi_{\emptyset,I_i}^*\Mm_{\emptyset,1}^{\Vv,U_i}$ 
	is dense in $\Mm_{I_i,1}^{\Vv,U_i}$.
\end{proof}

\section{The bar constructions in finite rank}\label{app2}\label{sec:6}





\out{
	\begin{proposition}
		If $\#I=\#J$ then the moduli spaces $\Mm_I$ and $\Mm_J$ are isomorphic.
	\end{proposition}
	\begin{proof}
		Either use moduli of instantons or gluing of holomorphic bundles.
	\end{proof}
}%

The objective of this section is to prove Theorem~\ref{theor0}.
Up until section~\ref{sec:8.5proof1.1}
we work with a fixed finite dimensional complex Hermitian vector space
$\Vv$ which we omit from the notation.
First we define,
for each finite set $L\subset \Cc^2$, the degree 1 and 2 components of the bar construction (see section~\ref{sec:prelim-bar-monoid}):
\[
B_L(\bullet)=\BAR_\bullet\Bigl(\Mm_{\emptyset},\tprod_{x\in L}\Mm_{\emptyset},\tprod_{x\in L}\Mm_{x}\Bigr)
\]
and, given disjoint finite sets $J$ and $K$, the degree 1 and 2 components of the bar constructions and the maps appearing in the following diagram:
\begin{equation}\label{eq:diagramofbarsinfiniterank}
\xymatrix{
  \BAR\bigl(|B_J|,\Mm_\emptyset,|B_K|\bigr)\ar[r]\ar[d] &
  |B_I|\ar[d] \\
  \BAR(\Mm_J,\Mm_\emptyset,\Mm_K)\ar[r] &
  \Mm_I
}
\end{equation}
For each Jordan open set $U$ (see Definition~\ref{def:UgoodrelI})
and each finite set $I\subset U\times\Cc$, let $\Mm_I^{U}$ be as in equation~\eqref{eq:DefOfM^U}. 
The face maps will be defined using the gluing maps $\boxplus_{\mathbf I}$
from Proposition~\ref{prop:mapsboxplus}. 
In degree $k=1$ all non-trivial gluing maps
are pullback maps.
In degree $k=2$ we will need:
\begin{itemize}
	\item The pullback maps $\pi_{J,I}^*\colon\Mm_J^{U_1}\to\Mm_I^{U_2}$, with $J\subset I$ and $U_1\subset U_2$, obtained by restriction 
	of the pullback maps in Definition~\ref{def:omega,pi*,Malpha}(4);
	\item The gluing maps $\boxplus_{I_1,I_2}\colon \Mm_{I_1,1}^{U_1}\times\Mm_{I_2,1}^{U_2}\to \Mm_{I_1\cup I_2,2}^{U}$,  with $U_1,U_2\subset U$ and
	$U_1\cap U_2=\emptyset$;
	\item The inclusion maps $i\colon\Mm_I^{U_1}\to\Mm_I^{U_2}$, with
	$U_1\subset U_2$.
\end{itemize}
The maps $\boxplus_{I_1,I_2}$ require a choice of pairs of 
disjoint Jordan open sets $U_1$, $U_2$, and for the inclusion maps 
and pullback maps to be well defined 
we also need larger open sets containing certain families of open sets.
We will now  make these observations precise.
Fix disjoint finite sets $J,K\subset\Cc^2$ and let $I=J\cup K$.
	Let $p\colon\Cc^2\to\Cc$ be projection onto the first factor. Given $z\in\Cc$ let $\Re z$, $\Im z$ be respectively the real and imaginary parts of $z$.
Since a biholomorphic map $f\colon\Cc^2\to\Cc^2$ induces isomorphisms $\Mm_I\cong\Mm_{f(I)}$, we may assume
without loss of generality that: 
\begin{equation}\label{eq:Iisgeneric}
\text{The assignment $x\mapsto\Re p(x)$ (with $x\in I$) is one to one.}
\end{equation}
In degree $k=2$, our constructions depend on a choice of Jordan open sets:
\[
C,\,D_x,\,U_L,\,C_L\subset\Cc\quad (\text{for each $x\in I$ and each $L\subset I$, $L\neq\emptyset$})
\]
satisfying the following properties, for all $L',L\subset I$ and all $x,y\in I$ with $x\neq y$: 
\begin{equation}\label{eq:conditions1}
\begin{aligned}
&C_L\subset U_L\cap C\,,\quad p(x)\in D_x\subset U_{x}\,,\quad L'\subset L\Rightarrow U_{L'}\subset U_L\,;\\
&D_x\cap C=\emptyset\,,\quad U_{x}\cap U_{y}=\emptyset\,,\quad U_J\cap U_K=\emptyset\,.
\end{aligned}
\end{equation}

\begin{remark}
  Theorem~\ref{theor0}, proved below, shows that 
  the homotopy type of the spaces $|B_L|$ is independent of the choice of open sets.
\end{remark}

\begin{proposition}
	Given disjoint finite sets $J$ and $K$ such that $I=J\cup K$ 
	satisfies condition~\eqref{eq:Iisgeneric},
	there are Jordan open sets satisfying conditions~\eqref{eq:conditions1}.
\end{proposition}
\begin{proof}
	Fix $\delta<\frac12\min\{|\Re p(x)-\Re p(y)|:x,y\in I,\ x\neq y\}$ and fix $r$ so that $|p(w)|<r-3\delta$ for any $w\in I$. Consider the rectangle
	\[
	R=\bigl\{z\in\Cc:-\delta<\Re z<\delta,\ -r+2\delta<\Im z<r+\delta\bigr\}\,.
	\]
	For each $U\subset \Cc$ let $U^*=\{\overline z:z\in U\}$ be reflexion on the real axis. Consider the following open sets,
        where $x\in J$, $y\in K$ and $w\in I$:
	\begin{align*}
		C_J&=\bigl\{z\in\Cc:-r-\delta<\Re z<r+\delta,\ r-\delta<\Im z<r+\delta\bigr\},\quad C_K=C_J^*\\
		C&=\bigl\{z\in\Cc:r-\delta<\Re z<r+\delta,\ -r-\delta<\Im z<r+\delta\bigr\}\cup C_J\cup C_K\\
		U_x&=R+\Re p(x),\quad U_y=R^*+\Re p(y)\\
		D_w&=\bigl\{z\in\Cc:|z-p(w)|<\delta\bigr\}\\
		U_J&=C_J\cup\Bigl({\textstyle\bigcup_{x\in J}}U_x\Bigr),\quad U_K=C_K\cup\Bigl({\textstyle\bigcup_{y\in K}}U_y\Bigr),\quad U_I=C\cup\Bigl({\textstyle\bigcup_{w\in I}}U_w\Bigr)
	\end{align*}
        We define $U_L$ for $\#L>1$ as follows: If $L\subset J$ we take $U_{L}=U_J$; if $L\subset K$ we take $U_{L}=U_K$; otherwise $U_L=U_I$. Finally, we take $C_L=U_L\cap C$.
\end{proof}

\subsection{Structures in degrees up to \boldmath $k$}

Although we are only able to define gluing maps up to degree 2, we can still form a bar construction in degrees up to 2 by formally truncating the moduli spaces as follows:

\begin{definition}\label{def:truncated}
Given a graded space  $A=\coprod_{n\geq0} A_n$ with $A_0=\{*\}$,
let $A_{\leq k}=\{*\}\amalg A_1\amalg \cdots\amalg A_k\amalg \{*\}\amalg\cdots$ be the graded space obtained by collapsing to a point
each summand $A_n$ where $n>k$. 

Give the cartesian product 
$A_{\leq k}\times A_{\leq k}$ the natural grading.
A monoid structure on $A$ in degrees up to $k$ is a 
monoid structure on the truncated graded
space $A_{\leq k}$. 
For $k=2$ this is equivalent to a map 
\[q\colon A_1\times A_1\to A_2\,.\]

A left action of $A$ on a graded space $N=\{*\}\amalg N_1\amalg N_2\amalg\cdots$ in degrees up to $k$ is a left action of $A_{\leq k}$ on $N_{\leq k}$.
For $k=1$ this is just a map $Q_1^{\ell}\colon A_1\to N_1$. For $k=2$ giving such an action is equivalent to giving maps
\[
Q^{\ell}_{11}\colon A_1\times N_1\to N_2\,,\quad Q^{\ell}_1\colon A_1\to N_1\quad\text{and}\quad Q^{\ell}_2\colon A_2\to N_2
\]
satisfying the relation
\begin{equation}\label{eq:Q2circq=Q11circ1xQ1}
Q^{\ell}_2\bigl(q(r_1,r_2)\bigr)=Q^{\ell}_{11}\bigl(r_1,Q^{\ell}_1(r_2)\bigr)\,.
\end{equation}
We similarly define a right action of $A$ on a graded space $M$ in degrees up to $k$ as a right action of $A_{\leq k}$ on $M_{\leq k}$.
For $k=2$ this action is given by maps
\[
Q^r_{11}\colon M_1\times A_1\to M_2\,,\quad Q^r_1\colon A_1\to M_1\quad\text{and}\quad Q^r_2\colon A_2\to M_2
\]
satisfying $Q^r_2\circ q=Q^r_{11}\circ(Q^r_1\times\ident )$.
We represent the bar construction induced by the actions of $A$ on $M$ and $N$ by:
\[
\BAR(M,A,N)_{\leq k}=\BAR(M_{\leq k},A_{\leq k},N_{\leq k})_{\leq k}\,.
\]
\end{definition}

A map from $\BAR(M,A,N)$ to a graded space $Y$ can be defined by
a map $\BAR_0(M,A,N)\to Y$ coequalizing the face maps
$\BAR_1(M,A,N)\rightrightarrows\BAR_0(M,A,N)$.
In degree one such a map is defined by maps
$W_M\colon M_1\to Y_1$ and $W_N\colon N_1\to Y_1$
satisfying the relation
\begin{equation}\label{eq:WcircQ=WcircQdeg1}
  W_N\circ Q_1^{\ell}=W_M\circ Q_1^r\,.
\end{equation}
In degree 2 a map $\BAR(M,A,N)\to Y$ is defined by maps
\[
W_M\colon M_2\to Y_2,\quad W_N\colon N_2\to Y_2,\quad W_{11}\colon M_1\times N_1\to Y_2,
\]
satisfying the relations 
\begin{equation}\label{eq:WcircQ=WcircQ}
\begin{aligned}
W_M\circ Q_{11}^r&=W_{11}\circ(\ident\times Q_1^{\ell})\,,\\
W_N\circ Q_{11}^{\ell}&=W_{11}\circ(Q_1^r\times\ident)\,,\\
W_M\circ Q_2^r&=W_N\circ Q_2^{\ell}\,.
\end{aligned}
\end{equation}

\subsection{Bar constructions in degree 1}\label{sec:barfork=1}

In degree $k=1$, a choice of Jordan open sets is not necessary. 
However, we still need to be careful in how we define the degree 1
bar constructions because they will be used in degree 2 
to define the upper row in
diagram~\eqref{eq:diagramofbarsinfiniterank}.

Let $J,K\subset\Cc^2$ be disjoint finite sets and let $I=J\cup K$.
Given $L\subset I$ and $x\in L$ pick open sets
$D_x$ and $U_L$ such that $p(x)\in D_x$ and $D_x\subset U_L$
whenever $x\in L$
(compare with conditions~\eqref{eq:conditions1}: in degree $k=1$ we don't  
need to require that $D_x\cap D_y=\emptyset$ for $x\neq y$).
Given a finite set $L\subset I$ the maps
\[
Q_1^{\ell}\colon\bigl(\tprod_{x\in L}\!\Mm_\emptyset^{D_x}\bigr)_{\!1}
\xrightarrow{\prod\pi_{\emptyset,x}^*}
\bigl(\tprod_{x\in L}\!\Mm_x^{D_x}\bigr)_{\!1}
\quad\text{and}\quad
Q_1^r\colon\bigl(\tprod_{x\in L}\!\Mm_\emptyset^{D_x}\bigr)_{\!1}
=\tcoprod_{x\in L}\!\Mm_{\emptyset,1}^{D_x}
\xrightarrow{i}
\Mm_{\emptyset,1}^{U_L}
\]
(where $\bigl(\prod\Mm\bigr)_1$ denotes the degree $1$ component)
define, in degree 1, a left and a right action of $\prod\Mm_\emptyset^{D_x}$
on $\prod\Mm_x^{D_x}$ and $\Mm_\emptyset^{U_L}$ respectively, giving rise to
a bar construction in degree 1 which we represent by
\begin{equation}\label{eq:defofBL1}
B_{L,1}(\bullet)=\BAR_\bullet\bigl(\Mm_\emptyset^{U_L},\tprod_{x\in L}\!\Mm_\emptyset^{D_x},\tprod_{x\in L}\!\Mm_x^{D_x}\bigr)_1\,.
\end{equation}

\begin{notation}\label{nota:Ax*vs*xA}
In the proof of Proposition~\ref{prop:barbarbardiagramcommutesinfiniterank}
it will be important to distinguish between the following two 
homeomorphic summands in $B_{L,1}(2)$:
\[
\biggl(\bigl(\tprod_{x\in L}\!\Mm_\emptyset^{D_x}\bigr)_{\!1}\times*\biggr)
\,\amalg\,
\biggl(*\times\bigl(\tprod_{x\in L}\!\Mm_\emptyset^{D_x}\bigr)_{\!1}\biggr)
=\bigl(\tprod_{x\in L}\!\Mm_\emptyset^{D_x}\times\tprod_{x\in L}\!\Mm_\emptyset^{D_x}\bigr)_{\!1}
\subset B_{L,1}(2)\,.
\]
We will write:
\[
\begin{aligned}
\bigl(\tprod_{x\in L}\Mm_\emptyset^{D_x}\bigr)_{\!1}^\ell
&=\tcoprod_{x\in L}\Mm_{\emptyset,1}^{D_x,\ell}
=\bigl(\tprod_{x\in L}\Mm_\emptyset^{D_x}\bigr)_{\!1}\times*\,;\\
\bigl(\tprod_{x\in L}\Mm_\emptyset^{D_x}\bigr)_{\!1}^r
&=\tcoprod_{x\in L}\Mm_{\emptyset,1}^{D_x,r}
=*\times\bigl(\tprod_{x\in L}\Mm_\emptyset^{D_x}\bigr)_{\!1}\,.
\end{aligned}
\]
\end{notation}

The maps
\[
W_M\colon \Mm_{\emptyset,1}^{U_L}\xrightarrow{\pi_{\emptyset, L}^*} \Mm_{L,1}^{U_L}
\quad\text{and}\quad
W_N\colon  \bigl(\tprod_{x\in L}\!\Mm_x^{D_x}\bigr)_{\!1}
=\tcoprod_{x\in L}\!\Mm_{x,1}^{D_x}
\xrightarrow{\pi_{x,L}^*}\Mm_{L,1}^{U_L}
\]
satisfy relation~\eqref{eq:WcircQ=WcircQdeg1} so they define a map
\begin{equation}\label{eq:hindegree1}
h_\boxplus\colon|B_{L,1}|\to\Mm_{L,1}^{U_L}\,.
\end{equation}

\begin{remark}\label{rmk:changeopensetsk=1}
  Let $\hat B_{L,1}$ be the simplicial space obtained
  by taking $D_x=U_L=\Cc$. Then for any other choice of open sets  
  $D_x$ and $U_L$ satisfying $p(x)\in D_x\subset U_L$,
  the maps of monoids and modules determined by the inclusions 
  $D_x,U_L\subset \Cc$ induce a weak equivalence
  $B_{L,1}\to\hat B_{L,1}$ (see Proposition~\ref{prop:MUishomeoandmoretoM}).  
\end{remark}

Now let $L_1,L_2\subset I$ be disjoint finite sets and let $L=L_1\cup L_2$.
Let $U_1,U_2, C_L$ be Jordan open sets such that
\begin{equation}\label{eq:jordanink=1}
L_i\subset U_i\times\Cc\quad \text{(with $i=1,2$),}\quad
C_L\subset U_1\cap U_2\quad\text{and}\quad
U_1\cup U_2\subset U_L\,.
\end{equation}
The maps
\[
Q_1^{\ell}\colon\Mm_{\emptyset,1}^{C_L}\xrightarrow{\pi_{\emptyset,L_2}^*}
\Mm_{L_2,1}^{U_2}
\quad\text{and}\quad
Q_1^r\colon\Mm_{\emptyset,1}^{C_L}\xrightarrow{\pi_{\emptyset,L_1}^*}
\Mm_{L_1,1}^{U_1}
\]
define,  in degree 1,
left and right actions of $\Mm_{\emptyset}^{C_L}$
on $\Mm_{L_2}^{U_2}$ and $\Mm_{L_1}^{U_1}$ respectively, giving rise to a bar construction $\BAR(\Mm_{L_1}^{U_1},\Mm_\emptyset^{C_L},\Mm_{L_2}^{U_2})$.
The maps
\[
W_M\colon \Mm_{L_1,1}^{U_1}\xrightarrow{\pi_{L_1,L}^*}\Mm_{L,1}^{U_L}
\quad\text{and}\quad
W_N\colon \Mm_{L_2,1}^{U_2}\xrightarrow{\pi_{L_2,L}^*}\Mm_{L,1}^{U_L}
\]
define a map:
\begin{equation}\label{eq:mapbarMMM->Mdeg1}
f\colon\BAR(\Mm_{L_1}^{U_1},\Mm_\emptyset^{C_L},\Mm_{L_2}^{U_2})_1
\to\Mm_{L,1}^{U_L}
\end{equation}
Consider now the case when $L_1=L_2=\emptyset$.

\begin{proposition}\label{eq:f:B(M,Mcap,M)->Mcup}
	Let $C_L, U_1,U_2,U_L$
	be Jordan open sets satisfying relations~\eqref{eq:jordanink=1}.
	Then the map 
	\[
	f\colon \BAR\bigl(\Mm_{\emptyset,1}^{U_1},\Mm_{\emptyset,1}^{C_L},\Mm_{\emptyset,1}^{U_2}\bigr)\to\Mm_{\emptyset,1}^{U_L}
	\]
	from equation~\eqref{eq:mapbarMMM->Mdeg1} is a homotopy equivalence.
\end{proposition}
\begin{proof}
    The maps of monoids and modules determined by the inclusions 
	$C_{L},U_1,U_2\subset U_{L}$ induce a weak equivalence
	\[\BAR\bigl(\Mm_{\emptyset,1}^{U_1},\Mm_{\emptyset,1}^{C_L},\Mm_{\emptyset,1}^{U_2}\bigr)
	\simeq
	\BAR(\Mm_\emptyset^{U_L},\Mm_\emptyset^{U_L},\Mm_\emptyset^{U_L})_1\]
	and $f$ is the composition
	\[
	\BAR\bigl(\Mm_{\emptyset,1}^{U_1},\Mm_{\emptyset,1}^{C_L},\Mm_{\emptyset,1}^{U_2}\bigr)
	\xrightarrow{\simeq}
	\BAR(\Mm_\emptyset^{U_L},\Mm_\emptyset^{U_L},\Mm_\emptyset^{U_L})_1
	\to\Mm_{\emptyset,1}^{U_L}
	\]
	which is a homotopy equivalence (Proposition~\ref{prop:B(G,G,Y)=Y}).
\end{proof}

\subsection{The spaces \boldmath $|B_{L,2}|$}\label{subsec:B_L,2}

Fix open sets satisfying conditions~\eqref{eq:conditions1}, and for each open set $U$ in the chosen collection, 
fix a homeomorphism
$\psi_U\colon \Mm_L^{U}\xrightarrow{\cong}\Mm_L$ homotopic to the inclusion $\Mm_L^{U}\to\Mm_L$
(see Proposition~\ref{prop:MUishomeoandmoretoM}).
Choose some order for the elements of $I$.
For each finite set
$L\subset I$ we define a monoid structure on $\smash[b]{\prod\limits_{x\in L}}\Mm_\emptyset$ in degrees up to 2 by: 
\begin{align}\label{eq:monoidprodM}
  q&\colon\bigl(\tprod\limits_{x\in L}\!\Mm_{\emptyset}\bigr)_{\!1}\times\bigl(\tprod\limits_{x\in L}\!\Mm_{\emptyset}\bigr)_{\!1}
 \xrightarrow{\psi_{C_x}^{-1}\times\psi_{D_x}^{-1}}\bigl(\tprod\limits_{x\in L}\!\Mm_{\emptyset}^{C_x}\bigr)_{\!1}\times\bigl(\tprod\limits_{x\in L}\!\Mm_{\emptyset}^{D_x}\bigr)_{\!1}
\notag \\
 &\qquad\qquad=\tcoprod_{x\in L}\!\bigl(\Mm_{\emptyset,1}^{C_x}\times\Mm_{\emptyset,1}^{D_x}\bigr)
 \amalg
 \tcoprod\limits_{\substack{x,y\in L\\ x\neq y}}\!\bigl(\Mm_{\emptyset,1}^{C_y}\times\Mm_{\emptyset,1}^{D_x}\bigr)
 \\ \notag
 &\xrightarrow{\boxplus_{\emptyset,\emptyset}\,\amalg\, i}
 \bigl(\tcoprod_{x\in L}\!\Mm_{\emptyset,2}^{U_x}\bigr)
 \amalg
 \tcoprod\limits_{\substack{x,y\in L\\y<x}}\!\bigl(\Mm_{\emptyset,1}^{U_y}\times\Mm_{\emptyset,1}^{U_x}\bigr) 
 =\bigl(\tprod\limits_{x\in L}\!\Mm_{\emptyset}^{U_x}\bigr)_{\!2}
  \xrightarrow{\psi_{U_x}}\bigl(\tprod\limits_{x\in L}\!\Mm_{\emptyset}\bigr)_{\!2}\,.
\end{align}
We define a left action of  $\prod\Mm_\emptyset$ on $\prod\Mm_x$ in degrees up to 2  by conjugating the following maps with the homeomorphisms $\Mm_{\emptyset,k}^U\cong\Mm_{\emptyset,k}$ and $\Mm_{x,k}^U\cong\Mm_{x,k}$:
\begin{equation}\label{eq:leftactionofprodM}
\begin{aligned}
Q_{11}^{\ell}&\colon \bigl(\tprod\limits_{x\in L}\!\Mm_{\emptyset}^{C_x}\bigr)_{\!1}\times\bigl(\tprod\limits_{x\in L}\!\Mm_{x}^{D_x}\bigr)_{\!1}
\xrightarrow{\boxplus_{\emptyset,x}\,\amalg\,i}
\bigl(\tprod\limits_{x\in L}\!\Mm_{x}^{U_x}\bigr)_{\!2}\,;\\
Q_1^{\ell}&\colon \bigl(\tprod\limits_{x\in L}\!\Mm_{\emptyset}^{D_x}\bigr)_{\!1}
\xrightarrow{\prod\pi_{\emptyset,x}^*}
\bigl(\tprod\limits_{x\in L}\!\Mm_{x}^{D_x}\bigr)_{\!1}\,;\\
Q_2^{\ell}&\colon \bigl(\tprod\limits_{x\in L}\!\Mm_{\emptyset}^{U_x}\bigr)_{\!2}
\xrightarrow{\prod\pi_{\emptyset,x}^*}
\bigl(\tprod\limits_{x\in L}\!\Mm_{x}^{U_x}\bigr)_{\!2}\,,
\end{aligned}
\end{equation}
where $Q_{11}^{\ell}$ is defined in an analogous way to the map $q$ is equation~\eqref{eq:monoidprodM}.
\out{
\[
\begin{aligned}
\boxplus_{\emptyset,x}&\colon
\Mm_{\emptyset,1}^{C_x}\times\Mm_{x,1}^{D_x}\to
\Mm_{x,2}^{U_x}\,,& \quad&(x\in L);\\
\pi_{\emptyset,x}^*\times i&\colon
\Mm_{\emptyset,1}^{C_y}\times\Mm_{x,1}^{D_x}\to
\Mm_{y,1}^{U_y}\times\Mm_{x,1}^{U_x}\,,
& &(x,y\in L, x\neq y).
\end{aligned}
\]}
Relation~\eqref{eq:Q2circq=Q11circ1xQ1} follows from Proposition~\ref{prop:mapsboxplus}(1).

\begin{notation}
	We'll abuse notation and denote by $Q_1^\ell$, $Q_2^\ell$ and $Q_{11}^\ell$ both 
	the maps in equation~\eqref{eq:leftactionofprodM} and the maps obtained
	by conjugating them with the homeomorphisms $\psi_U$.
\end{notation}

\begin{remark}
	The monoid structure on $\prod\Mm_\emptyset$
	and its left action on $\prod\Mm_x$ are obtained by 
	conjugating the maps (in degree 2):
	\begin{align*}
	\tprod_{x\in L}\boxplus_{\emptyset,\emptyset}&\colon\tprod_{x\in L}\bigl(\Mm_\emptyset^{C_x}\times\Mm_\emptyset^{D_x}\bigr)\to\tprod_{x\in L}\Mm_\emptyset^{U_x}\,,\\
	\tprod_{x\in L}\boxplus_{\emptyset,x}&\colon\tprod_{x\in L}\bigl(\Mm_\emptyset^{C_x}\times\Mm_x^{D_x}\bigr)\to\tprod_{x\in L}\Mm_x^{U_x}
	\end{align*}
	with the homeomorphisms $\psi_U$.
\end{remark}


We define a right action of  $\bigl(\prod\Mm_\emptyset\bigr)_{\leq2}$ on $\Mm_{\emptyset,\leq2}$ using the maps
\begin{equation}\label{eq:rightactionofprodM}
\begin{aligned}
Q_{11}^r&\colon \Mm_{\emptyset,1}^C\times\bigl(\tprod\limits_{x\in L}\!\Mm_{\emptyset}^{D_x}\bigr)_{\!1}\xrightarrow{\boxplus_{\emptyset,\dots,\emptyset}}\Mm_{\emptyset,2}\,;\\
Q_1^r&\colon \bigl(\tprod\limits_{x\in L}\!\Mm_{\emptyset}^{C_x}\bigr)_{\!1}\xrightarrow{\boxplus_{\emptyset,\dots,\emptyset}}\Mm_{\emptyset,1}^{C}\,;\\
Q_2^r&\colon \bigl(\tprod\limits_{x\in L}\!\Mm_{\emptyset}^{U_x}\bigr)_{\!2}\xrightarrow{\boxplus_{\emptyset,\dots,\emptyset}}\Mm_{\emptyset,2}\,.
\end{aligned}
\end{equation}
Note that, from Proposition~\ref{prop:propertiesofboxplus}(1), the map $Q_{2}^r$ doesn't depend on the order of 
the factors in the product $\tprod_{x\in L}\Mm_{\emptyset}^{U_x}$.
Relation~\eqref{eq:Q2circq=Q11circ1xQ1} follows from 
this observation and the naturality of
the maps $\boxplus_{\emptyset,\emptyset}$ with respect to inclusions.
From the monoid structure and the actions just defined we get a bar construction
 $\BAR_\bullet(\Mm_\emptyset,\tprod\Mm_\emptyset,\tprod\Mm_x)$
in degree 2.

\begin{remark}
	We are not interested in the degree $k=1$ summand of this bar construction, which is not the same as the one in equation~\eqref{eq:defofBL1}.
\end{remark}

Write $L=\{x_1,\dots,x_j\}$ and let $\boxplus_{\mathbf L}=\boxplus_{x_1,\dots,x_j}$.
The maps
\begin{align}\label{eq:maphboxplusW11}\notag
W_M&\colon\Mm_{\emptyset,2}\xrightarrow{\pi_{\emptyset,L}^*}\Mm_{L,2}
\\ \notag
W_N&\colon\bigl(\tprod\limits_{x\in L}\!\Mm_x\bigr)_2\cong
\bigl(\tprod\limits_{x\in L}\!\Mm_x^{U_x}\bigr)_2
\xrightarrow{\boxplus_{\mathbf L}}\Mm_{L,2}\\
W_{11}&\colon\Mm_{\emptyset,1}\times\bigl(\tprod\limits_{x\in L}\!\Mm_x\bigr)_1\cong
\textstyle\coprod\limits_{x\in L}\!\bigl(\Mm_{\emptyset,1}^C\times\Mm_{x,1}^{D_x}\bigr)
\xrightarrow{\pi_{x,I}^*\,\circ\,\boxplus_{\emptyset,x}}\Mm_{L,2}
\end{align}
satisfy relations~\eqref{eq:WcircQ=WcircQ}, so they define a map 
\begin{equation}\label{eq:maphboxplus}
h_\boxplus\colon
\BAR(\Mm_\emptyset,\tprod\Mm_\emptyset,\tprod\Mm_x)_{2}
\to\Mm_{L,2}\,.
\end{equation}

\out{
\begin{remark}\label{rmk:rightactionofMonprodM}                        
	By taking the opposite monoid structure on $\prod\Mm_\emptyset$       
	defined by the map 
	$\bigl(\tprod\Mm_{\emptyset}^{D_x}\bigr)_{\!1}\times\bigl(\tprod\Mm_{\emptyset}^{C_x}\bigr)_{\!1}\to \bigl(\tprod\Mm_{\emptyset}^{U_x}\bigr)_{\!2}$
	we get in a standard way,
        a right action of
	$\prod\Mm_x$
	on $\prod\Mm_x$ and a left action on $\Mm_\emptyset$ in degrees up to 2.
	The resulting bar construction
	$\BAR(\prod\Mm_x,\prod\Mm_\emptyset,\Mm_\emptyset)_{2}$
	is homeomorphic to
	$\BAR(\Mm_\emptyset,\prod\Mm_\emptyset,\prod\Mm_x)_{2}$.        
\end{remark}                                                     
}

The bar construction $\BAR_\bullet(\Mm_\emptyset,\prod\Mm_\emptyset,\prod\Mm_x)_2$
is independent of the choice of homeomorphisms $\psi_U$. To make this independence explicit
we will introduce a simplicial space $B_{L,2}(\bullet)$ 
homeomorphic to $\BAR_\bullet(\Mm_\emptyset,\prod\Mm_\emptyset,\prod\Mm_x)_2$. For each $n$ let
\out{
We define, for each $n$, a space $B_{L,2}(n)$ homeomorphic             
to $\BAR_n(\Mm_\emptyset,\prod\Mm_\emptyset,\prod\Mm_x)_2$
using the homeomorphisms:
\begin{alignat*}{2}
\bigl(\tprod_{x\in L}\!\Mm_x\bigr)_{\!2}&\xrightarrow{\cong}
\bigl(\tprod_{x\in L}\!\Mm_x^{U_x}\bigr)_{\!2}\,, \qquad
&
\bigl(\tprod_{x\in L}\!\Mm_\emptyset\bigr)_{\!1}\times
\bigl(\tprod_{x\in L}\!\Mm_x\bigr)_{\!1}\xrightarrow{\cong}
\bigl(\tprod_{x\in L}\!\Mm_\emptyset^{C_x}\bigr)_{\!1}&\times
\bigl(\tprod_{x\in L}\!\Mm_x^{D_x}\bigr)_{\!1}\,,
\\
\bigl(\tprod_{x\in L}\!\Mm_\emptyset\bigr)_{\!2}&\xrightarrow{\cong}
\bigl(\tprod_{x\in L}\!\Mm_\emptyset^{U_x}\bigr)_{\!2}\,,
&
\bigl(\tprod_{x\in L}\!\Mm_\emptyset\bigr)_{\!1}\times
\bigl(\tprod_{x\in L}\!\Mm_\emptyset\bigr)_{\!1}\xrightarrow{\cong}
\bigl(\tprod_{x\in L}\!\Mm_\emptyset^{C_x}\bigr)_{\!1}&\times
\bigl(\tprod_{x\in L}\!\Mm_\emptyset^{D_x}\bigr)_{\!1}\,, \\
&&
\Mm_{\emptyset,1}\times 
\bigl(\tprod_{x\in L}\!\Mm_\emptyset\bigr)_{\!1}\xrightarrow{\cong}
\Mm_{\emptyset,1}^C&\times 
\bigl(\tprod_{x\in L}\!\Mm_\emptyset^{D_x}\bigr)_{\!1} 
\end{alignat*}
and the identity map $\Mm_{\emptyset,2}\to\Mm_{\emptyset,2}$.
The face and degeneracy maps are the ones induced from 
$\BAR(\Mm_\emptyset,\prod\Mm_\emptyset,\prod\Mm_x)_2$.                 
We can easily write down $B_{L,2}(n)$ explicitly:}
\begin{alignat}{2}\label{eq:defofB_J,2}
B_{L,2}(n)=\Mm_{\emptyset,2}\notag
\,\amalg\,
\bigl(\tprod_{x\in L}\!\Mm_{x}^{U_x}\bigr)_{\!2}
\,\amalg\,
\Mm_{\emptyset,1}^{C}&\times\bigl(\tprod_{x\in L}\!\Mm_{x}^{D_x}\bigr)_{\!1}\\
\amalg
\tcoprod_{i=1}^n\biggl(
\bigl(\tprod_{x\in L}\!\Mm_\emptyset^{U_x}\bigr)_{\!2}
\,\amalg\,
\Mm_{\emptyset,1}^{C}&\times \bigl(\tprod_{x\in L}\!\Mm_{\emptyset}^{D_x}\bigr)_{\!1}
\,\amalg\,&
\bigl(\tprod_{x\in L}\!\Mm_{\emptyset}^{C_x}\bigr)_{\!1}&\times\bigl(\tprod_{x\in L}\!\Mm_{x}^{D_x}\bigr)_{\!1}\biggr)
\\
&&\llap{$\displaystyle\amalg\tcoprod_{\substack{i,j=1 \\ i<j}}^n$}
\bigl(\tprod_{x\in L}\!\Mm_\emptyset^{C_x}\bigr)_{\!1}
&\times
\bigl(\tprod_{x\in L}\!\Mm_\emptyset^{D_x}\bigr)_{\!1}\notag
\end{alignat}
\out{
\begin{equation}\label{eq:defofB_J,2}          
\begin{aligned}
	B_{L,2}(0)&=\bigl(\tprod_{x\in L}\!\Mm_{x}^{U_x}\bigr)_{\!2}\amalg \Bigl(\Mm_{\emptyset,1}^{C}\times\bigl(\tprod_{x\in L}\!\Mm_{x}^{D_x}\bigr)_{\!1}\Bigr)\amalg \Mm_{\emptyset,2}\\
	B_{L,2}(1)&=\bigl(\tprod_{x\in L}\!\Mm_\emptyset^{U_x}\bigr)_{\!2}\amalg
	\Bigl(\Mm_{\emptyset,1}^{C}\times \bigl(\tprod_{x\in L}\!\Mm_{\emptyset}^{D_x}\bigr)_{\!1}\Bigr)\\	
	&\amalg 
	\Bigl(\bigl(\tprod_{x\in L}\!\Mm_{\emptyset}^{C_x}\bigr)_{\!1}\times\bigl(\tprod_{x\in L}\!\Mm_{x}^{D_x}\bigr)_{\!1}\Bigr)
	\amalg\text{(degenerate summands)}\\
	B_{L,2}(2)&=\Bigl(\bigl(\tprod_{x\in L}\!\Mm_\emptyset^{C_x}\bigr)_{\!1}\times\bigl(\tprod_{x\in L}\!\Mm_\emptyset^{D_x}\bigr)_{\!1}\amalg\text{(degenerate summands)}
\end{aligned}                                                             
\end{equation}}
The maps $\psi_U$  
induce, for each $n$, a homeomorphism
$B_{L,2}(n)\cong\BAR_n(\Mm_\emptyset,\prod\Mm_\emptyset,\prod\Mm_x)_2$. These homeomorphisms
induce a simplicial space structure on $B_{L,2}$.
Note that on morphisms $B_{L,2}$ is defined using 
only the inclusions $i$, pullbacks and the gluing maps $\boxplus_{\emptyset,\emptyset}$ and $\boxplus_{\emptyset,x}$.
%
%

In the proof of Proposition~\ref{prop:barbarbardiagramcommutesinfiniterank}
we will need another simplicial space $B_{L,2}^{\bg}$
homotopically equivalent
to $B_{L,2}$, which we now define.
For each $x\in L$ let 
\[
B_x=C\cup\bigl(\textstyle\bigcup\limits_{\substack{y\in I\\y\neq x}}\!U_y\bigr)
\]
We define, for each $n$, a space $B_{L,2}^\bg(n)$ 
homeomorphic to $B_{L,2}(n)$ as follows:
\begin{alignat}{2}\label{eq:BL2big}
B_{L,2}^\bg(n)= \\ \notag 
\Mm_{\emptyset,2}
 \amalg
 \bigl(\tprod_{\llap{$\scriptstyle x\in L$}}\Mm_x^{U_x}\bigr)_{\!2}
 &\amalg
 \bigl(\tcoprod_{\llap{$\scriptstyle x\in L$}}\Mm_{\emptyset,1}^{B_x}\times\Mm_{x,1}^{D_x}\bigr)
 \\ \notag
\amalg\tcoprod_{i=1}^n\biggl(
       \bigl(\tprod_{\llap{$\scriptstyle x\in L$}}\Mm_\emptyset^{U_x}\bigr)_{\!2}
 &\amalg\bigl(\tcoprod_{\llap{$\scriptstyle x\in L$}}\Mm_{\emptyset,1}^{B_x}\times\Mm_{\emptyset,1}^{D_x}\bigr)&
 \amalg\bigl(\tcoprod_{\llap{$\scriptstyle x\in L$}}\Mm_{\emptyset,1}^{C_x}&\times\Mm_{x,1}^{D_x}\bigr)
 \amalg\bigl(\!\!\tcoprod_{\substack{x,y\in L\\y\neq x}}\!\!\Mm_{\emptyset,1}^{U_y}\times\Mm_{x,1}^{D_x}\bigr)
 \biggr) 
 \\ \notag && 
 \llap{$\displaystyle\amalg\tcoprod_{\substack{i,j=1\\i\neq j}}^n$}\biggl(
 \bigl(\tcoprod_{\llap{$\scriptstyle x\in L$}}\Mm_{\emptyset,1}^{C_x}&\times\Mm_{\emptyset,1}^{D_x}\bigr)
 \amalg
 \bigl(\tcoprod_{y\neq x}\Mm_{\emptyset,1}^{U_y}\times\Mm_{\emptyset,1}^{D_x}\bigr)
 \biggr)
\end{alignat}
For each $n$ we have a homotopy equivalence
$B_{L,2}(n)\to B_{L,2}^\bg(n)$,
defined as the identity on the
$\Mm_{\emptyset,2}$, $(\prod\Mm_x^{U_x})_2$ and
$(\prod\Mm_\emptyset^{U_x})_2$ summands (see equation~\eqref{eq:defofB_J,2}) and defined on the remaining summands 
by the inclusion maps
\begin{align}\label{eq:BLsubsetBLbig}
\Mm_{\emptyset,1}^C\times 
\bigl(\tprod_{x\in L}\!\Mm_x^{D_x}\bigr)_{\!1}=
\tcoprod_{x\in L}\!\bigl(\Mm_{\emptyset,1}^C\times \Mm_{x,1}^{D_x}\bigr)
\to
\tcoprod_{x\in L}\!\bigl(\Mm_{\emptyset,1}^{B_x}&\times \Mm_{x,1}^{D_x}\bigr)
\\ \notag
\Mm_{\emptyset,1}^C\times 
\bigl(\tprod_{x\in L}\!\Mm_{\emptyset}^{D_x}\bigr)_{\!1}=
\tcoprod_{x\in L}\!\bigl(\Mm_{\emptyset,1}^C\times \Mm_{\emptyset,1}^{D_x}\bigr)
\to
\tcoprod_{x\in L}\!\bigl(\Mm_{\emptyset,1}^{B_x}&\times \Mm_{\emptyset,1}^{D_x}\bigr)
\\ \notag
\bigl(\tprod_{x\in L}\!\Mm_\emptyset^{C_x}\bigr)_{\!1}\times\bigl(\tprod_{x\in L}\!\Mm_x^{D_x}\bigr)_{\!1}=
\bigl(\tcoprod_{x\in L}\!\Mm_{\emptyset,1}^{C_x}\times\Mm_{x,1}^{D_x}\bigr)\amalg
\bigl(\tcoprod_{x\neq y}\Mm_{\emptyset,1}^{C_y}&\times\Mm_{x,1}^{D_x}\bigr)
\\ \notag
\to
\bigl(\tcoprod_{x\in L}\!\Mm_{\emptyset,1}^{C_x}\times\Mm_{x,1}^{D_x}\bigr)\amalg
\bigl(\tcoprod_{x\neq y}\Mm_{\emptyset,1}^{U_y}&\times\Mm_{x,1}^{D_x}\bigr)
\\ \notag
\bigl(\tprod_{x\in L}\!\Mm_\emptyset^{C_x}\bigr)_{\!1}\times\bigl(\tprod_{x\in L}\!\Mm_\emptyset^{D_x}\bigr)_{\!1}=
\bigl(\tcoprod_{x\in L}\!\Mm_{\emptyset,1}^{C_x}\times\Mm_{\emptyset,1}^{D_x}\bigr)\amalg
\bigl(\tcoprod_{x\neq y}\Mm_{\emptyset,1}^{C_y}&\times\Mm_{\emptyset,1}^{D_x}\bigr)
\\ \notag
\to
\bigl(\tcoprod_{x\in L}\!\Mm_{\emptyset,1}^{C_x}\times\Mm_{\emptyset,1}^{D_x}\bigr)\amalg
\bigl(\tcoprod_{x\neq y}\Mm_{\emptyset,1}^{U_y}&\times\Mm_{\emptyset,1}^{D_x}\bigr)
\end{align}
The face and degeneracy
maps on $B_{L,2}$ are defined using inclusions, pullbacks and
the gluing maps. All these maps 
can be naturally extended to define a simplicial structure on $B_{L,2}^\bg$:
if we write
\begin{align*}
\bigl(\tprod_{x\in L}\!\Mm_x^{U_x}\bigr)_{\!2}&=
  \bigl(\tcoprod_{x\in L}\!\Mm_{x,2}^{U_x}\bigr)\amalg
  \bigl(\tcoprod_{\substack{x,y\in L\\y<x}}\!\Mm_{y,1}^{U_y}\times\Mm_{x,1}^{U_x}\bigr)\\
\bigl(\tprod_{x\in L}\!\Mm_\emptyset^{U_x}\bigr)_{\!2}&=
\bigl(\tcoprod_{x\in L}\!\Mm_{\emptyset,2}^{U_x}\bigr)\amalg
\bigl(\tcoprod_{\substack{x,y\in L\\y<x}}\!\Mm_{\emptyset,1}^{U_y}\times\Mm_{\emptyset,1}^{U_x}
\bigr)
\end{align*}
all the face maps in $B_{L,2}^\bg$
between non-degenerate summands are represented
in the diagram below:
\out{
\[
\xymatrix@C=-1em{
\Mm_{\emptyset,2} & & 
\smash[b]{\coprod\limits_x}\,\Mm_{\emptyset,2}
  \ar[ll]
  \ar[rr]^-{\pi^*_{\emptyset,x}} & &
\smash[b]{\coprod\limits_x}\,\Mm_{x,2} 
\\ & & &
\smash[b]{\coprod\limits_x}\,\Mm_{\emptyset,1}^{C_x}\times\Mm_{\emptyset,1}^{D_x}
  \ar[lu]_{\boxplus_{\emptyset,\emptyset}}
  \ar[ld]^{i\times\ident}
  \ar[rd]^{\ident\times\pi^*_{\emptyset,x}} 
\\
\smash[b]{\coprod\limits_{y<x}}\Mm_{\emptyset,1}^{U_y}\times\Mm_{\emptyset,1}^{U_x}
  \ar[uu]^{\boxplus_{\emptyset,\emptyset}}
  \ar[dd]_{\pi^*_{\emptyset,y}\times\pi^*_{\emptyset,x}} & &
\smash[b]{\coprod\limits_x}\,\Mm_{\emptyset,1}^{B_x}\times\Mm_{\emptyset,1}^{D_x}
  \ar[lluu]_-{\boxplus_{\emptyset,\emptyset}}
  \ar[rrdd]^-{\pi^*_{\emptyset,x}\times\pi^*_{\emptyset,x}} & &
\smash[b]{\coprod\limits_x}\,\Mm_{\emptyset,1}^{C_x}\times\Mm_{x,1}^{D_x}
  \ar[uu]_{\boxplus_{\emptyset,x}}
  \ar[dd]^{i\times\ident} 
\\ &
\smash[b]{\coprod\limits_{y\neq x}}\Mm_{\emptyset,1}^{U_y}\times\Mm_{\emptyset,1}^{D_x}
  \ar[lu]_{\ident\times i}
  \ar[ru]_{i\times\ident}
  \ar[rd]^{\ident\times\pi_{\emptyset,x}^*} 
\\
\smash[b]{\coprod\limits_{y<x}}\Mm_{y,1}^{U_y}\times\Mm_{x,1}^{U_x} & &
\smash[b]{\coprod\limits_{y\neq x}}\Mm_{\emptyset,1}^{U_y}\times\Mm_{x,1}^{D_x}
  \ar[ll]_-{\pi_{\emptyset,y}^*\times i}
  \ar[rr]^-{i\times\ident} & &
\smash[b]{\coprod\limits_x}\,\Mm_{\emptyset,1}^{B_x}\times\Mm_{x,1}^{D_x}
}\]
}
\begin{equation}\label{diagram:BL2big}
\xymatrix@C=-1em{
	\smash[b]{\coprod\limits_{x\in L}}\Mm_{\emptyset,1}^{B_x}\times\Mm_{x,1}^{D_x} & & 
	\smash[b]{\coprod\limits_{x\in L}}\Mm_{\emptyset,1}^{C_x}\times\Mm_{x,1}^{D_x}
	\ar[rr]^{\boxplus_{\emptyset,x}}
	\ar[ll]_{i\times\ident}  & &
	\smash[b]{\coprod\limits_{x\in L}}\Mm_{x,2}\phantom{M}
\\ & & &
	\smash[b]{\coprod\limits_{x\in L}}\Mm_{\emptyset,1}^{C_x}\times\Mm_{\emptyset,1}^{D_x}\phantom{M}
	\ar[rd]_{\boxplus_{\emptyset,\emptyset}}
	\ar[ld]^{i\times\ident}
	\ar[lu]_{\ident\times\pi^*_{\emptyset,x}} 
\\
	\smash[b]{\coprod\limits_{\substack{x,y\in L\\y\neq x}}}\!\Mm_{\emptyset,1}^{U_y}\times\Mm_{x,1}^{D_x}
	\ar[dd]_-{\pi_{\emptyset,y}^*\times i}
	\ar[uu]^-{i\times\ident} & &
	\smash[b]{\coprod\limits_{x\in L}}\Mm_{\emptyset,1}^{B_x}\times\Mm_{\emptyset,1}^{D_x}
	\ar[rrdd]^-{\boxplus_{\emptyset,\emptyset}}
	\ar[lluu]_-{\ident\times\pi^*_{\emptyset,x}}\phantom{M} & &
	\smash[b]{\coprod\limits_{x\in L}}\Mm_{\emptyset,2}\phantom{M}
	\ar[dd]
	\ar[uu]_-{\pi^*_{\emptyset,x}}
\\ &
	\smash[b]{\coprod\limits_{\substack{x,y\in L\\y\neq x}}}\!\Mm_{\emptyset,1}^{U_y}\times\Mm_{\emptyset,1}^{D_x}
	\ar[rd]^{\ident\times i}
	\ar[ru]_{i\times\ident}
	\ar[lu]_{\ident\times\pi_{\emptyset,x}^*} 
\\
	\coprod\limits_{\substack{x,y\in L\\y<x}}\!\Mm_{y,1}^{U_y}\times\Mm_{x,1}^{U_x} & &
	\coprod\limits_{\substack{x,y\in L\\y<x}}\!\Mm_{\emptyset,1}^{U_y}\times\Mm_{\emptyset,1}^{U_x}
	\ar[rr]^{\boxplus_{\emptyset,\emptyset}}
	\ar[ll]_{\pi^*_{\emptyset,y}\times\pi^*_{\emptyset,x}} & &
	\Mm_{\emptyset,2}
}\end{equation}
The inclusion maps in equation~\eqref{eq:BLsubsetBLbig} 
induce a weak equivalence  $B_{L,2}\to B_{L,2}^\bg$.

\out{%

We can visualize all the maps together in the cube diagram:
\[
\xymatrix@C=0pt{
	& \Mm_{\emptyset,2}\ar[rr] &&
	\Mm_{J,2} \\
	\Mm_{\emptyset,1}^{C}\times \Bigl(\prod \Mm_{\emptyset}^{D_x}\Bigr)_{\!1}\ar[ru]^-{\boxplus}\ar[rr]^(0.65){\Id\times\prod\pi_i^*} &&
	\Mm_{\emptyset,1}^{C}\times \Bigl(\prod \Mm_{x}^{D_x}\Bigr)_{\!1}\ar[ru]\\
	& \Bigl(\prod \Mm_\emptyset^{U_x}\Bigr)_{\!2}\ar'[r]_-{\prod\pi_i^*}[rr]\ar'[u][uu]_-{\boxplus} &&
	\Bigl(\prod \Mm_{x}^{U_x}\Bigr)_{\!2} \ar[uu]\\
	\Bigl(\prod \Mm_\emptyset^{C_x}\Bigr)_{\!1}\times\Bigl(\prod \Mm_\emptyset^{D_x}\Bigr)_{\!1}\ar[ru]^(0.6){\prod\boxplus}\ar[rr]^{\Id\times\prod\pi_i^*}\ar[uu]_(0.6){\boxplus\times\Id} &&
	\Bigl(\prod \Mm_{\emptyset}^{C_x}\Bigr)_{\!1}\times\Bigl(\prod \Mm_{x}^{D_x}\Bigr)_{\!1}\ar[ru]^-{\prod\tboxplus_i}\ar[uu]_(0.65){\boxplus\times\Id}
}
\]
}%

The map $W_{11}$ in equation~\eqref{eq:maphboxplusW11} extends to a map
$\tcoprod\Mm_{\emptyset,1}^{B_x}\times\Mm_{x,1}^{D_x}\to\Mm_{I,2}$
so the map $h_\boxplus$ in equation~\eqref{eq:maphboxplus} 
extends to a map 
\begin{equation}\label{eq:hboxplusbig}
h_\boxplus\colon|B_{L,2}^\bg|\to\Mm_{L,2}\,.
\end{equation}

\subsection{The space \boldmath $\BAR(\Mm_{J},\Mm_\emptyset,\Mm_K)$ in degree 2}

In this section and the next we fix finite sets $L_1,L_2\subset I$ such that $U_{L_1}\cap U_{L_2}=\emptyset$ (we are interested in 2 cases: the case where $\#L_1=\#L_2=1$
and the case where $L_1=J$ and $L_2=K$).
We give $\Mm_{\emptyset}$ the structure of a monoid in degrees up to 2 by the following map:
\begin{equation}\label{eq:monoidM}
q\colon\Mm_{\emptyset,1}\times\Mm_{\emptyset,1}\cong
\Mm_{\emptyset,1}^{C_{L_1}}\times\Mm_{\emptyset,1}^{C_{L_2}}\xrightarrow{\,\boxplus_{\emptyset,\emptyset}\,}\Mm_{\emptyset,2}\,.
\end{equation}
We define a left action of $\Mm_\emptyset$ on $\Mm_{L_2}$ 
using the maps:
\begin{equation}\label{eq:leftactionofMonM}
  \begin{aligned}
    Q_{11}^{\ell}&\colon\Mm_{\emptyset,1}\times\Mm_{L_2,1}\cong
    \Mm_{\emptyset,1}^{C_{L_1}}\times\Mm_{L_2,1}^{U_{L_2}}\xrightarrow{\boxplus_{\emptyset,L_2}}\Mm_{L_2,2}\,,\\
    Q_1^{\ell}&\colon\Mm_{\emptyset,1}\cong
    \Mm_{\emptyset,1}^{C_{L_2}}\xrightarrow{\pi_{\emptyset,L_2}^*}\Mm_{L_2,1}^{U_{L_2}}\cong\Mm_{L_2,1}\,,\\
    Q_2^{\ell}&\colon
    \Mm_{\emptyset,2}\xrightarrow{\pi_{\emptyset,L_2}^*}\Mm_{L_2,2}\,,
\end{aligned}
\end{equation}
and we define in an analogous way a right action of $\Mm_\emptyset$ on $\Mm_{L_1}$ by
\begin{equation}\label{eq:rightactionofMonM}
\Mm_{L_1,1}^{U_{L_1}}\times\Mm_{\emptyset,1}^{C_{L_2}}\to\Mm_{L_1,2}\,,\quad
\Mm_{\emptyset,1}^{C_{L_1}}\to\Mm_{L_1,1}^{U_{L_1}}\,,\quad
\Mm_{\emptyset,2}\to\Mm_{L_1,2}\,.
\end{equation}
These actions give rise to a bar construction $\BAR(\Mm_{L_1},\Mm_\emptyset,\Mm_{L_2})$. Let $L=L_1\cup L_2$.
The maps
\[
	\Mm_{L_1,2}\xrightarrow{\pi_{L_1,L}^*}\Mm_{L,2}\,,\quad
	\Mm_{L_1,1}^{U_{L_1}}\times\Mm_{L_2,1}^{U_{L_2}}\xrightarrow{\boxplus_{L_1,L_2}}\Mm_{L,2}\,,\quad
	\Mm_{L_2,2}\xrightarrow{\pi_{L_2,L}^*}\Mm_{L,2}
\]
induce a map in degree 2:
\begin{equation}\label{eq:mapbarMMM->M}
  f\colon\BAR(\Mm_{L_1},\Mm_\emptyset,\Mm_{L_2})\to\Mm_{L}\,.
\end{equation}

\begin{remark}\label{remark:C_L=U_L}
  The monoid structure and the actions defined in this section 
are well defined if we take $C_{L_1}=U_{L_1}$ and $C_{L_2}=U_{L_2}$. We represent the bar construction thus obtained by
  $\hatBAR(\Mm_{L_1},\Mm_\emptyset,\Mm_{L_2})$. By Proposition~\ref{prop:MUishomeoandmoretoM},
  the maps of monoids and modules determined by the inclusions 
  $C_{L_1}\subset U_{L_1}$ and $C_{L_2}\subset U_{L_2}$ induce a homotopy equivalence:
  \[
  \BAR(\Mm_{L_1},\Mm_\emptyset,\Mm_{L_2})\xrightarrow{\simeq}\hatBAR(\Mm_{L_1},\Mm_\emptyset,\Mm_{L_2})\,.
  \]
\end{remark}

In the proof of Proposition~\ref{prop:barbarbardiagramcommutesinfiniterank}
we will also need to consider the bar construction induced by the 
left and right actions of $\Mm_\emptyset$ on itself defined using the maps
$Q_2^{\ell}=Q_2^r=\ident\colon\Mm_{\emptyset,2}\to\Mm_{\emptyset,2}$ and
\begin{align*}
Q_{11}^{\ell}&\colon\Mm_{\emptyset,1}^{C_{L_1}}\times\Mm_{\emptyset,1}^{U_{L_2}}
\xrightarrow{\boxplus_{\emptyset,\emptyset}}\Mm_{\emptyset,2} &
Q_1^{\ell}&\colon\Mm_{\emptyset,1}^{C_{L_2}}\xrightarrow{i}\Mm_{\emptyset,1}^{U_{L_2}}
\\
Q_{11}^r&\colon\Mm_{\emptyset,1}^{U_{L_1}}\times\Mm_{\emptyset,1}^{C_{L_2}}
\xrightarrow{\boxplus_{\emptyset,\emptyset}}\Mm_{\emptyset,2} &
Q_1^r&\colon\Mm_{\emptyset,1}^{C_{L_1}}\xrightarrow{i}\Mm_{\emptyset,1}^{U_{L_1}}
\end{align*}
(compare with equations~\eqref{eq:leftactionofMonM} 
and~\eqref{eq:rightactionofMonM}).
We represent the bar construction thus obtained by 
\begin{equation}\label{eq:Bemptyset2}
B_{\emptyset,2}=\BAR(\Mm_\emptyset,\Mm_\emptyset,\Mm_\emptyset)_2\,.
\end{equation}
The maps $W_M=W_N=\ident\colon\Mm_{\emptyset,2}\to\Mm_{\emptyset,2}$ and 
\[
W_{11}\colon \Mm_{\emptyset,1}^{U_{L_1}}\times\Mm_{\emptyset,1}^{U_{L_2}}
\xrightarrow{\boxplus_{\emptyset,\emptyset}}\Mm_{\emptyset,2}
\]
induce a map $f\colon B_{\emptyset,2}\to\Mm_{\emptyset,2}$.

\begin{proposition}\label{prop:B(M,M,M)->Mk=2}
  The map $f\colon B_{\emptyset,2}\to\Mm_{\emptyset,2}$
  is a homotopy equivalence.
\end{proposition}
\begin{proof}
If we let $\hat B_{\emptyset,2}$ denote the bar construction obtained 
by taking $C_{L_1}=U_{L_1}$ and $C_{L_2}=U_{L_2}$ 
(see remark~\ref{remark:C_L=U_L}), the inclusion map
$B_{\emptyset,2}\to\hat B_{\emptyset,2}$ is  a homotopy equivalence
and the map $f$ is the composition 
$B_{\emptyset,2}\xrightarrow{\simeq}\hat B_{\emptyset,2}\to\Mm_{\emptyset,2}$,
which is a homotopy equivalence (Proposition~\ref{prop:B(G,G,Y)=Y}).
\end{proof}

\subsection{The space \boldmath $\BAR\bigl(|B_J|,\Mm_\emptyset,|B_K|\bigr)$ in degree 2}

Consider the monoid structure on $\Mm_{\emptyset,\leq2}$
defined in equation~\eqref{eq:monoidM}. We define a
left action in degrees up to 2
of the monoid $\Mm_{\emptyset,\leq2}$, seen as a constant simplicial space,
on the graded simplicial space
$B_{L_2}=\star\amalg B_{L_2,1}\amalg B_{L_2,2}\amalg\star\amalg\cdots$
(see equations~\eqref{eq:defofB_J,2} and~\eqref{eq:defofBL1})
by the natural transformations:
\begin{equation}\label{eq:simplicialaction1}
Q_1^{\ell}\colon\Mm_{\emptyset,1}^{C_{L_2}}\xrightarrow{i}
\Mm_{\emptyset,1}^{U_{L_2}}\subset B_{L_2,1}(n)
\quad\text{and}\quad
Q_2^{\ell}\colon\Mm_{\emptyset,2}\xrightarrow{\subset} B_{L_2,2}(n)\,,
\end{equation}
and by the natural transformation
$Q^{\ell}_{11}\colon\Mm_{\emptyset,1}^{C_{L_1}}\times B_{L_2,1}\to B_{L_2,2}$ 
defined by
\begin{align}\label{eq:simplicialaction2}
\Mm_{\emptyset,1}^{C_{L_1}}&\times B_{L_2,1}(n)\\ \notag
=\Mm_{\emptyset,1}^{C_{L_1}}\times\Mm_{\emptyset,1}^{U_{L_2}}
&\amalg
\tcoprod_{i=1}^n\biggl(\Mm_{\emptyset,1}^{C_{L_1}}\times\bigl(\tprod_{x\in L_2}\!\!\Mm_\emptyset^{D_x}\bigr)_{\!1}\biggr)
\amalg\,
\Mm_{\emptyset,1}^{C_{L_1}}\times\bigl(\tprod_{x\in L_2}\!\!\Mm_x^{D_x}\bigr)_{\!1}
\\ \notag
\xrightarrow{\boxplus_{\emptyset,\emptyset}\amalg\,i\,\amalg\,i}
\Mm_{\emptyset,2}
&\amalg\,
\tcoprod_{i=1}^n\biggl(\Mm_{\emptyset,1}^{C}\times\bigl(\tprod_{x\in L_2}\!\!\Mm_\emptyset^{D_x}\bigr)_{\!1}\biggr)
\amalg\,
\Mm_{\emptyset,1}^{C}\times\bigl(\tprod_{x\in L_2}\!\!\Mm_x^{D_x}\bigr)_{\!1}\subset B_{L_2,2}(n)
\end{align}
To check that these maps define a natural transformation it is enough
to observe that the following diagram is commutative:
\begin{equation}\label{eq:MemptyactsonB_J}
\xymatrix@C=4em{
	\Mm_{\emptyset,1}^{C_{L_1}}\times \Mm_{\emptyset,1}^{U_{L_2}}\ar[d]^{\boxplus}&
	\Mm_{\emptyset,1}^{C_{L_1}}\times\bigl(\prod\limits_{x\in L_2}\!\! \Mm_\emptyset^{D_x}\bigr)_{\!1}\ar[r]^-{\ident\times\prod\pi_i^*}\ar[l]_-{\ident\times\boxplus}\ar[d]^{i\times\ident}&
	\Mm_{\emptyset,1}^{C_{L_1}}\times\bigl(\prod\limits_{x\in L_2}\!\! \Mm_{x}^{D_x}\bigr)_{\!1}\ar[d]^{i\times\ident}\\
	\Mm_{\emptyset,2}&
	\Mm_{\emptyset,1}^{C}\times \bigl(\prod\limits_{x\in L_2}\!\! \Mm_{\emptyset}^{D_x}\bigr)_{\!1}\ar[l]_-{\boxplus}\ar[r]^-{\ident\times\prod\pi_i^*}&
	\Mm_{\emptyset,1}^{C}\times \bigl(\prod\limits_{x\in L_2}\!\! \Mm_{x}^{D_x}\bigr)_{\!1}
}
\end{equation}
Relation~\eqref{eq:Q2circq=Q11circ1xQ1} follows from the commutativity
of the diagram:
\[
\xymatrix{
	\Mm_{\emptyset,1}^{C_{L_1}}\times\Mm_{\emptyset,1}^{C_{L_2}}
	\ar[r]^-{\ident\times i}\ar[d]^{q} &
	\Mm_{\emptyset,1}^{C_{L_1}}\times\Mm_{\emptyset,1}^{U_{L_2}}
	\ar[r]^-{\subset}\ar[d]^{\boxplus_{\emptyset,\emptyset}} &
	\Mm_{\emptyset,1}^{C_{L_1}}\times B_{L_2,1}(n)
	\ar[d]^{Q_{11}^{\ell}}\\
	\Mm_{\emptyset,2}\ar[r]^-{=} & \Mm_{\emptyset,2}\ar[r]^-{Q_2^{\ell}} & B_{L_2,2}(n) }
\]
where the map in the top row equals $\ident\times Q_1^{\ell}$.

To define a right action of $\Mm_\emptyset$ on $|B_{L_1}|$ 
in degrees up to 2 we proceed as follows: 
By taking the opposite monoid structure on $\Mm_\emptyset$,
defined using the map 
$\boxplus_{\emptyset,\emptyset}\colon\Mm_{\emptyset,1}^{C_{L_2}}\times\Mm_{\emptyset,1}^{C_{L_1}}\to\Mm_{\emptyset,2}$,
we get, in a standard way, a right action of
$\Mm_\emptyset$ on $B_{L_2}$.
If we then switch every occurrence of the finite sets 
$L_1$ and $L_2$ in the definitions above, we recover the original monoid structure on $\Mm_\emptyset$ (equation~\eqref{eq:monoidM}),
together with a right action on $B_{L_1}$. Explicitly, we have $Q_2^r=Q_2^{\ell}$,
the map $Q_1^r$ is induced by the inclusion 
$\Mm_{\emptyset,1}^{C_{L_1}}\subset\Mm_{\emptyset,1}^{U_{L_1}}$
and the map $Q_{11}^r$ is induced by the inclusion
$\Mm_{\emptyset,1}^{C_{L_2}}\to\Mm_{\emptyset,1}^C$
and by the map
\[
\boxplus_{\emptyset,\emptyset}\colon
\Mm_{\emptyset,1}^{U_{L_1}}\times\Mm_{\emptyset,1}^{C_{L_2}}
\to\Mm_{\emptyset,2}
\]
(compare with equations~\eqref{eq:simplicialaction1} and~\eqref{eq:simplicialaction2}). These actions give rise to a bar construction
$\BAR\bigl(|B_{L_1}|,\Mm_\emptyset,|B_{L_2}|\bigr)$.

Consider the map $h_\boxplus\colon |B_{L_j}|\to\Mm_{L_j}$
(with $j=1,2$) 
defined in degree 1 by composing the map in equation~\eqref{eq:hindegree1}
with the homeomorphism $\Mm_{L_j}^{U_{L_j}}\cong\Mm_{L_j}$
and defined in degree 2 in equation~\eqref{eq:maphboxplus}.

\begin{proposition}\label{prop:hboxplusequivariant}
The map $h_\boxplus$ is equivariant with respect
to both the left actions of the monoid $\Mm_\emptyset$ on $|B_{L_2}|$ and
$\Mm_{L_2}^{U_{L_2}}$ (equation~\eqref{eq:leftactionofMonM}),
and the right actions of $\Mm_\emptyset$ on $|B_{L_1}|$ and $\Mm_{L_1}^{U_{L_1}}$
(equation~\eqref{eq:rightactionofMonM}).
\end{proposition}
\begin{proof}
Consider first the left actions.
We need to show that the following diagrams are commutative:
\[
\xymatrix{
\Mm_{\emptyset,1}^{C_{L_2}}\ar[r]^-{Q_1^\ell}\ar[rd]_{\pi_{\emptyset,L_2}^*} &
|B_{L_2,1}|\ar[d]^{h_\boxplus} \\ &
\Mm_{L_2,1}^{U_{L_2}} }	\qquad
\xymatrix{
	\Mm_{\emptyset,2}\ar[r]^-{Q_2^\ell}\ar[rd]_{\pi_{\emptyset,L_2}^*} &
	|B_{L_2,2}|\ar[d]^{h_\boxplus} \\ &
	\Mm_{L_2,2} }  \qquad
\xymatrix{
	\Mm_{\emptyset,1}^{C_{L_1}}\times|B_{L_2,1}|
	\ar[r]^-{Q_{11}^\ell}\ar[d]_{\ident\times h_\boxplus} &
	|B_{L_2,2}|\ar[d]^{h_\boxplus} \\
	\Mm_{\emptyset,1}^{C_{L_1}}\times\Mm_{L_2,1}^{U_{L_2}}
	\ar[r]^-{\boxplus_{\emptyset,L_2}} &
	\Mm_{L_2,2} }
\]
That is clearly the case for the first 2 diagrams.
The commutativity of the third diagram
amounts to the commutativity of the following diagrams:
\[
\xymatrix{
\Mm_{\emptyset,1}^{C_{L_1}}\times\Mm_{\emptyset,1}^{U_{L_2}}
  \ar[r]^-{\boxplus_{\emptyset,\emptyset}}\ar[d]_{\ident} &
\Mm_{\emptyset,2}\ar[d]_{\pi_{\emptyset,L_2}^*} \\
	\Mm_{\emptyset,1}^{C_{L_1}}\times\Mm_{L_2,1}^{U_{L_2}}
	\ar[r]^-{\boxplus_{\emptyset,L_2}} &
	\Mm_{L_2,2} }\qquad
\xymatrix{
\Mm_{\emptyset,1}^{C_{L_1}}\times\bigl(\prod\limits_{x\in L_2}\!\!\Mm_x^{D_x}\bigr)_{\!1}
  \ar[r]^-{i\times\ident}\ar[d]_{\ident\times\boxplus_{\mathbf {L_2}}} &
\Mm_{\emptyset,1}^{C}\times\bigl(\prod\limits_{x\in L_2}\!\!\Mm_x^{D_x}\bigr)_{\!1}
  \ar[d]_{\boxplus_{\emptyset,\mathbf{L_2}}} \\
\Mm_{\emptyset,1}^{C_{L_1}}\times\Mm_{L_2,1}^{U_{L_2}}
\ar[r]^-{\boxplus_{\emptyset,L_2}} &
\Mm_{L_2,2} }
\]
The proof for the right actions is completely analogous.
\end{proof}

By Proposition~\ref{prop:hboxplusequivariant}, the map $h_\boxplus$ induces
a map in degree 2:
\begin{equation}\label{eq:mapbarbarbartoBI}
\BAR\bigl(|B_{L_1}|,\Mm_\emptyset,|B_{L_2}|\bigr)\to
\BAR(\Mm_{L_1},\Mm_\emptyset,\Mm_{L_2})\,.
\end{equation}

\subsection{The Diagram~(\textbf{\ref{eq:diagramofbarsinfiniterank}})}

In the previous sections we defined, for $k=2$, the bar constructions and 
all maps except the top one appearing in diagram~\eqref{eq:diagramofbarsinfiniterank}
(see equations~\eqref{eq:maphboxplus}, \eqref{eq:mapbarMMM->M} and~\eqref{eq:mapbarbarbartoBI}). 

We now consider the case $k=1$.
We take all Jordan open sets to be $\Cc$ 
(see Remark~\ref{rmk:changeopensetsk=1}). The maps
$h_\boxplus$ and $f$ are defined in equations~\eqref{eq:hindegree1}
and~\eqref{eq:mapbarMMM->Mdeg1} respectively. 
A left action
of $\Mm_\emptyset$ on the simplicial space
$B_{L_2}$ in degree 1 
is defined by the natural transformation
$Q_1^\ell\colon\Mm_{\emptyset,1}\xrightarrow{\subset}B_{L_2,1}(n)$
and a left action of $\Mm_\emptyset$ on the space $\Mm_{L_2}$ in degree 1
is defined by the map 
$Q_1^\ell=\pi_{\emptyset,L_2}^*\colon\Mm_{\emptyset,1}\to\Mm_{L_2,1}$.
The corresponding right actions are defined similarly.
The map $h_\boxplus$ is easily seen to be equivariant
with respect to these actions so
it induces a map in degree 1:
\[
\BAR\bigl(|B_{L_1}|,\Mm_\emptyset,|B_{L_2}|\bigr)\to
\BAR(\Mm_{L_1},\Mm_\emptyset,\Mm_{L_2})\,.
\]

\begin{proposition}\label{prop:barbarbardiagramcommutesinfiniterank}
  Let $L_1,L_2\subset I$ be disjoint finite sets such that $U_{L_1}\cap U_{L_2}=\emptyset$ and let $L=L_1\cup L_2$. Then 
  there is a map $\BAR\bigl(|B_{L_1}|,\Mm_\emptyset,|B_{L_2}|\bigr)\to|B_L|$ in degrees 1 and 2 which is a homotopy equivalence and such that the following diagram commutes up to homotopy:
  \begin{equation}\label{eq:diagram-barbarbar}
  \xymatrix{
    \BAR\bigl(|B_{L_1}|,\Mm_\emptyset,|B_{L_2}|\bigr)\ar[r]^-{\simeq}\ar[d] &
    |B_L| \ar[d]^{h_\boxplus} \\
    \BAR(\Mm_{L_1},\Mm_\emptyset,\Mm_{L_2})\ar[r]^-{f} &
    \Mm_L
  }
  \end{equation}
\end{proposition}


The method of proof of this proposition will be generalized when we prove Proposition~\ref{prop:IcupJinappendix} on page~\pageref{prop:IcupJinappendix}.

\begin{proof}
	Consider first the degree $k=2$ case. 
	We begin by defining a trisimplicial space
  $F_0\colon(\Delta\times\Delta\times\Delta)^\op\to\Top$ 
  homeomorphic to 
  $\BAR_\bullet\bigl(B_{L_1}(\bullet),\Mm_\emptyset,B_{L_2}(\bullet)\bigr)$.
For each non-negative integer $n$ let
\[
  A_2(n)=\bigl(\tcoprod_{i=1}^n\!\Mm_{\emptyset,2}\bigr)\amalg
    \bigl(\tcoprod_{\substack{i,j=1 \\i<j}}^n\!\!\Mm_{\emptyset,1}^{C_{L_1}}\times\Mm_{\emptyset,1}^{C_{L_2}}\bigr)\,,\qquad
  A_{L_j,1}(n)=\tcoprod_{i=1}^n\!\Mm_{\emptyset,1}^{C_{L_j}}\quad (j=1,2).
  \]
  Note that $A_2(n)$ and $A_{L_j,1}(n)$
  are homeomorphic respectively to the degree 1 and 2 summands of
  $\bigl(\Mm_\emptyset\bigr)^n$.
  Then let
  \begin{multline*}
  F_0(n_1,n,n_2)=B_{L_1,2}(n_1)\amalg A_2(n)\amalg B_{L_2,2}(n_2)\\
  \amalg B_{L_1,1}(n_1)\times A_{L_2,1}(n) \amalg
  B_{L_1,1}(n_1)\times B_{L_2,1}(n_2) \amalg
  A_{L_1,1}(n)\times B_{L_2,1}(n_2)\,.
  \end{multline*}
  The homeomorphisms $\psi_U$ induce homeomorphisms
  \[
  F_0(n_1,n,n_2)\cong \BAR_n\bigl(B_{L_1}(n_1),\Mm_\emptyset,B_{L_2}(n_2)\bigr)
  \]
  which give $F_0$ the structure of a trisimplicial space.
        Realizing $F_0$ with respect to $n_1$ and $n_2$ we obtain the simplicial space $\BAR_\bullet\bigl(|B_{L_1}|,\Mm_\emptyset,|B_{L_2}|\bigr)$.

	Let $F_1$ be the bisimplicial space obtained by realizing 
	$F_0$ with respect to $n$.
	We write $|F_1|$ for its geometric realization, which may be described in 3 homeomorphic ways
	(see \cite[Lemma on page 94]{Qui73}):
	realizing with respect to one of the $n_i$'s and then with respect to the other, or realizing the diagonal simplicial space
	(if we let $d\colon\Delta\to\Delta\times\Delta$ be the diagonal functor, then $|d^*F_1|\cong|F_1|$). Let
	\begin{align*}
	B_{\emptyset,1}^{L_1}
	&=\BAR(\Mm_\emptyset^{U_{L_1}},\Mm_\emptyset^{C_{L_1}},\Mm_\emptyset^C)_1\,, &
	B_{\emptyset,1}^{L_2}
	&=\BAR(\Mm_\emptyset^C,\Mm_\emptyset^{C_{L_2}},\Mm_\emptyset^{U_{L_2}})_1 &
	\end{align*}
	(see Proposition~\ref{eq:f:B(M,Mcap,M)->Mcup})
	and let $B_{\emptyset,2}$ be as in equation~\eqref{eq:Bemptyset2}.
	Also recall Notation~\ref{nota:Ax*vs*xA}.
	All the face maps in $d^*F_1$
	between non-degenerate summands are represented
	in the diagram below 
	(in the lower half 
	of the diagram the order of the factors in some of the products was switched to make the maps clearer):
\[
\xymatrix@C=-4em@R=3ex{
	B_{\emptyset,1}^{L_1}\times
	\bigl(\prod\limits_{L_2}\Mm_{x}^{D_x}\bigr)_{\!1} 
	& & 
	\bigl(\prod\limits_{L_2}\Mm_{\emptyset}^{C_x}\bigr)_{\!1}\times
	\bigl(\prod\limits_{L_2}\Mm_{x}^{D_x}\bigr)_{\!1}
	\ar[rrr]\ar[ll]  
	& & &
	\bigl(\prod\limits_{L_2}\Mm_{x}^{U_x}\bigr)_{\!2}
	\\ & & &
	\bigl(\prod\limits_{L_2}\Mm_{\emptyset}^{C_x}\bigr)_{\!1}\times
	\bigl(\prod\limits_{L_2}\Mm_{\emptyset}^{D_x}\bigr)_{\!1}
	\ar[rrd]\ar[ld]\ar[lu] & \text{\phantom{MMMMMM}}
	\\
	\bigl(\prod\limits_{L_1}\Mm_{\emptyset}^{D_x}\bigr)_{\!1}\times
	\bigl(\prod\limits_{L_2}\Mm_{x}^{D_x}\bigr)_{\!1}
	\ar[dd]\ar[uu] 
	& &
	B_{\emptyset,1}^{L_1}\times
	\bigl(\prod\limits_{L_2}\Mm_{\emptyset}^{D_x}\bigr)_{\!1}
	\ar[rrrdd]\ar[lluu] 
	& & &
	\bigl(\prod\limits_{L_2}\Mm_{\emptyset}^{U_x}\bigr)_{\!2}
	\ar[dd]	\ar[uu]	
	\\ &
	\bigl(\prod\limits_{L_1}\Mm_{\emptyset}^{D_x}\bigr)_{\!1}^\ell\times
	\bigl(\prod\limits_{L_2}\Mm_{\emptyset}^{D_x}\bigr)_{\!1}^r
	\ar[rd]\ar[ru]\ar[lu]	
	\\
	\bigl(\prod\limits_{L_1}\Mm_{x}^{D_x}\bigr)_{\!1}\times
	\bigl(\prod\limits_{L_2}\Mm_{x}^{D_x} \bigr)_{\!1}
	& &
	\bigl(\prod\limits_{L_1}\Mm_{\emptyset}^{D_x}\bigr)_{\!1}\times
	\bigl(\prod\limits_{L_2}\Mm_{\emptyset}^{D_x}\bigr)_{\!1}
	\ar[rrr]\ar[ll] & & &
	B_{\emptyset,2}
	\\ &
	\bigl(\prod\limits_{L_1}\Mm_{\emptyset}^{D_x}\bigr)_{\!1}^r\times
	\bigl(\prod\limits_{L_2}\Mm_{\emptyset}^{D_x}\bigr)_{\!1}^\ell
	\ar[rd]\ar[ru]\ar[ld]
	\\
	\bigl(\prod\limits_{L_1}\Mm_{x}^{D_x}\bigr)_{\!1}\times
	\bigl(\prod\limits_{L_2}\Mm_{\emptyset}^{D_x}\bigr)_{\!1}
	\ar[dd]\ar[uu] 
	& &
	\bigl(\prod\limits_{L_1}\Mm_{\emptyset}^{D_x}\bigr)_{\!1}\times	
	B_{\emptyset,1}^{L_2}
	\ar[rrruu]\ar[lldd] 
	& & &
	\bigl(\prod\limits_{L_1}\Mm_{\emptyset}^{U_x}\bigr)_{\!2}
	\ar[dd]	\ar[uu]			
	\\ & & &
	\bigl(\prod\limits_{L_1}\Mm_{\emptyset}^{D_x}\bigr)_{\!1}\times
	\bigl(\prod\limits_{L_1}\Mm_{\emptyset}^{C_x}\bigr)_{\!1}
	\ar[rru]\ar[ld]\ar[lu]
	\\
	\bigl(\prod\limits_{L_1}\Mm_{x}^{D_x}\bigr)_{\!1}\times
	B_{\emptyset,1}^{L_2}
	& & 
	\bigl(\prod\limits_{L_1}\Mm_{x}^{D_x}\bigr)_{\!1}\times
	\bigl(\prod\limits_{L_1}\Mm_{\emptyset}^{C_x}\bigr)_{\!1}
	\ar[rrr]\ar[ll]  
	& & &
	\bigl(\prod\limits_{L_1}\Mm_{x}^{U_x}\bigr)_{\!2}	
}\]
	We now describe a weak equivalence $d^*F_1\to B_{L,2}^\bg$
	(see diagram~\eqref{diagram:BL2big}). We only need to define it 
	on $d^*F_1(n)$ for $n\leq 2$ since
	all summands in $d^*F_1(n)$ are degenerate for $n>2$.
	The map $d^*F_1(0)\to B_{L,2}^\bg(0)$ is defined by the 
	homotopy equivalence
	$B_{\emptyset,2}\to\Mm_{\emptyset,2}$ of 
	Proposition~~\ref{prop:B(M,M,M)->Mk=2},
by the map 
\begin{multline}\label{eq:dF1->Bbig1}
\bigl(\tprod_{x\in L_1}\!\!\Mm_x^{U_x}\bigr)_{\!2}
\amalg	 
\bigl(\tprod_{x\in L_2}\!\!\Mm_x^{U_x}\bigr)_{\!2}
\amalg
\bigl(\tprod_{x\in L_1}\!\!\Mm_x^{D_x}\bigr)_{\!1}
\times
\bigl(\tprod_{y\in L_2}\!\!\Mm_x^{D_x}\bigr)_{\!1}\\
\xrightarrow{D_x\subset U_x}
\bigl(\tprod_{x\in L_1}\!\!\Mm_x^{U_x}\bigr)_{\!2}
\amalg	 
\bigl(\tprod_{x\in L_2}\!\!\Mm_x^{U_x}\bigr)_{\!2}
\amalg
\bigl(\tprod_{x\in L_1}\!\!\Mm_x^{U_x}\bigr)_{\!1}
\times
\bigl(\tprod_{x\in L_2}\!\!\Mm_x^{U_x}\bigr)_{\!1}
=\bigl(\tprod_{x\in L}\!\Mm_x^{U_x}\bigr)_{\!2}
\end{multline}
and by the map
\begin{multline}\label{eq:dF1->Bbig2}
\bigl(\tprod_{x\in L_1}\!\!\Mm_x^{D_x}\bigr)_{\!1}\times B_{\emptyset,1}^{L_2}
\,\amalg\,
B_{\emptyset,1}^{L_1}\times\bigl(\tprod_{x\in L_2}\!\!\Mm_x^{D_x}\bigr)_{\!1}
\\
\cong
\bigl(\tcoprod_{x\in L_1}\!B_{\emptyset,1}^{L_2}\times\Mm_{x,1}^{D_x}\bigr)
\amalg
\bigl(\tcoprod_{x\in L_2}\!B_{\emptyset,1}^{L_1}\times\Mm_{x,1}^{D_x}\bigr)
\xrightarrow{f\times\ident}
\tcoprod_{x\in L}\!\Mm_{\emptyset,1}^{B_x}\times\Mm_{x,1}^{D_x}
\end{multline}
where $f$ is the homotopy equivalence 
from Proposition~\ref{eq:f:B(M,Mcap,M)->Mcup},
which is well defined since
$U_{L_2}\subset B_x$ for $x\in L_1$ and
$U_{L_1}\subset B_x$ for $x\in L_2$.

The map $d^*F_1(1)\to B_{L,2}^\bg(1)$
is defined on non-degenerate summands by the map
\[
\bigl(\tprod_{x\in L_1}\!\!\Mm_\emptyset^{U_x}\bigr)_{\!2}
\amalg	 
\bigl(\tprod_{x\in L_2}\!\!\Mm_\emptyset^{U_x}\bigr)_{\!2}
\amalg
\bigl(\tprod_{x\in L_1}\!\!\Mm_\emptyset^{D_x}\bigr)_{\!1}
\times
\bigl(\tprod_{y\in L_2}\!\!\Mm_\emptyset^{D_x}\bigr)_{\!1}
\to\bigl(\tprod_{x\in L}\!\Mm_\emptyset^{U_x}\bigr)_{\!2}
\]
(compare with equation~\eqref{eq:dF1->Bbig1}), the map
\[
\bigl(\tprod_{x\in L_1}\!\Mm_\emptyset^{D_x}\bigr)_{\!1}
\times B_{\emptyset,1}^{L_2}
\,\amalg\,
B_{\emptyset,1}^{L_1}\times
\bigl(\tprod_{x\in L_2}\!\!\Mm_\emptyset^{D_x}\bigr)_{\!1}
\to \tcoprod_{x\in L}\Mm_{\emptyset,1}^{B_x}\times\Mm_{\emptyset,1}^{D_x}
\]
(compare with equation~\eqref{eq:dF1->Bbig2}), the map
\begin{multline*}
\bigl(\tprod_{x\in L_1}\!\!\Mm_x^{D_x}\bigr)_{\!1}
\times
\bigl(\tprod_{x\in L_1}\!\Mm_\emptyset^{C_x}\bigr)_{\!1}
\,\amalg\,
\bigl(\tprod_{x\in L_2}\!\Mm_\emptyset^{C_x}\bigr)_{\!1}
\times
\bigl(\tprod_{x\in L_2}\!\!\Mm_x^{D_x}\bigr)_{\!1}
\\ \cong
\tcoprod_{x\in L}\!
\Mm_{\emptyset,1}^{C_x}\times
\Mm_{x,1}^{D_x}
\,\amalg\,
\tcoprod_{\substack{x,y\in L_1\\ x\neq y}}\!
\Mm_{\emptyset,1}^{C_y}\times\Mm_{x,1}^{D_x}
\,\amalg\,
\tcoprod_{\substack{x,y\in L_2\\ x\neq y}}\!
\Mm_{\emptyset,1}^{C_y}\times\Mm_{x,1}^{D_x}
\\ \xrightarrow{C_y\subset U_y}
\tcoprod_{x\in L}\!
\Mm_{\emptyset,1}^{C_x}\times
\Mm_{x,1}^{D_x}
\,\amalg\,
\tcoprod_{\substack{x,y\in L_1\\ x\neq y}}\!
\Mm_{\emptyset,1}^{U_y}\times\Mm_{x,1}^{D_x}
\,\amalg\,
\tcoprod_{\substack{x,y\in L_2\\ x\neq y}}\!
\Mm_{\emptyset,1}^{U_y}\times\Mm_{x,1}^{D_x}
\end{multline*}
and the map 
\begin{multline*}
\bigl(\tprod_{x\in L_1}\!\!\Mm_x^{D_x}\bigr)_{\!1}
\times
\bigl(\tprod_{x\in L_2}\!\!\Mm_\emptyset^{D_x}\bigr)_{\!1}
\,\amalg\,
\bigl(\tprod_{x\in L_1}\!\!\Mm_\emptyset^{D_x}\bigr)_{\!1}
\times
\bigl(\tprod_{x\in L_2}\!\!\Mm_x^{D_x}\bigr)_{\!1}
\\ \cong
\tcoprod_{\substack{x\in L_1 \\ y\in L_2}}
\Mm_{\emptyset,1}^{D_y}\times\Mm_{x,1}^{D_x}
\,\amalg\,
\tcoprod_{\substack{y\in L_1 \\ x\in L_2}}
\Mm_{\emptyset,1}^{D_y}\times\Mm_{x,1}^{D_x}
\\ \xrightarrow{D_y\subset U_y}
\tcoprod_{\substack{x\in L_1 \\ y\in L_2}}
\Mm_{\emptyset,1}^{U_y}\times\Mm_{x,1}^{D_x}
\,\amalg\,
\tcoprod_{\substack{y\in L_1 \\ x\in L_2}}
\Mm_{\emptyset,1}^{U_y}\times\Mm_{x,1}^{D_x}
\end{multline*}
The map $d^*F_1(2)\to B_{L,2}^\bg(2)$ is defined on non-degenerate summands 
by the map
\begin{multline*}
\bigl(\tprod_{x\in L_1}\!\!\Mm_\emptyset^{D_x}\bigr)_{\!1}
\times
\bigl(\tprod_{x\in L_1}\!\Mm_\emptyset^{C_x}\bigr)_{\!1}
\,\amalg\,
\bigl(\tprod_{x\in L_2}\!\Mm_\emptyset^{C_x}\bigr)_{\!1}
\times
\bigl(\tprod_{x\in L_2}\!\!\Mm_\emptyset^{D_x}\bigr)_{\!1}
\\ \xrightarrow{C_y\subset U_y}
\tcoprod_{x\in L}\!
\Mm_{\emptyset,1}^{C_x}\times
\Mm_{\emptyset,1}^{D_x}
\,\amalg\,
\tcoprod_{\substack{x,y\in L_1\\ x\neq y}}\!
\Mm_{\emptyset,1}^{U_y}\times\Mm_{\emptyset,1}^{D_x}
\,\amalg\,
\tcoprod_{\substack{x,y\in L_2\\ x\neq y}}\!
\Mm_{\emptyset,1}^{U_y}\times\Mm_{\emptyset,1}^{D_x}
\end{multline*}
and the map 
\begin{multline*}
\bigl(\tprod_{x\in L_1}\!\!\Mm_\emptyset^{D_x}\bigr)^r_{\!1}
\times
\bigl(\tprod_{x\in L_2}\!\!\Mm_\emptyset^{D_x}\bigr)^\ell_{\!1}
\,\amalg\,
\bigl(\tprod_{x\in L_1}\!\!\Mm_\emptyset^{D_x}\bigr)^\ell_{\!1}
\times
\bigl(\tprod_{x\in L_2}\!\!\Mm_\emptyset^{D_x}\bigr)^r_{\!1}
\\ \cong
\tcoprod_{\substack{x\in L_1 \\ y\in L_2}}
\Mm_{\emptyset,1}^{D_y,\ell}\times\Mm_{\emptyset,1}^{D_x,r}
\,\amalg\,
\tcoprod_{\substack{y\in L_1 \\ x\in L_2}}
\Mm_{\emptyset,1}^{D_y,\ell}\times\Mm_{\emptyset,1}^{D_x,r}
\\ \xrightarrow{D_y\subset U_y}
\tcoprod_{\substack{x\in L_1 \\ y\in L_2}}
\Mm_{\emptyset,1}^{U_y}\times\Mm_{\emptyset,1}^{D_x}
\,\amalg\,
\tcoprod_{\substack{y\in L_1 \\ x\in L_2}}
\Mm_{\emptyset,1}^{U_y}\times\Mm_{\emptyset,1}^{D_x}
\end{multline*}
In this way we get a diagram:
	\[
	\xymatrix{
		\BAR\bigl(|B_{L_1}|,\Mm_\emptyset,|B_{L_2}|\bigr)_2\cong |d^*F_1|\ar[r]^-{\simeq}\ar[d] &
		|B_{L,2}^\bg| \ar[d]\\
		\BAR(\Mm_{L_1},\Mm_\emptyset,\Mm_{L_2})_2\ar[r] &
		\Mm_{L,2}
	}
	\]
\out{
		To check that this diagram commutes we only need to check that  
	the following diagram commutes (in the $k=2$ components):             
\[
\xymatrix@C=1em{
\Bigl(\Mm_\emptyset\times\!\prod\limits_{x\in L_1}\!\Mm_x\Bigr)
\times\Mm_\emptyset\times
\Bigl(\Mm_\emptyset\times\!\prod\limits_{x\in L_2}\!\Mm_x\Bigr)
\ar[d]_-{\cong}
\ar[rr]^-{\boxplus_{\emptyset,\mathbf{L_1}}\times\ident\times\boxplus_{\emptyset,\mathbf{L_2}}} & &
\Mm_{L_1}\times\Mm_\emptyset\times\Mm_{L_2}
\ar[dd]_{\boxplus_{L_1,\emptyset,L_2}}
\\
\prod\limits_{x\in L_1}\!\Mm_x\times
\Bigl(\Mm_\emptyset\times\Mm_\emptyset\times\Mm_\emptyset\Bigr)
\times\!\prod\limits_{x\in L_2}\!\Mm_x
\ar[d]_{\boxplus_{\emptyset,\emptyset,\emptyset}}
\\
\prod\limits_{x\in L_1}\!\Mm_x\times
\Mm_\emptyset\times\!\prod\limits_{x\in L_2}\!\Mm_x
\ar[r]^-{\cong} &
\Mm_\emptyset\times\!\prod\limits_{x\in L}\!\Mm_x
\ar[r]^-{\boxplus_{\emptyset,\mathbf{L}}} & \Mm_L            
}\]                                                     
}
which commutes by the commutativity and associativity of the gluing maps
(Proposition~\ref{prop:mapsboxplus}).
        This finishes the proof for $k=2$.
        The proof for $k=1$ uses analogous but much simpler arguments.
\end{proof}

\subsection{Proof of Theorem~\ref{theor0}}\label{sec:8.5proof1.1}

We can now prove Theorem~\ref{theor0}, which we restate here. Let $\Vv$ be a finite dimensional complex Hermitian vector space and
denote the dependence of the bar construction on $\Vv$ by $B_L^\Vv$.

{
\renewcommand{\theequation}{\ref{theor0}}
\begin{theorem}
Let $I=\{x_1,\dots,x_q\}\subset\Cc^2$. Then:
\begin{enumerate}
\item The map 
$h_\boxplus\colon|B^\Vv_I|\to \Mm^\Vv_I$ from equations~\eqref{eq:hindegree1} and~\eqref{eq:maphboxplus} is a homotopy equivalence in the $k=1,2$ components.
\item If $I=J\cup K$, with $J\cap K=\emptyset$, then
the map 
$\BAR(\Mm^\Vv_J,\Mm^\Vv_\emptyset,\Mm^\Vv_K)\to \Mm^\Vv_{I}$ from equations~\eqref{eq:mapbarMMM->Mdeg1} and~\eqref{eq:mapbarMMM->M} is a homotopy equivalence in the $k=1,2$ components.
\end{enumerate}
\end{theorem}
\addtocounter{equation}{-1}
}
\begin{proof}
From Proposition~\ref{prop:barbarbardiagramcommutesinfiniterank} with $L_1=J$ and $L_2=K$
it follows that part (2) of the theorem is a consequence of part (1), which we now prove.
Let $\cat$ be as in Definition~\ref{def:cat} and let $\cat_{I,k}$ be as in Definition~\ref{def:catI,k}.
Let $\Mm^\Vv_k\colon \cat\to\Top$ be the functor defined on objects by $\Mm^\Vv_k(I)=\Mm^\Vv_{I,k}$ and defined on morphisms by pullback.
Also, let $B^\Vv_k\colon\cat\times\Delta^{\op}\to\Top$ be the functor defined on objects by 
$B^\Vv_k(n,J)=B^\Vv_{J,k}(n)$. Analogous but simpler arguments to those in the 
proof of Theorem~\ref{theoI} (on page~\pageref{proof:theorem1.3}) show that we have a commutative diagram
\begin{equation}\label{eq:nleqkisenoughinfiniter}
\entrymodifiers={+!!<0pt,\fontdimen22\textfont2>}
\xymatrix{
\hcolim\limits_{\cat_{I,k}}B^\Vv_k\ar[r]\ar[d]_-{\simeq} & \hcolim\limits_{\cat_{I,k}}\Mm^\Vv_k\ar[d]^-{\simeq} \\
|B^\Vv_{I,k}|\ar[r] & \Mm^\Vv_{I,k}
}\end{equation}
where the vertical maps are homotopy equivalences,
so the proof will be complete if we show that the top horizontal map is a homotopy equivalence.
Thus, for $k=1$ we only need to consider the trivial case when $\#I=1$, and
for $k=2$, we only need to consider the case where $I=\{x,y\}\subset\Cc^2$ which we now analyze.
Now consider Proposition~\ref{prop:barbarbardiagramcommutesinfiniterank} in the case where $L_1=\{x\}$, $L_2=\{y\}$ and $L=\{x,y\}$.
The maps $|B^\Vv_x|\to \Mm^\Vv_x$ in diagram~\eqref{eq:diagram-barbarbar} are trivially homotopy equivalences hence the left vertical map is a homotopy equivalence.
Thus we only have to show that the bottom horizontal map in diagram~\eqref{eq:diagram-barbarbar} is a homotopy equivalence. Consider the bar construction in Remark~\ref{remark:C_L=U_L}. We have maps
\[
\BAR(\Mm_x^\Vv,\Mm_\emptyset^\Vv,\Mm_y^\Vv)\xrightarrow{\simeq}\hatBAR(\Mm_x^\Vv,\Mm_\emptyset^\Vv,\Mm_y^\Vv)\to\Mm_L^\Vv\,.
\]
The face maps between the non-degenerate degree 2 summands in
$\hatBAR_\bullet(\Mm_x^\Vv,\Mm_\emptyset^\Vv,\Mm_y^\Vv)$ as well as the map $\hatBAR_0(\Mm_x^\Vv,\Mm_\emptyset^\Vv,\Mm_y^\Vv)\to\Mm_L^\Vv$ are
represented in the following cube diagram
(where we omit $\Vv$ from the notation):
\[
\xymatrix@C=5em{ &
  \Mm_{x,2}\ar[rr]^-{\pi_{x,L}^*} &&
  \Mm_{L,2} \\
  \Mm_{x,1}^{U_x}\times\Mm_{\emptyset,1}^{U_y}\ar[ru]^{\boxplus_{x,\emptyset}}\ar[rr]^(0.7){\ident\times\pi_{\emptyset,y}^*} &&
  \Mm_{x,1}^{U_x}\times\Mm_{y,1}^{U_y}\ar[ru]_{\boxplus_{x,y}} \\ &
  \Mm_{\emptyset,2}\ar'[r]_{\pi_{\emptyset,y}^*}[rr]\ar'[u]^{\pi_{\emptyset,x}^*}[uu] &&
  \Mm_{y,2}\ar[uu]_{\pi_{y,L}^*}\\
  \Mm_{\emptyset,1}^{U_x}\times\Mm_{\emptyset,1}^{U_y}\ar[uu]^{\pi_{\emptyset,x}^*\times\ident}\ar[ru]^{\boxplus_{\emptyset,\emptyset}}\ar[rr]^-{\ident\times\pi_{\emptyset,y}^*} &&
  \Mm_{\emptyset,1}^{U_x}\times\Mm_{y,1}^{U_y}\ar[uu]_(0.7){\pi_{\emptyset,x}^*\times\ident}\ar[ru]_-{\boxplus_{\emptyset,y}}  
}
\]
The diagram obtained from the cube by removing the vertex $\Mm_{L,2}^\Vv$
is homeomorphic to the nerve $N$ of the open cover of $\Mm_{L,2}^\Vv$ in Corollary~\ref{cor:opencoverofM_2} (see Example~\ref{ex:nerveofopencover} for the definition of nerve), and
the barycentric subdivision of 
$\hatBAR_\bullet(\Mm_x^\Vv,\Mm_\emptyset^\Vv,\Mm_y^\Vv)$
(see \cite[section 8.3.A, page 60]{ViFu04})
is homeomorphic to $B_\bullet(*,N,*)$. It follows from 
\cite[Proposition 4.1]{Seg68} that 
the map $\hatBAR(\Mm_x^\Vv,\Mm_\emptyset^\Vv,\Mm_y^\Vv)\to\Mm_L^\Vv$ is a homotopy equivalence, which concludes the proof of the theorem.
\end{proof}

Taking the limit when $\dim \Vv\to\infty$ we can now prove: 

\begin{theorem}\label{thm:hI2simeq}
Let $\Hh$ be a countably infinite dimensional complex Hermitian vector space. Then
the map $h_{I}\colon\|\Bb_I^\Hh\|\to\Mm_I^\Hh$ in Definition~\ref{def:Delta_I,Bb_I,h_I}
is a homotopy equivalence in degrees 1 and 2.
\end{theorem}
\begin{proof}
The maps in Proposition~\ref{prop:mapsboxplus} pass to the colimit to define maps
$\boxplus_{\mathbf I}\colon \prod_i\Mm_{I_i}^{\Hh,U_i}\to \Mm_I^{\Hh,U}$.
We need to see how these maps are related with the action of 
the linear isometries operad $\Ll^\Hh$.
Let $\imath_\alpha:\Hh\to\Hh^n$ be inclusion onto the $\alpha$-th factor. 
Let $\Oo$ be the suboperad of the endomorphism operad of $\Hh$ consisting of  complex linear maps $f\colon \Hh^n\to\Hh$ such that
$f\circ\imath_\alpha$ is an isometry for all $\alpha=1,\ldots,n$. 
Then $\Oo(n)\cong\Ll(1)^n$ so $\Oo(n)$ is contractible.
Also note that $\Oo$ contains the linear isometries operad $\Ll$.
We fix basepoints $*\in\Oo(n)$ given in matrix notation by
\[
*=\left[\ \Id\quad\cdots\quad\Id\ \right]:\Hh^n\to\Hh\,.
\]
For any $f\in\Oo(n)$ and each $\alpha=1,\ldots,n$, let
$f_\alpha=f\circ\imath_\alpha$. Given finite disjoint sets $J_1$, \ldots, $J_n$ we define a map $\boxplus_{\mathbf J,f}$
as the composition:
\begin{equation}\label{eq:fboxplus}
\boxplus_{\mathbf J,f}\colon\prod_{\alpha=1}^{n}\Mm_{J_\alpha,k_\alpha}^{\Hh,U_\alpha}
\xrightarrow{\prod f_\alpha}
\prod_{\alpha=1}^{n}\Mm_{J_\alpha,k_\alpha}^{\Hh,U_\alpha}
\xrightarrow{\boxplus_{\mathbf J}}\Mm_{J,k}^{\Hh,U}\qquad(k=\textstyle\sum k_\alpha\leq 2)\,.
\end{equation}
Note that, for $f=*$, we have $\boxplus_{\mathbf J,f}=\boxplus_{\mathbf J}$. The maps $\boxplus_{\mathbf J,f}$ are compatible with the operad data
in the following sense: Let $j_1,\dots,j_n$ be non-negative integers and let $j=\sum j_i$;
let $f\in\Oo(n)$ and $g_i\in\Oo(j_i)$;
consider finite disjoint sets $J_\alpha$, with $\alpha=1,\dots,j$ and
for each $i=1,\dots,n$, let $s_i=j_1+\dots+j_i$ and let:
\[
\mathbf J_i=(J_{s_{i-1}+1},\dots,J_{s_{i}})\,,\qquad 
K_i=\bigcup_{\alpha=s_{i-1}+1}^{s_i}J_\alpha\qquad
(i=1,\dots,n)\,;
\]
also, let:
\[
\mathbf J=(J_1,\dots,J_j)\,,\qquad \mathbf K=(K_1,\dots,K_n)\,,\qquad J=\bigcup\limits_{i=1}^nK_i=\bigcup\limits_{\alpha=1}^jJ_\alpha\,.
\]
Then we have:
\begin{equation}\label{eq:compatible}
\boxplus_{\mathbf J,f\circ\prod g_i}=\boxplus_{\mathbf K,f}\circ\prod_{i=1}^n\boxplus_{\mathbf J_i,g_i}\,.
\end{equation}
To prove this last statement, 
let $\imath_\alpha\colon\Hh\to \Hh^{j_i}$ (with $\alpha=1,\dots,j$) be inclusion into the $(\alpha-s_{i-1})$-th factor
and let  $f_i=f\circ\imath_i$ and $g_{i\alpha}=g_i\circ\imath_\alpha$.
Equation~\eqref{eq:compatible} then follows from the commutativity of the following diagram (the lower triangle is commutative by Proposition~\ref{prop:mapsboxplus}(2) and the upper right triangle is commutative by Proposition~\ref{prop:mapsboxplus}(5)): 
\[
\xymatrix@C=5em{
\ds\prod_{i=1}^n\left[\prod_{\alpha=s_{i-1}+1}^{s_i}\!\!\!\Mm_{J_\alpha}^{\Hh,U_\alpha}\right]
\ar[r]^-{\prod_i\prod_\alpha f_i\circ g_{i\alpha}}
\ar[d]_-{\prod_i\prod_\alpha g_{i\alpha}}&
\ds\prod_{i=1}^n\left[\prod_{\alpha=s_{i-1}+1}^{s_i}\!\!\!\Mm_{J_\alpha}^{\Hh,U_\alpha}\right]
\ar[r]^-{\boxplus_{\mathbf J}}\ar[rd]_{\prod_i\boxplus_{\mathbf J_i}}&\Mm_{J}^{\Hh,U}\\
\ds\prod_{i=1}^n\left[\prod_{\alpha=s_{i-1}+1}^{s_i}\!\!\!\Mm_{J_\alpha}^{\Hh,U_\alpha}\right]
\ar[r]_-{\prod_i\boxplus_{\mathbf J_i}}\ar[ru]_{\prod_i\prod_\alpha f_i}&
\ds\prod_{i=1}^n\Mm_{K_i}^{\Hh,U_i}\ar[r]_-{\prod_i f_i}&\ds\prod_{i=1}^n\Mm_{K_i}^{\Hh,U_i}\ar[u]_-{\boxplus_{\mathbf K}}}
\]
Let $q=\#I$, let $\Oo_I(n)=\Oo(1+nq)$ and $\Oo^I(n)=\prod_I\Oo(n)$
(compare with Definition~\ref{def:LlI}). Consider the category $\Delta(\Oo_I,\Oo^I)$ (see Definition~\ref{con:catDelta(P)}).
Then we can define a functor
$\Ff_I:\Delta(\Oo_I,\Oo^I)^\op\to \Top$ 
by letting $\Ff_I(n)=B^\Hh_I(n)$ (see equation~\eqref{eq:defofB_J,2})
and defining the morphisms using the maps $\boxplus_{\mathbf J,f}$ in equation~\eqref{eq:fboxplus}.
The inclusion of operads $\Ll^\Hh\subset\Oo$ induces an equivalence of categories 
$\imath\colon\Delta_I\to\Delta(\Oo_I,\Oo^I)$. 
For $f\in\Ll^\Hh(n)\subset\Oo(n)$ we have the commutative diagram (see Proposition~\ref{prop:mapsboxplus}):
\[
\xymatrix{
\ds\prod_{\alpha=1}^{n}\Mm_{J_\alpha,k_\alpha}^{\Hh,U_\alpha}
\ar[r]^-{\prod \imath_\alpha}\ar[rd]_-{\oplus\circ\pi^*}&
\ds\prod_{\alpha=1}^{n}\Mm_{J_\alpha,k_\alpha}^{\Hh^n\!,U_\alpha}
\ar[r]^-{\prod f}\ar[d]^-{\boxplus}&
\ds\prod_{\alpha=1}^{n}\Mm_{J_\alpha,k_\alpha}^{\Hh,U_\alpha}
\ar[d]^-{\boxplus}\\ &
\Mm_{J,k}^{\Hh^n\!,U}\ar[r]^-{f}&
\Mm_{J,k}^{\Hh,U}}
\]
so the inclusions $\Mm_J^{\Hh,U}\to\Mm_J^{\Hh}$ induce a weak equivalence from the restriction of $\Ff_I$ to $\Delta_I$ to $\Bb^\Hh_I$.
To finish the proof we observe that the inclusion of the base point in $\Oo(n)$ induces an equivalence of categories $\Delta\to\Delta(\Oo_I,\Oo^I)$ and
the restriction of $\Ff_I$ to $\Delta$ equals the simplicial space $B^\Hh_I$.
The functor $\Ff_I$ extends to a functor $\widetilde\Ff_I:\widetilde\Delta(\Oo_I,\Oo^I)\to \Top$ (see Definition~\ref{con:widetildedelta(P)})
and
$B^\Hh_I$ extends to a functor $\widetilde B^\Hh_I\colon\widetilde\Delta\to\Top$ by letting $\widetilde\Ff_I(-1)=B^\Hh_I(-1)=\Mm_I^{\Hh}$ and defining the functors on morphisms in the obvious way, and we have a commutative diagram
\[
\xymatrix{
\Bb^\Hh_I\ar[d]\ar[r]^-{\simeq}&\Ff_I\ar[d]&B^\Hh_I\ar[d]^-{\simeq}\ar[l]_-{\simeq}\\
\widetilde\Bb^\Hh_I\ar[r]^-{\simeq}&\widetilde\Ff_I&\widetilde B^\Hh_I\ar[l]_-{\simeq}}
\]
which completes the proof.
\end{proof}

\section{The limit when $k\to\infty$}\label{sec:7}

We now consider the limit when $k\to\infty$. We will work with the moduli space of instantons, which we now describe. 
For each finite set $J\subset\Cc^2$ let $\#_J\overline{\CP^2}$ be the smooth 4-manifold obtained by collapsing $L_\infty\subset\cp{J}$ to a point $y_\infty\in \#_J\overline{\CP^2}$ 
and let $\overline\pi\colon\cp{J}\to \#_J\overline{\CP^2}$ be the collapsing map. Let $g$ be a smooth metric on $\#_J\overline{\CP^2}$
such that $\overline\pi^*g$ is compatible with the complex structure on $\cp{J}$.
Let $\Vv$ be a finite dimensional complex Hermitian vector space,
let $E\to\#_J\overline{\CP^2}$ be an $SU(\Vv)$ vector bundle with second Chern class $c_2(E)=k$, let $\Aa(E)$ denote the space of smooth connections on $E$
and let $P_\infty$ denote the space of isomorphisms $\phi\colon E_{y_\infty}\to \Vv$ equivariant with respect to the $SU(\Vv)$ action.
The gauge group $\Gg$ of smooth automorphisms of $E$ acts freely on
$P_{\infty}\times\Aa(E)$ and we represent the quotient by $\Cs_{J,k}^\Vv$.
The \emph{moduli space of instantons} is the subspace $\MI_{J,k}^\Vv\subset\Cs_{J,k}^\Vv$ of
equivalence classes of connections whose curvature is self-dual with respect to the Hodge * operator. 
This space is isomorphic to $\Mm_{J,k}^\Vv$ (see \cite[Theorem 0.1]{Buc93}, \cite[Corollary 2.14]{Mat00}).
Let $\MI_{J}^\Vv=\coprod_k\MI_{J,k}^\Vv$ and 
$\Cs_{J}^\Vv=\coprod_k\Cs_{J,k}^\Vv$. The same arguments as the ones in section~\ref{sec:2}
show that $\MI_{J}^\Vv$ and $\Cs_{J}^\Vv$ are $\Ii_*$-functors and we have:

\begin{proposition}\label{prop:inclusionismapofLalgebras}
  The inclusion $\MI_{J}^\Vv\to\Cs_{J}^\Vv$ and the isomorphism $\psi_\Vv\colon\MI_{J}^\Vv\to\Mm_{J}^\Vv$ are maps of $\Ii_*$-functors.
\end{proposition}
\begin{proof}
  The result is clear for the inclusion
  and it follows easily for the map $\psi_\Vv$ from its description: given a self-dual connection $\xi$ on $E$, 
the $(0,1)$-part of the pullback $(\overline\pi^*\xi)^{(0,1)}$ is a holomorphic structure on the pullback bundle
$\overline\pi^*E\to\cp{J}$ and an isomorphism $\phi\in P_{\infty}$ induces a holomorphic trivialization
$\phi\times\ident\colon \overline\pi^*E|_{L_\infty}=E_{y_\infty}\times L_\infty\to \Vv\times L_{\infty}$; then
$\psi_\Vv\bigl([\phi,\xi]\bigr)=\bigl[(\overline\pi^*\xi)^{(0,1)},\phi\times\ident\bigr]\in\Mm_{J,k}^\Vv$ (recall Definition~\ref{def:ofMm}).
\end{proof}

Taking the direct limit when $k\to\infty$ 
(see the discussion on page~\pageref{theoIII}, before Theorem~\ref{theoIII})
the inclusion $\MI_{J}^\Vv\subset\Cs_J^\Vv$ induces a weak homotopy equivalence $\MI^\Vv_{J,\infty}\xrightarrow{\simeq}\Cs_{J,\infty}^\Vv$
\cite[Theorem 2*]{Tau89} and $\Cs_{J}^\Vv$ is homotopically equivalent to the space of based maps $\Map_*(\#_J\overline{\CP^2},BSU(\Vv))$ where $BSU(\Vv)$ is the classifying space of $SU(\Vv)$
(see \cite[Theorem 1.3]{CoMi94}).

\out{
The map $\lambda_\Vv\colon\Map(\#_J\CP^2,BSU(\Vv))\to \Cs_k^\Vv$ can be described as follows
\cite[page 101]{AtJo78}: Fix a connection $\xi$ on the universal bundle $ESU(\Vv)\to BSU(\Vv)$ and a point $\Id\in ESU(\Vv)$ over the basepoint of $BSU(\Vv)$.
Given a map $f\colon \#_J\CP^2\to BSU(\Vv)$ classifying the bundle $P$, we have
\begin{equation*}
\lambda_\Vv(f)=[f^{-1}(\Id),f^*\xi]\in P_{y_\infty}\times\Aa(P)/\Gg\,.
\end{equation*}
}%

\out{%
Let $\overline\pi\colon\cp{J}\to \#_J\CP^2$ be the collapsing map. Assume $\#_J\CP^2$ has a smooth metric $g$ such that $\overline\pi^*g$ is compatible with the complex structure on $\cp{I}$.
The moduli space of instantons is the subspace $\MI_{J,k}^\Vv\subset\Cs_{J,k}^\Vv$ of
equivalence classes of connections whose curvature is self-dual with respect to the Hodge * operator. There is an isomorphism $\psi_\Vv\colon\MI_{J,k}^\Vv\to\Mm_{J,k}^\Vv$
which we now describe: given a self-dual connection $\xi$ on $P$, let $\nabla_\xi$ be the associated connection on $P\times_{SU(\Vv)}\Vv$;
the $(0,1)$-part of the pullback $(\overline\pi^*\xi)^{(0,1)}$ is a holomorphic structure on the pullback bundle
$E=\overline\pi^*(P\times_{SU(\Vv)}\Vv)\to\cp{J}$. An element $p\in P_{y_\infty}$ induces an isomorphism $P_{y_\infty}\times_{SU(\Vv)}\Vv\to \Vv$ and hence a holomorphic trivialization
$\phi_p\colon E|_{L_\infty}\to \Vv\times L_{\infty}$. Thus we get a map $\psi_\Vv\colon\MI_{J,k}^\Vv\to\Mm_{J,k}^\Vv$ with $\psi_\Vv\bigl([p,\xi])=\bigl([\phi_p,(\overline\pi^*\xi)^{(0,1)}]\bigr)$.
Note that $\psi_\Vv$ is a map of $\Ii$-functors.
}%

\out{
In \cite{Tau89}, Taubes described, for $k'>k$, maps of pairs
$\bigl(\Cs_{J,k}^\Vv,\MI_{J,k}^{\,\Vv}\bigr)\to\bigl(\Cs_{J,k'}^\Vv\to\MI_{J,k'}^{\,\Vv}\bigr)$
such that the maps on $\mathscr C_{J,k}^\Vv$ are homotopy equivalences, and showed that in the direct limit
the inclusions $\imath_k$ induce a homotopy equivalence
$\MI_{J,\infty}^{\,\Vv}\xrightarrow{\simeq}\Cs_{J,\infty}^\Vv$.
}%

We now prove Theorem~\ref{theoIII}.
Since $\overline{\CP^2}$ has a CW-complex structure 
given by attaching a 4-dimensional cell $D^4$ to $S^2$ via the Hopf map
$h\colon S^3\to S^2$, we can view
maps from $\overline{\CP^2}$ as maps from $D^4$ whose restriction to the boundary factor through $h$.
In an analogous way, the space of maps from $\#_n\overline{\CP^2}$
to $BSU(\Vv)$ is homotopy equivalent to
the space of maps from $D^4$ to $BSU(\Vv)$ which factor through the map
$S^3\to\bigvee_nS^3\xrightarrow{\bigvee h}\bigvee_nS^2$.
For the remainder of this section $I$ denotes the unit interval: $I=[0,1]\subset\Rr$. Note that $S^3\cong I^3/\partial I^3$
and $\bigvee_nS^3$ is the quotient of $I^3$ by the subspace of triples
$\mathbf x=(x_1,x_2,x_3)$ with either $\mathbf x\in\partial I^3$
or $x_1=i/n$ with $i=0,\dots,n$.

\begin{definition}\label{def:loopmonoidandaction}
Given a based topological space $(X,*)$ let ${\mathcal M}_* X$ represent the space of 
compactly supported maps $f\colon \left(-\infty,0\right]\times I^3\to X$
such that $f(t,\mathbf x)=*$ whenever $\xx\in\partial(I^3)$.
\begin{enumerate}
\item We identify, up to homotopy, 
$\Omega^4X$ with the subspace of maps $f\in {\mathcal M}_*X$ such that
$f(0,\mathbf x)=*$ for any $\mathbf x\in I^3$.
\item Let $H\colon I^3\to S^2$ be the composition of the projection $I^3\to I^3/\partial I^3$ with the Hopf map.
We identify, up to homotopy, $\Map_*\bigl(\overline{\CP^2},X\bigr)$ with the subspace of maps
$f\in {\mathcal M}_*X$ whose restriction to $0\times I^3$ factors through $H$. Restriction to $0\times I^3$ induces
a map\label{maprho} $\rho\colon\Map_*\bigl(\overline{\CP^2},X\bigr)\to\Omega^2X$.
\item Let $\xx=(x_1,x_2,x_3)\in I^3$.
We identify, up to homotopy, $\Map_*\bigl(\#_n\overline{\CP^2},X\bigr)$ with the subspace of maps
$f\in {\mathcal M}_*X$ such that: 
\begin{enumerate}
\item $f(0,i/n,x_2,x_3)=*$ for $i=0,\dots,n$ and any
$x_2,x_3\in I$; 
\item for each $i=1,\dots,n$, the restriction of $f$ to $0\times\bigl[(i-1)/n,i/n\bigr]\times I^2\cong I^3$ factors through $H\colon I^3\to S^2$.
\end{enumerate}
We have a map \label{mapboldsymbolrho} $\boldsymbol\rho=(\rho_1,\dots,\rho_n)\colon\Map_*\bigl(\#_n\overline{\CP^2},X\bigr)\to(\Omega^2X)^n$
whose components $\rho_i$ are induced by restriction to $0\times \bigl[(i-1)/n,i/n\bigr]\times I^2$, for $i=1,\dots,n$.
\end{enumerate}
We now give $(\Omega^4X)^n$ the structure of an associative monoid and define actions of $(\Omega^4X)^n$ on
$(\Map_*(\overline{\CP^2},X))^n$ and $\Omega^4X$. 
\begin{enumerate}\setcounter{enumi}{3}
\item First we define a left action of $\Omega^4X$ on ${\mathcal M}_*X$.
Given a function $f\in {\mathcal M}_* X$ we let $s_f$ be the supremum of the set of 
$t\in\left(-\infty,0\right]$ such that $f(s,\xx)=*$ for any
$s\leq t$ and any $\xx\in I^3$. 
Then, given $g\in\Omega^4X$ we define
$g\cdot f\in {\mathcal M}_* X$ by:
\[
g\cdot f(t,\xx)=\begin{cases}f(t,\xx),&\text{if $t\geq s_f\,$;}\\
g(t-s_f,\xx),&\text{if $t<s_f\,$.}\end{cases}
\]
This action preserves the subspaces $\Omega^4X$, $\Map_*(\overline{\CP^2},X)$ and $\Map_*(\#_n\overline{\CP^2},X)$ and clearly  $(g_1\cdot g_2)\cdot f=g_1\cdot(g_2\cdot f)$.
Since $({\mathcal M}_*X)^n\cong {\mathcal M}_*(X^n)$, we get an associative product on $(\Omega^4X)^n$ and a left action
of $(\Omega^4X)^n$ on $({\mathcal M}_*X)^n$.
\item We now define a map $\omega\colon({\mathcal M}_*X)^n\to {\mathcal M}_*X$ by concatenation in the second variable. Given
$\ff=(f_1,\dots,f_n)\in({\mathcal M}_*X)^n$, for each $i=1,\dots,n$ and $x_1\in[(i-1)/n,i/n]$, we let
$\omega(\ff)(t,x_1,x_2,x_3)=f_i(t,nx_1-i+1,x_2,x_3)$.
Then, 
given $\ff\in({\mathcal M}_* X)^n$ and $\mathbf g\in(\Omega^4X)^n$, 
we have $\omega(\mathbf g\cdot\ff)=\omega(\mathbf g)\cdot\omega(\ff)$.
The map $\omega$ restricts
to define maps $(\Omega^4X)^n\to\Omega^4X$ and
$\bigl(\Map_*(\overline{\CP^2},X)\bigr)^n\to\Map_*\bigl(\#_n\overline{\CP^2},X\bigr)$. 
We define a right action of $(\Omega^4X)^n$ on $\Omega^4X$ by $f\cdot\mathbf g=f\cdot\omega(\mathbf g)$.
\end{enumerate}
We represent by
\begin{equation}\label{eq:k->inftybarwithloops}
  \BAR\bigl(\Omega^4X,(\Omega^4X)^n,(\Map_*(\overline{\CP^2},X))^n\bigr)
\end{equation}
the bar construction induced by the actions in (4) and (5)
(see section~\ref{sec:prelim-bar-monoid}).
\end{definition}

Theorem~\ref{theoIII} is a special case of the following theorem, when $X=BSU(\Vv)$:

{
\renewcommand{\theequation}{\ref{theoIII}*}
\begin{theorem}
The maps $\Omega^4X\times\bigl((\Omega^4X)^n\bigr)^k\times\bigl(\Map_*(\overline{\CP^2},X)\bigr)^n\to\Map_*\bigl(\#_n\overline{\CP^2},X\bigr)$ sending
$(g,\gG_1,\dots,\gG_k,\ff)$ to 
$g\cdot\omega(\gG_1\cdot\dotsm\cdot\gG_k\cdot\ff)$
induce a map 
\[
h\colon \BAR\bigl(\Omega^4X,(\Omega^4X)^n,(\Map_*(\overline{\CP^2},X))^n\bigr)\to\Map_*\bigl(\#_n\overline{\CP^2},X\bigr)
\] 
which is a homotopy equivalence.
\end{theorem}
\addtocounter{equation}{-1}
} 
\begin{proof}
We have a commutative diagram:
\[\xymatrix{
\BAR\bigl(\Omega^4X,(\Omega^4X)^n,(\Omega^4X)^n\bigr)\ar[r]^-{h}_-{\simeq}\ar[d]^i&
\Omega^4X\ar[d]^i\\
\BAR\bigl(\Omega^4X,(\Omega^4X)^n,(\Map_*(\overline{\CP^2},X))^n\bigr)\ar[r]^-{h}\ar[d]^-{\boldsymbol\rho\circ h}&
\Map_*(\#_n\overline{\CP^2},X)\ar[d]^-{\boldsymbol\rho}\\
(\Omega^2X)^n\ar@{-}[r]^-{=}&(\Omega^2X)^n
}\]
The maps $i$ are inclusions and the map $\boldsymbol\rho$ is the one defined above in~\ref{def:loopmonoidandaction}\eqref{mapboldsymbolrho}.
The right vertical maps are induced by the cofiber sequence $\bigvee_nS^2\to\#_n\overline{\CP^2}\to S^4$, and hence they form a fibration sequence.
Since the top row is a homotopy equivalence (see Proposition~\ref{prop:B(G,G,Y)=Y}),
to finish the proof we only have to show that the left vertical maps form a fibration sequence.
The map $\boldsymbol\rho\circ h$ is induced by the composition
\[
\Omega^4X\times(\Map_*(\overline{\CP^2},X))^n\xrightarrow{p_2}(\Map_*(\overline{\CP^2},X))^n\xrightarrow{\prod\rho}(\Omega^2X)^n
\]
where $p_2$ is projection onto the second factor and
$\rho\colon\Map_*(\overline{\CP^2},X)\to\Omega^2X$ is the map defined above in~\ref{def:loopmonoidandaction}\eqref{maprho}.
The homotopy fiber $F_{\!\boldsymbol\rho\circ h}$ of the map $\boldsymbol\rho\circ h$ is the fiber product: 
\[
\xymatrix{
F_{\!\boldsymbol\rho\circ h}\ar[r]\ar[d] &
\BAR\bigl(\Omega^4X,(\Omega^4X)^n,(\Map_*(\overline{\CP^2},X))^n\bigr)
\ar[d]^{(\boldsymbol\rho\circ h)\times\star} \\
\bigl((\Omega^2X)^n\bigr)^I\ar[r]^-{\text{ev}_0\times \text{ev}_1} &
(\Omega^2X)^n\times (\Omega^2X)^n}
\]
where, given a path $\gamma$ in $(\Omega^2X)^n$, 
$\text{ev}_0(\gamma)=\gamma(0)$ and $\text{ev}_1(\gamma)=\gamma(1)$,
so from \cite[Corollary~11.6]{May72} 
we see that $F_{\!\boldsymbol\rho\circ h}=\BAR\bigl(\Omega^4X,(\Omega^4X)^n,F_{\!\rho}^{n}\bigr)$,
where $F_{\!\rho}$ is the homotopy
fiber of the map $\rho$. Since the inclusion $\Omega^4X\to F_\rho$ is a homotopy equivalence, and the simplicial spaces 
involved are good, it follows from \cite[Proposition A.1]{Seg74} that the map 
\[
\BAR\bigl(\Omega^4X,(\Omega^4X)^n,(\Omega^4X)^n\bigr)\to\BAR\bigl(\Omega^4X,(\Omega^4X)^n,F_\rho^n\bigr)
\]
is a homotopy equivalence. This concludes the proof.
\end{proof}

We now study the rank-stable limit. Consider the $\Ii$-functors (see \cite[page 16]{May77}):
\[
X^\Vv=\frac{SU(\Vv\oplus \Vv)}{SU(\Vv)\times SU(\Vv)}\,,\qquad P^\Vv=\frac{SU(\Vv\oplus \Vv)}{\Id\times SU(\Vv)}\,,\qquad E^\Vv=P^\Vv\times_{SU(\Vv)}\Vv\,.
\]
For each $\Vv$ the natural map $E^\Vv\to X^\Vv$ is an $SU(\Vv)$ vector bundle and there is a canonical isomorphism of the fiber 
$E^\Vv_*$ over the basepoint with $\Vv$. 
For $\dim\Vv<\infty$
let $\xi^\Vv$ be the canonical universal connection on $E^\Vv$ \cite[section 2]{NaRa61}. Then pullback induces a map of $\Ii_*$-functors
$\psi_\Vv\colon\Map(\#_J\overline{\CP^2},X^\Vv)\to\Cs_{J}^\Vv$
which extends uniquely to a morphism of $\Ii$-functors
by passage to limits.

If $\Hh$ is a universe (Definition~\ref{ex:Ll^H}), then
we can identify the classifying space $BSU$ and the universal bundle $ESU$ with $X^\Hh$ and $P^\Hh$ respectively.
The maps $\psi_\Vv$ induce a map of $\Ll^\Hh$-algebras $\Map(\#_J\overline{\CP^2},X^\Hh)\to\Cs_{J,k}^\Hh$
which is a homotopy equivalence (see \cite[page 101, second paragraph]{AtJo78}). Let $n=\#J$ and consider 
the homotopy coherent bar construction $\Bb\bigl(\Omega^4BSU,(\Omega^4BSU)^n,(\Map_*(\overline{\CP^2},BSU))^n\bigr)$
constructed as in Definition~\ref{def:Delta_I,Bb_I,h_I}. We will now see how this bar construction is related to the one in equation~\eqref{eq:k->inftybarwithloops}.

\begin{proposition}\label{prop:loopcompatiblewithwhitney}
  There is a homotopy commutative diagram:
  \[
  \xymatrix{
    \BAR\bigl(\Omega^4BSU,(\Omega^4BSU)^n,(\Map_*(\overline{\CP^2},BSU))^n\bigr)\ar[r]^-h\ar[d] &
    \Map_*(\#_n\overline{\CP^2},BSU)\\
    \Bb\bigl(\Omega^4BSU,(\Omega^4BSU)^n,(\Map_*(\overline{\CP^2},BSU))^n\bigr)\ar[ru]_(0.6){h_J}
  }
  \]
  where all the maps are homotopy equivalences.
\end{proposition}
\begin{proof}
The idea of the proof is the same as in the proof of Theorem~\ref{thm:hI2simeq}.
Let $D_nX\subset(\Mc_*X)^n$ be the subspace of $n$-tuples $(f_1,\dots,f_n)$ such that, for all $x\in\left(-\infty,0\right]\times I^3$ we have $f_i(x)\neq *$ for at most one $i$.
  We then have a map $\mu\colon D_nX\to\Mc_*X$ with $\mu(f_1,\dots,f_n)(x)=f_i(x)$ if $f_i(x)\neq *$ for some $i$, and $\mu(f_1,\dots,f_n)(x)=*$ otherwise.
  An isometry $\alpha\colon \Vv\to {\Vv'}$ induces a map $X^\Vv\to X^{\Vv'}$ and hence a map $\Mc_*X^\Vv\to\Mc_*X^{\Vv'}$. Let $\imath_i\colon \Vv\to \Vv^n$
  be inclusion into the $i$-th factor. Then the following diagrams are commutative (compare with Proposition~\ref{prop:mapsboxplus}):
  \[
  \xymatrix{
    D_n X^\Vv\ar[r]^-{\prod\imath_i}\ar[rd]_-{\oplus} &
    D_n X^{\Vv^n}\ar[d]^\mu \\ &
    \Mc_*X^{\Vv^n}} \qquad
  \xymatrix{
    D_n X^\Vv\ar[r]^-{\prod\alpha}\ar[d]_\mu &
    D_n X^{\Vv'}\ar[d]^\mu \\
    \Mc_*X^\Vv\ar[r]^-\alpha &
    \Mc_*X^{\Vv'}}
  \]
  where $\prod\imath_i$ and $\prod\alpha$ denote the maps obtained by
  restricting the domain and the codomain of the product maps.
  The analogue of the maps in equation~\eqref{eq:fboxplus} are defined as follows:
  Let $\mathcal P$ be as in the proof of Theorem~\ref{thm:hI2simeq},
  let $\imath_i\colon\Hh\to\Hh^n$ be inclusion into the $i$-th factor and for each $f\in\mathcal P(n)$ define a map $\mu_f$ as the composition
  \[
  \mu_f\colon D_nX^\Hh\xrightarrow{\prod(f\circ\imath_i)}D_nX^{\Hh}\xrightarrow{\ \mu\ }\Mc_*X^\Hh\,.
  \]
  We now express the maps of Definition~\ref{def:loopmonoidandaction}(4)(5) in terms of $\mu$ as follows:
\begin{enumerate}
\item Let $j_i\colon\Mc_*X\to(\Mc_*X)^n$ be inclusion into the $i$-th factor and consider the composition $\omega_i=\omega\circ j_i$.
  For each $i$, the map $\omega_i$ is an embedding and the map $\omega$ is the composition
\[
\omega\colon(\Mc_*X)^n\xrightarrow{\prod \omega_i}D_nX\xrightarrow{\mu}\Mc_*X\,.
\]
\item For each $s\leq 0$ let $\tau_s\colon\Omega^4X\to\Omega^4X$ be defined by
\[
\tau_sg(t,\mathbf x)=\begin{cases}g(t-s,\mathbf x)& t<s; \\ * & t\geq s.\end{cases}
\]
For each $g\in\Mc_*X$ let $s_g$ be as in Definition~\ref{def:loopmonoidandaction}(4).
Given spaces $X_1,\dots,X_{m+1}$ and
$(g_1,\dots,g_m,g_{m+1})\in\bigl(\prod_{i=1}^m\Omega^4X_i\bigr)\times\Mc_*X_{m+1}$, for each $i$ let $r_i=s_{g_{i+1}}+\dots+s_{g_{m+1}}$ and let
$\tau\colon\bigl(\prod\Omega^4X_i\bigr)\times\Mc_*X_{m+1}\to\bigl(\prod\Omega^4X_i\bigr)\times\Mc_*X_{m+1}$ be defined by
\[
\tau(g_1,\dots,g_{m+1})=(\tau_{r_1}g_1,\dots,\tau_{r_m}g_m,g_{m+1})\,.
\]
If $X_1=\dots=X_{m+1}=X$
then $\mu\circ\tau(g_1,\dots,g_{m+1})=g_1\cdot\ldots\cdot g_m\cdot g_{m+1}$.
Note that the map $\tau$ is an embedding with image the subspace of $m+1$-tuples $(g_1,\dots,g_{m+1})\in\prod_i\Mc_*X_i$
such that $\supp g_i\subset\left(-\infty,s_{g_{i+1}}\right]$.
\end{enumerate}
Let $\Oo^J$ and $\Oo_J$ be as in the proof of Theorem~\ref{thm:hI2simeq} and
consider the functor $\Ff_J:\Delta(\Oo_J,\Oo^J)^\op\to \Top$
with $\mathcal F_J(m)$ equal to the image of the following embedding (where $n=\#J$):
\begin{multline*}
  \Omega^4BSU\times\bigl((\Omega^4BSU)^n\bigr)^m\times\bigl(\Map(\overline{\CP^2},BSU)\bigr)^n\\\xrightarrow{\Id\times(\prod\omega_i)^m\times\prod\omega_i}
  \Omega^4BSU\times\bigl((\Omega^4BSU)^n\bigr)^m\times\bigl(\Map(\overline{\CP^2},BSU)\bigr)^n\\
  =\Omega^4BSU\times\bigl(\Omega^4(BSU^n)\bigr)^m\times\Map(\overline{\CP^2},BSU^n)\\\xrightarrow{\tau}
  \Omega^4BSU\times\bigl(\Omega^4(BSU^n)\bigr)^m\times\Map(\overline{\CP^2},BSU^n)
\end{multline*}
and defined on morphisms by using the maps $\mu_f$.
The proof now proceeds exactly as in the proof of Theorem~\ref{thm:hI2simeq}. 
\end{proof}

\section{Homology}\label{sec:8}

In this section we prove Theorem~\ref{theoIV}. We'll always work with a fixed countably infinite dimensional vector space $\Hh$ which we omit from the notation. 
We begin by computing the homology of $\|\Bb_I\|$.
Fix a point $x\in\Cc^2$ and consider the homology rings $R=H_*(\Mm_\emptyset)$ and $M=H_*(\Mm_x)$.
It was shown in \cite[Theorem 1.2]{Kir94}, \cite[Theorem 3]{San95}, \cite[Theorem 1.1]{BrSa97} that $\Mm_{\emptyset}\simeq \coprod_{k\geq 0} BU(k)$ and
$\Mm_x\simeq \coprod_{k\geq 0} BU(k)\times BU(k)$ as $E_\infty$-spaces;
pullback $\pi^*\colon\Mm_\emptyset\to\Mm_x$ is induced by the diagonal inclusions
$\Delta_k\colon BU(k)\to BU(k)\times BU(k)$ (see \cite[equation (2)]{BrSa00}).
Thus we have $R=\bigoplus R_k$ and $M=\bigoplus M_k$ where $R_k\cong H_*\bigl(BU(k)\bigr)$
and $M_k\cong H_*\bigl(BU(k)\times BU(k)\bigr)\cong R_k\otimes R_k$.
If we write
\[
R=\Zz[r_i;i=0,1,2,\dots]
\]
where $r_i\in H_{2i}\bigl(BU(1)\bigr)$  is dual to $c_1^i$ (see \cite[Theorem 21.4.3]{MaPo12}),
then $R_k$ is the $\Zz$-submodule of homogeneous polynomials of degree $k$.
The inclusion $\coprod\bigl(BU(k)\times BU(k)\bigr)\to\bigl(\coprod BU(k)\bigr)\times\bigl(\coprod BU(k)\bigr)$
induces an inclusion
\[
M=\bigoplus\nolimits_k R_k\otimes R_k\subset R\otimes R=\Zz[x_i,y_j;i,j\geq 0]
\]
(where $x_i$ and $y_i$ have homological degree $2i$); namely, $M$ is the subring generated by $x_iy_j$, with $i,j\geq0$.
Consider the bigrading on the $\Zz$-modules $R$ and $M$, where the bidegree $(i,k)$ summand consist of the elements in $R_k$, $M_k$
with homological degree $2i$; in particular:
\begin{equation}\label{eq:bigradinginRkandMk}
  \deg r_i=(i,1)\quad\text{and}\quad \deg x_iy_j=(i+j,1)\,.
\end{equation}
The diagonal inclusions $\Delta_k\colon BU(k)\to BU(k)\times BU(k)$ induce a homomorphism
$\Delta_*\colon R\to M$ of bigraded rings with $\Delta_*(r_k)=\sum_{i+j=k}x_iy_j$, which makes $M$ into
an $R$-module.

\begin{proposition}\label{prop:MfreeoverR}
Let $T_k\subset\Zz[x_i;i\geq k]$ be the $\Zz$-submodule of homogeneous polynomials of degree $k$,
let $N_k=T_k\otimes R_k\subset M_k$ and let $N=\bigoplus_k N_k\subset M$.
Then $M$ is a free module over $R$ with basis $N$.
\end{proposition}
\begin{proof}
Let $\phi\colon R\otimes N\to M$ be the homomorphism of bigraded $\Zz$-modules induced by $\Delta_*$.
We want to show that $\phi$ is an isomorphism. 
We begin by showing that $\phi$ is surjective.
First we need to establish some notation. Let $A$, $B$ be eventually zero sequences
of non-negative integers:
\[
A=(a_0,a_1,\dots,a_n,\dots)\,,\qquad
B=(b_0,b_1\dots,b_n,\dots)\,.
\]
We order these sequences by lexicographic order. Let
\[
x^Ay^B=x_0^{a_0}x_1^{a_1}\dots x_n^{a_n}\dots y_0^{b_0}\dots y_n^{b_n}\ldots\in R\otimes R\,,
\]
and let $|A|=\sum_k a_k$ and $|B|=\sum_k b_k$. Then $x^Ay^B\in M$ if and only if $|A|=|B|$, and these monomials form a $\Zz$-basis for $M$.
We define a total order on the set of monomials $x^Ay^B$ as follows:
\begin{align*}
x^{A_1}y^{B_1}>x^{A_2}y^{B_2}\quad\text{if and only if}\quad&|A_1|>|A_2|,\text{ or }\\
&|A_1|=|A_2|\text{ and }A_1>A_2,\text{ or }\\
&A_1=A_2\text{ and }B_1<B_2
\end{align*}
(notice the reversed order on the $B$'s). 
We will show by induction on the ordering of the monomials 
that any monomial $x^Ay^B\in M$ is in the image of $\phi$. The statement is clearly true for $1\in M$ and
whenever $x^Ay^B\in N_n$ for some $n$, so fix a monomial
$x^{A_1}y^{B_1}\in M_n\setminus N_n$ and assume, by induction hypothesis,
that every monomial $x^{A_2}y^{B_2}$ strictly smaller than $x^{A_1}y^{B_1}$ is in the image of $\phi$.
Since $x^{A_1}y^{B_1}\notin N_n$, there is a $k\leq n$ and a sequence $A=(a_0,a_1,\ldots)$ with $|A|=n-1$ and $a_i=0$ for $i<k-1$
such that $x^{A_1}=x_{k-1}x^{A}$.
Also, since $k\leq n$, we can write $y^{B_1}=y_{n_1}y_{n_2-1}\dots y_{n_k-k+1}y^B$ with
$n_1<n_2<\dots<n_k$ and $B=(b_0,b_1,\ldots)$ a sequence with $|B|=n-k$ and $b_i=0$ for $i<n_k-k+1$.
Set $x_q=y_q=0$ for $q<0$ and consider the following element in $\Zz[x_i,y_j]$:
\[
d=x^Ay^B\det\begin{pmatrix}
y_{n_1}   & \dots & y_{n_k}   \\
y_{n_1-1} & \dots & y_{n_k-1} \\
\vdots  & \ddots & \vdots \\
y_{n_1-k+2} & \dots & y_{n_k-k+2} \\
r_{n_1}   & \dots & r_{n_k}  
\end{pmatrix}
\]
(where, abusing notation, we write $r_i$ instead of $\Delta_*r_i$).
Using Laplace's formula in the last row we find that $d=\sum_{i=0}^kr_{n_i}p_i$ with $p_i\in M_{n-1}$.
In particular, all monomials in each polynomial $p_i$ are strictly smaller than $x^{A_1}y^{B_1}$ so,
by the induction hypothesis, we have $d\in\text{Im}\,\phi$. 
Now using the identities $r_{n_i}=\sum x_\ell y_{n_i-\ell}$ in the last row we find that:
\[
d = \sum_{\ell=0}^\infty x_\ell x^Ay^B\det\begin{pmatrix}
y_{n_1}   & \dots & y_{n_k}   \\
y_{n_1-1} & \dots & y_{n_k-1} \\
\vdots  & \ddots & \vdots \\
y_{n_1-k+2} & \dots & y_{n_k-k+2} \\
y_{n_1-\ell} & \dots & y_{n_k-\ell}
\end{pmatrix}\,.
\]
Notice that the terms in the sum vanish for $\ell=0,\dots,k-2$ and $x_\ell x^A<x_{k-1}x^A$ for $\ell\geq k$.
Also, for $\ell=k-1$, the leading term of the determinant is the product of the entries in the
main diagonal. Thus we can write
\[
d=x_{k-1}x^Ay^By_{n_1}y_{n_2-1}\dots y_{n_k-k+1}+p=x^{A_1}y^{B_1}+p\in\text{Im}\,\phi
\]
where $p$ is a finite sum of monomials strictly smaller than $x^{A_1}y^{B_1}$. Using the induction hypothesis 
we conclude that $x^{A_1}y^{B_1}\in\text{Im}\,\phi$, which finishes the proof of the surjectivity of $\phi$.
Since $R\otimes N$ and $M$ are free $\Zz$-modules, finitely generated in each bidegree, in order to show that
$\phi$ is an isomorphism we only need to check that the dimensions over $\Zz$ match, which we will prove in Lemma~\ref{lemma:H_RH_N=H_M} below.
\end{proof}

Before we continue we introduce some notation. Let $(t;q)_n=\prod_{i=0}^{n-1}(1-tq^i)$ denote the $q$-Pochhammer symbol
and let $(q)_n=(q;q)_n$. Also consider the $q$-multinomial
\[
\binom{k}{i_1,\dots,i_n}_{\!q}=\frac{(q)_k}{\prod_{j=1}^n(q)_{i_j}}\quad(k=i_1+\dots+i_n)
\]
and let $\binom{k}{i}_{\!q}=\binom{k}{i,k-i}_{\!q}$ be the $q$-binomial. One easily checks that
\begin{equation}\label{eq:qbinomialpascalidentities}
  \binom ki_{\!q}=q^i\binom{k-1}i_{\!q}+\binom{k-1}{i-1}_{\!q}\,.
\end{equation}

\begin{lemma}\label{lemma:HilbertSeriesInduction}
For any $n,j\in\Zz$ with $n\geq j\geq 0$ we have
\[
A_{n,j}=\sum_{k=0}^{n-j}\frac{(q)_n}{(q)_{k+j}}\binom{n-j}{k}_{\!\!q}
q^{k^2+kj}=1\,.
\]
\end{lemma}
\begin{proof}
The proof is by induction on $n-j$. The result is clear for $n=j$. Using the relations $\binom nk_{\!q}=\binom{n-1}k_{\!q}+q^{n-k}\binom{n-1}{k-1}_{\!q}$ and 
$(q)_n=(1-q^n)(q)_{n-1}$, we get:
\begin{align*}
A_{n,j}&=\sum_{k=0}^{n-j-1}\!\frac{(q)_n}{(q)_{k+j}}\binom{n-j-1}{k}_{\!\!q}q^{k^2+kj}
+\sum_{k=1}^{n-j}\frac{(q)_n}{(q)_{k+j}}q^{n-j-k}\binom{n-j-1}{k-1}_{\!\!q}q^{k^2+kj}\\
&=(1-q^n)A_{n-1,j}+\sum_{k=0}^{n-j-1}\!\frac{(q)_n}{(q)_{k+j+1}}\binom{n-j-1}{k}_{\!\!q}q^{n-j-k-1}q^{(k+1)^2+(k+1)j}\\
&=(1-q^n)A_{n-1,j}+q^nA_{n,j+1}
\end{align*}
hence $A_{n,j}=1$ by the induction hypothesis.
\end{proof}

Consider the bigrading in 
the $\Zz$-modules $R$, $M$ and $N$ (equation~\eqref{eq:bigradinginRkandMk}).
Write the Hilbert series as $P(q,t)=\sum d_{ik}q^it^k$ where $d_{ik}$ is the rank of the
bidegree $(i,k)$ summand.

\begin{lemma}\label{lemma:H_RH_N=H_M}
The Hilbert series of $R$, $M$ and $N$ are respectively:
\begin{align*}
P_R(q,t)&=\sum_{n=0}^\infty \frac{t^n}{(q)_n}\,; &
P_M(q,t)&=\sum_{n=0}^\infty \frac{t^n}{\bigl((q)_n\bigr)^2}\,; &
P_N(q,t)&=\sum_{n=0}^\infty \frac{q^{n^2}t^n}{\bigl((q)_n\bigr)^2}\,,
\end{align*}
and we have $P_M(q,t)=P_R(q,t)P_N(q,t)$.
\end{lemma}

\begin{remark}
Lemma~\ref{lemma:H_RH_N=H_M} shows that the Hilbert series of $R\otimes N$ equals $P_{R\otimes N}=P_RP_N=P_M$
which concludes the proof of Proposition~\ref{prop:MfreeoverR}.
\end{remark}

\begin{proof}
Since $R_k\cong H^*\bigl(BU(k)\bigr)=\Zz[c_1,\dots,c_k]$ with $\deg c_i=(i,1)$, and $M_k=R_k\otimes R_k$, 
the Hilbert series of $R_k$ and $M_k$ are respectively $1/(q)_n$ and $1/(q)_n^2$ from which the formulas
for $P_R$ and $P_M$ immediately follow. 
The Hilbert series of $\Zz[x_i;i\geq k]\cong\Zz[x_k]\otimes\Zz[x_{k+1}]\otimes\cdots$, where
$x_i$ has bidegree $(i,1)$, is
\[
\prod_{i=k}^\infty\frac1{1-tq^i}=\frac1{(tq^k;q)_\infty}=\sum_{n=0}^\infty\frac{(tq^k)^n}{(q)_n}
\]
(the second equality is a special case of the $q$-binomial theorem: see \cite[Corollary 2.3]{Ber10}).
The Hilbert series of $T_k\subset \Zz[x_i;i\geq k]$ is obtained by taking the coefficient of $t^k$ and hence
it is given by $q^{k^2}/(q)_k$,
so the Hilbert series of $N_k=T_k\otimes R_k$ is $q^{k^2}/(q)_k^2$,
from which we get the formula for $P_N(q,t)$.
Writing $P_M$, $P_R$ and $P_N$ as power series in $t$ and multiplying the series we see
that the identity $P_M=P_RP_N$ is equivalent to the identity
\[
\sum_{k=0}^n\frac{q^{k^2}}{\bigl((q)_k\bigr)^2(q)_{n-k}}=\frac1{\bigl((q)_n\bigr)^2}
\]
which follows immediately from Lemma~\ref{lemma:HilbertSeriesInduction} by taking $j=0$.
\end{proof}

\begin{proposition}\label{prop:H(B_I)}
Given a finite set $I\subset\Cc^2$ with cardinality $\#I=n$, we have
\[
H_*\bigl(\|\Bb_I\|\bigr)\cong\underbrace{M\otimes_R\dots\otimes_RM}_{n}\cong N^{\otimes n}\otimes R\,.
\]
\end{proposition}
\begin{proof}
Consider the equivalence of categories $F\colon \Delta_I\to \Delta$ and let
$F_{h*}\Bb_I\colon\Delta^{\mathrm{op}}\to \Top$ be Segal's homotopy pushdown construction 
(see \cite[section 5]{HoVo92}). Then we
have homotopy equivalences
$\|\Bb_I\|\simeq \hcolim_{\Delta^\op}F_{h*}\Bb_I$ 
\cite[Proposition 5.5]{HoVo92} and 
$\hcolim_{\Delta^\op}F_{h*}\Bb_I\simeq|\tau F_{h*}\Bb_I|$ (Proposition~\ref{prop:hcolim}(1)).
The homology version of Segal's spectral sequence \cite[Proposition 5.1]{Seg68},
converging to the homology
of $|\tau F_{h*}\Bb_I|$, has
as $E^1$ term the simplicial chain complex 
$E^1_{\bullet,q}=H_q(\tau F_{h*}\Bb_I)\cong H_q(F_{h*}\Bb_I)$.
If $1\colon\Delta_I\to\Delta_I$ denotes the identity functor, then we have weak equivalences
$F^*F_{h*}\Bb_I\leftarrow1^*1_{h*}\Bb_I\to\Bb_I$ 
\cite[Proposition 5.3]{HoVo92} so 
the functors $F_{h*}\Bb_I$ and $\Bb_I$ are naturally equivalent
as functors $\Delta^\op\to\hTop$.
It follows that the chain complex
$H_q(F_{h*}\Bb_I)$ is isomorphic to the 
bar complex 
$\BAR(R,R^{\otimes n},M^{\otimes n})$.
By Proposition~\ref{prop:MfreeoverR}, $M^{\otimes n}$ is a free module over $R^{\otimes n}$ 
with basis $N^{\otimes n}$ so
the spectral sequence collapses and we find that 
\[
H_*\bigl(\|\Bb_I\|\bigr)\cong M^{\otimes n}\otimes_{R^{\otimes n}}R\cong  N^{\otimes n}\otimes R\cong M\otimes_R\dots\otimes_RM
\]
which concludes the proof.
\end{proof}

\begin{proposition}\label{prop:simplyconnected}
	The degree $c_2=k$ components $\|\Bb_{I,k}\|$
	of the space $\|\Bb_I\|$ are simply connected.
\end{proposition}
\begin{proof}
	The proof is by induction on $\#I$. For $\#I=0,1$ it immediately follows
	from Proposition~\ref{thm:h0hxsimeq}. For the induction step we use
	Proposition~\ref{prop:IcupJ}. Write $I=J\cup L$ with $\#J,\#L<\#I$
	and $J\cap L=\emptyset$, and let
	\[
	\Bb_{J,L,k}(\bullet)=\Bb_\bullet(\|F_i^*\Bb_J\|,\Mm_\emptyset,\|F_i^*\Bb_L\|)_k\,,
	\quad
	\|\Bb_{J,L,k}\|=\Bb(\|F_i^*\Bb_J\|,\Mm_\emptyset,\|F_i^*\Bb_L\|)_k\,.
	\]
	Then $\|\Bb_{I,k}\|\simeq \|\Bb_{J,L,k}\|$.
	By induction hypothesis, the connected components of $\Bb_{J,L,k}(m)$:
	\[
	\|F_i^*\Bb_{J,\alpha_0}\|\times\Bigl(
	\tprod_{j=1}^m\Mm_{\emptyset,\alpha_j-\alpha_{j-1}}\Bigr)
	\times\|F_i^*\Bb_{L,k-\alpha_m}\|\quad(0\leq\alpha_0\leq\dots\leq\alpha_m\leq k)
	\]
	are simply connected. 
	Let $F_{h*}$ be 
	Segal's homotopy pushdown along the equivalence of categories 
	$F\colon \Delta(\Ll_+,\Ll)\to\Delta$. 
	The simplicial space obtained
	by replacing each connected component in $F_{h*}\Bb_{J,L,k}$
	by a singleton is $\BAR_\bullet(*,\cat_k,*)$, where $\cat_k$ is the
	category $\{0\to1\to\dots\to k\}$.
	A choice of basepoint in the connected component of $F_{h*}\Bb_{J,L,k}(k)$
	corresponding to the composition of arrows $0\to1\to\dots\to k\in \BAR_k(*,\cat_k,*)$
	determines via the face and degeneracy maps, for any $m$,
	a collection $K_0(m)\subset F_{h*}\Bb_{J,L,k}(m)$ of basepoints, one in
	each connected component, inducing a simplicial map 
	$\BAR_\bullet(*,\cat_k,*)\to F_{h*}\Bb_{J,L,k}$.
	Let $\tilde\pi_1\bigl(F_{h*}\Bb_{J,L,k}(m)\bigr)$ denote the fundamental
	groupoid with $K_0(m)$ as the set of objects and with morphisms
	the equivalence classes os paths between basepoints. The diagram of
	spaces $F_{h*}\Bb_{J,L,k}$ induces a diagram of groupoids 
	$\tilde\pi_1\bigl(F_{h*}\Bb_{J,L,k}\bigr)$ and since the 
	connected components
    of $F_{h*}\Bb_{J,L,k}$
	are simply connected, we have an isomorphism of diagrams
	\[
	\tilde\pi_1\bigl(F_{h*}\Bb_{J,L,k}\bigr)\cong
	\tilde\pi_1\bigl(\BAR_\bullet(*,\cat_k,*)\bigr)\,.
	\]
	By \cite[Theorem 1.1]{DFa04} we have natural equivalences of groupoids
	\[
	\tilde\pi_1\bigl(\hcolim_{\Delta^\op} F_{h*}\Bb_{J,L,k}\bigr)
	\cong\hcolim_{\Delta^\op}\tilde\pi_1\bigl(F_{h*}\Bb_{J,L,k}\bigr)
	\]
	and
	\[
	\tilde\pi_1\bigl(\hcolim_{\Delta^\op}\BAR_\bullet(*,\cat_k,*)\bigr)
	\cong \hcolim_{\Delta^\op}\tilde\pi_1\bigl(\BAR_\bullet(*,\cat_k,*)\bigr)\,.
	\]
	Since 
	$\hcolim_{\Delta^\op}\BAR_\bullet(*,\cat_k,*)\bigr)\simeq \BAR(*,\cat_k,*)$
	is contractible,
		\begin{align*}
		\tilde\pi_1\bigl(\|\Bb_{J,L,k}\|\bigr)&\cong
		\tilde\pi_1\bigl(\hcolim_{\Delta^\op} F_{h*}\Bb_{J,L,k}\bigr)
		\cong\hcolim_{\Delta^\op}\tilde\pi_1\bigl(F_{h*}\Bb_{J,L,k}
		\bigr)\\
		&\cong \hcolim_{\Delta^\op}\tilde\pi_1\bigl(\BAR_\bullet(*,\cat_k,*)\bigr)
		\cong \tilde\pi_1\bigl(\hcolim_{\Delta^\op}\BAR_\bullet(*,\cat_k,*)\bigr)\cong\{1\}
		\end{align*}
	which finishes the proof.
\end{proof}

As in Lemma~\ref{lemma:H_RH_N=H_M},
we write the Hilbert series of $H_*\bigl(\|\Bb_I\|\bigr)$ as $P_I(q,t)=\sum d_{ik}q^it^k$ where $d_{ik}$ is the rank of the
bidegree $(i,k)$ summand. Also, given power series $s_1, s_2\in\Zz[[q]]$ we write $s_1=s_2+\mathcal O(q^m)$
if $s_1=s_2$ in $\Zz[[q]]/\langle q^{m}\rangle$. 

\begin{lemma}\label{lemma:P_Ipnk}
Let $I\subset\Cc^2$  be a finite set with cardinality $n$. Then the Hilbert series of $H_*\bigl(\|\Bb_I\|\bigr)$ is
given by the $q$-series:
\begin{equation}\label{eq:P_I(q,t)}
P_I(q,t)=\sum_{k=0}^\infty \frac{p_{n,k}(q)}{(q)_k^2}t^k\,,\quad\text{where}\quad
p_{n,k}=\!\sum_{i_1+\dots+i_n=k}\!q^{i_2^2+\dots+i_n^2}\binom{k}{i_1,\dots,i_n}_{\!q}^2\,.
\end{equation}
The polynomials $p_{n,k}$ satisfy:
\begin{enumerate}
\item $p_{1,k}=1$, $p_{2,k}=\binom{2k}{k}_{q}$, $p_{n,1}=1+(n-1)q$ and
\[
p_{n,2}=1+(n-1)q+\tfrac12(n-1)(n+2)q^2+(n-1)^2q^3+\tfrac12n(n-1)q^4\,;
\]
\item For any $n$, $k$ we have $(q)_\infty^{n-1}p_{n,k}=1+\mathcal O(q^{k+1})$;
\item For $2\leq n\leq k+1$ we have 
\[
(q)_\infty^{n-1}p_{n,k}=1-(2^n-2)\sum_{i=k+1}^{2k-n+2}q^i-(2^n-3)q^{2k-n+3}+\mathcal O(q^{2k-n+4})\,.
\]
\end{enumerate}
\end{lemma}
\begin{proof}
By Proposition~\ref{prop:H(B_I)},
the homology of $\|\Bb_I\|$ is isomorphic to the tensor product
$M\otimes N^{\otimes(n-1)}$ so its Hilbert series is given by
\[
P_I(q,t)=\left(\sum_{k=0}^\infty\frac{t^k}{(q)_k^2}\right)\left(\sum_{k=0}^\infty\frac{q^{k^2}t^k}{(q)_k^2}\right)^{n-1}
=\sum_{k=0}^\infty \frac{p_{n,k}}{(q)_k^2}t^k
\]
with $p_{n,k}$ as in equation~\eqref{eq:P_I(q,t)}.
The cases $k=1,2$ and $n=1$ are easily computed, while the case $n=2$ is a special case of the $q$-Vandermonde identities \cite[Solution to exercise 1.100, page 188]{Sta12}.

We prove statement (2) by induction on $n$.
Statement (2) holds for $n=1,2$ since, by (1), we have $(q)_\infty p_{2,k}=(q^{k+1})_\infty(q^{k+1})_k$.
Assume (2) holds for $n-1$. 
The polynomials $p_{n,k}$ satisfy the recurrence relation:
\[
p_{n,k}=\sum_{i=0}^kq^{i^2}\binom{k}{i}_{\!q}^2p_{n-1,k-i}
\]
so we find that
\[
(q)_\infty^{n-2}p_{n,k}=\sum_{a=0}^kq^{a^2}\binom{k}{a}_{\!q}^2(q)_\infty^{n-2}p_{n-1,k-a}
=\sum_{a=0}^kq^{a^2}\binom{k}{a}_{\!q}^2\bigl(1+\mathcal O(q^{k-a+1})\bigr)\,.
\]
Since $q^{a^2}q^{k-a+1}=\mathcal O(q^{k+1})$, we get:
\[
(q)_\infty^{n-1}p_{n,k}=(q)_\infty\sum_{a=0}^kq^{a^2}\binom{k}{a}_{\!q}^2+\mathcal O(q^{k+1})
=(q)_\infty p_{2,k}+\mathcal O(q^{k+1})=1+\mathcal O(q^{k+1})
\]
which finishes the proof of (2).

We now prove (3) by induction on $n$.
For $n=2$ we have $(q)_\infty p_{2,k}=(q^{k+1})_\infty(q^{k+1})_k$ and the result follows.
Assume (3) holds for $n-1$. Then
\begin{align*}
(q)_\infty^{n-2}p_{n,k}&=\sum_{a=0}^kq^{a^2}\binom{k}{a}_{\!q}^2(q)_\infty^{n-2}p_{n-1,k-a}\\
&=\sum_{a=0}^kq^{a^2}\binom{k}{a}_{\!q}^2\left(1-(2^{n-1}-2)\sum_{i=k-a+1}^{2k-2a-n+3}q^i+\mathcal O(q^{2k-2a-n+4})\right)\,.
\end{align*}
For $a\neq 1$ we have $a^2+2k-2a-n+4=(a-1)^2+2k-n+3\geq 2k-n+4$, so:
\[
q^{a^2}\sum_{i=k-a+1}^{2k-2a-n+3}q^i
=q^{a^2-a}\sum_{i=k+1}^{2k-a-n+3}q^i
=q^{a^2-a}\sum_{i=k+1}^{2k-n+3}q^i+\mathcal O(q^{2k-n+4})
\]
while for $a=1$ we have, by the induction hypothesis,
\[
q(q)_\infty^{n-2}p_{n-1,k-1}=q-(2^{n-1}-2)\sum_{i=k+1}^{2k-n+3}q^i
+q^{2k-n+3}+\mathcal O(q^{2k-n+4})\,.
\]
Thus we have: 
\begin{multline*}
(q)_\infty^{n-2}p_{n,k}=
\sum_{a=0}^kq^{a^2}\binom{k}{a}_{\!q}^2
-(2^{n-1}-2)\sum_{i=k+1}^{2k-n+3}q^i
\sum_{a=0}^kq^{a^2-a}\binom{k}{a}_{\!q}^2\\
+q^{2k-n+3}+\mathcal O(q^{2k-n+4})\,.
\end{multline*}
Now, using equation~\eqref{eq:qbinomialpascalidentities} and the $q$-Vandermonde identities we get:
\[
\sum_{a=0}^kq^{a^2-a}\binom{k}{a}_{\!q}^2 
=\sum_{a=0}^{k-1}q^{a^2}\binom{k}{a}_{\!q}\binom{k-1}{a}_{\!q}+\sum_{a=1}^kq^{a^2-a}\binom{k}{a}_{\!q}\binom{k-1}{a-1}_{\!q}=2\binom{2k-1}{k}_{\!q}
\]
and substituting above we get
\[
(q)_\infty^{n-2}p_{n,k}=p_{2,k}-(2^{n}-4)\binom{2k-1}{k}_{\!q}\sum_{i=k+1}^{2k-n+3}q^i+q^{2k-n+3}+\mathcal O(q^{2k-n+4})\,.
\]
Multiplying by $(q)_\infty$ and observing that $(q)_\infty\binom{2k-1}{k}_{q}=(q^{k})_k(q^{k+1})_\infty=1+\mathcal O(q^{k})$
we get, since $n\geq 3$:
\[
(q)_\infty^{n-1}p_{n,k}=(q)_\infty p_{2,k}-(2^n-4)\sum_{i=k+1}^{2k-n+3}q^i+q^{2k-n+3}+\mathcal O(q^{2k-n+4})\,.
\]
The result now easily follows.
\end{proof}

Let $I\subset\Cc^2$ be a finite set with cardinality $\#I=n$
and let $\mathscr C_{I,k}=\mathscr C^{\mathbb H}_{I,k}$ denote the rank-stable limit 
of the space 
$\mathscr C^{\mathbb V}_{I,k}$ 
of equivalence classes of framed connections on an $SU(\mathbb V)$ vector bundle over $\#_n\overline{\CP^2}$ with $c_2=k$ (see section~\ref{sec:7}).
The space $\mathscr C_{I,k}$ is
homotopically equivalent to $BU^{n+1}$ (see \cite[Remark 1.1]{BrSa97}).	
We are ready to prove Theorem~\ref{theoIV}, which is a consequence of the following theorem:

{
\renewcommand{\theequation}{\ref{theoIV}*}
\begin{theorem}\label{theor:upperboundstocokernel}
	Let $\imath_k\colon\Mm_{I,k}\cong\MI_{I,k}\to\mathscr C_{I,k}$ 
	be the 
	natural inclusion map and let $h_{I,k}\colon\|\Bb_{I,k}\|\to\Mm_{I,k}$ be 
	the $c_2=k$ components of the map $h_I$ in Definition~\ref{def:Delta_I,Bb_I,h_I}:
	\[
	\|\Bb_{I,k}\|\xrightarrow{h_{I,k}}\Mm_{I,k}\cong\MI_{I,k}
	\xrightarrow{\imath_k}\mathscr C_{I,k}\,.
	\]
  \begin{enumerate}
    \item The map $\imath_k$ induces surjective homomorphisms
    in homology and homotopy up to degree $2k+1$.
  \item The map $h_{I,k}$ is injective in homology.
  \item The map $h_{I,k}$ is an isomorphism in homology in degree $i$ provided $i\leq 2k+1$ and the map
   $(\imath_k)_*\colon H_i(\MI_{I,k})\to H_i(\mathscr C_{I,k})$
   is an isomorphism.
   \item The map $h_{I,k}$ is an isomorphism in homotopy in degree $i$ provided $i\leq 2k$ and the map
   $(\imath_k)_*\colon \pi_i(\MI_{I,k})\to \pi_i(\mathscr C_{I,k})$
   is an isomorphism. 
  \item Let $b_{i}$ be the Betti numbers of $BU^{n+1}$,
let $c_{2i}$ be the coefficients of the power series:
\[
\sum_{i=0}^\infty c_{2i}q^i=\left(\sum_{i=0}^\infty b_{2i}q^i\right)
\Bigl(2^n(q^{k+1}+\dots+q^{2k-n+2})
+(2^n-1)q^{2k-n+3}\Bigr)\,,
\]
and set $c_i=0$ for $i$ odd.
Then the rank of the cokernel of the map $(\imath_k)_*\colon H_i(\MI_{I,k})\to H_i(\mathscr C_{I,k})$ is less than or equal to $c_{i}$ for
$i\leq 4k-2n+7$, provided $2\leq n\leq k+1$.
\end{enumerate}
\end{theorem}
\addtocounter{equation}{-1}
}

\begin{proof}
By Proposition~\ref{prop:inclusionismapofLalgebras} the inclusions $\Mm_I\to\mathscr C_I$ are maps
of $\Ll$-algebras which induce a map of the bar constructions
\[
\textstyle
\|\Bb_I\|=\Bb\bigl(\prod\Mm_x,\prod\Mm_\emptyset,\Mm_\emptyset\bigr)\to\Bb\bigl(\prod \Cs_x,\prod\Cs_\emptyset,\Cs_\emptyset\bigr)
\]
and we get a commutative diagram, where the right vertical map is a homotopy equivalence (Proposition~\ref{prop:loopcompatiblewithwhitney}):
\begin{equation}\label{eq:r=inftyk=infty}
\xymatrix{
  \|\Bb_I\|\ar[r]\ar[d]_{h_I}&
  \Bb\bigl(\prod \mathscr C_x,\prod\mathscr C_\emptyset,\mathscr C_\emptyset\bigr)\ar[d]\\
\Mm_I\ar[r] & \mathscr C_I
}\end{equation}
To prove (2) it is enough to show that 
the top horizontal map is injective in homology, which we now prove.
Let $\overline R=H_*(\mathscr C_\emptyset)\cong H_*(BU\times\Zz)$ and $\overline M=H_*(\mathscr C_x)\cong H_*(BU\times BU\times\Zz)$,
and let $r_0$ be a generator of $H_0(\{1\}\times BU)$. Then
$\overline R$ and $\overline M$ are respectively the localizations of $R$ and $M$ away from $r_0$:
$\overline R=R[r_0^{-1}]$ and $\overline M=M[r_0^{-1}]$. 
Also $\overline M$ is free over $\overline R$ and:
\[
\textstyle
H_*\bigl(\Bb(\prod \mathscr C_x,\prod\mathscr C_\emptyset,\mathscr C_\emptyset)\bigr)\cong \overline M\otimes_{\overline R}\dots\otimes_{\overline R}\overline M
\cong (M\otimes_R\dots\otimes_RM)[r_0^{-1}].
\]
Since $M\otimes_R\dots\otimes_RM$ is $r_0$-torsion free, the map
$M\otimes_R\dots\otimes_RM\to(M\otimes_R\dots\otimes_RM)[r_0^{-1}]$ is injective as claimed. 
It follows that the map  $h_I$ is injective in homology.

We will now compute the dimension of the cokernel of the 
map $H_*(\|\Bb_{I,k}\|)\to H_*(\mathscr C_{I,k})$ induced by 
the diagonal map in diagram~\eqref{eq:r=inftyk=infty}.
The ring $H_*(\mathscr C_{I,k})$ is graded by half the homology degree with 
Hilbert series
$\sum_i b_{2i}q^i=\sum_i1/(q)_\infty^{n+1}$ and the Hilbert series of $H_*(\|\Bb_{I,k}\|)$ was computed in  Lemma~\ref{lemma:P_Ipnk}.
Subtracting the coefficients of the Hilbert series 
we obtain:
\[
\frac{1}{(q)_\infty^{n+1}}-\frac{p_{n,k}}{(q)_k^2}
=\frac{1-(q^{k+1})_\infty^2(q)_\infty^{n-1}p_{n,k}}{(q)_\infty^{n+1}}
=\left(\sum_{i=0}^\infty b_{2i}q^i\right)
\Bigl(1-(q^{k+1})_\infty^2(q)_\infty^{n-1}p_{n,k}\Bigr)\,.
\]
Statement (5) follows from Lemma~\ref{lemma:P_Ipnk}(3),
observing that $(q^{k+1})_\infty^2=1-2(q^{k+1}+\dots+q^{2k+1})+\mathcal O(q^{2k+2})$.
Furthermore,
by Lemma~\ref{lemma:P_Ipnk}(2), the map 
$H_*(\|\Bb_I\|)\to H_*(\mathscr C_I)$ is an isomorphism
in degrees up to $2k+1$, proving (3). Since
$\mathscr C_{I,k}\simeq BU^{n+1}$  and $\|\Bb_{I,k}\|$  are simply connected
(Proposition~\ref{prop:simplyconnected}),
by the Hurewicz Theorem the map $\pi_*(\|\Bb_{I,k}\|)\to \pi_*(\mathscr C_{I,k})$ 
is an isomorphism in degrees up to $2k$ and is surjective in degree $2k+1$.
Statements (1) and (4) immediately follow. 
\end{proof}

\appendix

\section{Bar construction}\label{app:bar}

In this appendix we prove propositions~\ref{prop3.1}
and~\ref{prop:IcupJ}.
First we need to prove some lemmas. Let $\widetilde\Delta$ be as in Definition~\ref{def:widetildeDelta}.
The identity $M\times A^n\times A=M\times A^{n+1}$  leads us
to make the following definition:

\begin{definition}\label{def:widehatdelta}
We represent by $\overline\Delta$ and $\widehat\Delta$
the categories whose objects are the same as the objects of $\widetilde\Delta$
(with $\emptyset=[-1]$) 
and such that $\overline\Delta(m,n)\subset\Delta(m+1,n+1)$
is the set of order preserving maps $\overline\mu\colon\{0,\dots,m+1\}\to\{0,\dots,n+1\}$
with $\overline\mu(m+1)=n+1$, and $\widehat\Delta(m,n)$
is the set of order preserving maps $\widehat\mu\colon\{-1,0,\dots,m+1\}\to\{-1,0,\dots,n+1\}$
with $\widehat\mu(m+1)=n+1$ and $\widehat\mu(-1)=-1$.
\end{definition}

\begin{remark}
The categories $\Delta$ and $\widetilde\Delta$ are subcategories 
of $\overline\Delta$ since we can extend any function $[m]\to[n]$ 
canonically to a morphism
$[m+1]\to[n+1]$ by sending $m+1$ to $n+1$. In a similar way, $\overline\Delta$ is a subcategory of $\widehat\Delta$.
Also observe that the object $[-1]$ is a final object 
of both $\overline\Delta{}^\op$ and $\widehat\Delta{}^\op$ and, 
for $m\neq-1$,
restriction gives an isomorphism of sets
$\overline\Delta(m,n)\cong\Delta(m,n+1)$.
\end{remark}

\out{
Given a functor $F\colon\cat\to\mathfrak D$ and an object 
$D\in\mathfrak D$, the undercategory
$D\downarrow F$ is the category whose objects are the pairs $(f,C)$
with $C\in\cat$ and $f\colon D\to F(C)$ and whose morphisms
$p\colon (f,C)\to(f',C')$ are the morphisms $p\colon C\to C'$
in $\cat$ such that $F(p)\circ f=f'$. The overcategory
$F\downarrow D$
is the category whose objects are the pairs $(C,f)$
with $f\colon F(C)\to D$ and whose morphisms
$p\colon (f,C)\to(f',C')$ are the morphisms $p\colon C\to C'$
in $\cat$ such that $f'\circ F(p)=f$. We say
the functor $F$ is \emph{right cofinal} if
$B(*,D\downarrow F,*)\cong *$ for all $D\in\mathfrak D$.
}

\begin{lemma}\label{lemma:widehatdelta}
The inclusion functors $F:\Delta^{\mathrm{op}}\to\overline\Delta{}^{\mathrm{op}}$,
$\widetilde F:\widetilde\Delta^{\mathrm{op}}\to\overline\Delta{}^{\mathrm{op}}$
and $\widehat F:\overline\Delta{}^{\mathrm{op}}\to\widehat\Delta{}^{\mathrm{op}}$ 
are right cofinal.
\end{lemma}

For the definition of right cofinal see equation~\eqref{eq:cofinal}.

\begin{proof}
We first show that $\widetilde F$ is right cofinal. 
It is enough to show that, for any $[n]\in\overline\Delta$, the overcategory
$\widetilde F\downarrow [n]$ has a final object. This object is the pair
$([n+1],\imath)$, where $[n+1]\in\widetilde\Delta$ and
$\imath\colon F\bigl([n+1]\bigr)=[n+1]\to[n]$ corresponds to the identity map under the isomorphism
$\overline\Delta(n+1,n)\cong\Delta(n+1,n+1)=\widetilde\Delta(n+1,n+1)$.
The exact same proof shows that $F$ is right cofinal since
the pair $([n+1],\imath)$ is also an object in $F\downarrow [n]$.

The proof for $\widehat F$ is similar.
Let $f\in\widehat\Delta(n+1,n)$ be the map $f\colon\{-1,\dots,n+2\}\to\{-1,\dots,n+1\}$ with $f(-1)=-1$ and
$f(i)=i-1$ for $i=0,\dots,n+1$. We will show that $([n+1],f)$ 
is a final object of $\widehat F\downarrow [n]$:
given a map $\widehat\mu\colon\{-1,\dots,m+1\}\to\{-1,\dots,n+1\}$ there is a unique map
$\overline\mu\colon\{0,\dots,m+1\}\to\{0,\dots,n+2\}$ such that $\widehat\mu=f\circ\widehat F(\overline\mu)$, namely: 
$\overline\mu(i)=\widehat\mu(i)+1$. This completes the proof.
\end{proof}

Let $\Oo$ be an $E_\infty$ operad with composition data $\gamma$ and let $\Ms$ be an $E_\infty$ monoidal module over $\Oo$ with composition data $\Gamma_R$ (see Definition~\ref{def:monoidalmodule}).

\begin{definition}
We denote by $\overline\Delta(\Ms,\Oo)$ the topological category equivalent to $\overline\Delta$
whose morphisms are defined as follows: for each $\mu\in\Delta(m+1,n+1)$ let
\[
\overline\Delta_{\mu}(\Ms,\Oo)=\Ms(\mu_0)\times
\prod_{\alpha=1}^{m+1}\Oo(\mu_\alpha-\mu_{\alpha-1})
\]
and define \[\overline\Delta(\Ms,\Oo)(m,n)=\coprod\limits_{\mu\in\overline\Delta(m,n)}\overline\Delta_{\mu}(\Ms,\Oo)\,.\] 
Given $\mu\in\overline\Delta(m,n)$
and $\nu\in\overline\Delta(n,p)$, composition
$\overline\Delta_\mu(\Ms,\Oo)\times\overline\Delta_\nu(\Ms,\Oo)\to
\overline\Delta_{\nu\circ\mu}(\Ms,\Oo)$ is defined using the operad data:
  \begin{align*}
  \Ms(\mu_0)\times\Ms(\nu_0)\times\prod_{\beta=1}^{\mu_0}\Oo(\nu_\beta-\nu_{\beta-1})
  &\xrightarrow{\Gamma_R}\Ms(\nu_{\mu_0})\,;
  \\
  \Oo(\mu_\alpha-\mu_{\alpha-1})\times\prod_{\beta=\mu_{\alpha-1}+1}^{\mu_{\alpha}}\Oo(\nu_{\beta}-\nu_{\beta-1})
  &\xrightarrow{\gamma}
  \Oo(\nu_{\mu_\alpha}-\nu_{\mu_{\alpha-1}})\quad(\alpha=1,\dots,m+1).
  \end{align*}
\end{definition}

Notice that, for $\mu_m\leq n$, we have
$\Oo_+(n-\mu_m)=\Oo(\mu_{m+1}-\mu_m)$ hence 
$\Delta(\Ms,\Oo)$ and $\widetilde\Delta(\Ms,\Oo)$ (see Definitions~\ref{con:catDelta(P)} and~\ref{con:widetildedelta(P)})
are canonically subcategories of $\overline\Delta(\Ms,\Oo)$.

\begin{lemma}\label{lemma:overlineB(M,A)}
The functor $\widetilde\Bb_\bullet(M,A,A;M)$ from Definition~\ref{con:widetildedelta(P)}(2)
can be extended to a functor
$\overline\Bb_\bullet(M,A):\overline\Delta(\Ms,\Oo)^\op\to \Top$.
\end{lemma}
\begin{proof}
Given $\mu\in\Delta(m+1,n+1)$ with
$\mu_{m+1}=n+1$, the required maps
\[
\overline\Delta_{\mu}(\Ms,\Oo)\times M\times A^{n+1}
=\overline\Delta_{\mu}(\Ms,\Oo)\times M\times A^{\mu_0}\times\prod_{\alpha=1}^{m+1}A^{\mu_\alpha-\mu_{\alpha-1}}
\to M\times A^{m+1}
\]
are induced by the maps
\begin{align*}
&\Ms(\mu_0)\times M\times A^{\mu_0}\to M\,;\\
&\Oo(\mu_\alpha-\mu_{\alpha-1})\times A^{\mu_\alpha-\mu_{\alpha-1}}\to A\,.\qedhere
\end{align*}
\end{proof}

Let $\overline\Bb(M,A)$ be the homotopy colimit of the 
functor $\overline\Bb_\bullet(M,A)$ in Lemma~\ref{lemma:overlineB(M,A)}.
Proposition~\ref{prop3.1} is now a direct consequence of the following lemma:

\begin{lemma}\label{theo3.1}
We have a commutative diagram
\[
\xymatrix{
\Bb(M,A,A)\ar[d]\ar[r]&
\overline\Bb(M,A)\\
\widetilde\Bb(M,A,A;M)\ar[ru]&
M\ar[l]_-{[-1]}\ar[u]_-{[-1]}}
\]
where every map is a homotopy equivalence.
\end{lemma}
\begin{proof}
	The bottom horizontal map is a homotopy equivalence
	by Proposition~\ref{prop:B->Y} so
it is enough to show that
the inclusion functor $\Delta(\Ms,\Oo)^\op\to\overline\Delta(\Ms,\Oo)^\op$ 
and the functor 
$\star\to\overline\Delta(\Ms,\Oo)^\op$
which sends $\star$ to $[-1]$ are right cofinal. 
This follows from the pullback squares
\[
\xymatrix{
\Delta(\Ms,\Oo)\ar[r]\ar[d]&
\Delta\ar[d]\\
\overline\Delta(\Ms,\Oo)\ar[r]&
\overline\Delta}\qquad
\xymatrix{
\star\ar@{-}[r]^-{=}\ar[d]&
\star\ar[d]\\
\overline\Delta(\Ms,\Oo)\ar[r]&
\overline\Delta}
\]
since the horizontal maps are equivalences of categories and the right vertical maps are cofinal.
\end{proof}

We now turn to the proof of Proposition~\ref{prop:IcupJ}. Recall Definitions~\ref{def:Ii} and~\ref{def:LlI}.
Given disjoint finite sets $I,J\subset\Cc^2$, a universe $\Hh$ and integers $a,b,c$ with $a\geq1$ and $b,c\geq0$ let 
\[
\Ll_{IJ}^\Hh(a,b,c)=\Ii\bigl(\Hh^a\oplus(\Hh^I)^b\oplus(\Hh^J)^c,\Hh\bigr)\,.
\]
By Proposition~\ref{prop:MisI-functor}, $\Mm_\emptyset$ is an $\Ii$-functor so
we have maps
\begin{multline}\label{eq:LIJ(a,b,c)acting}
\Ll_{IJ}^\Hh(a,b,c)
\times(\Mm_\emptyset^\Hh)^a
\times\bigl(\tprod_I\Mm_\emptyset^\Hh\bigr)^b
\times\bigl(\tprod_J\Mm_\emptyset^\Hh)^c\\
\to\Ll_{IJ}^\Hh(a,b,c)
\times\Mm_\emptyset^{\Hh^a\oplus(\Hh^I)^b\oplus(\Hh^J)^c}
\to\Mm_\emptyset^\Hh\,.
\end{multline}
Note that $\Ll_{IJ}^\Hh(1,0,c)=\Ll_J^\Hh(c)$
and $\Ll_{IJ}^\Hh(1,b,0)=\Ll_I^\Hh(b)$.
We also have an isomorphism $\Ll_{IJ}^\Hh(1,a,a)\cong\Ll_{I\cup J}^\Hh(a)$ induced by the permutation
\[
\tau\colon\Hh\oplus(\Hh^I)^a\oplus(\Hh^J)^a\to\Hh\oplus(\Hh^{I\cup J})^a\,.
\]
The permutation $\tau$ also induces a map
\[\tau\colon\Mm^\Hh_\emptyset
\times(\tprod_I\Mm^\Hh_\emptyset)^a
\times(\tprod_J\Mm^\Hh_\emptyset)^a\to
\Mm^\Hh_\emptyset\times(\tprod_{I\cup J}\Mm^\Hh_\emptyset)^a\]
and we have a commutative diagram:
\begin{equation}\label{eq:LIJdiagnoal=LIcupJ}
  \xymatrix{
    \Ll^\Hh_{IJ}(1,a,a)
    \times\Mm^\Hh_\emptyset
    \times(\prod_I\Mm^\Hh_\emptyset)^a
    \times(\prod_J\Mm^\Hh_\emptyset)^a\ar[r]\ar[d]^{\tau\times\tau}&
    \Mm^\Hh_\emptyset\ar@{=}[d] \\
    \Ll^\Hh_{I\cup J}(a)\times \Mm^\Hh_\emptyset\times(\prod_{I\cup J}\Mm^\Hh_\emptyset)^a\ar[r] &
    \Mm^\Hh_\emptyset }
\end{equation}

{
\renewcommand{\theequation}{\ref{prop:IcupJ}}
\begin{proposition}\label{prop:IcupJinappendix}\addtocounter{equation}{-1}
Let $I,J\subset\Cc^2$ be finite disjoint sets.
Fix universes $\Hh_{IJ},\Hh_\emptyset$ and let $\Hh=\Hh_\emptyset\otimes\Hh_{IJ}$. Let
$i:\Ll^{\Hh_{IJ}}\to\Ll^\Hh$ be the canonical operad map. Then, in $\hTop$,
we have a commutative diagram:
\renewcommand{\theequation}{\mbox{\Alph{section}.\arabic{equation}}}
\begin{equation}\label{eq:barbarbardiagramwithwhitney}
\xymatrix{
\Bb^{\Hh_\emptyset}(\|F_i^*\Bb_I^\Hh\|,\Mm_\emptyset^\Hh,\|F_i^*\Bb_J^\Hh\|)\ar[d]\ar[r]^-{\simeq}&
\|\Bb_{I\cup J}^\Hh\|\ar[d]^-{h_{I\cup J}}\\
\Bb^\Hh(\Mm_I^\Hh,\Mm_\emptyset^\Hh,\Mm_J^\Hh)\ar[r]&
\Mm_{I\cup J}^\Hh}
\end{equation}
where the top horizontal map is a homotopy equivalence and
the left vertical map is induced by $h_I$ and $h_J$.
\end{proposition}
}

\begin{proof}
  As in the proof of Proposition~\ref{prop:barbarbardiagramcommutesinfiniterank}, the strategy is to compare the bar construction with a trisimplicial version and take the diagonal.
  In the proof of~\ref{prop:barbarbardiagramcommutesinfiniterank} we only needed two functors: $F_0$ and $F_1$. Here, however, we need 3 more functors to define the
  vertical maps in diagram~\eqref{eq:barbarbardiagramwithwhitney} (see section~\ref{sec:mapsfromthebarconstruction}).
  Recall the functors in equation~\eqref{eq:defofB(B,M,B)}.
  We will define diagrams of spaces (see section~\ref{sec:diagramsofspaces}) 
$\mathfrak F_0$, $\mathfrak F_1$, $\widehat{\mathfrak F}_1$, $\mathfrak F_2$, $\mathfrak F_3$
indexed by certain categories $C_0$, $C_1$, $\widehat C_1$, $C_2$, $C_3$ 
in such a way that we get a commutative diagram in $\hTop$ (compare with diagram~\ref{eq:diagram-barbarbar}):
\[
\xymatrix@C=1em{
\Bb^{\Hh_\emptyset}(\|F_i^*\Bb^\Hh_I\|,\Mm^\Hh_\emptyset,\|F_i^*\Bb^\Hh_J\|)\ar[dd]\ar[rr]^-{g_0}_-{\simeq}&&
\|\mathfrak F_0\|\ar[dd]\ar[r]^-{\simeq}\ar[rd]&
\|\widehat\FF_1\|&
\|\Bb_{I\cup J}^\Hh\|\ar[l]_-{d}^-{\simeq}\ar[dl]\ar[dd]
\\
&&&\|\mathfrak F_1\|\ar[u]_-{\simeq}\ar[d]
\\
\Bb^{\Hh_\emptyset}(\|F_i^*\widetilde\Bb^\Hh_I\|,\Mm^\Hh_\emptyset,\|F_i^*\widetilde\Bb^\Hh_J\|)\ar[rr]^-{g_2}_-{\simeq}&&
\|\mathfrak F_2\|\ar[r]&
\|\mathfrak F_3\|&
\|\widetilde\Bb^\Hh_{I\cup J}\|\ar[l]_-{d}^-{\simeq}
\\
\Bb^\Hh(\Mm^\Hh_I,\Mm^\Hh_\emptyset,\Mm^\Hh_J)\ar[u]^-{\simeq}\ar[rrr]\ar[rru]&&&
\widetilde\Bb^\Hh(\Mm^\Hh_I,\Mm^\Hh_\emptyset,\Mm^\Hh_J;\Mm^\Hh_{I\cup J})\ar[u]_-{\simeq}&
\Mm^\Hh_{I\cup J}\ar[u]_-{\simeq}\ar[l]_-{\simeq}}
\]
The result then immediately follows.

We begin by defining 
a category $C$ topologically equivalent to $\widetilde\Delta\times\widehat\Delta\times\widetilde\Delta$ (see Definition~\ref{def:widehatdelta}).
The categories $C_0$, $C_1$, $\widehat C_1$, $C_2$, $C_3$ 
will be subcategories of $C$. The objects of $C$
are the triples $(m^I,m,m^J)$ of integers with $m^I,m,m^J\geq-1$. To define the morphisms recall the notation of
 Definition~\ref{def:LlI}.
Given morphisms $\mu^I\in\widetilde\Delta(m^I,n^I)$, $\mu\in\widehat\Delta(m,n)$ and $\mu^J\in\widetilde\Delta(m^J,n^J)$, 
with $m\neq -1$, let
\begin{multline*}
C(\mu^I,\mu,\mu^J)
=\left(\prod_{\alpha=1}^{m^I+1}\Ll^{\Hh,I}(\mu^I_{\alpha}-\mu^I_{\alpha-1})\right)\\
\times\Ll_{IJ}^\Hh(1+\mu_0,\mu^I_{0},0)
\times\left(\prod_{\alpha=1}^m\Ll^\Hh(\mu_\alpha-\mu_{\alpha-1})\right)\\
\times\Ll_{IJ}^\Hh(1+n-\mu_m,0,\mu^J_{0})
\times\left(\prod_{\alpha=1}^{m^J+1}\Ll^{\Hh,J}(\mu^J_{\alpha}-\mu^J_{\alpha-1})\right),
\end{multline*}
(with the convention that $\mu^I_{m^I+1}=n^I+1$ and $\mu^J_{m^J+1}=n^J+1$).
For $m=-1$ let
\begin{multline*}
C(\mu_I,\imath,\mu_J)
=\left(\prod_{\alpha=1}^{m^I+1}\Ll^{\Hh,I}(\mu^I_{\alpha}-\mu^I_{\alpha-1})\right)\\
\times\Ll_{IJ}(2+n,\mu^I_{0},\mu^J_{0})
\times\left(\prod_{\alpha=1}^{m^J+1}\Ll^{\Hh,J}(\mu^J_{\alpha}-\mu^J_{\alpha-1})\right)
\end{multline*}
where $\imath$ is the unique morphism in $\widehat\Delta(-1,n)$. 
We define the morphisms in $C$ by
\[
C\bigl((m^I,m,m^J),(n^I,n,n^J)\bigr)=
\coprod_{\mu^I,\mu,\mu^J}
C(\mu^I,\mu,\mu^J)\,.
\]
We define the categories $\widehat C_1$ and $C_3$ by the pullback diagrams
\[
\xymatrix{
	\widehat C_1\ar[r]\ar[d]&\Delta\times\widehat\Delta\times\Delta\ar[d]\\
	C\ar[r]&\widetilde\Delta\times\widehat\Delta\times\widetilde\Delta}
\qquad\xymatrix{
	C_3\ar[r]\ar[d]&\widetilde\Delta\times\widetilde\Delta\times\widetilde\Delta\ar[d]\\
	C\ar[r]&\widetilde\Delta\times\widehat\Delta\times\widetilde\Delta}
\]
Also let $C_2\subset C_3$ be the full subcategory
whose objects are the triples of integers $(n^I,n,n^J)$ with
$n^I,n^J\geq -1$ and $n\geq 0$, let $C_1=C_3\cap\widehat C_1$ 
and let $C_0=C_2\cap C_1$. Notice that we have equivalences of 
categories $C_2\simeq\widetilde\Delta\times\Delta\times\widetilde\Delta$,
$C_1\simeq\Delta\times\widetilde\Delta\times\Delta$ and
$C_0\simeq\Delta\times\Delta\times\Delta$.

We now define the functor $\mathfrak F_2\colon C_2^\op\to\Top$.
On objects,
$\mathfrak F_2(n^I,n,n^J)=\widetilde\Bb^\Hh_I(n^I)\times(\Mm_\emptyset^\Hh)^n\times\widetilde\Bb^\Hh_J(n^J)$
and the functor $\mathfrak F_2$ is defined on morphisms as follows:
given morphisms $\mu^I\in\widetilde\Delta(m^I,n^I)$, 
$\mu\in\Delta(m,n)$, $\mu^J\in\widetilde\Delta(m^J,n^J)$,
and $f\in C(\mu^I,\mu,\mu^J)$, we can write, for $n^I,n^J\neq-1$,
\begin{multline*}
\mathfrak F_2(n^I,n,n^J)\\ \xrightarrow{\text{shuffle}}
\bigl(\tprod_{I}\!\Mm_\emptyset^\Hh\bigr)^{n^I-\mu^I_0}
\times\tprod_I\Mm_x^\Hh\times
\Bigl((\Mm_\emptyset^\Hh)^{1+\mu_0}\times
\bigl(\tprod_I\Mm_\emptyset^\Hh\bigr)^{\mu^I_0}\Bigr)
\times(\Mm_\emptyset^\Hh)^{n-\mu_0}
\times\widetilde\Bb^\Hh_J(n^J)\\
\xrightarrow{f}\mathfrak F_2(m^I,m,m^J)
\end{multline*}
where $f$ acts on the product as in
Definition~\ref{con:barconstruction} and
equation~\eqref{eq:LIJ(a,b,c)acting}.
If either $n^I$ or $n^J$ equal $-1$, $f$ acts using the structure
of $\Mm_I^\Hh$ and $\Mm_J^\Hh$ as modules over the $\Ll^\Hh$-algebra
$\Mm_\emptyset^\Hh$ (Remark~\ref{rmk:MImoduleoverMJ}).

We define the functor $\mathfrak F_0$ as the restriction of
$\mathfrak F_2$ to $C_0^\op$; then the inclusion $C_0\subset C_2$ induces a map $\|\FF_0\|\to\|\FF_2\|$.

We now define the homotopy equivalences $g_0$ and $g_2$. Let $\Delta_0^{\Hh_\emptyset}=\Delta(\Ll_+^{\Hh_\emptyset},\Ll^{\Hh_\emptyset})$
(see Definition~\ref{con:catDelta(P)}) and
consider the functor $F\colon\widetilde\Delta^{\Hh_{IJ}}_I\times\Delta_0^{\Hh_\emptyset}\times\widetilde\Delta^{\Hh_{IJ}}_J\to C_2$
(see Definition~\ref{def:Delta_I,Bb_I,h_I})
which is the identity on objects and is induced on morphisms 
as folows: 
given morphisms $\mu^I\in\widetilde\Delta(m^I,n^I)$, 
$\mu\in\Delta(m,n)$ and $\mu^J\in\widetilde\Delta(m^J,n^J)$,
the map
\[
\widetilde\Delta^{\Hh_{IJ}}_I\times\Delta_0^{\Hh_\emptyset}\times\widetilde\Delta^{\Hh_{IJ}}_J\to C_2
\]
is given by the maps
\begin{align*}
\prod_{\alpha=1}^{m^I+1}\Ll^{\Hh_{IJ},I}(\mu^I_\alpha-\mu^I_{\alpha-1})
&\to
\prod_{\alpha=1}^{m^I+1}\Ll^{\Hh,I}(\mu^I_\alpha-\mu^I_{\alpha-1})\\
\prod_{\alpha=1}^{m+1}\Ll^{\Hh_{\emptyset}}(\mu_\alpha-\mu_{\alpha-1})
&\to
\prod_{\alpha=1}^{m+1}\Ll^{\Hh}(\mu_\alpha-\mu_{\alpha-1})\\
\prod_{\alpha=1}^{m^J+1}\Ll^{\Hh_{IJ},J}(\mu^J_\alpha-\mu^J_{\alpha-1})
&\to
\prod_{\alpha=1}^{m^J+1}\Ll^{\Hh,J}(\mu^J_\alpha-\mu^J_{\alpha-1})
\end{align*}
induced by the the canonical maps $i:\Ll^{\Hh_{IJ}}\to\Ll^{\Hh}$ and $i_\emptyset\colon\Ll^{\Hh_\emptyset}\to\Ll^\Hh$, and
by maps
\begin{align*}
\Ll_I^{\Hh_{IJ}}(\mu^I_0)\times\Ll^{\Hh_\emptyset}(1+\mu_0)
&\to\Ll_{IJ}^\Hh(1+\mu_0,\mu^I_0,0)\\
\Ll^{\Hh_\emptyset}(1+n-\mu_m)\times\Ll_J^{\Hh_{IJ}}(\mu^J_0)
&\to\Ll_{IJ}^\Hh(1+n-\mu_m,0,\mu^J_0)
\end{align*}
which we now define.
Using matrix notation,
the image by the first map 
of a pair of isometries
\begin{align*}
\left[\ g_{IJ}\quad h_{IJ}\ \right]&:\Hh_{IJ}\oplus \bigl((\Hh_{IJ})^I\bigr)^{\mu^I_0}\to\Hh_{IJ}\,,&
\left[\ g_\emptyset\quad f_\emptyset\ \right]&:\Hh_\emptyset\oplus(\Hh_\emptyset)^{\mu_0}\to\Hh_\emptyset
\end{align*}
is the isometry
\[
\left[\ g_\emptyset\otimes g_{IJ}\quad f_\emptyset\otimes g_{IJ}\quad g_\emptyset\otimes h_{IJ}\ \right]:
\Hh\oplus\Hh^{\mu_0}\oplus (\Hh^I)^{\mu^I_0}\to\Hh\,,
\]
where we identified
$\Hh^{\mu_0}$ with $(\Hh_\emptyset)^{\mu_0}\otimes\Hh_{IJ}$ and
$(\Hh^I)^{\mu^I_0}$ with $\Hh_\emptyset\otimes\bigl((\Hh_{IJ})^I\bigr)^{\mu^I_0}$.
The second map is defined in an analogous way:
the image of a pair of isometries
\begin{align*}
\left[\ g_\emptyset\quad f_\emptyset\ \right]&:\Hh_\emptyset\oplus(\Hh_\emptyset)^{n-\mu_m}\to\Hh_\emptyset\,, &
\left[\ g_{IJ}\quad h_{IJ}\ \right]&:\Hh_{IJ}\oplus \bigl((\Hh_{IJ})^J\bigr)^{\mu^J_0}\to\Hh_{IJ}
\end{align*}
is the isometry
\[
\left[\ g_\emptyset\otimes g_{IJ}\quad f_\emptyset\otimes g_{IJ}\quad g_\emptyset\otimes h_{IJ}\ \right]:
\Hh\oplus\Hh^{n-\mu_m}\oplus (\Hh^J)^{\mu^J_0}\to\Hh\,.
\]
Now, a direct verification shows that we have a homeomorphism of $\Delta_0^{\Hh_\emptyset}$-diagrams
\[
\hcolim_{(\widetilde\Delta_I^{\Hh_{IJ}}\times\widetilde\Delta_J^{\Hh_{IJ}})^\op}F^*\FF_2^\Hh\cong
\Bb_\bullet^{\Hh_\emptyset}\bigl(\|F_i^*\widetilde\Bb_I^\Hh\|,\Mm_\emptyset^\Hh,\|F_i^*\widetilde\Bb_J^\Hh\|\bigr), 
\]
and since $F$ is an equivalence of categories, by
\cite[Proposition 3.1(6)]{HoVo92} and
the commutation of homotopy colimits 
(see \cite[section 6]{HoVo92}), we get
\begin{align*}
\hcolim_{C_2^\op}\FF_2
&\simeq\hcolim_{(\widetilde\Delta_I^{\Hh_{IJ}}\times\Delta_0^{\Hh_\emptyset}\times\Delta_J^{\Hh_{IJ}})^\op}F^*\FF_2\\
&\cong\hcolim_{(\Delta^{\Hh_\emptyset}_0)^\op}\left(\hcolim_{(\widetilde\Delta_I^{\Hh_{IJ}}\times\Delta_J^{\Hh_{IJ}})^\op}F^*\FF_2\right)\\
&\cong\hcolim_{(\Delta^{\Hh_\emptyset}_0)^\op}\Bb_\bullet^{\Hh_\emptyset}(\|F_i^*\widetilde\Bb_I^\Hh\|,\Mm_\emptyset^\Hh,\|F_i^*\widetilde\Bb_J^\Hh\|)
\end{align*}
Therefore we have a commutative diagram in $\Top$:
\begin{equation}\label{eq:trapezetrianglediagram}
\xymatrix@C=4em{
\Bb^{\Hh_\emptyset}(\|F_i^*\widetilde\Bb_I^\Hh\|,\Mm_\emptyset^\Hh,\|F_i^*\widetilde\Bb_J^\Hh\|)&
\displaystyle\|\hcolim_{(\widetilde\Delta_I^{\Hh_{IJ}}\times\widetilde\Delta_J^{\Hh_{IJ}})^\op}F^*\FF_2\|\ar[r]^-{F}_-\simeq\ar[l]^-{\cong}&
\|\FF_2\|\\
\Bb^{\Hh_\emptyset}(\Mm_I^\Hh,\Mm_\emptyset^\Hh,\Mm_J^\Hh)\ar[ru]\ar[u]^-{\simeq}\ar[r]^-{i_\emptyset}_-{\simeq}&
\Bb^{\Hh}(\Mm_I^\Hh,\Mm_\emptyset^\Hh,\Mm_J^\Hh)\ar[ru]_-{\simeq}
}
\end{equation}
where the right diagonal map is induced by the functor $\Delta_0^{\Hh}\to C_2$ which sends $n$ to $(-1,n,-1)$.
We define $g_2$ as the top row in diagram~\ref{eq:trapezetrianglediagram}.
The map $g_0$ is defined by taking the restriction of $g_2$ to 
$\Bb^{\Hh_\emptyset}(\|F_i^*\Bb_I^\Hh\|,\Mm_\emptyset^\Hh,\|F_i^*\Bb_J^\Hh\|)$ and observing that it factors through $\|\FF_0\|$.

We now construct functors 
$\widehat\FF_1\colon \widehat C_1^\op\to\Top$ and $\FF_3\colon C_3^\op\to\Top$.
On objects $\FF_3(n^I,n,n^J)=\FF_2(n^I,n,n^J)$ for $n\neq-1$,
\[
\FF_3(n^I,-1,n^J)=\begin{cases}
\widetilde\Bb^\Hh_I(n^I)\times\bigl(\prod_J\Mm_\emptyset^\Hh\bigr)^{n^J}\times\bigl(\prod_{x\in J}\Mm^\Hh_x\bigr),&\text{if $n^J\neq-1$};\\
\bigl(\prod_{x\in I}\Mm^\Hh_x\bigr)\times\bigl(\prod_I\Mm_\emptyset^\Hh\bigr)^{n^I}\times\widetilde\Bb^\Hh_J(n^J)&\text{if $n^I\neq-1$}
\end{cases}
\]
and $\FF_3(-1,-1,-1)=\Mm_{I\cup J}^\Hh$. On objects, the functor $\widehat\FF_1$ is defined by
$\widehat \FF_1(n^I,n,n^J)=\FF_3(n^I,n,n^J)$ for
$(n^I,n,n^J)\in \widehat C_1$.
The functors are defined on morphisms in the usual way. We also define $\FF_1$ as the restriction of $\FF_3$ to $C_1^\op$
(which coincides with the restriction of $\widehat\FF_1$ to $C_1^\op$). Since, by Lemma~\ref{lemma:widehatdelta}, the inclusions $\widetilde\Delta\to\widehat\Delta$ 
and $\overline\Delta\to\widehat\Delta$ are right cofinal, it follows
that the inclusions $C_0\to\widehat C_1$ and $C_1\to\widehat C_1$ are also right cofinal and hence the maps $\|\FF_0\|\to\|\widehat\FF_1\|$ 
and $\|\FF_1\|\to\|\widehat\FF_1\|$ are homotopy equivalences.

The map $\widetilde\Bb^\Hh(\Mm_I^\Hh,\Mm_\emptyset^\Hh,\Mm_J^\Hh;\Mm_{I\cup J}^\Hh)\to\|\FF_3\|$ is induced by the functor
$\widetilde\Delta\to\widetilde\Delta\times\widetilde\Delta\times\widetilde\Delta$ given on objects by $n\mapsto(-1,n,-1)$
(compare with the right diagonal map in diagram~\ref{eq:trapezetrianglediagram}).

We now define a diagonal functor $d\colon\widetilde\Delta_{I\cup J}\to C_3$, given
on objects by $n\mapsto(n,-1,n)$; to define
$d$ on morphisms just observe that, for any $\mu\in\widetilde\Delta(m,n)$,
the spaces of morphisms $\widetilde\Delta_{I\cup J}(\mu)$ and $C_3(\mu,\Id,\mu)$ are
isomorphic (see diagram~\eqref{eq:LIJdiagnoal=LIcupJ}). Now, direct inspection shows that
$\widetilde\Bb_{I\cup J}=d^*\FF_3$. Restricting $d$ we get a functor $d\colon\Delta_{I\cup J}\to C_1$ and we also have
$\Bb_{I\cup J}=d^*\FF_1$. We claim that
the map $\|d^*\FF_1\|\xrightarrow{d}\|\widehat\FF_1\|$ induced by $d$ and the inclusion $\FF_1\to\widehat\FF_1$
is a homotopy equivalence. This follows from the pullback rectangle in the diagram:
\[\xymatrix@C=5em{
\Delta_{I\cup J}\ar[rr]^-{d}\ar[d]_-{\simeq}&&
\widehat C_1\ar[r]^-{\widehat\FF_1}\ar[d]_-{\simeq}&\Top\\
\Delta\ar[r]^-{\text{diag}}&
\Delta\times\Delta\ar[r]^-{\ident\times[-1]\times\ident}&
\Delta\times\widehat\Delta\times\Delta
}\]
Since the bottom arrows are cofinal and the vertical arrows are equivalences,
it follows that $d$ is cofinal and hence $\|d^*\FF_1\|\xrightarrow{d}\|\widehat\FF_1\|$
is a homotopy equivalence. In an analogous way we can see that the map $\|d^*\FF_3\|\xrightarrow{d}\|\FF_3\|$
is a homotopy equivalence, which finishes the proof.
\end{proof}

\vspace*{1em}
\noindent {\it Aknowledgements.}
The author would like to thank the referee for his careful reading of the manuscript and his comments, which greatly improved the readability of the text,
and for pointing out that the proof of Theorem 1.1 was incomplete.
The author would also like to thank Gustavo Granja for innumerous very fruitful discussions.

\bibliography{bar}

\end{document}